\newcommand{\symdef}[2]{\index[symbols]{#1@#2}}
\theoremstyle{plain}
\newtheorem{thm}{Theorem}[section]
\newtheorem*{thm*}{Theorem}
\newtheorem{prop}[thm]{Proposition}
\newtheorem{lem}[thm]{Lemma}
\newtheorem{cor}[thm]{Corollary}
\theoremstyle{definition}
\newtheorem{defn}[thm]{Definition}
\theoremstyle{remark}
\newtheorem{rem}[thm]{Remark}
\newtheorem{example}[thm]{Example}
\newtheorem*{claim*}{Claim}
\newcommand{\asym}{\mathrm{asym}}
\newcommand{\ram}{\mathrm{ram}}
\newcommand{\red}{\mathrm{red}}
\newcommand{\res}{\mathrm{res}}
\newcommand{\rs}{\mathrm{rs}}
\newcommand{\spl}{\mathbf{spl}}
\newcommand{\supp}{\mathrm{supp}}
\newcommand{\sym}{\mathrm{sym}}
\newcommand{\ur}{\mathrm{ur}}
\newcommand{\Wal}{\mathrm{Wal}}
\newcommand{\IV}{\mathrm{IV}}
\newcommand{\C}{\mathbb{C}}
\newcommand{\F}{\mathbb{F}}
\newcommand{\Q}{\mathbb{Q}}
\newcommand{\R}{\mathbb{R}}
\newcommand{\Z}{\mathbb{Z}}
\newcommand{\Gm}{\mathbb{G}_{\mathrm{m}}}
\newcommand{\bbH}{\mathbb{H}}
\newcommand{\bfa}{\mathbf{a}}
\newcommand{\x}{\mathbf{x}}
\newcommand{\y}{\mathbf{y}}
\newcommand{\bfA}{\mathbf{A}}
\newcommand{\bfB}{\mathbf{B}}
\newcommand{\B}{\mathbf{B}}
\newcommand{\bfC}{\mathbf{C}}
\newcommand{\bfG}{\mathbf{G}}
\newcommand{\G}{\mathbf{G}}
\newcommand{\bfH}{\mathbf{H}}
\newcommand{\J}{\mathbf{J}}
\newcommand{\bfN}{\mathbf{N}}
\newcommand{\bfS}{\mathbf{S}}
\newcommand{\bfT}{\mathbf{T}}
\newcommand{\T}{\mathbf{T}}
\newcommand{\bfV}{\mathbf{V}}
\newcommand{\bfX}{\mathbf{X}}
\newcommand{\bfZ}{\mathbf{Z}}
\newcommand{\mfg}{\mathfrak{g}}
\newcommand{\mfj}{\mathfrak{j}}
\newcommand{\mfp}{\mathfrak{p}}
\newcommand{\mfs}{\mathfrak{s}}
\newcommand{\mfG}{\mathfrak{G}}
\newcommand{\mcA}{\mathcal{A}}
\newcommand{\mcB}{\mathcal{B}}
\newcommand{\mcE}{\mathcal{E}}
\newcommand{\mcF}{\mathcal{F}}
\newcommand{\mcK}{\mathcal{K}}
\newcommand{\mcO}{\mathcal{O}}
\newcommand{\bmfg}{\boldsymbol{\mathfrak{g}}}
\newcommand{\bmfs}{\boldsymbol{\mathfrak{s}}}
\newcommand{\ol}{\overline}
\newcommand{\ul}{\underline}
\renewcommand{\t}{\tilde}
\newcommand{\h}{\hat}
\newcommand{\nat}{\natural}
\DeclareMathOperator{\cInd}{c-Ind}
\DeclareMathOperator{\id}{id}
\DeclareMathOperator{\sgn}{sgn}
\DeclareMathOperator{\tr}{tr}
\DeclareMathOperator{\val}{val}
\DeclareMathOperator{\Aut}{Aut}
\DeclareMathOperator{\Cok}{Cok}
\DeclareMathOperator{\Gal}{Gal}
\DeclareMathOperator{\Hom}{Hom}
\DeclareMathOperator{\Ind}{Ind}
\DeclareMathOperator{\Ker}{Ker}
\DeclareMathOperator{\Lie}{Lie}
\DeclareMathOperator{\Nr}{Nr}
\DeclareMathOperator{\Res}{Res}
\DeclareMathOperator{\Tr}{Tr}
\DeclareMathOperator{\GL}{GL}
\DeclareMathOperator{\SL}{SL}
\DeclareMathOperator{\Sp}{Sp}
\title{Twisted character formula for toral supercuspidal representations}
\author{Masao Oi}
\address{Department of Mathematics, National Taiwan University, Astronomy Mathematics Building 5F, No.\ 1, Sec.\ 4, Roosevelt Rd., Taipei 10617, Taiwan.}
\email{masaooi@ntu.edu.tw}
\begin{document}

\begin{abstract}
We establish an explicit formula for twisted Harish-Chandra characters of toral supercuspidal representations of $p$-adic reductive groups under several technical assumptions.
Our setup especially includes the case of a quasi-split group equipped with an involution.
\end{abstract}


\maketitle

\begingroup
\renewcommand{\thefootnote}{}
\footnotetext{\DTMnow}
\footnotetext{Note: The material of this paper was originally incorporated as part of \cite{Oi23-TECR} and has been extracted and reformulated in a slightly more general setting.}
\endgroup

\setcounter{tocdepth}{2}
\tableofcontents


\section{Introduction}\label{sec:intro}

The aim of this paper is to establish an explicit formula of \textit{twisted (Harish-Chandra) characters} for a specific class of supercuspidal representations of $p$-adic reductive groups called \textit{toral supercuspidal representations}.

Let $\bfG$ be a connected reductive group over a $p$-adic field $F$.
We consider an irreducible admissible representation $\pi$ of $\bfG(F)$.
Recall that, in representation theory of finite groups, the trace characters of representations play a very important role.
However, in our context, it does not make sense to take the trace of $\pi$ in the usual way since $\pi$ is mostly infinite-dimensional.
Nevertheless, we can still associate to $\pi$ a $\C$-valued function $\Theta_\pi$ on (the regular semisimple locus of) $\bfG(F)$ called the \textit{Harish-Chandra} character of $\pi$, which encodes all the information of $\pi$ in principle just as the trace character of a representation of a finite group.
Based on the philosophy ``\textit{characters tell all}'' (\cite{SS09}), it is natural to try to seek an explicit formula for Harish-Chandra characters of irreducible admissible representations of $p$-adic reductive groups.
Of course, it is impossible to describe $\Theta_\pi(\gamma)$ `explicitly' if the given information is merely that $\pi$ is an irreducible admissible representation and $\gamma$ is a regular semisimple element.
In order to obtain a concrete expression of $\Theta_\pi(\gamma)$, we must specify concretely what kind of $\pi$ and $\gamma$ are under consideration, by providing explicit data (labeling) for them.
Keeping this in mind, let us first introduce the pivotal works of Adler--Spice \cite{AS08,AS09} and DeBacker--Spice \cite{DS18}.

In representation theory of $p$-adic reductive groups, \textit{supercuspidal} representations are often referred to as the ``building blocks''.
In \cite{Yu01}, J.-K.\ Yu introduced a method for constructing a lot of supercuspidal representations in a highly explicit manner.
The supercuspidal representation obtained by Yu's construction are called \textit{tame} supercuspidal representations.
The input data of Yu's construction is a $5$-tuple $(\vec{\bfG}, \vec{\vartheta},\vec{r},\x,\rho_0)$ consisting of very concrete objects (see Section \ref{sec:rsc} for details).
On the other hand, in \cite{AS08}, Adler--Spice introduced the notion of a \textit{normal $r$-approximation} for elements of $p$-adic reductive groups.
Roughly speaking, for an element $\gamma\in \bfG(F)$, a normal $r$-approximation to $\gamma$ is a product decomposition $\gamma=\prod_{0\leq i <r}\gamma_i\cdot \gamma_{\geq r}$ along the $p$-adic depth of elements of $p$-adic reductive groups (the depth of each $\gamma_i$ is $i$ unless $\gamma_i=1$ and the depth of $\gamma_{\geq r}$ is greater than or equal to $r\in\R_{\geq0}$).
A normal $r$-approximation can be thought of as an enhancement of the \textit{topological Jordan decomposition} in the sense of \cite{Spi08}, which decomposes $\gamma$ into the product of a $p$-adically semisimple element $\gamma_0$ and a $p$-adically unipotent element $\gamma_{+}$ which commute.
Although it is not always possible to find a normal $r$-approximation for a given element, it is shown that any elliptic regular semisimple element admits a normal $r$-approximation.

With these languages, now it makes sense to investigate an explicit formula of $\Theta_\pi(\gamma)$ for a tame supercuspidal representation $\pi$ of $\bfG(F)$ in terms of its input data $(\vec{\bfG}, \vec{\vartheta},\vec{r},\x,\rho_0)$ and a normal $r$-approximation $\gamma=\prod_{0\leq i <r}\gamma_i\cdot \gamma_{\geq r}$.
A complete answer to this problem was given by Adler--Spice \cite{AS09} under a certain compactness assumption on the Yu-datum $(\vec{\bfG}, \vec{\vartheta},\vec{r},\x,\rho_0)$.
We do not explain what the ``compactness'' means here because from now on we restrict ourselves to a further special class of tame supercuspidal representations called \textit{toral} supercuspidal representations, for which the assumption always holds.
(In fact, the compactness assumption has been removed by recent work of Spice \cite{Spi18,Spi21}; see remarks at the end of this introduction.)
The Yu-datum of a toral supercuspidal representation is of a particularly simple form (see Section \ref{subsec:toral-sc}).
The key point is that such a Yu-datum can be thought of just as a pair $(\bfS,\vartheta)$ of a tamely ramified elliptic maximal torus $\bfS$ of $\bfG$ and a character $\vartheta\colon\bfS(F)\rightarrow\C^\times$ satisfying a certain regularity condition.
Let us call such a pair $(\bfS,\vartheta)$ a \textit{toral pair}.
(This is a special case of Kaletha's re-parametrization theorem for \textit{regular supercuspidal representations}.)

The formula of Adler--Spice \cite{AS09}, which was later elaborated by DeBacker--Spice \cite{DS18}, is as follows:
\begin{thm}[Adler--DeBacker--Spice character formula]\label{thm:ADS}
    Suppose that $p\gg0$.
    Let $\pi$ be a toral supercuspidal representation of $\bfG(F)$ corresponding to a toral pair $(\bfS,\vartheta)$.
    Let $r\in\R_{\geq0}$ be the depth of $\pi$.
    For any elliptic regular semisimple element $\gamma\in \bfG(F)$ with a normal $r$-approximation $\gamma=\prod_{0\leq i <r}\gamma_i\cdot \gamma_{\geq r}$, 
    \[
        \Phi_\pi(\gamma)=
        \sum_{\begin{subarray}{c}g\in \bfS(F)\backslash \bfG(F)/\bfG_{\gamma_{<r}}(F)\\ {}^{g}\gamma_{<r}\in \bfS(F) \end{subarray}} \vartheta({}^{g}\gamma_{<r})\cdot\varepsilon({}^{g}\gamma_{<r})\cdot\hat{\iota}^{\bfG_{{}^{g}\gamma_{<r}}}_{X^\ast}(\log({}^{g}\gamma_{\geq r})),
    \]
    where $\Phi_\pi$ denotes the normalized Harish-Chandra character of $\pi$.
\end{thm}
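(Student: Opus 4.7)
The natural strategy is to combine Yu's explicit realization of $\pi$ as a compactly induced representation with the Frobenius character formula, then to factor the trace of the inducing representation along its Heisenberg--Weil decomposition, and finally to identify the deep-part contribution with a Lie-algebra Fourier transform via Adler's $\log$ map.

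\emph{Step 1 (Compact induction and Frobenius formula).} In the toral case, Yu's construction produces $\pi \cong \cInd_J^{\bfG(F)} \lambda$, where $J = \bfS(F)\cdot \bfG(F)_{\x,r/2+}$ is an open compact-mod-center subgroup, and $\lambda$ restricted to $\bfS(F)$ is (up to a quadratic twist) the character $\vartheta$, while its restriction to the prounipotent factor arises from the Heisenberg--Weil package at depth $r/2$. Since $\gamma$ is elliptic regular, Harish-Chandra's formula for induced characters collapses to a finite sum
\[
\Theta_\pi(\gamma) = \sum_{\begin{subarray}{c} g\in J\backslash \bfG(F) \\ {}^g\gamma \in J \end{subarray}} \Theta_\lambda({}^g\gamma),
\]
which one then renormalizes to obtain $\Phi_\pi$.

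\emph{Step 2 (Identifying the contributing cosets).} Applying the Moy--Prasad filtration to $J$, the membership condition ${}^g\gamma \in J$ forces the shallow part ${}^g\gamma_{<r}$ of the normal $r$-approximation to lie in $\bfS(F)$ and the deep part ${}^g\gamma_{\geq r}$ to lie in the prounipotent radical of $J$. Uniqueness of the normal approximation up to conjugation by $\bfG_{\gamma_{<r}}(F)$, combined with the fact that $J$ is normalized by $\bfS(F)$, re-parametrizes the index set as $\bfS(F)\backslash \bfG(F)/\bfG_{\gamma_{<r}}(F)$, matching the right-hand side of the theorem.

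\emph{Step 3 (Character of $\lambda$ and Fourier reduction).} For each contributing $g$, factor $\Theta_\lambda({}^g\gamma)$ along the product decomposition ${}^g\gamma = {}^g\gamma_{<r} \cdot {}^g\gamma_{\geq r}$. The torus part immediately yields $\vartheta({}^g\gamma_{<r})$. The action of ${}^g\gamma_{<r}$ on the Heisenberg--Weil component produces the root-of-unity sign $\varepsilon({}^g\gamma_{<r})$, to be evaluated by analyzing its induced symplectic automorphism of the Moy--Prasad graded piece. For the deep contribution, use Adler's $\log$ (valid for $p\gg 0$) to identify the prounipotent radical with a lattice in $\Lie(\bfG_{{}^g\gamma_{<r}})(F)$; on this lattice the Heisenberg--Weil character is a Gauss sum whose stationary-phase evaluation equals the Fourier transform of the orbital integral on the dual lattice, i.e.\ $\hat{\iota}^{\bfG_{{}^g\gamma_{<r}}}_{X^\ast}(\log({}^g\gamma_{\geq r}))$.

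\emph{Main obstacle.} The hardest ingredient is Step 3: identifying the Weil character on the deep part with the Fourier transform $\hat{\iota}$ of a torus orbital integral and pinning down the sign $\varepsilon$. This requires delicate analysis of the generalized Gauss sum attached to the Heisenberg group at depth $r/2$, together with careful bookkeeping of quadratic characters, measure normalizations, and the $p\gg 0$ hypothesis that ensures $\log$ is an $\bfS(F)$-equivariant bijection between the relevant Moy--Prasad strata of group and Lie algebra.
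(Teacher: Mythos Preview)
Your proposal has the right skeleton but misidentifies the source of the Fourier transform $\hat{\iota}^{\bfG_{{}^g\gamma_{<r}}}_{X^\ast}$ in Step~3, and this is a genuine gap. In the Adler--DeBacker--Spice argument (which the paper summarizes in the introduction and carries out in detail for the twisted case in Sections~\ref{sec:TCF1}--\ref{sec:TCF-final}), the Fourier transform does \emph{not} arise from any ``stationary-phase'' evaluation of the Heisenberg--Weil character at the deep element $\gamma_{\geq r}$. One begins with Harish-Chandra's integration formula, an integral of $\dot\Theta_\sigma$ over $G/Z_\G$ (not a finite Frobenius sum); after localizing to the contributing double cosets and using that $\sigma$ is $\hat\vartheta$-isotypic on $G_{\x,r}$, the remaining integral over the centralizer $G_{{}^g\gamma_{<r}}$ is literally an orbital integral on the Lie algebra against $Y\mapsto\psi_F(\langle Y,X^\ast\rangle)$, which is $\hat\mu^{\G_{{}^g\gamma_{<r}}}_{X^\ast}(\log({}^g\gamma_{\geq r}))$ by \cite[Lemma~B.4]{AS09} (cf.\ Theorem~\ref{thm:TCF-0}). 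The Heisenberg--Weil representation enters only through $\Theta_\rho$ evaluated at the \emph{shallow} element ${}^g\gamma_{<r}$: G\'erardin's formula for the symplectic automorphism that ${}^g\gamma_{<r}$ induces on $J/J_+$ produces the sign $\varepsilon$ (cf.\ Section~\ref{sec:HW-twisted}). Your outline also omits a separate ingredient: the Gauss-sum factor $\mfG_{\G_{\gamma_0}}(\vartheta,\gamma_+)$ coming from the Frobenius formula for $\sigma=\Ind_K^{K_\sigma}\rho$ (Corollary~\ref{cor:5.3.2}); this, together with an associated volume term $|\t{\mfG}|$, is what renormalizes $\hat\mu$ into $\hat\iota$.

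There is also a gap in Steps~1--2. The inducing subgroup $K=SG_{\x,s}$ (with $s=r/2$, not $r/2+$) is only compact-mod-center, so Harish-Chandra's formula is an integral, and collapsing it to a finite sum over $S\backslash G/G_{\gamma_{<r}}$ with the condition ${}^g\gamma_{<r}\in S$ is not a formal consequence of ``uniqueness of the normal approximation''. It requires the vanishing argument of Lemma~\ref{lem:sigma} and the separation lemma (Proposition~\ref{prop:sep}), which together force ${}^g\gamma_{<r}$ into a $G_{\x,0+}$-conjugate of $S$ whenever $\dot\Theta_\sigma({}^g\gamma)\neq0$, followed by a nontrivial index computation (Lemma~\ref{lem:index}) to obtain the claimed double-coset set.
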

Let us explain the right-hand side in a bit more detail.
We put $\gamma_{<r}:=\prod_{0\leq i <r}\gamma_i$.
Then, by the definition of a normal approximation, $\gamma_{<r}$ (``$r$-head'' of $\gamma$) commutes with $\gamma_{\geq r}$ (``$r$-tail'' of $\gamma$).
The key feature of this formula is that the $r$-head and the $r$-tail contribute to $\Theta_{\pi}(\gamma)$ in a separated way.
The contribution of the $r$-head is given by the character $\vartheta$ and a certain function $\varepsilon$ on $\bfS(F)$ introduced by DeBacker--Spice in \cite{DS18}, which is determined by the root-theoretic information of the pair $(\bfS,\vartheta)$.
On the other hand, the contribution of the $r$-tail is given in terms of the normalized Fourier transform of the Lie algebra orbital integral $\hat{\iota}^{\bfG_{{}^{g}\gamma_{<r}}}_{X^\ast}$.
Here, $X^\ast$ is an element of the dual Lie algebra determined by the pair $(\bfS,\vartheta)$.
The symbol ``$\log(-)$'' denotes a logarithm map from a $p$-adic reductive group to its Lie algebra (the assumption ``$p\gg0$'' is needed to guarantee that such a nice map exists).
Note that the orbital integral is taken not in the original connected reductive group $\bfG$, but in the connected centralizer group $\bfG_{{}^{g}\gamma_{<r}}$ of the $r$-head part ${}^{g}\gamma_{<r}$.
Therefore, we may interpret this formula as a manifestation of Harish-Chandra's philosophy of \textit{descent} in representation theory and harmonic analysis of $p$-adic reductive groups.
For more details, see \cite[Theorem 4.28]{DS18}; however, we caution that our description of Theorem \ref{thm:ADS} is (over-)simplified for exposition.

We remark that although $\gamma$ is assumed to be elliptic in Theorem \ref{thm:ADS}, it is practically harmless since supercuspidal representations can be distinguished from each other only by their Harish-Chandra character values on elliptic regular semisimple elements (a consequence of the \textit{elliptic inner product pairing}).

Now, let us move on to the ``twisted'' situation.
We fix an $F$-rational automorphism $\theta$ of $\bfG$ of finite order.
For an irreducible admissible representation $\pi$ of $\bfG(F)$, we define its \textit{$\theta$-twist} to be $\pi^\theta:=\pi\circ\theta$.
We say that $\pi$ is $\theta$-stable if $\pi^\theta$ is isomorphic to $\pi$; this is equivalent to that $\pi$ can be extended to a representation $\tilde{\bfG}$ of the semi-direct group $\bfG(F)\rtimes \langle\theta\rangle$.
When $\pi$ is $\theta$-stable, we can introduce a variant of the Harish-Chandra character called the \textit{$\theta$-twisted Harish-Chandra character} $\Theta_{\tilde{\pi}}$, which is a function on the regular semisimple locus of $\tilde{\bfG}(F):=\bfG(F)\rtimes\theta$.
The twisted Harish-Chandra characters plays an important role in the context of the Langlands correspondence.
In particular, twisted Harish-Chandra characters are necessary to formulate various basic cases of the Langlands functoriality called the \textit{twisted endoscopic lifting}, which includes the base change, automorphic induction, and so on.
Indeed, the motivation of this paper is an application to the twisted endoscopy; our result will be utilized in our subsequent work \cite{Oi23-TECR}, which compares Arthur's construction of the local Langlands correspondence for quasi-split classical groups \cite{Art13} with Kaletha's explicit construction of regular supercuspidal local Langlands correspondence \cite{Kal19}.

Let $\pi$ be a toral supercuspidal representation of $\bfG(F)$ associated to a pair $(\bfS,\vartheta)$.
Our aim is to, by assuming that $\pi$ is $\theta$-stable, establish an ``Adler--DeBacker--Spice-like'' explicit formula of $\Theta_{\tilde\pi}(\delta)$ for $\delta\in\tilde{\bfG}(F)$.
For this, we first have to think about a twisted version of a normal $r$-approximation, that is, how to define a normal $r$-approximation for elements of $\tilde{\bfG}(F)$ such that a computation of twisted Harish-Chandra characters can be developed based on it and also that its existence is ensured for a reasonable class of elements (especially, elliptic regular semisimple elements) of $\tilde{\bfG}(F)$.

The key observation from the untwisted setting is the following.
Let $\gamma=\gamma_{<r}\cdot\gamma_{\geq r}$ be a normal $r$-approximation to a regular semisimple element $\gamma\in \bfG(F)$.
As mentioned before, a normal $r$-approximation is a refinement of a topological Jordan decomposition; the $r$-head $\gamma_{<r}$ is furthermore decomposed into a topologically semisimple part $\gamma_{0}$ and a topologically unipotent (and shallower-than-$r$) part $\gamma_{<r}^{+}$ such that $\gamma=\gamma_{0}\cdot(\gamma_{<r}^{+}\cdot\gamma_{\geq r})$ gives a topological Jordan decomposition of $\gamma$.
If we put $\gamma_{+}\colonequals \gamma_{<r}^{+}\cdot\gamma_{\geq r}$, then $\gamma_{+}$ belongs to $\G_{\gamma_{0}}$ and the decomposition $\gamma_{+}=\gamma_{<r}^{+}\cdot\gamma_{\geq r}$ is a normal $r$-approximation to $\gamma_{+}$ in $\G_{\gamma_{0}}$.
In summary, a normal $r$-approximation to $\gamma$ in $\bfG$ induces a topological Jordan decomposition $\gamma=\gamma_0\cdot\gamma_+$ and a normal $r$-approximation to $\gamma_+$ in $\bfG_{\gamma_0}$.

Let us consider performing this procedure in the reverse direction.
For a given element $\gamma\in\bfG(F)$, we take a topological Jordan decomposition $\gamma=\gamma_{0}\cdot\gamma_{+}$ and a normal $r$-approximation $\gamma_{+}=\gamma_{<r}^{+}\cdot\gamma_{\geq r}$ in $\G_{\gamma_{0}}$.
In fact, then the resulting decomposition $\gamma=(\gamma_{0}\cdot\gamma_{<r}^{+})\cdot\gamma_{\geq r}$ might not necessarily be a normal $r$-approximation to $\gamma$ in $\G$ (because a ``good'' element of $\G_{\gamma_{0}}$ might not be ``good'' in $\G$; see Definitions \ref{defn:good-element}--\ref{defn:approx}).
The point here is that, however, the property that $\gamma_{+}=\gamma_{<r}^{+}\cdot\gamma_{\geq r}$ is a normal $r$-approximation in $\G_{\gamma_{0}}$ is enough so that the arguments in \cite{AS08,AS09,DS18} work well.

Based on this observation, we arrive at the following ``construction'' (rather than a conceptual definition) of a normal $r$-approximation in the twisted setting.
Let $\delta\in\t\G(F)$ be an elliptic regular semisimple element.
\begin{enumerate}
\item
Take a topological Jordan decomposition $\delta=\delta_{0}\cdot\delta_{+}$ (the meaning of a ``topological Jordan decomposition'' is the same as in the untwisted setting, i.e., $\delta_{0}$ is $p$-adically semisimple and $\delta_{+}$ is $p$-adically unipotent such that $\delta_0\delta_+=\delta_+\delta_0$).
Note that, while $\delta_{0}$ lies in $\t\G=\G\rtimes\theta$, $\delta_{+}$ lies in the ``untwisted'' part $\G$, in fact, even the connected centralizer $\G_{\delta_{0}}$ of the conjugate action by $\delta_0$ on $\bfG$.
\item
Take a normal $r$-approximation $\delta_{+}=\delta_{<r}^{+}\cdot\delta_{\geq r}$ in $\G_{\delta_{0}}$.
Note that $\delta_{+}\in\G_{\delta_{0}}$ is no longer ``twisted'', hence the work of Adler--Spice \cite{AS08} is enough to find a normal $r$-approximation to $\delta_{+}$ in $\G_{\delta_{0}}$.
\item
We call the resulting $\delta=\delta_{0}\cdot\delta_{<r}^{+}\cdot\delta_{\geq r}$ a \textit{normal $r$-approximation} to $\delta$. 
\end{enumerate}

We remark that we need to assume that $p$ is odd and prime to the order of $\theta$ so that this construction works.

Now let us state our main result.
The setup is as follows.
Let $\G$ be a tamely ramified quasi-split connected reductive group over $F$ and $\theta$ an $F$-rational involution of $\G$ preserving an $F$-splitting of $\G$.
We assume that the restricted root system with respect to $(\G,\bfS,\theta)$ does not contain any restricted root of type $2$ or $3$.
We also assume a technical condition on the restricted root system, which is satisfied when $\theta$ is an involution, for example (see Section \ref{subsec:twisted-Weil-asym}).
Let $\tilde{\bfS}\subset\tilde{\bfG}$ be a tame elliptic twisted maximal torus (see Section \ref{subsec:twisted-tori} for the definition of a twisted maximal torus); by fixing a base point $\ul{\eta}\in\tilde{\bfS}(F)$, we may write $\tilde{\bfS}=\bfS\cdot\ul{\eta}$ for a tame elliptic maximal torus $\bfS$ of $\bfG$.
The $\ul{\eta}$-conjugation on $\bfG$ preserves $\bfS$, hence induces an $F$-rational automorphism $\theta_\bfS$ of $\bfS$.
We consider a toral pair $(\bfS,\vartheta)$ such that $\vartheta=\vartheta\circ\theta_\bfS$.
Then the corresponding toral supercuspidal representation $\pi:=\pi_{(\bfS,\vartheta)}$ is a $\theta$-stable toral supercuspidal representation.
Let $r$ be the depth of $\pi$.

\begin{thm}[Theorem \ref{thm:TCF-final}]\label{thm:twisted-ADS}
    Suppose $p\gg0$.
    Let $\delta\in\tilde{\bfG}(F)$ be an elliptic regular semisimple element with a normal $r$-approximation $\delta=\delta_{0}\cdot\delta_{<r}^{+}\cdot\delta_{\geq r}$ constructed in the manner as above.
    Then
    \[
    \Phi_{\t\pi}(\delta)
    =
    C_{\ul{\eta}}\cdot
    \sum_{\begin{subarray}{c}g\in \bfS(F)\backslash \bfG(F)/\bfG_{\delta_{<r}}(F) \\ {}^{g}\delta_{<r}\in \t{\bfS}(F)\end{subarray}} \t\varepsilon({}^{g}\delta_{<r})\cdot\t\vartheta({}^{g}\delta_{<r})\cdot\h\iota^{\G_{{}^{g}\delta_{<r}}}_{X^{\ast}}\bigl(\log({}^{g}\delta_{\geq r})\bigr),
    \]
    where $\Phi_{\t\pi}$ is the normalized twisted Harish-Chandra character of $\t\pi$.
\end{thm}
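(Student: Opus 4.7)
The plan is to reduce Theorem \ref{thm:twisted-ADS} to the untwisted Adler--DeBacker--Spice formula (Theorem \ref{thm:ADS}) via Harish-Chandra descent along the $p$-adically semisimple part $\delta_{0}$ of $\delta$. Since $\pi$ is toral supercuspidal, we have $\pi\cong\cInd_{K}^{\bfG(F)}\rho$ for a compact-modulo-center open subgroup $K$ and a representation $\rho$ built from Yu's construction; the $\theta$-stability of $\pi$ together with the choice of base point $\ul\eta$ allows us to extend $\rho$ to $\t\rho$ on $\t K\colonequals K\langle\ul\eta\rangle$, so that $\t\pi\cong\cInd_{\t K}^{\t\bfG(F)}\t\rho$, and the scalar $C_{\ul\eta}$ records the normalization of this extension.

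First I would invoke the standard distributional character formula for compactly induced representations, writing $\Phi_{\t\pi}(\delta)$ as a sum of $\tr\t\rho({}^{g}\delta)$ over $g\in\t K\backslash\t\bfG(F)/\t\bfG(F)_{\delta}$ with ${}^{g}\delta\in\t K$. Using the topological Jordan decomposition $\delta=\delta_{0}\cdot\delta_{+}$, the condition ${}^{g}\delta\in\t K$ forces ${}^{g}\delta_{0}\in\t\bfS(F)$ after adjustment inside $K$, because in the toral case the topologically semisimple part of the support of $\t\rho$ is concentrated on conjugates of $\t\bfS(F)$. Re-indexing converts the sum into the outer sum over $g\in\bfS(F)\backslash\bfG(F)/\bfG_{\delta_{<r}}(F)$ with ${}^{g}\delta_{<r}\in\t\bfS(F)$ appearing in the statement.

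Next, for each such $g$, I would perform Harish-Chandra descent on $\tr\t\rho({}^{g}\delta)$ through the topologically semisimple element $\delta_{0}^{g}$. Writing temporarily $\gamma={}^{g}\delta$, the element $\gamma_{0}$ acts on each Heisenberg--Weil layer of Yu's inductive construction of $\rho$ by an explicit scalar, and under the restricted-root hypotheses (no restricted roots of type $2$ or $3$, plus the technical condition from Section \ref{subsec:twisted-Weil-asym}) these scalars assemble into a product of the form $C_{\ul\eta}\cdot\t\varepsilon(\gamma_{<r})\cdot\t\vartheta(\gamma_{<r})\cdot\Phi_{\pi^{\gamma_{0}}}(\gamma_{+})$, where $\pi^{\gamma_{0}}$ is a descended toral supercuspidal representation of $\bfG_{\gamma_{0}}(F)$ attached to $(\bfS,\vartheta)$ viewed inside the centralizer $\bfG_{\gamma_{0}}$.

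Finally, applying Theorem \ref{thm:ADS} to $\pi^{\gamma_{0}}$ on $\bfG_{\gamma_{0}}(F)$ at $\gamma_{+}$, with its normal $r$-approximation $\gamma_{+}=\gamma_{<r}^{+}\cdot\gamma_{\geq r}$ inside $\bfG_{\gamma_{0}}$, produces an inner sum whose typical summand contains $\hat\iota^{\bfG_{\gamma_{<r}}}_{X^\ast}(\log(\gamma_{\geq r}))$ (using $(\bfG_{\gamma_{0}})_{\gamma_{<r}^{+}}=\bfG_{\gamma_{<r}}$) multiplied by the untwisted $\vartheta\cdot\varepsilon$ factor at $\gamma_{<r}^{+}$; absorbing these into $\t\vartheta$ and $\t\varepsilon$ via their compatibility with the topological Jordan decomposition, and folding the inner sum into the outer sum, produces the stated identity. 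The principal technical obstacle is the descent step: proving that the scalars coming from the action of $\gamma_{0}$ on the Heisenberg--Weil layers combine cleanly into $C_{\ul\eta}\cdot\t\varepsilon\cdot\t\vartheta$, and that the descended representation is indeed a toral supercuspidal of $\bfG_{\gamma_{0}}$. This requires a careful analysis of how $\theta_{\bfS}$-twisted conjugation interacts with the Weil-representation cocycles underlying Yu's construction, for which the restricted-root hypotheses are essential.
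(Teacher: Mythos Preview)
Your outline shares the right skeleton with the paper --- Frobenius/integration formula, re-indexing via the topological Jordan decomposition, and a Heisenberg--Weil computation --- but the central step, where you ``descend to a toral supercuspidal $\pi^{\gamma_{0}}$ of $\bfG_{\gamma_{0}}(F)$ and apply the untwisted Theorem~\ref{thm:ADS} there,'' is not how the argument actually works, and as stated it has a genuine gap.

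The paper never produces a factorization of the form $\tr\t\rho({}^{g}\delta)=(\text{scalar})\cdot\Phi_{\pi^{\gamma_{0}}}(\gamma_{+})$. What happens instead is that the twisted Harish-Chandra integration formula (from \cite{LH17}; note that $\t\bfG(F)$ is a twisted space, not a group, so ``$\t\pi\cong\cInd_{\t K}^{\t\bfG(F)}\t\rho$'' and an ordinary Frobenius formula require care to even formulate) yields directly
\[
\Theta_{\t\pi}(\delta)=\sum_{g}\Theta_{\t\sigma}({}^{g}\eta)\cdot\hat\mu^{\bfG_{{}^{g}\eta}}_{X^{\ast}}(\log({}^{g}\delta_{\geq r})),
\]
where $\eta=\delta_{<r}$ and the orbital integral $\hat\mu$ emerges from the integration, not from a subsequent application of untwisted ADS. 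The descended toral pair $(\bfS^{\nat},\vartheta^{\nat})$ of $\bfG_{\eta}$ does appear, but only to show that the inner integral is compactly supported; no descended character value $\Phi_{\pi^{\gamma_{0}}}(\gamma_{+})$ is ever invoked. There is no inner ADS sum to ``fold into'' the outer sum: the single sum over $g\in S\backslash G/G_{\eta}$ with ${}^{g}\eta\in\t S$ is what the integration formula already produces.

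The remaining work is to compute $\Theta_{\t\sigma}({}^{g}\eta)$. A Frobenius formula for $\sigma=\Ind_{K}^{K_{\sigma}}\rho$ reduces this to $\Theta_{\t\rho}({}^{g}\eta)$ times explicit Gauss-sum factors $\mfG_{\bfG_{\eta_{0}}}(\vartheta,\eta_{+})$, and $\Theta_{\t\rho}({}^{g}\eta)$ is then computed via the twisted trace of the Heisenberg--Weil representation. This last step is the technical heart: one must track how $[\ul\eta]$ permutes the $\Sigma$-orbits in $\Phi(\bfG,\bfS)$, decompose the Weil trace accordingly, and evaluate each factor via G\'erardin's formula. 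The ``scalar'' you wish to extract in your descent step \emph{is} this computation; it is not a clean consequence of the untwisted theory but a genuinely new case-by-case analysis depending on how the symmetry type of $\alpha$ relates to that of $\alpha_{\res}$. So your proposal correctly identifies where the difficulty lies, but packaging it as ``descend, then cite Theorem~\ref{thm:ADS}'' hides rather than solves it.
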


We do not explain the meanings of the symbols $C_{\ul{\eta}}$, $\tilde{\varepsilon}$, and $\tilde{\vartheta}$ on the right-hand side; these are something computable explicitly and written in terms of arithmetic or root-theoretic quantities as in the untwisted case.
Instead, we emphasize that the formula looks formally the same as the one in the untwisted setting (Theorem \ref{thm:ADS}).
Indeed, much of the proof can be carried out by following the arguments of \cite{AS09} and \cite{DS18}.

The starting point of the proof is the twisted version of Harish-Chandra's integration formula.
Since $\pi$ is defined to be the compact induction of a finite-dimensional representation $\sigma$ of an open compact-modulo-center subgroup $K_\sigma\subset \bfG(F)$, the integration formula gives
\[
\Theta_{\t{\pi}}(\delta)
=
\frac{\deg\pi}{\dim\sigma}
\int_{\bfG(F)/\bfZ_{\G}(F)}
\int_{\mcK} \dot{\Theta}_{\t{\sigma}} ({}^{\dot{g}k}\delta)\, dk\,d\dot{g}.
\]
Here, our assumption on $(\bfS,\vartheta)$ implies that $K_\sigma$ is also $\ul{\eta}$-stable, hence we may consider the twisted character $\Theta_{\tilde{\sigma}}$ (see the proof of Theorem \ref{thm:TCF-0} for the other symbols used in the above formula).
By imitating the computation of \cite{AS09} using the properties of a normal $r$-approximation, we can rewrite the formula as follows:
\[
\Theta_{\t{\pi}}(\delta)
=
\sum_{\begin{subarray}{c} g\in \bfS(F)\backslash \bfG(F)/\bfG_{\eta}(F) \\ {}^{g}\eta\in\t{\bfS}(F)\end{subarray}}
\Theta_{\t{\sigma}}({}^{g}\eta)
\cdot
\hat{\mu}^{\G_{{}^{g}\eta}}_{X^{\ast}} \bigl(\log({}^{g}\delta_{\geq r})\bigr),
\]
where we write $\eta:=\delta_{<r}$ in short ($\hat{\mu}$ is an unnormalized Fourier transform of a Lie algebra orbital integral).
Thus the proof of Theorem \ref{thm:twisted-ADS} comes down to computing $\Theta_{\t{\sigma}}({}^{g}\eta)$.
To simplify the notation, we write $\eta$ for ${}^{g}\eta\in\t{\bfS}(F)$ by abuse of notation.

Since $\sigma$ is induced from a representation $\rho$ of a smaller group $K\subset K_\sigma$, which is also $\ul{\eta}$-stable, we can furthermore express $\Theta_{\t{\sigma}}(\eta)$ as the averaged sum of the twisted characters $\Theta_{\t{\rho}}({}^{k}\eta)$ over $k\in K\backslash K_\sigma$ by the Frobenius character formula.
In fact, this averaged sum is given by the product of $\Theta_{\t{\rho}}(\eta)$ and a constant determined by $\vartheta$ and the topological Jordan decomposition $\eta=\eta_0\cdot\eta_+$:
\[
\Theta_{\t{\sigma}}(\eta)
=
\Theta_{\t{\rho}}(\eta)
\cdot|\t{\mfG}_{\G_{\eta_{0}}}(\vartheta,\eta_{+})|
\cdot\mfG_{\G_{\eta_{0}}}(\vartheta,\eta_{+})
\]
(Corollary \ref{cor:5.3.2}).
Here, $|\t{\mfG}_{\G_{\eta_{0}}}(\vartheta,\eta_{+})|$ can be thought of as a certain volume term, which can be utilized to normalize $\hat{\mu}$.
On the other hand, $\mfG_{\G_{\eta_{0}}}(\vartheta,\eta_{+})$ is a root of unity which constitutes $\tilde{\varepsilon}$ in Theorem \ref{thm:twisted-ADS}.

Our discussion so far are just imitation of those of \cite{AS09} and \cite{DS18}.
However, we must face genuinely new difficulties to compute the term $\Theta_{\t{\rho}}(\eta)$.
To explain this, let us first briefly explain the construction of $\rho$ (see Section \ref{subsec:toral-sc} for the details) and also the computation in the untwisted setting by Adler--DeBacker--Spice, i.e., how to determine $\Theta_{\rho}(s)$ for $s\in \bfS(F)$.

The open compact-mod-center subgroup $K$ of $\bfG(F)$ where $\rho$ is defined can be written as the product $K=SJ$, where $S=\bfS(F)$ and $J$ is a subgroup normalized by $S$.
The point of the construction is that $J$ has a finite quotient which has a structure of a finite Heisenberg group $H(V)$ associated to a symplectic $\F_p$-vector space $V$.
Since the image of the $r$-th Moy--Prasad filtration $S_r$ of $S$ in $H(V)$ equals the center of $H(V)$, we may apply the Stone-von Neumann theorem to get an irreducible representation $\omega$ of $\Sp_{\F_p}(V)\ltimes H(V)$ (so-called the \textit{Heisenberg--Weil} representation).
As the conjugate action of $S$ on $J$ in fact preserves the symplectic structure of $V$, we have a natural homomorphism $S\ltimes J\rightarrow \Sp_{\F_p}(V)\ltimes H(V)$, hence $\omega$ can be regarded as a representation of $S\ltimes J$.
The tensor product of $\omega$ and $\vartheta$ (regarded as a character of $S\ltimes J$ via inflation) factors through $S\ltimes J\twoheadrightarrow SJ=K$.
We define $\rho$ to be the resulting representation of $K$.

By this definition, to determine $\Theta_{\rho}(s)$, it is enough to compute $\tr(\omega([s]))$, where $[s]\in\Sp_{\F_p}(V)$ denotes the conjugate action of $s$ on $V$.
A character formula for Heisenberg--Weil representations in the finite field setting has been already known by the work of G\'erardin \cite{Ger77}; this is what Adler--Spice \cite{AS09} and DeBacker--Spice \cite{DS18} utilized.
The point of the discussion is that the symplectic space part $V$ of the finite Heisenberg quotient $H(V)$ of $J$ has an orthogonal decomposition derived from the root space decomposition of $\Lie \bfG$ with respect to $\bfS$.
To be more precise, we let $\Phi$ be the set of absolute roots of $\bfS$ in $\bfG$, which is equipped with an action of $\Gamma:=\Gal(\overline{F}/F)$.
If we let $\dot{\Phi}:=\Phi/\Gamma$, then we have
\[
   V=\bigoplus_{\dot{\alpha}\in \dot{\Phi}} V_\alpha,
\]
where the action of $S$ on $V_\alpha$ is given by $\alpha$ (each $V_\alpha$ could be zero).
We call $\alpha\in\Phi$ \textit{symmetric} (resp.\ \textit{asymmetric}) if $-\alpha\in\dot{\alpha}=\Gamma\alpha$ (resp.\ $-\alpha\notin\dot{\alpha}=\Gamma\alpha$).
When $\alpha$ is symmetric, $V_\alpha$ is a symplectic subspace.
When $\alpha$ is asymmetric, $V_\alpha\oplus V_{-\alpha}$ forms a symplectic subspace.
Furthermore, the resulting decomposition
\[
    V=\bigoplus_{\ddot{\alpha}\in \ddot{\Phi}} V_{\ddot{\alpha}}
\]
is orthogonal, where $\Sigma:=\Gamma\times\{\pm1\}$, $\ddot{\Phi}:=\Phi/\Sigma$, and $V_{\ddot{\alpha}}=V_\alpha$ or $V_{\ddot{\alpha}}=V_\alpha\oplus V_{-\alpha}$ according to whether $\alpha$ is symmetric or asymmetric.
This implies that, noting that the conjugate action of $s$ on $V$ preserves each $V_{\ddot{\alpha}}$, we can compute $\tr(\omega([s]))$ as the product of the characters of Heisenberg--Weil representations $\omega_{\ddot{\alpha}}$ of $H(V_{\ddot{\alpha}})$ for $\ddot{\alpha}\in\ddot{\Phi}$:
\[
    \tr(\omega([s]))
    =
    \prod_{\ddot{\alpha}\in\ddot{\Phi}}\tr(\omega_{\ddot{\alpha}}([s])).
\]
Each symplectic space $V_{\ddot{\alpha}}$ is enough understandable, so that the character $\tr(\omega_{\ddot{\alpha}}([s]))$ can be explicated using \cite{Ger77}.
For example, when $\alpha$ is asymmetric, $V_\alpha$ and $V_{-\alpha}$ form a polarization of $V_{\ddot{\alpha}}$ and $[s]$ preserves this polarization.
In fact, G\'erardin's character formula for such symplectic automorphism becomes particularly simple.
Consequently, we obtain a certain sign function constituting $\varepsilon$ of the ADS formula (Theorem \ref{thm:ADS}).

Now let us go back to the twisted situation, that is, the computation of $\Theta_{\tilde{\rho}}(\eta)$ for $\eta\in\t{S}:=\t{\bfS}(F)$.
Again in this situation, the conjugate action of $\eta$ on $J$ preserves the symplectic structure of $V$, hence it is enough to compute $\tr(\omega([\eta]))$ for the symplectic automorphism $[\eta]$ of $V$ induced by $\eta$.
The difference from the untwisted case is that $[\eta]$ does not necessarily acts on each symplectic subspace $V_{\ddot{\alpha}}$.
To be more precise, note that the action of $\eta$ on $V$ induces a permutation on the set of roots $\Phi$.
If we write $\theta\colon\Phi\rightarrow\Phi$ for this permutation by abuse of notation (this does not depend on $\t{\eta}\in\t{S}$), then $[\eta]$ maps $V_{\ddot{\alpha}}$ onto $V_{\Sigma\theta(\alpha)}$.
Therefore, the smallest pieces of an orthogonal decomposition of $V$ we can utilize in the twisted setting are not individual $V_{\ddot{\alpha}}$'s, but their sums $V_{\ddot{\alpha}}+V_{\Sigma\theta(\alpha)}+ V_{\Sigma\theta^2(\alpha)}+\cdots$.
Accordingly, the product expression of $\tr(\omega([\eta]))$ is of the form
\[
    \tr(\omega([\eta]))
    =
    \prod_{\ddot{\alpha}\in\Theta\backslash\ddot{\Phi}}\tr(\omega_{\ddot{\alpha}}([\eta]^{m_\alpha})),
\]
where the index set is over $\Theta\,(:=\!\langle\theta\rangle)$-orbits of $\ddot{\Phi}$ and $m_\alpha$ denotes the order of $\Sigma\backslash(\Sigma\times\Theta)\alpha$ (see Corollary \ref{cor:twisted-HW-Yu}).

In fact, this is exactly the source of the complication.
For example, let us consider the case where $\alpha$ is asymmetric (hence $V_{\ddot{\alpha}}=V_{\alpha}\oplus V_{-\alpha}$ gives a polarization) and moreover $V_{\Sigma\theta(\alpha)}=V_{\ddot{\alpha}}$.
In this case, we have $\theta(\dot{\alpha})=\pm\dot{\alpha}$.
If $\theta(\dot{\alpha})=\dot{\alpha}$, then $V_{\alpha}$ and $V_{-\alpha}$ are preserved by $[\eta]$, thus the character of $[\eta]$ can be computed easily as in the untwisted case.
However, it could also happen that $\theta(\dot{\alpha})=-\dot{\alpha}$, then $V_{\alpha}$ and $V_{-\alpha}$ are swapped by $[\eta]$.
This means that the symplectic automorphism $[\eta]$ of $V_{\ddot{\alpha}}$ no longer preserves the polarization, which makes the description of G\'erardin's formula complicated.

What is happening in this example is that the $\theta$-orbit of $\alpha$ is symmetric with respect to the $\Gamma$-action even though $\alpha$ itself is asymmetric.
Therefore, to compute each $\tr(\omega_{\ddot{\alpha}}([\eta]^{m_\alpha}))$, we have to care about the relationship between the symmetry of the root system $\Phi$ and the twist $\theta$.
What we give eventually in this paper is to present an ``algorithm'' to determine each $\tr(\omega_{\ddot{\alpha}}([\eta]^{m_\alpha}))$ based on a case-by-case argument on the type of $\alpha$.
Hence, we do not really give a complete description of the twisted character in general.
However, depending on the situation ($\G$, $\theta$, and $\bfS$), the algorithm may terminate rather quickly. 
For example, it is the case when $\theta$ is involutive.


\medbreak
\noindent{\bfseries Organization of this paper.}\quad
In Section \ref{sec:notation}, we list our notation.
In Section \ref{sec:twisted-spaces}, we explain our setup and establish a version of the theory of good product expansion by Adler--Spice \cite{AS08} in the twisted space setting.
In Section \ref{sec:rsc}, we review Yu's construction of tame supercuspidal representations with emphasis on the toral case.
In Section \ref{sec:TCF1}, we establish a preliminary version of a twisted Adler--DeBacker--Spice character formula for toral supercuspidal representations.
In the main theorem of this section (Theorem \ref{thm:TCF-pre}), the contribution of the shallow part of $\delta$ remains not to be computed.
In Section \ref{sec:HW-twisted}, we compute the contribution of the shallow part by appealing to G\'erardin's character formula.
In Section \ref{sec:TCF-final}, we establish our main theorem (Theorem \ref{thm:TCF-final}).
What we have to do here is basically just to combine the results of Sections \ref{sec:TCF1} and \ref{sec:HW-twisted}, but we also need to be careful about normalization of various terms to get a cleaner formula.
Some of the results needed in this section are summarized in Appendix \ref{sec:twisted-HW}.

\medbreak
\noindent{\bfseries Acknowledgment.}\quad
I am deeply grateful to Atsushi Ichino, Tasho Kaletha, and Yoichi Mieda, who have dedicated a considerable amount of time to have discussions with me on this project.
I am also very grateful to Loren Spice, who helped me a lot by answering my technical questions on the good product expansion.
I also would like to thank Jeffrey Adler, Alexander Bertoloni Meli, Charlotte Chan, Guy Henniart, Tamotsu Ikeda, Wen-Wei Li, Sug Woo Shin, Kazuki Tokimoto, Sandeep Varma, and Alex Youcis for their encouragement and helpful discussions on this paper.

This work was carried out with the support from the Program for Leading Graduate Schools MEXT, JSPS Research Fellowship for Young Scientists, and KAKENHI Grant Number 17J05451, 19J00846, 20K14287.
This work was also supported by the Yushan Young Fellow Program, Ministry of Education, Taiwan.
I am grateful to the Hakubi Center of Kyoto University for providing a wonderful research environment during 2019--2024.

\section{Notation and assumptions on $p$}\label{sec:notation}

Here we summarize our basic notation and assumptions on $p$.
(See also the list of symbols at the end of this paper.)

\subsection{Notation}

\subsubsection{$p$-adic fields}
Let $p$ be a prime number.
We fix a $p$-adic field $F$, i.e., $F$ is a finite extension of $\Q_{p}$.
We fix an algebraic closure $\overline{F}$ of $F$ and always work within $\overline{F}$ when we consider algebraic extensions of $F$.
For any finite extension $E$ of $F$, we write $\mcO_{E}$, $\mfp_{E}$, $k_{E}$, and $\Gamma_{E}$ for the ring of integers of $E$, the maximal ideal of $\mcO_{E}$, the residue field, and the absolute Galois group $\Gal(\overline{F}/E)$ of $E$, respectively.
When $E=F$, we omit the subscript $F$ from these symbols. 
For any finite extension $E$ of $F$, we write $W_{E}$, $I_{E}$, and $P_{E}$ for the Weil group of $E$, its inertia subgroup, and its wild inertia subgroup, respectively.
For any $r\in\R_{>0}$, let $I_{F}^{r}$ denote the $r$-th upper ramification filtration of $I_{F}$.
We fix a valuation $\val_{F}$ of $F$ such that $\val_{F}(F^\times)=\Z$.
We extend it to $\overline{F}$ and again write $\val_{F}$ for it.
We define an absolute value $|\cdot|_{\overline{F}}$ of $\overline{F}$ by $|\cdot|_{\overline{F}}\colonequals q^{-\val_{F}(\cdot)}$, where $q$ is the cardinality of the residue field $k_{F}$.

We fix an additive character $\psi_{F}\colon F\rightarrow\C^{\times}$ of level $1$, i.e., $\psi_{F}|_{\mfp}\equiv\mathbbm{1}$ and $\psi_{F}|_{\mcO}\not\equiv\mathbbm{1}$.

\subsubsection{Algebraic varieties and algebraic groups}
In this paper, we use a bold letter for an algebraic variety and use an italic letter for the set of its $F$-valued points when it is defined over $F$.
For example, if $\bfX$ is an algebraic variety defined over $F$, then $X\colonequals \bfX(F)$.

For any algebraic group $\G$, we write $X^{\ast}(\G)$ and $X_{\ast}(\G)$ for the group of characters and cocharacters of $\G$, respectively.
We let $\bfZ_{\G}$ denote the center of $\G$.
When $\G$ is defined over $F$, so is $\bfZ_{\G}$ and the set of its $F$-valued points is denoted by $Z_{\G}$.

For any torus $\bfS$ equipped with an automorphism $\theta_{\bfS}$, we let $\bfS^{\theta_{\bfS}}$ and $\bfS_{\theta_{\bfS}}$ denote the invariant and coinvariant of $\bfS$ with respect to $\theta_{\bfS}$, respectively.

\subsubsection{Centralizers and normalizers}
Suppose that $\G$ is an algebraic group and $\bfX$ is an algebraic variety equipped with left and right actions of $\G$, for which we write $\G\times\bfX\times\G\rightarrow\bfX\colon(g_{1}, x, g_{2})\mapsto g_{1}\cdot x\cdot g_{2}$.
Then we define the conjugate action of $\G$ on $\bfX$ by $\G\times\bfX\rightarrow\bfX\colon (g,x)\mapsto g\cdot x\cdot g^{-1}$.
We introduce the following notation:
\begin{itemize}
\item
For $g\in\G$, let $[g]$ denote the conjugation automorphism $\bfX\rightarrow\bfX\colon x\mapsto g\cdot x\cdot g^{-1}$.
We also often write ${}^{g}x\colonequals [g](x)=g\cdot x\cdot g^{-1}$.
\item
For $x\in\bfX$, let $\G^{x}$ denote the full stabilizer of $x$ in $\G$ with respect to the conjugate action, i.e., $\G^{x}\colonequals \{g\in\G\mid [g](x)=x\}$.
\item
For $x\in\bfX$, let $\G_{x}$ denote the connected stabilizer of $x$ in $\G$ with respect to the conjugate action, i.e., $\G_{x}\colonequals \G^{x,\circ}$.
\end{itemize}
Note that, when $\G$ and $\bfX$ are $F$-rational, $[g]$ is also $F$-rational if $g\in \G(F)$.
Similarly, $\G^{x}$ and $\G_{x}$ are $F$-rational if $x\in\bfX(F)$.

For any subsets $\bfH\subset \bfG$ and $Y\subset \bfX$, we put 
\begin{itemize}
\item
$\bfZ_{\bfH}(Y)\colonequals \{g\in\bfH\mid \text{$[g](y)=y$ for any $y\in Y$}\}$ and 
\item
$\bfN_{\bfH}(Y)\colonequals \{g\in\bfH\mid [g](Y)\subset Y\}$.
\end{itemize}
When $Y$ is a singleton $\{y\}$, we simply write $\bfZ_{\bfH}(y)\colonequals \bfZ_{\bfH}(Y)=\bfN_{\bfH}(Y)$.
If $\G$ and $\bfX$ are defined over $F$ and $Y$ is a subset of $\bfX(F)$, then $\bfZ_{\G}(Y)$ and $\bfN_{\G}(Y)$ are defined over $F$ and the sets of their $F$-valued points are denoted by $Z_{\G}(Y)$ and $N_{\G}(Y)$, respectively.

\subsubsection{Reductive groups}
For any connected reductive group $\G$ and its maximal torus $\bfS$, we let $\Phi(\G,\bfS)$ and $\Phi^{\vee}(\G,\bfS)$ denote the set of roots and coroots of $\bfS$ in $\G$, respectively.
Note that, when both $\G$ and $\bfS$ are defined over $F$, the sets $\Phi(\G,\bfS)$ and $\Phi^{\vee}(\G,\bfS)$ are equipped with an action of $\Gamma$.
We let $\Omega_{\G}(\bfS)$ be the Weyl group of $\bfS$ in $\G$, i.e., $\Omega_{\G}(\bfS)\colonequals \bfN_{\G}(\bfS)/\bfS$.
We sometimes loosely write $\Omega_{\G}$ for $\Omega_{\G}(\bfS)$ when the choice of a maximal torus $\bfS$ is clear from the context (e.g., when $\bfS$ is a maximal torus belonging to a splitting of $\G$).

We write $\bmfg$ for the Lie algebra of $\G$.
When $\G$ is defined over $F$, $\bmfg$ is an algebraic variety over $F$, hence we write $\mfg\colonequals \bmfg(F)$ following the convention explained above.

\subsubsection{Bruhat--Tits theory}

Suppose that $\G$ is a connected reductive group over $F$.
We follow the notation on Bruhat--Tits theory used by \cite{AS08, AS09, DS18}.
(See, for example, \cite[Section 3.1]{AS08} for details.)
Especially, $\mcB(\G,F)$ (resp.\ $\mcB^{\red}(\G,F)$) denotes the enlarged (resp.\ reduced) Bruhat--Tits building of $\G$ over $F$.
For $\x\in\mcB(\G,F)$ (or $\x\in\mcB^{\red}(\G,F)$), we let $G_{\x}$ be the stabilizer of $\x$ in $G$.

We define $\widetilde{\R}$ to be the set $\R\sqcup\{r+\mid r\in\R\}\sqcup\{\infty\}$ with a natural order.
Then, for any $r\in\widetilde{\R}_{\geq0}$ and $\x\in\mcB^{\red}(\G,F)$, we can consider the $r$-th Moy--Prasad filtration $G_{\x,r}$ of $G$ with respect to the point $\x$.
For any $r,s\in\widetilde{\R}_{\geq0}$ satisfying $r<s$, we write $G_{\x,r:s}$ for the quotient $G_{\x,r}/G_{\x,s}$.
We put $G_{r}\colonequals \bigcup_{\x\in\mcB^{\red}(\G,F)}G_{\x,r}$ for $r\in\widetilde{\R}_{\geq0}$.
Similarly, we have the Moy--Prasad filtration $\{\mfg_{\x,r}\}_{r}$ on the Lie algebra $\mfg=\bmfg(F)$, their quotients $\mfg_{\x,r:s}$, and the unions $\mfg_{r}$.
We also have the Moy--Prasad filtration on the dual Lie algebra $\mfg^{\ast}\colonequals \Hom_{F}(\mfg,F)$ defined by 
\[
\mfg^{\ast}_{\x,r}\colonequals \{Y^{\ast}\in\mfg^{\ast} \mid \langle \mfg_{\x,(-r)+},Y^{\ast}\rangle\subset \mfp\}
\]
for any $r\in\R_{\geq0}$ and $\x\in\mcB^{\red}(\G,F)$ ($\mfg^{\ast}_{\x,r+}$ is defined to be $\bigcup_{s>r}\mfg^{\ast}_{\x,s}$).

Suppose that $\bfS$ is a tame (i.e., $F$-rational and split over a tamely ramified extension of $F$) maximal torus of $\G$.
By fixing an $S$-equivariant embedding of $\mcB(\bfS,F)$ into $\mcB(\G,F)$, we may regard $\mcB(\bfS,F)$ as a subset of $\mcB(\G,F)$.
Then, for any point $\x\in\mcB(\G,F)$, the property that ``$\x$ belongs to the image of $\mcB(\bfS,F)$'' does not depend on the choice of such an embedding (see the second paragraph of \cite[Section 3]{FKS23} for details).
For any point $\x\in\mcB(\G,F)$ which belongs to $\mcB(\bfS,F)$, we have $S_{\mathrm{b}}\subset G_{\x}$, where $S_{\mathrm{b}}$ denotes the maximal bounded subgroup of $S$.
When $\bfS$ is elliptic in $\G$, the image of $\mcB(\bfS,F)$ in $\mcB^{\red}(\G,F)$ consists of only one point.
If $\x\in\mcB(\G,F)$ belongs to the image of $\mcB(\bfS,F)$, we say that $\x$ is associated to $\bfS$.

We also fix a family of mock-exponential maps $\mfg_{\x,r}\rightarrow G_{\x,r}$ for $x\in\mcB(\G,F)$ and $r\in\t\R_{>0}$ and simply write ``$\exp$'' for it (see \cite[Appendix A]{AS09}; cf.\ \cite[Section 3.4]{Hak18}).
We write ``$\log$'' for the inverse of $\exp$.
It is guaranteed that a mock exponential map in the sense of \cite[Appendix A]{AS09} always exists when $p$ does not divide the order of the absolute Weyl group of $\G$.

\subsubsection{Finite sets with Galois actions}\label{subsubsec:fin-Gal-sets}
We put $\Sigma\colonequals \Gamma\times\{\pm1\}$.
Suppose that $\Phi$ is a finite set with an action of $\Sigma$, e.g., the set of roots of an $F$-rational maximal torus in a connected reductive group ($-1$ acts on $\Phi$ via $\alpha\mapsto -\alpha$ in this case).
Following \cite{AS09}, we put $\dot{\Phi}\colonequals \Phi/\Gamma$ and $\ddot{\Phi}\colonequals \Phi/\Sigma$.
Also, whenever there is no risk of confusion, we simply write $\dot{\alpha}:=\Gamma\alpha$ and $\ddot{\alpha}:=\Sigma\alpha$.

For each $\alpha\in\Phi$, we put $\Gamma_{\alpha}$ (resp.\ $\Gamma_{\pm\alpha}$) to be the stabilizer of $\alpha$ (resp.\ $\{\pm\alpha\}$) in $\Gamma$.
Let $F_{\alpha}$ (resp.\ $F_{\pm\alpha}$) be the subfield of $\overline{F}$ fixed by $\Gamma_{\alpha}$ (resp.\ $\Gamma_{\pm\alpha}$).
Hence we have $\Gamma_{\alpha}=\Gamma_{F_{\alpha}}$ and $\Gamma_{\pm\alpha}=\Gamma_{F_{\pm\alpha}}$:
\[
F\subset F_{\pm\alpha}\subset F_{\alpha}
\quad
\longleftrightarrow
\quad
\Gamma\supset \Gamma_{\pm\alpha}\supset \Gamma_{\alpha}.
\]
We abbreviate the residue field $k_{F_{\alpha}}$ of $F_{\alpha}$ (resp.\ $k_{F_{\pm\alpha}}$ of $F_{\pm\alpha}$) as $k_{\alpha}$ (resp.\ $k_{\pm\alpha})$.

We say that $\alpha\in\Phi$ is \textit{asymmetric} if $F_{\alpha}=F_{\pm\alpha}$ and that $\alpha$ is \textit{symmetric} if $F_{\alpha}\supsetneq F_{\pm\alpha}$.
We remark that $\alpha$ is symmetric if and only if the $\Gamma$-orbit of $\alpha$ contains $-\alpha$.
By noting that the extension $F_{\alpha}/F_{\pm\alpha}$ is necessarily quadratic if $\alpha$ is symmetric, we say that $\alpha$ is \textit{(symmetric) unramified} (resp.\ \textit{ramified}) if $F_{\alpha}/F_{\pm\alpha}$ is unramified (resp.\ ramified).
We write $\Phi_{\asym}$, $\Phi_{\ur}$, $\Phi_{\ram}$, and $\Phi_{\sym}$ for the set of asymmetric elements, symmetric unramified elements, symmetric ramified elements, and symmetric elements of $\Phi$, respectively.

For $\alpha\in\Phi_{\sym}$, we let $\kappa_{\alpha}\colon F_{\pm\alpha}^{\times}\rightarrow\C^{\times}$ denote the quadratic character of $F_{\pm\alpha}^{\times}$ corresponding to the quadratic extension $F_{\alpha}/F_{\pm\alpha}$ under the local class field theory.

Note that, if $\alpha$ is symmetric, $\dot{\alpha}=\ddot{\alpha}$.
This implies that the sets $\dot{\Phi}_{\sym}$ and $\ddot{\Phi}_{\sym}$ can be naturally identified (and, of course, the same is true for $\Phi_{\ur}$ or $\Phi_{\ram}$).



\subsubsection{Finite fields}
Suppose that $\ul{k}$ is a finite field of odd characteristic $p$.
Then the multiplicative group $\ul{k}^{\times}$ is cyclic of even order, hence there exists a unique nontrivial quadratic character $\ul{k}^{\times}\rightarrow\{\pm1\}$.
We write $\sgn_{\ul{k}^{\times}}(-)$ for this character.

Next, we furthermore suppose that $[\ul{k}:\F_{p}]$ is even.
Then, there uniquely exists a subextension $\ul{k}_{\pm}$ satisfying $[\ul{k}:\ul{k}_{\pm}]=2$.
We let $\ul{k}^{1}$ denote the kernel of the norm map $\Nr_{\ul{k}/\ul{k}_{\pm}}\colon \ul{k}^{\times}\rightarrow\ul{k}_{\pm}^{\times}$.
By noting that $\ul{k}^{1}$ is also cyclic of even order, we write $\sgn_{\ul{k}^{1}}(-)$ for the unique nontrivial quadratic character of $\ul{k}^{1}$.

\subsection{Assumptions on $p$}
In this paper, we consider a connected reductive group $\bfG$ over $F$ equipped with a finite order automorphism $\theta$.
From Section \ref{subsec:approx}, we assume that $p$ is odd and does not divide the order of $\theta$.
In Section \ref{subsec:rsc}, we add the assumption that $p$ does not divide the order of the absolute Weyl group of $\G$.

\section{Twisted spaces}\label{sec:twisted-spaces}

In this section, we first review some basics of twisted spaces, on which the theory of twisted endoscopy is based.
Then, we reproduce the theory of good product expansion by Adler--Spice (\cite{AS08}) in the context of twisted spaces.

\subsection{Twisted spaces}\label{subsec:twisted-spaces}

Recall that the theory of twisted endoscopy (\cite{KS99}) starts with fixing a triple $(\G,\theta,\bfa)$ consisting of 
\begin{itemize}
\item
a connected reductive group $\G$ over $F$,
\item
an $F$-rational quasi-semisimple automorphism $\theta$ of $\G$ (i.e., $\theta$ preserves a Borel pair), and
\item
$\bfa\in H^{1}(W_{F},\bfZ_{\hat{\G}})$, where $\h\G$ is the Langlands dual group of $\G$ over $\C$.
\end{itemize}
In this paper, we focus on the case where $(\G,\theta,\bfa)$ satisfies the following conditions:
\begin{itemize}
\item
$\G$ is a quasi-split group with an $F$-splitting $\spl_{\G}=(\B,\T,\{X_{\alpha}\}_{\alpha})$;
\item
$\theta$ is of finite order (let $l$ be the order of $\theta$) and preserves $\spl_{\G}$;
\item
$\bfa$ is trivial.
\end{itemize}

\begin{example}\label{ex:GL}
We particularly keep the following example in mind.
Let $\G$ be the general linear group $\GL_{n}$ over $F$.
Let $\theta$ be the $F$-rational automorphism of $\G$ defined by
\[
\theta(g)\colonequals  J_{n}{}^{t}\!g^{-1}J_{n}^{-1},
\]
where $J_{n}$ is an anti-diagonal matrix of size $n$ whose $(i,n+1-i)$-th entry is given by $(-1)^{i-1}$ and ${}^{t}\!g$ denotes the transpose of $g$.
Then $\theta$ is involutive and preserves the standard $F$-splitting of $\GL_{n}$.
This is the case considered in Arthur's theory of the endoscopic classification of representations of quasi-split classical (symplectic or special orthogonal) groups (\cite{Art13}).
\end{example}

Following Labesse (\cite{Lab04}) and M{\oe}glin--Waldspurger (\cite{Wal08,MW16-1}), we work with the formalism of \textit{twisted spaces} as follows.
We put 
\[
\t\G\colonequals \G\theta \,(:=\bfG\rtimes\theta),
\]
namely, $\t{\G}$ is the connected component of the disconnected reductive group $\G\rtimes\langle\theta\rangle$ containing the element $1\rtimes\theta$.
(In the following, the symbol ``$\rtimes$'' is often omitted when there is no risk of confusion.)
This is a twisted space in the sense of Labesse, that is, an algebraic variety over $F$ which is a bi-$\G$-torsor.
As an algebraic variety, it is isomorphic to $\G$ by the map $\G\rightarrow\t{\G}\colon g\mapsto g\theta:=g\cdot1\rtimes\theta$.
The right and left actions of $\G$ on $\t\G$ is given by
\[
g_{1}\cdot (g\theta)\cdot g_{2}
=
(g_{1}g\theta(g_{2}))\theta.
\]
Thus the conjugate action of $\G$ on $\t{\G}$ is given by
\[
[g_{1}](g\theta)
\colonequals 
g_{1}\cdot (g\theta)\cdot g_{1}^{-1}
=
(g_{1}g\theta(g_{1})^{-1})\theta.
\]
Note that the $\theta$-twisted conjugacy in $\G$ (as in \cite{KS99}) is amount to the $\G$-conjugacy in $\t\G$.
The conjugate action of $\t\G$ on $\G$ is given by, for $\delta=g\theta\in\t\G$, 
\[
[\delta]
= 
[g]\circ\theta
\colon\G\rightarrow\G.
\]

\subsection{Twisted maximal torus}\label{subsec:twisted-tori}

We next investigate the notion of a twisted maximal torus.

\begin{defn}[{\cite[Section 4.1]{MW18}}]\label{defn:twisted-tori}
Let $(\t\bfS,\bfS)$ be a pair of 
\begin{itemize}
\item an $F$-rational maximal torus $\bfS$ of $\G$ and
\item an $F$-rational $\bfS$-twisted subspace $\t\bfS$ of $\t\G$ (i.e., subvariety of $\t{\G}$ which is a bi-$\bfS$-torsor under the bi-$\bfS$-action on $\t\bfS\subset\t\G$).
\end{itemize}
We say that $(\t\bfS,\bfS)$ is an \textit{$F$-rational twisted maximal torus} of $\t\G$ if the following two conditions are satisfied:
\begin{enumerate}
\item
There exists a Borel subgroup $\bfB_{\bfS}$ of $\G$ (not necessarily $F$-rational) containing $\bfS$ and satisfying $\t\bfS=\bfN_{\t\G}(\bfS)\cap \bfN_{\t\G}(\bfB_{\bfS})$.
\item
The set $\t{S}=\t\bfS(F)$ of $F$-valued points of $\t\bfS$ is not empty.
\end{enumerate}
\end{defn}

By the condition (1) of Definition \ref{defn:twisted-tori}, every $\eta\in\t{\bfS}$ acts on $\bfS$ by the conjugation $[\eta]$.
Since $\t\bfS$ is an $\bfS$-twisted space and $\bfS$ is commutative, this action is independent of the choice of $\eta$.
We let $\theta_{\bfS}$ denote this automorphism of $\bfS$.\symdef{theta-S}{$\theta_{\bfS}$}
Note that we can take $\eta$ to be $F$-rational by the condition (2) of Definition \ref{defn:twisted-tori}, hence $\theta_{\bfS}$ is $F$-rational.

\begin{lem}\label{lem:twist-order}
The order of $\theta_{\bfS}$ is $l$.
\end{lem}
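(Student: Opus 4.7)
The plan is to reduce the statement to the canonical case of the twisted torus $\bfT\theta \subset \tilde{\bfG}$ by a conjugation over $\overline{F}$, and then to invoke the rigidity of $\theta$ as a pinning-preserving automorphism of $\bfG$.

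First, I would pick any $\eta \in \tilde{\bfS}(\overline{F})$; an $F$-point exists by Definition~\ref{defn:twisted-tori}(2), though an $\overline{F}$-point suffices here. By Definition~\ref{defn:twisted-tori}(1), $\eta$ normalizes both $\bfS$ and some Borel $\bfB_{\bfS} \supset \bfS$, hence preserves the Borel pair $(\bfB_{\bfS}, \bfS)$. Since all Borel pairs of $\bfG$ are $\bfG(\overline{F})$-conjugate, I would choose $g \in \bfG(\overline{F})$ with ${}^g\bfS = \bfT$ and ${}^g\bfB_{\bfS} = \bfB$. Writing ${}^g\eta = h\theta$ with $h \in \bfG(\overline{F})$, and using that $\theta$ preserves $\bfT$ and $\bfB$, the fact that $h\theta$ normalizes both $\bfT$ and $\bfB$ forces $h \in \bfN_{\bfG}(\bfT) \cap \bfN_{\bfG}(\bfB) = \bfT$. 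Conjugation by $g$ intertwines $\theta_{\bfS} = [\eta]|_{\bfS}$ with $[h\theta]|_{\bfT}$, and since $\bfT$ is abelian we have $[h\theta]|_{\bfT} = \theta|_{\bfT}$. Thus the lemma reduces to showing that $\theta|_{\bfT}$ has order exactly $l$.

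Clearly the order of $\theta|_{\bfT}$ divides $l$. For the reverse, I would take the smallest $k \geq 1$ with $\theta^k|_{\bfT} = \id$ and argue that $\theta^k = \id$ on $\bfG$. Since $\theta^k$ is trivial on $X^{\ast}(\bfT)$, it permutes the simple roots trivially, so its induced Dynkin diagram automorphism is trivial; hence $\theta^k$ is inner, and I may write $\theta^k = [z]$ with $z \in \bfZ_{\bfG}(\bfT) = \bfT$. The pinning-preserving property of $\theta^k$ then gives $\alpha(z) = 1$ for every simple root $\alpha$, forcing $z \in \bfZ_{\bfG}$ and therefore $\theta^k = \id$. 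This yields $l \mid k$, as desired.

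The principal obstacle I anticipate is the last paragraph: controlling a pinning-preserving automorphism that acts trivially on the maximal torus. This is the usual rigidity property of pinnings, but one must carefully separate the contribution of the diagram automorphism from inner twists by the center. The remaining steps amount to routine unwinding of the twisted-space formalism and a standard Borel-pair conjugation.
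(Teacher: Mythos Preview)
Your proof is correct and uses essentially the same ideas as the paper's argument: both hinge on the fact that an automorphism of $\bfG$ that preserves a Borel pair and acts trivially on its maximal torus must be inner, and that a pinning-preserving inner automorphism is trivial. The only difference is organizational---you conjugate $(\bfS,\bfB_{\bfS})$ to $(\bfT,\bfB)$ at the outset so as to work directly with $\theta|_{\bfT}$, whereas the paper stays with $(\bfS,\bfB_{\bfS})$ and $[\eta]$ throughout and passes to $\theta$ only at the final step.
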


\begin{proof}
Let $\eta\in\t{S}\subset \t{G}=G\theta$.
Since the order of $\theta$ is $l$, we have $\eta^{l}\in G$.
As $\eta$ normalizes $\bfS$ and a Borel subgroup $\bfB_{\bfS}$ containing $\bfS$, so does $\eta^{l}$.
By noting that $\bfS=N_{\G}(\bfS)\cap N_{\G}(\bfB_{\bfS})$, this implies that $\eta^{l}\in S$.
Hence, we get $\theta_{\bfS}^{l}=[\eta]|_{\bfS}^{l}=[\eta^{l}]|_{\bfS}=\id_{\bfS}$.
In other words, the order of $\theta_{\bfS}$ divides $l$.
If the order of $\theta_{\bfS}$ is strictly smaller than $l$, say $l'$, then $[\eta]^{l'}$ is an automorphism of $\bfG$ preserving $(\bfS,\bfB_\bfS)$ and acting on $\bfS$ via the identity.
Thus the outer automorphism determined by $[\eta]^{l'}$ is trivial, in other words, $[\eta]^{l'}$ is an inner automorphism of $\bfG$.
However, this means that $\theta^{l'}$ is also an inner automorphism of $\bfG$.
Since $\theta^{l'}$ preserves a splitting of $\bfG$, $\theta^{l'}$ must be trivial.
Thus we get a contradiction; the order of $\theta_\bfS$ must be $l$.
\end{proof}

\begin{example}
If we let $\tilde{\bfT}:=\bfT\theta$, then $(\tilde{\bfT},\bfT)$ forms an $F$-rational twisted maximal torus of $\t\G$.
Note that $\tilde{\bfT}$ contains $1\rtimes\theta\in \t{G}$ (hence $\theta_\bfT=\theta|_\bfT$), but in general $\tilde{\bfS}$ may not necessarily contain $1\rtimes\theta$.
\end{example}

When $(\bfS,\t\bfS)$ is an $F$-rational twisted maximal torus of $\t\G$, we often simply say that ``$\t\bfS$ is an $F$-rational twisted maximal torus of $\t\G$''.
For an $F$-rational twisted maximal torus $\t\bfS$ of $\t\G$, we put $\bfS^{\nat}\colonequals \bfS^{\theta_{\bfS},\circ}$.\symdef{S-nat}{$\bfS^{\nat}$}
Note that, for any $\eta\in\t\bfS$, we have $\bfS^{\nat}=\bfS_{\eta}\subset\bfG_{\eta}$.
The relationship between $\bfS$ and $\bfS^{\nat}$ is described as follows:

\begin{prop}\label{prop:twisted-tori}
For any $F$-rational twisted maximal torus $\t\bfS$ of $\t\G$, we have
\begin{enumerate}
\item
$\bfZ_{\G}(\bfS^{\nat})=\bfS$,
\item
$\bfZ_{\G}(\t\bfS)^{\circ}=\bfS^{\nat}$, and
\item
for any $\eta\in\t{\bfS}$, $\G_{\eta}$ is a connected reductive group with a maximal torus $\bfS^{\nat}$.
\end{enumerate}
\end{prop}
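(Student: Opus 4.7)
The plan is to prove the three parts in order, with (1) supplying the key structural input for both (2) and (3). Fix any $\eta_{0}\in\t\bfS(F)$, which exists by Definition~\ref{defn:twisted-tori}(2). By Definition~\ref{defn:twisted-tori}(1), the conjugation $[\eta_{0}]$ is an automorphism of $\bfG$ preserving both $\bfS$ and a Borel $\bfB_{\bfS}\supset\bfS$; in particular, the induced action of $\theta_{\bfS}$ on $X^{\ast}(\bfS)$ permutes the positive roots with respect to $\bfB_{\bfS}$.

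For (1), $\bfS^{\nat}$ is a subtorus of the maximal torus $\bfS$, so $\bfM\colonequals\bfZ_{\bfG}(\bfS^{\nat})$ is a connected reductive Levi subgroup of $\bfG$ containing $\bfS$, with root system relative to $\bfS$ equal to $\{\alpha\in\Phi(\bfG,\bfS)\mid\alpha|_{\bfS^{\nat}}=1\}$. It therefore suffices to show that no root of $\bfS$ in $\bfG$ restricts trivially to $\bfS^{\nat}$. Since $\bfS^{\nat}=\bfS^{\theta_{\bfS},\circ}$, we have $X_{\ast}(\bfS^{\nat})\otimes\Q=(X_{\ast}(\bfS)\otimes\Q)^{\theta_{\bfS}}$; using the decomposition $V=V^{\theta}\oplus(1-\theta)V$ valid for any finite-order action on a $\Q$-vector space, one verifies that $\alpha\in X^{\ast}(\bfS)$ restricts trivially to $\bfS^{\nat}$ if and only if the orbit sum $\sum_{i=0}^{l-1}\theta_{\bfS}^{i}(\alpha)$ vanishes in $X^{\ast}(\bfS)\otimes\Q$. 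Because $\theta_{\bfS}$ preserves $\bfB_{\bfS}$, every $\theta_{\bfS}^{i}(\alpha)$ shares the sign of $\alpha$, so the orbit sum is a nonzero sum of positive (or negative) roots, contradicting vanishing.

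For (2), we compute directly using $\t\bfS=\bfS\eta_{0}$. An element $g\in\bfG$ satisfies $g(s\eta_{0})g^{-1}=s\eta_{0}$ for all $s\in\bfS$ if and only if $gs[\eta_{0}](g^{-1})=s$ for every $s\in\bfS$. Specializing $s=1$ forces $g\in\bfG^{\eta_{0}}$, and then the equation collapses to $g\in\bfZ_{\bfG}(\bfS)=\bfS$. Hence $\bfZ_{\bfG}(\t\bfS)=\bfS\cap\bfG^{\eta_{0}}=\bfS^{\theta_{\bfS}}$, whose identity component is $\bfS^{\nat}$.

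For (3), $\bfS^{\nat}$ is a connected subgroup of $\bfG$ all of whose elements commute with every $\eta\in\t\bfS$, so $\bfS^{\nat}\subseteq\bfG_{\eta}$. If $\bfT'$ is any maximal torus of $\bfG_{\eta}$ containing $\bfS^{\nat}$, then $\bfT'$ centralizes $\bfS^{\nat}$, and (1) yields $\bfT'\subseteq\bfS$; combined with $\bfT'\subseteq\bfG^{\eta}$ and connectedness, this gives $\bfT'\subseteq\bfS^{\theta_{\bfS},\circ}=\bfS^{\nat}$, forcing equality. The main obstacle is the orbit-sum argument in (1): it relies essentially on the existence of a Borel $\bfB_{\bfS}\supset\bfS$ preserved by $[\eta_{0}]$, which is exactly what Definition~\ref{defn:twisted-tori}(1) provides; once this is in hand, parts (2) and (3) reduce to direct calculations.
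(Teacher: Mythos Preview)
Your proof is correct, but it takes a genuinely different route from the paper's. The paper appeals to Steinberg's theorem on quasi-semisimple automorphisms (via \cite[Theorem~1.1.A]{KS99}): parts (2) and (4) of that result immediately give that $\bfS\cap\G_{\eta}$ is a maximal torus of $\G_{\eta}$ and that $\bfZ_{\G}(\bfS^{\nat})=\bfS$, yielding (3) and (1) simultaneously; part (2) is then deduced from the equality $\bfS\cap\G_{\eta}=\bfS^{\nat}$ established along the way. You instead prove (1) first by an elementary root-system argument---the orbit sum $\sum_{i}\theta_{\bfS}^{i}(\alpha)$ cannot vanish because $\theta_{\bfS}$ preserves positivity---then obtain (2) by direct computation and (3) by sandwiching a maximal torus of $\G_{\eta}$ between $\bfS^{\nat}$ and $\bfZ_{\G}(\bfS^{\nat})=\bfS$. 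Your approach is more self-contained and avoids the black-box citation, which is a genuine gain in transparency; on the other hand, the paper's invocation of Steinberg is natural here because the same structure theory is used heavily in Section~\ref{subsec:Steinberg} to describe $\Phi(\G_{\eta},\bfS^{\nat})$, so nothing is actually saved by avoiding it.
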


\begin{proof}
Let $\eta\in\t{\bfS}$.
Let $\B_{\bfS}$ be a Borel subgroup of $\G$ containing $\bfS$ and satisfying $\t\bfS=\bfN_{\t\G}(\bfS)\cap \bfN_{\t\G}(\bfB_{\bfS})$.
Then $[\eta]$ defines an automorphism of $\G$ preserving the Borel pair $(\B_{\bfS},\bfS)$.
We apply Steinberg's result (\cite{Ste68}), which is summarized in \cite[Theorem 1.1.A]{KS99}, to the automorphism $[\eta]$ as follows.
By \cite[Theorem 1.1.A (2)]{KS99}, $\bfS\cap\G_{\eta}$ is a maximal torus of $\G_{\eta}$.
Since $\bfS_{\eta}\subset\bfS\cap\G_{\eta}\subset \bfS^{\eta}$, the connectedness of $\bfS\cap\G_{\eta}$ implies that $\bfS_{\eta}=\bfS\cap\G_{\eta}$.
Thus we get the assertion (3).
Moreover, by \cite[Theorem 1.1.A (4)]{KS99}, we get the assertion (1).

Let us check the assertion (2).
As the inclusion $\bfZ_{\G}(\t\bfS)^{\circ}\supset\bfS^{\nat}$ is obvious, we show the converse inclusion.
Let $g\in \bfZ_{\G}(\t\bfS)^{\circ}$.
Then we have $gs\eta g^{-1}=s\eta$ for any $s\in\bfS$ since $\t\bfS=\bfS\eta$.
Note that, as $\eta\in\t\bfS$, we have $\bfZ_{\G}(\t\bfS)^{\circ}\subset \bfZ_{\G}(\t\eta)^{\circ}=\G_{\eta}$.
Thus $gs\eta g^{-1}=s\eta$ (for any $s\in\bfS$) if and only if $gs g^{-1}=s$ (for any $s\in\bfS$), which implies that $g\in\bfS$.
Hence we get $g\in \bfS\cap\G_{\eta}=\bfS_{\eta}=\bfS^{\nat}$.
\end{proof}

Recall that an element $\delta\in\t\G$ is said to be
\begin{itemize}
\item
\textit{semisimple} if $[\delta]$ is a quasi-semisimple automorphism of $\G$;
\item
\textit{regular semisimple} if $\delta$ is semisimple and $\G_{\delta}$ is a torus; 
\item
\textit{strongly regular semisimple} if $\delta$ is semisimple and $\G^{\delta}$ is abelian
\end{itemize}
(see \cite[Sections 3.2 and 3,3]{KS99}).

Let $\bfA_{\t{\G}}$ denote the maximal split subtorus of $\bfZ_{\G}^{\theta}$.\symdef{A-G-tilde}{$\bfA_{\tilde{\G}}$}
Note that then, for any $F$-rational twisted maximal torus $\t\bfS$ of $\t\G$, $\bfA_{\t\G}$ is contained in $\bfS^{\nat}$.

\begin{defn}\label{defn:elliptic}
\begin{enumerate}
\item
Let $\t\bfS$ be an $F$-rational twisted maximal torus of $\t\G$.
We say that $\t\bfS$ is \textit{elliptic} if $\bfS^{\nat}$ is anisotropic modulo $\bfA_{\t{\G}}$.
\item
For any semisimple element $\delta\in\t{G}$, we say that $\delta$ is \textit{elliptic} if there exists an $F$-rational elliptic twisted maximal torus $\t\bfS$ of $\t\G$ such that $\delta\in\t{S}$.
\end{enumerate}
\end{defn}

\begin{rem}\label{rem:elliptic}
If $(\t\bfS,\bfS)$ is an $F$-rational twisted maximal torus of $\t\G$ whose $\bfS$ is elliptic, then $(\t\bfS,\bfS)$ is elliptic.
Indeed, as we have maps
\[
\xymatrix{
\bfS^{\nat}/\bfA_{\t\G} & \bfS^{\nat}/(\bfZ_{\G}\cap \bfS^{\nat}) \ar@{->>}[l] \ar@{^{(}->}[r]& \bfS/\bfZ_{\G},
}
\]
the ellipticity of $\bfS$ in $\G$ (which is equivalent to the anisotropicity of $\bfS$ modulo $\bfZ_{\G}$) implies that the anisotropicity of $\bfS^{\nat}$ modulo $\bfA_{\t\G}$.
\end{rem}

\begin{lem}\label{lem:twisted-torus}
Let $\bfS$ be an $F$-rational maximal torus of $\G$.
If there exists a semisimple element $\eta\in\t{G}$ and a Borel subgroup $\bfB_{\bfS}$ containing $\bfS$ such that $(\bfB_{\bfS},\bfS)$ is preserved by $[\eta]$, then $(\t\bfS,\bfS)\colonequals (\bfS\eta,\bfS)$ is an $F$-rational twisted maximal torus of $\t\G$.
\end{lem}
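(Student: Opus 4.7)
The plan is to verify directly the two defining conditions of Definition \ref{defn:twisted-tori} for the pair $(\t\bfS,\bfS)\colonequals (\bfS\eta,\bfS)$. To begin with, $\bfS\eta$ makes sense as an $F$-rational bi-$\bfS$-torsor inside $\t\bfG$: since $\bfS$ is $F$-rational and $\eta\in\t{G}$, the right-translation morphism $\bfS\to\bfS\eta$ is an $F$-isomorphism of varieties, while the assumption that $[\eta]$ preserves $\bfS$ gives $\eta^{-1}\bfS\eta=\bfS$, so that $\bfS\eta=\eta\bfS$ is stable under both the left and right multiplication by $\bfS$. The nonemptiness condition in Definition \ref{defn:twisted-tori} is then immediate, since $\eta$ itself lies in $\t\bfS(F)$.

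The main step is to establish the identity $\bfS\eta=\bfN_{\t\G}(\bfS)\cap\bfN_{\t\G}(\bfB_{\bfS})$. The inclusion $\subseteq$ is straightforward: for any $s\in\bfS$, the automorphism $[s\eta]=[s]\circ[\eta]$ of $\bfG$ preserves the Borel pair $(\bfB_{\bfS},\bfS)$ because $[\eta]$ does so by hypothesis while $[s]$ does so trivially (as $s\in\bfS\subset\bfB_{\bfS}$). For the reverse inclusion, I would take $\delta\in\bfN_{\t\G}(\bfS)\cap\bfN_{\t\G}(\bfB_{\bfS})$ and form $g\colonequals \delta\eta^{-1}\in\bfG$, which lies in $\bfG$ because $\t\bfG$ is a bi-$\bfG$-torsor (concretely, if $\delta=g_{1}\theta$ and $\eta=g_{2}\theta$, then $\delta\eta^{-1}=g_{1}g_{2}^{-1}$). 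Writing $[\delta]=[g]\circ[\eta]$ as automorphisms of $\bfG$, the fact that $[\delta]$ and $[\eta]$ both preserve $(\bfB_{\bfS},\bfS)$ forces $[g]$ to preserve it as well, so $g\in\bfN_{\bfG}(\bfB_{\bfS})\cap\bfN_{\bfG}(\bfS)$. The classical facts that a Borel subgroup is self-normalizing in $\bfG$ and that a maximal torus is self-normalizing inside any Borel containing it together yield $g\in\bfS$, whence $\delta=g\eta\in\bfS\eta$, as required.

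I do not anticipate any genuine obstacle: the argument is a bookkeeping exercise in the twisted-space formalism of Section \ref{subsec:twisted-spaces}, with the only nontrivial structural input being the self-normalization of Borel subgroups and maximal tori inside them. I note in passing that the semisimplicity hypothesis on $\eta$ stated in the lemma is redundant for this verification, since preservation of a Borel pair is already the defining property of quasi-semisimplicity; it is presumably retained for consistency with the convention that elements of $\t\bfG$ arising as base points of twisted maximal tori are semisimple.
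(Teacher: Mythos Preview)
Your proof is correct and follows essentially the same approach as the paper's own proof. The paper condenses your two inclusions into the single displayed identity $\bfN_{\t\G}(\bfS)\cap \bfN_{\t\G}(\bfB_{\bfS})=\bigl(\bfN_{\G}(\bfS)\cap \bfN_{\G}(\bfB_{\bfS})\bigr)\eta=\bfS\eta$, invoking the same standard fact $\bfN_{\G}(\bfS)\cap \bfN_{\G}(\bfB_{\bfS})=\bfS$; your version simply unpacks this step. Your closing remark that the semisimplicity hypothesis on $\eta$ is not actually used in the verification is also accurate.
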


\begin{proof}
Since $\t\G=\G\eta$ and $[\eta]$ preserves $(\bfB_{\bfS},\bfS)$, we have
\[
\bfN_{\t\G}(\bfS)\cap \bfN_{\t\G}(\bfB_{\bfS})
=\bigl(\bfN_{\G}(\bfS)\cap \bfN_{\G}(\bfB_{\bfS})\bigr)\eta
=\bfS\eta
=\t\bfS.
\]
Moreover, obviously $\t{S}=\t\bfS(F)$ is not empty as it contains $\eta$.
\end{proof}

\subsection{Steinberg's result on the structure of descended groups}\label{subsec:Steinberg}

Let $\t\bfS$  be an $F$-rational twisted maximal torus of $\t\G$ and $\bfB_{\bfS}$ a Borel subgroup which contains $\bfS$ and is preserved by the conjugate action of $\t{\bfS}$.
By fixing an element $g_{\bfS}\in\G$ satisfying $[g_{\bfS}](\bfB_{\bfS},\bfS)=(\bfB,\T)$, we get an isomorphism $[g_{\bfS}]\colon(\t\bfS,\bfS)\xrightarrow{\sim}(\t\T,\T)$ (which may not be defined over $F$).
Note that the isomorphism $[g_{\bfS}]\colon\bfS\xrightarrow{\sim}\T$ is independent of the choice of $g_{\bfS}\in\G$ and that the automorphism $\theta_{\bfS}$ of $\bfS$ is transported to $\theta_\bfT=\theta$ on $\T$ via $[g_{\bfS}]$, i.e., $\theta\circ[g_{\bfS}]=[g_{\bfS}]\circ\theta_{\bfS}$.

For any $\eta\in\t{S}$, its connected centralizer $\G_{\eta}$ is a connected reductive group with a maximal torus $\bfS^{\nat}$ by Proposition \ref{prop:twisted-tori}.
In this subsection, we review some facts about the structure of the root system $\Phi(\G_{\eta},\bfS^{\nat})$ following \cite[Section 3.3]{Wal08}.

We put 
\begin{itemize}
\item
$Y^{\ast}(\T)\colonequals X^{\ast}(\T)/(X^{\ast}(\T)\cap(1-\theta)X^{\ast}(\T)_{\Q})$ and
\item
$Y_{\ast}(\T)\colonequals X_{\ast}(\T)/(X_{\ast}(\T)\cap(1-\theta)X_{\ast}(\T)_{\Q})$.
\end{itemize}
We write $p^{\ast}\colon X^{\ast}(\T)\rightarrow Y^{\ast}(\T)$ and $p_{\ast}\colon X_{\ast}(\T)\rightarrow Y_{\ast}(\T)$ for the natural surjections.
Then we have the following:
\begin{enumerate}
\item
$Y^{\ast}(\T)\cong X^{\ast}(\T^{\nat})$ is the $\Z$-dual to $X_{\ast}(\T)^{\theta}\cong X_{\ast}(\T^{\nat})$;
\item
$Y_{\ast}(\T)$ is the $\Z$-dual to $X^{\ast}(\T)^{\theta}$.
\end{enumerate}

We put $\Theta\colonequals \langle\theta\rangle$.
Note that the action of $\Theta$ on $(\G,\T)$ induces an action on $\Phi(\G,\T)$ given by $\alpha\mapsto \theta(\alpha):=\alpha\circ\theta^{-1}$.
For any $\alpha\in\Phi(\G,\T)$, we let $l_{\alpha}$ be the cardinality of the $\Theta$-orbit of $\alpha$ in $\Phi(\G,\T)$ and define an element $N(\alpha)\in X^\ast(\bfT)$ by
\[
N(\alpha)\colonequals \sum_{i=0}^{l_{\alpha}-1} \theta^{i}(\alpha).
\]
We also define $l_{\alpha^{\vee}}$ and $N(\alpha^{\vee})$ for any $\alpha^{\vee}\in\Phi^{\vee}(\G,\T)$ in the same manner.
For $\alpha\in\Phi(\G,\T)$, we shortly write $\alpha_{\res}\colonequals p^{\ast}(\alpha)$.
We define a set $\Phi_{\res}(\G,\T)$ by
\[
\Phi_{\res}(\G,\T)
=\{p^{\ast}(\alpha) \mid \alpha\in \Phi(\G,\T)\}\subset Y^{\ast}(\T)\cong X^{\ast}(\T^{\nat}).
\]\symdef{Phi-res-G-T}{$\Phi_{\res}(\G,\T)$}
Then $\Phi_{\res}(\G,\T)$ forms a (possibly non-reduced) root system.
We call elements of $\Phi_{\res}(\G,\T)$ \textit{restricted roots}.
Following \cite[Section 1.3]{KS99}, we say that $\alpha\in\Phi(\G,\T)$ (or its associated $\alpha_{\res}$) is of
\begin{itemize}
\item
\textit{type $1$} if $2\alpha_{\res},\frac{1}{2}\alpha_{\res}\notin\Phi_{\res}(\G,\T)$,
\item
\textit{type $2$} if $2\alpha_{\res}\in\Phi_{\res}(\G,\T)$,
\item
\textit{type $3$} if $\frac{1}{2}\alpha_{\res}\in\Phi_{\res}(\G,\T)$.
\end{itemize}
We put
\[
\varrho_{\alpha}
\colonequals 
\begin{cases}
1 & \text{if $\alpha$ is of type $1$ or $3$,}\\
2 & \text{if $\alpha$ is of type $2$,}
\end{cases}
\quad
\varsigma_{\alpha}
\colonequals 
\begin{cases}
1 & \text{if $\alpha$ is of type $1$ or $2$,}\\
-1 & \text{if $\alpha$ is of type $3$.}
\end{cases}
\]
We also define a set $\Phi_{\res}^{\vee}(\G,\T)$ by
\[
\Phi_{\res}^{\vee}(\G,\T)
=\{\varrho_{\alpha}\cdot N(\alpha^{\vee}) \mid \alpha\in \Phi^{\vee}(\G,\T)\}\subset X_{\ast}(\T)^{\theta}\cong X_{\ast}(\T^{\nat}).
\]
Then we have bijections
\begin{align*}
\Phi(\G,\T)/\Theta&\xrightarrow{1:1}\Phi_{\res}(\G,\T)\colon \alpha\mapsto \alpha_{\res} \,(\colonequals p^{\ast}(\alpha)),\\
\Phi^{\vee}(\G,\T)/\Theta&\xrightarrow{1:1}\Phi_{\res}^{\vee}(\G,\T)\colon \alpha^\vee\mapsto \varrho_{\alpha}\cdot N(\alpha^\vee).
\end{align*}
(The sets $\Phi_{\res}(\G,\T)$ and $\Phi_{\res}^{\vee}(\G,\T)$ are denoted by $\Sigma^{\res}$ and $\check{\Sigma}^{\res}$ in \cite[Section 3.3]{Wal08}, respectively.)

\begin{rem}[{\cite[(1.3.3)]{KS99}}]\label{rem:A_{2n}}
There exists a restricted root of type $2$ or $3$ only when $\Phi(\G,\T)$ contains an irreducible component of Dynkin type $A_{2n}$ which is preserved and acted by some power of $\theta$ nontrivially.
\end{rem}

Now let $\eta$ be an element of $\t{S}$ and let $\nu\in\T$ be the element such that
\[
[g_{\bfS}](\eta)=\nu\theta\in\t\T=\T\theta.
\]
Then $[g_{\bfS}]\colon \G\rightarrow\G$ induces an isomorphism between $(\G_{\eta},\bfS^{\nat})$ and $(\G_{\nu\theta},\T^{\nat})$.
In particular, the sets $\Phi(\G_{\eta},\bfS^{\nat})$ and $\Phi^{\vee}(\G_{\eta},\bfS^{\nat})$ can be identified with $\Phi(\G_{\nu\theta},\T^{\nat})$ and $\Phi^{\vee}(\G_{\nu\theta},\T^{\nat})$, respectively (note that here we ignore the Galois actions).
The latter sets are described in terms of the restricted roots and coroots as follows:
\begin{align*}
\Phi(\G_{\nu\theta},\T^{\nat})
&=\{p^{\ast}(\alpha) \mid \alpha\in \Phi(\G,\T); N(\alpha)(\nu)=\varsigma_{\alpha}\}
\subset\Phi_{\res}(\G,\T),\\
\Phi^{\vee}(\G_{\nu\theta},\T^{\nat})
&=\{\varrho_{\alpha}\cdot N(\alpha^{\vee}) \mid \alpha^{\vee}\in \Phi^{\vee}(\G,\T); N(\alpha)(\nu)=\varsigma_{\alpha}\}
\subset\Phi_{\res}^{\vee}(\G,\T).
\end{align*}
Note that these sets are thought of as subsets of $X^{\ast}(\T^{\nat})$ and $X_{\ast}(\T^{\nat})$.

\begin{rem}
Since the fixed element $g_{\bfS}$ is not canonical, $\nu$ is not determined by $\eta$ canonically.
However, $g_{\bfS}$ is unique up to $\T$-translation, i.e., any other $g'_{\bfS}\in\G$ satisfying $[g'_{\bfS}](\bfS,\bfB_{\bfS})=(\T,\bfB)$ is necessarily given by $g'_{\bfS}=tg_{\bfS}$ for some $t\in\T$.
If we use $g'_{\bfS}$ instead of $g_{\bfS}$, then $[g'_{\bfS}](\eta)$ is given by $t(\nu\theta)t^{-1}$.
Thus, if we write $\nu'$ for the corresponding element of $\T$ determined by $\eta$ and $[g'_{\bfS}]$, then we have $\nu'=t\nu\theta(t)^{-1}$.
In particular, we have $N(\alpha)(\nu)=N(\alpha)(\nu')$ for any $\alpha\in\Phi(\G,\T)$.
Therefore the description of the sets $\Phi(\G_{\nu\theta},\T^{\nat})$ and $\Phi^{\vee}(\G_{\nu\theta},\T^{\nat})$ is determined only by $\eta$.
\end{rem}

\subsection{Good product expansion in twisted spaces}\label{subsec:approx}

We discuss a twisted version of the theory of good product expansion due to Adler and Spice (\cite{AS08}).

We first recall the definition of a good product expansion of elements of $p$-adic groups in the untwisted setting.
We temporarily let $\G$ be any tamely ramified connected reductive group over $F$.
Let $\bar{\G}$ be the quotient $\G/\bfZ_{\G}^{\circ}$.

\begin{defn}[{\cite[Definitions 4.11 and 6.1]{AS08}}]\label{defn:good-element}
\begin{enumerate}
\item
We say that an element $\gamma\in G$ is \textit{good of depth $0$} if $\gamma$ is semisimple and its image $\bar{\gamma}$ in $\bar{G}$ is \textit{absolutely semisimple}, i.e., every character value of $\bar{\gamma}$ (see \cite[Definition A.4]{AS08}) is of finite prime-to-$p$ order.
\item
For $d\in\R_{>0}$, an element $\gamma\in G$ is said to be \textit{good of depth $d$} if there exists an $F$-rational tame-modulo-center torus $\bfS$ in $\G$ such that
\begin{itemize}
\item
$\gamma\in S_{d}\smallsetminus S_{d+}$, and
\item
for every $\alpha\in\Phi(\G,\bfS)$, $\alpha(\gamma)=1$ or $\val_{F}(\alpha(\gamma)-1)=d$.
\end{itemize}
\end{enumerate}
\end{defn}

\begin{defn}[{\cite[Definition 6.4]{AS08}}]\label{defn:good-seq}
For $r\in\widetilde{\R}$, a sequence $\underline{\gamma}=(\gamma_{i})_{0\leq i<r}$ of elements of $G$ indexed by real numbers $0\leq i <r$ is called a \textit{good sequence} if
\begin{itemize}
\item
$\gamma_{i}$ is $1$ or a good of depth $i$ for each $0\leq i<r$, and
\item
there exists an $F$-rational tame torus $\bfS$ of $\G$ such that $\gamma_{i}\in S$ for every $0\leq i<r$.
\end{itemize}
\end{defn}

To a good sequence $\underline{\gamma}=(\gamma_{i})_{0\leq i<r}$, we associate subgroups of $\G$ to $\underline{\gamma}$ as follows:
\[
\bfC_{\G}^{(r)}(\underline{\gamma})
\colonequals 
\Bigl(\bigcap_{0\leq i<r} \bfZ_{\G}(\gamma_{i})\Bigr)^{\circ},
\quad
\bfZ_{\G}^{(r)}(\underline{\gamma})
\colonequals 
\bfZ_{\bfC_{\G}^{(r)}(\underline{\gamma})}.
\]
We write $C_{\G}^{(r)}(\underline{\gamma}):=\bfC_{\G}^{(r)}(\underline{\gamma})(F)$ and $Z_{\G}^{(r)}(\underline{\gamma})=\bfZ_{\G}^{(r)}(\underline{\gamma})(F)$.

\begin{defn}[{\cite[Definition 6.8]{AS08}}]\label{defn:approx}
For $\gamma\in G$, a good sequence $\underline{\gamma}=(\gamma_{i})_{0\leq i<r}$ $(r\in \widetilde{\R}_{>0})$ is called an \textit{$r$-approximation to $\gamma$} if there exists a point $\x\in\mcB(\bfC_{\G}^{(r)}(\underline{\gamma}),F)$ satisfying $\gamma\in(\prod_{0\leq i<r}\gamma_{i})G_{\x,r}$.
When we have $\gamma\in C_{\G}^{(r)}(\underline{\gamma})$, we say that $\underline{\gamma}$ is a \textit{normal $r$-approximation to $\gamma$}.
\end{defn}

When we have a normal $r$-approximation $\underline{\gamma}$ to $\gamma$, we put
\[
\gamma_{<r}\colonequals \prod_{0\leq i<r}\gamma_{i},\quad
\gamma_{\geq r}\colonequals \gamma\cdot\gamma_{<r}^{-1}
\]\symdef{gamma-less-r}{$\gamma_{<r}$}\symdef{gamma-greater-equal-r}{$\gamma_{\geq r}$}
and simply say that ``$\gamma=\gamma_{<r}\cdot\gamma_{\geq r}$ is a normal $r$-approximation.''
Note that $\gamma_{\geq r}$ commutes with $\gamma_{<r}$ when $\gamma=\gamma_{<r}\cdot\gamma_{\geq r}$ is a normal $r$-approximation.

Now we move on to the setting of twisted spaces.
Let $(\G,\theta)$ be as in Section \ref{subsec:twisted-spaces}.
In particular, we have the associated twisted space $\t{\G}=\G\theta$.
From now on, we assume that 
\[
\textbf{$p$ is odd and prime to $l$ (the order of $\theta$).}
\]
We put $\G^{\dagger}\colonequals \G\rtimes\langle\theta\rangle$.
Recall that $\bfA_{\t{\G}}$ is the maximal split subtorus of $\bfZ_{\G}^{\theta}$.\symdef{G-dagger}{$\G^\dagger$}
To extend the theory of Adler--Spice to $\G^{\dagger}$, we appeal to the notion of a topological Jordan decomposition.

\begin{defn}[{\cite[Definition 1.6]{Spi08}}]\label{defn:Jordan}
For $\gamma\in G^{\dagger}$, we say that a pair $(\gamma_{0},\gamma_{+})$ of elements of $G^{\dagger}$ is a \textit{topological $p$-Jordan decomposition modulo $A_{\t{\G}}$} of $\gamma$ if 
\begin{itemize}
\item
$\gamma=\gamma_{0}\gamma_{+}=\gamma_{+}\gamma_{0}$, 
\item
$\gamma_{0}$ is absolutely $p$-semisimple modulo $A_{\t{\G}}$, i.e., the image $\bar{\gamma}_{0}$ of $\gamma_{0}$ in $G^{\dagger}/A_{\t{\G}}$ is of finite prime-to-$p$ order, and
\item
$\gamma_{+}$ is topologically $p$-unipotent modulo $A_{\t{\G}}$, i.e., the image $\bar{\gamma}_{+}$ of $\gamma_{+}$ in $G^{\dagger}/A_{\t{\G}}$ satisfies $\lim_{n\rightarrow\infty}\bar{\gamma}_{+}^{p^{n}}=1$.
\end{itemize}
\end{defn}
\symdef{gamma-0}{$\gamma_{0}$}\symdef{gamma-plus}{$\gamma_{+}$}

In this paper, we refer to a topological $p$-Jordan decomposition modulo $A_{\t{\G}}$ simply as a \textit{topological Jordan decomposition}.
Similarly, when an element $\gamma$ is absolutely $p$-semisimple modulo $A_{\t\G}$ (resp.\ topologically $p$-unipotent modulo $A_{\t\G}$), we often simply say that $\gamma$ is topologically semisimple (resp.\ topologically unipotent) as long as there is no risk of confusion.

\begin{rem}\label{rem:Jordan}
Note that, for a given element $\gamma\in G^{\dagger}$, its topological Jordan decomposition is unique modulo $A_{\t{\G}}$ if it exists.
More precisely, if both $(\gamma_{0},\gamma_{+})$ and $(\gamma'_{0},\gamma'_{+})$ are topological Jordan decompositions of $\gamma$, then we have $\bar{\gamma}_{0}=\bar{\gamma}'_{0}$ and $\bar{\gamma}_{+}=\bar{\gamma}'_{+}$ in $G^{\dagger}/A_{\t{\G}}$.
\end{rem}

We say that an elliptic semisimple element $\delta$ of $\t{G}$ is \textit{tame} if there exists an $F$-rational elliptic twisted maximal torus $(\t\bfS,\bfS)$ such that $\bfS$ is tame and $\delta\in\t{S}$.
We note that if $\G$ is tamely ramified and $p$ does not divide the order of the absolute Weyl group of $\G$, any $F$-rational maximal torus of $\G$ is tame by \cite[Theorem 3.3]{Fin21-IMRN}.
In particular, any elliptic semisimple element $\delta$ of $\t{G}$ is tame.

\begin{lem}\label{lem:pi_{0}}
The order of $\pi_{0}(\bfS^{\theta_\bfS})$ is divided only by prime factors of $l$.
\end{lem}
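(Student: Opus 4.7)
The plan is to show that the finite abelian group $\pi_{0}(\bfS^{\theta_{\bfS}})$ is annihilated by $l$; once this is established, its order is a power of $l$ times a product of primes dividing $l$, so every prime dividing $|\pi_{0}(\bfS^{\theta_{\bfS}})|$ divides $l$, as required.

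To see that multiplication by $l$ kills $\pi_{0}(\bfS^{\theta_{\bfS}})$, I would introduce the norm map
\[
N\colon \bfS\longrightarrow \bfS,\qquad N(s)\colonequals \prod_{i=0}^{l-1}\theta_{\bfS}^{i}(s),
\]
which is well defined since $\theta_{\bfS}$ has order $l$ by Lemma \ref{lem:twist-order}. The image $N(\bfS)$ is connected because $\bfS$ is, and lies in $\bfS^{\theta_{\bfS}}$ because $\theta_{\bfS}\circ N=N$. Therefore $N(\bfS)\subset \bfS^{\theta_{\bfS},\circ}$.

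Now the key computation: for any $s\in\bfS^{\theta_{\bfS}}$, each factor in the product defining $N(s)$ equals $s$, so $N(s)=s^{l}$. Combining this with the previous paragraph gives $s^{l}\in\bfS^{\theta_{\bfS},\circ}$ for every $s\in\bfS^{\theta_{\bfS}}$. Passing to the quotient by $\bfS^{\theta_{\bfS},\circ}$, the $l$-th power map on $\pi_{0}(\bfS^{\theta_{\bfS}})$ is trivial, which is exactly the annihilation statement we needed. There is no real obstacle here; the only subtle point is confirming that the order of $\theta_{\bfS}$ is exactly $l$ (so that $N$ is truly the full norm), which is already guaranteed by Lemma \ref{lem:twist-order}.
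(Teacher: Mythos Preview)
Your proof is correct and takes a genuinely different route from the paper's. The paper passes to the character lattice, identifies the torsion of $X^{\ast}(\bfS^{\theta_{\bfS}})$ with the cokernel of $1-\theta_{\bfS}^{\ast}$ on $X^{\ast}(\bfS)$, and then base-changes to $\Z_{(\ell)}[\mu_{l}]$ for each prime $\ell\nmid l$ to diagonalize $\theta_{\bfS}^{\ast}$ and see that the cokernel is free there. Your norm argument is more elementary: it avoids character lattices and cyclotomic integers entirely, and in fact yields the stronger statement that $\pi_{0}(\bfS^{\theta_{\bfS}})$ has exponent dividing $l$, not merely that its prime divisors divide $l$. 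One minor remark: you do not actually need the order of $\theta_{\bfS}$ to be \emph{exactly} $l$; it suffices that $\theta_{\bfS}^{l}=\id$, so the appeal to Lemma~\ref{lem:twist-order} is not essential for the argument to go through.
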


\begin{proof}
Since $\pi_0(\bfS^{\theta_\bfS})$ is finite, it is enough to investigate the order of $X^{\ast}(\pi_0(\bfS^{\theta_\bfS}))$, which is the torsion part of $X^{\ast}(\bfS^{\theta_\bfS})$.
From the left exact sequence
\[
1 \rightarrow \bfS^{\theta_\bfS} \rightarrow \bfS \xrightarrow{1-\theta_\bfS} \bfS,
\]
where the map $1-\theta_\bfS$ is given by $s\mapsto s\cdot\theta_\bfS(s)^{-1}$, we get a right exact sequence
\[
X^{\ast}(\bfS)\xrightarrow{1-\theta_\bfS^\ast} X^{\ast}(\bfS) \rightarrow X^{\ast}(\bfS^{\theta_\bfS}) \rightarrow 0.
\]
We let $n$ be the rank of $\bfS$ and fix an identification $X^{\ast}(\bfS)\cong\Z^{\oplus n}$.
The above right exact sequence shows that $X^{\ast}(\bfS^{\theta_\bfS})$ is the cokernel of the homomorphism $1-\theta_\bfS^{\ast}\colon\Z^{\oplus n} \rightarrow \Z^{\oplus n}$.

Let us fix any prime number $\ell$ which does not divide $l$ and show that $\Cok(1-\theta_\bfS^{\ast})$ has no $\ell$-torsion.
For this, it suffices to show that $\Cok(1-\theta_\bfS^{\ast})\otimes_{\Z} R\cong\Cok(1-\theta_\bfS^{\ast}\colon R^{\oplus n} \rightarrow R^{\oplus n})$ has no $\ell$-torsion for $R=\Z_{(\ell)}[\mu_l]$, where $\Z_{(\ell)}$ denotes the localization of $\Z$ at $\ell$ and $\mu_l$ is the set of $l$-th roots of unity (note that $R$ is flat).
Since the order of $\theta_\bfS^{\ast}$ is $l$, all the eigenvalues of $\theta_\bfS^\ast$ are contained in $\mu_l$, hence $\theta_\bfS^{\ast}$ can be diagonalized in $R$.
If we let $e_1,\ldots,e_n$ denote the eigenvalues of $\theta_\bfS^{\ast}$, then we have $\Cok(1-\theta_\bfS^{\ast})\cong\bigoplus_{i=1}^{n}R/(1-e_i)R$.
As $l$ is invertible in $\Z_{(\ell)}$, $1-e_i$ is also invertible in $R$ unless $e_i=1$.
This implies that $\Cok(1-\theta_\bfS^{\ast})\otimes_{\Z} R$ is free.
\end{proof}

\begin{prop}\label{prop:Jordan}
Let $\delta$ be a tame elliptic semisimple element of $\t{G}$.
Then there exists a pair $(\delta_{0},\delta_{+})$ of elements of $G^{\dagger}$ such that
\begin{enumerate}
\item
$\delta=\delta_{0}\delta_{+}=\delta_{+}\delta_{0}$,
\item
$\delta_{0}\in\t{G}\subset G^{\dagger}$ is topologically semisimple,
\item
$\delta_{+}\in G_{\delta_{0},0+}\subset G^{\dagger}$ is topologically unipotent, and
\item
$\delta_{0}$ and $\delta_{+}$ belong to the closure $\overline{\langle\delta\rangle A_{\t{\G}}}$ of $\langle\delta\rangle A_{\t{\G}}$ in $G^{\dagger}$.
\end{enumerate}
In particular, $(\delta_{0},\delta_{+})$ is a topological Jordan decomposition of $\delta$.
\end{prop}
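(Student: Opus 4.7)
The plan is to construct the decomposition explicitly inside a tame elliptic twisted maximal torus containing $\delta$, then verify the four listed properties. By the tameness hypothesis, fix an $F$-rational elliptic twisted maximal torus $(\tilde{\mathbf{S}},\mathbf{S})$ with $\mathbf{S}$ tame and $\delta\in\tilde{S}$. The key observation is that $\mathbf{S}^\natural=\mathbf{S}^{\theta_{\mathbf{S}},\circ}$ is an $F$-rational torus split over a tame extension, is anisotropic modulo $\mathbf{A}_{\tilde{\mathbf{G}}}$ by ellipticity, and by Proposition~\ref{prop:twisted-tori} is simultaneously a maximal torus of $\mathbf{G}_{\delta'}$ for every $\delta'\in\tilde{S}$. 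Since $\mathbf{S}^\natural$ is pointwise $\theta_{\mathbf{S}}$-fixed, every element of $S^\natural$ automatically commutes with $\delta$; this is the mechanism that will make the topologically unipotent part commute with the topologically semisimple part.

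For the construction: the identity $[\delta](\delta^l)=\delta^l$, together with the fact that $\delta$ acts on $\mathbf{S}$ as $\theta_{\mathbf{S}}$, gives $\delta^l\in S^{\theta_{\mathbf{S}}}$. Lemma~\ref{lem:pi_{0}} shows $m:=|\pi_0(\mathbf{S}^{\theta_{\mathbf{S}}})(F)|$ is prime to $p$, hence $\delta^N\in S^\natural$ for $N:=lm$ with $\gcd(N,p)=1$. Applying the standard topological Jordan decomposition to $\delta^N$ inside the compact group $S^\natural/A_{\tilde{\mathbf{G}}}$ produces $\delta^N=u_0\cdot u_+$ modulo $A_{\tilde{\mathbf{G}}}$, with $u_0$ of finite prime-to-$p$ order, $u_+\in(S^\natural)_{0+}$ topologically unipotent, and both lying in $\overline{\langle\delta^N\rangle A_{\tilde{\mathbf{G}}}}$. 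Since $N$ is prime to $p$ and $(S^\natural)_{0+}/A_{\tilde{\mathbf{G}},0+}$ is pro-$p$, the $N$-th power map is a homeomorphism there, so $\bar u_+$ has a unique $N$-th root; lift it to $\delta_+\in(S^\natural)_{0+}$ and set $\delta_0:=\delta\cdot\delta_+^{-1}$.

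Conditions (1)--(3) are then verified directly: commutativity (1) holds because $\delta_+\in\mathbf{S}^\natural$ is fixed by $\theta_{\mathbf{S}}$ and so commutes with $\delta$, and hence with $\delta_0$; for (2), $\delta_0\in\tilde{S}\subset\tilde{G}$ by coset arithmetic, and $\delta_0^N\equiv u_0$ modulo $A_{\tilde{\mathbf{G}}}$ has finite prime-to-$p$ order, whence so does $\delta_0$ because $\gcd(N,p)=1$; and (3) follows from $\delta_+\in(S^\natural)_{0+}\subset(G_{\delta_0})_{\mathbf{x},0+}\subset G_{\delta_0,0+}$ for any $\mathbf{x}\in\mathcal{B}(\mathbf{S}^\natural,F)\subset\mathcal{B}(\mathbf{G}_{\delta_0},F)$, using that $\mathbf{S}^\natural$ is a maximal torus of $\mathbf{G}_{\delta_0}$. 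The main obstacle is condition (4): one must arrange that the lift $\delta_+$ has its class modulo $A_{\tilde{\mathbf{G}}}$ in the closure $\overline{\langle\bar\delta\rangle}$ inside $G^\dagger/A_{\tilde{\mathbf{G}}}$. The argument is as follows: $\bar\delta^N$ lies in the compact group $S^\natural/A_{\tilde{\mathbf{G}}}$, so $\overline{\langle\bar\delta^N\rangle}$ is compact, and $\overline{\langle\bar\delta\rangle}$ is a finite union of $N$ cosets thereof, hence compact. As a closed subgroup of the totally disconnected group $G^\dagger/A_{\tilde{\mathbf{G}}}$ it is profinite abelian, and therefore decomposes canonically as the direct product of its Sylow pro-$p$ and pro-(prime-to-$p$) parts. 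The $N$-th power map restricts to a bijection on the pro-$p$ factor, so the unique $N$-th root of $\bar u_+$ in $(S^\natural)_{0+}/A_{\tilde{\mathbf{G}},0+}$ is forced to coincide with the pro-$p$ component of $\bar\delta$, and thus lies in $\overline{\langle\bar\delta\rangle}$. Lifting through the freedom $A_{\tilde{\mathbf{G}},0+}\subset(S^\natural)_{0+}$ produces $\delta_+$ and $\delta_0$ inside $\overline{\langle\delta\rangle A_{\tilde{\mathbf{G}}}}$.
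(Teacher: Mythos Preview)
Your proof is correct and follows essentially the same strategy as the paper: pass to $\bfS^{\nat}$ via a prime-to-$p$ power $\delta^{N}$, apply the ordinary topological Jordan decomposition there, extract a prime-to-$p$ root of the unipotent part, and set $\delta_{0}=\delta\cdot\delta_{+}^{-1}$. The only notable difference is in the verification of (4): the paper constructs $\delta_{+}$ explicitly as the limit of the Cauchy sequence $(\delta'_{+})^{(p^{na}-1)/N}$, which makes membership in $\overline{\langle\delta\rangle A_{\t{\G}}}$ transparent, whereas you argue structurally via the Sylow decomposition of the profinite abelian group $\overline{\langle\bar\delta\rangle}$ to identify the $N$-th root with the pro-$p$ component of $\bar\delta$---both arguments are valid and of comparable weight.
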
\symdef{delta-0}{$\delta_{0}$}\symdef{delta-plus}{$\delta_{+}$}

\begin{proof}
Since $\delta$ is elliptic semisimple, there exists an $F$-rational elliptic twisted maximal torus $\t\bfS$ of $\t{\G}$ such that $\delta\in\t{S}$ by definition (Definition \ref{defn:elliptic}).
Since the associated automorphism $\theta_{\bfS}$ of $\bfS$ can be thought of as the conjugation by $\delta$ on $\bfS$, the element $\delta^{l}$ of $S$ is fixed by $\theta_{\bfS}$.
In other words, $\delta^{l}$ belongs to $S^{\theta_{\bfS}}$.
Thus, by putting $k\colonequals |\pi_{0}(\bfS^{\theta_{\bfS}})|$, we see that $\delta^{lk}\in S^{\nat}$ (recall that $\bfS^{\nat}\colonequals \bfS^{\theta_{\bfS},\circ}$).

Since $\t{\bfS}$ is elliptic, $\bfS^{\nat}$ is anisotropic modulo $\bfA_{\t{\G}}$, hence $S^{\nat}$ is compact modulo $A_{\t{\G}}$.
In particular, $\delta^{lk}$ has a topological Jordan decomposition in $S^{\nat}/A_{\t{\G}}$ by \cite[Proposition 1.8]{Spi08}.
More precisely, if we write $\bar{\delta}'$ for the image of $\delta^{lk}$ in $S^{\nat}/A_{\t{\G}}$, then we have two elements $\bar{\delta}'_{0}$ and $\bar{\delta}'_{+}$ of $S^{\nat}/A_{\t{\G}}$ such that 
\begin{itemize}
\item
$\bar{\delta}'=\bar{\delta}'_{0}\bar{\delta}'_{+}=\bar{\delta}'_{+}\bar{\delta}'_{0}$,
\item
$\bar{\delta}'_{0}$ is of finite prime-to-$p$ order,
\item
$\bar{\delta}'_{+}$ is topologically $p$-unipotent.
\end{itemize}

We put $\bar{\bfS}^{\nat}\colonequals \bfS^{\nat}/\bfA_{\t{\G}}$.
We note that the image of $\bar{\delta}'_{+}$ under the natural injection 
\[
S^{\nat}/A_{\t{\G}} \hookrightarrow \bar{S}^{\nat}=(\bfS^{\nat}/\bfA_{\t{\G}})(F)
\]
belongs to the pro-unipotent radical $\bar{S}^{\nat}_{0+}$ of the unique parahoric subgroup $\bar{S}^{\nat}_{0}$ of $\bar{S}^{\nat}$.
Indeed, by \cite[Lemma 2.21]{Spi08}, the topological $p$-unipotency of $\bar{\delta}'_{+}$ implies the topological $F$-unipotency of $\bar{\delta}'_{+}$ in the sense of \cite[Definition 2.15]{Spi08}, that is, $\bar{\delta}'_{+}$ belongs to $\bar{\bfS}^{\nat}(E)_{0+}$ for the splitting field $E$ of $\bar{\bfS}^{\nat}$.
Since the torus $\bar{\bfS}^{\nat}$ is tame, we have $\bar{\bfS}^{\nat}(E)_{0+}\cap \bar{S}^{\nat}=\bar{S}^{\nat}_{0+}$ (see \cite[Proposition 2.2]{Yu01}).

By applying \cite[Lemma 3.1.4 (2)]{Kal19} to the short exact sequence
\[
1
\rightarrow\bfA_{\t{\G}}
\rightarrow\bfS^{\nat}
\rightarrow\bar{\bfS}^{\nat}
\rightarrow1,
\]
we see that the map $S^{\nat}_{0+}\rightarrow\bar{S}^{\nat}_{0+}$ is surjective.
Thus we can take an element $\delta'_{+}\in S^{\nat}_{0+}$ mapping to $\bar{\delta}'_{+}\in \bar{S}^{\nat}_{0+}$.

Now note that $lk$ is prime to $p$ by the assumption on $\theta$ and Lemma \ref{lem:pi_{0}}.
Thus we can take $\delta_{+}\in S^{\nat}_{0+}$ satisfying $\delta_{+}^{lk}=\delta'_{+}$ (i.e., an ``$lk$-th root'' of $\delta'_{+}$) as follows.
Let $a\in\Z_{>0}$ be a positive integer such that $p^{a}\equiv1 \pmod{lk}$.
Then the topological $p$-unipotency of $\delta'_{+}$ implies that the sequence
\[
\delta'^{\frac{p^{a}-1}{lk}}_{+},\quad
\delta'^{\frac{p^{2a}-1}{lk}}_{+},\quad
\delta'^{\frac{p^{3a}-1}{lk}}_{+},\ldots
\]
is Cauchy, hence converges.
If we let $\delta_{+}^{-1}\in S^{\nat}_{0+}$ be the limit of this sequence, then we have $\delta_{+}^{lk}=\delta'_{+}$.

We put $\delta_{0}\colonequals \delta\cdot\delta_{+}^{-1}$.
Then obviously $\delta_{0}$ belongs to $\t{S}\subset\t{G}$.
Since $\delta_{+}$ belongs to $S^{\nat}$, $\delta_{+}$ commutes with $\delta_{0}$.
Moreover, by the construction, $\delta_{0}^{lk}$ belongs to $S^{\nat}$ and its image in $S^{\nat}/A_{\t{\G}}$ is of finite prime-to-$p$ order.
Hence, again by noting that $lk$ is prime to $p$, we conclude that $\delta_{0}$ is absolutely $p$-semisimple modulo $A_{\t{\G}}$.
Finally, in order to check that $\delta_{0}, \delta_{+}\in \overline{\langle\delta\rangle A_{\t{\G}}}$, it suffices to show only $\delta_{+}\in \overline{\langle\delta\rangle A_{\t{\G}}}$.
Since the similar statement holds for $\bar{\delta}'_{+}$, namely, $\bar{\delta}'_{+}\in \overline{\langle\bar{\delta}'\rangle}\subset S^{\nat}/A_{\t{\G}}$, we have $\delta'_{+}\in \overline{\langle\delta^{lk}\rangle A_{\t{\G}}}$.
By the construction of $\delta_{+}$, this implies that $\delta_{+}\in \overline{\langle\delta\rangle A_{\t{\G}}}$.
\end{proof}

In the rest of this paper, for a tame elliptic semisimple element $\delta$ of $\t{G}$, we call a decomposition as in Proposition \ref{prop:Jordan} a \textit{topological Jordan decomposition of $\delta$}.

\begin{defn}\label{defn:twisted-approx}
Let $\delta\in\t{G}$ be a tame elliptic semisimple element.
A \textit{normal $r$-approximation to $\delta$} ($r\in\widetilde{\R}_{>0}$) is a pair $(\delta=\delta_{0}\delta_{+}, \underline{\delta}_{+})$ of 
\begin{itemize}
\item
a topological Jordan decomposition $\delta=\delta_{0}\delta_{+}$ of $\delta$ and
\item
a normal $r$-approximation $\underline{\delta}_{+}=(\delta_{i})_{0<i<r}$ to $\delta_{+}$ in $G_{\delta_{0}}$.
\end{itemize}
\end{defn}

For a normal $r$-approximation $(\delta=\delta_{0}\delta_{+}, \underline{\delta}_{+})$ to a tame elliptic semisimple element $\delta\in\t{G}$, we put
\[
\delta^{+}_{<r}\colonequals \prod_{0<i<r}\delta_{i},\quad
\delta_{<r}\colonequals \prod_{0\leq i<r}\delta_{i},\quad
\delta_{\geq r}\colonequals \delta_{<r}^{-1}\delta,
\]\symdef{delta-plus-less-r}{$\delta^{+}_{<r}$}\symdef{delta-less-r}{$\delta_{<r}$}\symdef{delta-greater-equal-r}{$\delta_{\geq r}$}
\[
\bfC_{\G}^{(r)}(\delta)
\colonequals 
\bfC_{\G_{\delta_{0}}}^{(r)}(\ul{\delta}_{+}).
\]
When $(\delta=\delta_{0}\delta_{+}, \underline{\delta}_{+})$ is a normal $r$-approximation to $\delta$, we often simply write ``$\delta=\delta_{0}\delta^{+}_{<r}\delta_{\geq r}$ is a normal $r$-approximation to $\delta$''.

\begin{lem}\label{lem:rss-descent}
Let $\delta$ and $(\delta_{0},\delta_{+})$ be as in Proposition \ref{prop:Jordan}.
If $\delta$ is elliptic regular semisimple in $\t{\G}$, then so is $\delta_{+}$ in $\G_{\delta_{0}}$.
\end{lem}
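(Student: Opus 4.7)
The plan is to fix an $F$-rational elliptic twisted maximal torus $(\t{\bfS},\bfS)$ of $\t{\G}$ with $\delta\in\t{S}$ (which exists by Definition \ref{defn:elliptic}) and to extract from the construction in the proof of Proposition \ref{prop:Jordan} that $\delta_{0}\in\t{S}$ and $\delta_{+}\in S^{\nat}_{0+}\subset S^{\nat}$. In particular, $\delta_{+}$ lies in the torus $\bfS^{\nat}$ and so is automatically semisimple; what remains is to show that $(\G_{\delta_{0}})_{\delta_{+}}$ is a torus anisotropic modulo the maximal $F$-split central torus $\bfA_{\G_{\delta_{0}}}$ of $\G_{\delta_{0}}$.

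First I would identify $\G_{\delta}=\bfS^{\nat}$: by Proposition \ref{prop:twisted-tori}\,(3) applied with $\eta=\delta$, $\bfS^{\nat}$ is a maximal torus of $\G_{\delta}$, and since $\delta$ is regular semisimple $\G_{\delta}$ is itself a torus, so it coincides with its unique maximal torus $\bfS^{\nat}$. Next I would prove $(\G_{\delta_{0}})_{\delta_{+}}=\bfS^{\nat}$. The inclusion $\bfS^{\nat}\subset(\G_{\delta_{0}})_{\delta_{+}}$ is immediate since $\bfS^{\nat}$ is connected, lies in $\G_{\delta_{0}}$ (again by Proposition \ref{prop:twisted-tori}\,(3), now with $\eta=\delta_{0}$), and centralizes $\delta_{+}\in\bfS^{\nat}$. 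Conversely, any $g\in(\G_{\delta_{0}})^{\delta_{+}}$ centralizes both $\delta_{0}$ and $\delta_{+}$, hence $\delta=\delta_{0}\delta_{+}$, so $g\in\G^{\delta}$; passing to identity components gives $(\G_{\delta_{0}})_{\delta_{+}}\subset\G_{\delta}=\bfS^{\nat}$. This establishes that $\delta_{+}$ is regular semisimple in $\G_{\delta_{0}}$ with connected centralizer $\bfS^{\nat}$.

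The ellipticity assertion then reduces to proving $\bfA_{\G_{\delta_{0}}}=\bfA_{\t{\G}}$. The inclusion $\bfA_{\t{\G}}\subset\bfA_{\G_{\delta_{0}}}$ follows from the facts that $\bfA_{\t{\G}}\subset\bfZ_{\G}^{\theta}$ is $F$-split and centralized by $\delta_{0}$, placing it inside $\G_{\delta_{0}}$ as a connected $F$-split central subtorus; conversely, $\bfA_{\G_{\delta_{0}}}\subset\bfZ_{\G_{\delta_{0}}}^{\circ}\subset\bfS^{\nat}$ (using that $\bfS^{\nat}$ is a maximal torus of $\G_{\delta_{0}}$), and the ellipticity of $\t{\bfS}$ forces the maximal $F$-split subtorus of $\bfS^{\nat}$ to be exactly $\bfA_{\t{\G}}$. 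With this identification in hand, the anisotropicity of $\bfS^{\nat}=(\G_{\delta_{0}})_{\delta_{+}}$ modulo $\bfA_{\t{\G}}$ translates directly into the ellipticity of $\delta_{+}$ in $\G_{\delta_{0}}$.

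Overall the argument is essentially bookkeeping with twisted centralizers; the one place the hypotheses genuinely intervene is the identification $\G_{\delta}=\bfS^{\nat}$, which leverages both the regularity of $\delta$ and the Steinberg-type structure result packaged in Proposition \ref{prop:twisted-tori}\,(3). The remaining comparisons of split central tori are formal, and I do not anticipate any serious obstacle.
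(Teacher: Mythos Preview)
Your proposal is correct and follows essentially the same route as the paper: identify $\G_{\delta}=\bfS^{\nat}$ via Proposition~\ref{prop:twisted-tori}\,(3) and regularity, deduce $(\G_{\delta_{0}})_{\delta_{+}}=\bfS^{\nat}$ from the two-sided inclusion, and then check ellipticity of $\bfS^{\nat}$ in $\G_{\delta_{0}}$. The only difference is cosmetic: for ellipticity the paper simply observes that $\bfA_{\t{\G}}\subset\bfZ_{\G_{\delta_{0}}}$, so anisotropicity of $\bfS^{\nat}$ modulo $\bfA_{\t{\G}}$ already forces anisotropicity modulo $\bfZ_{\G_{\delta_{0}}}$, whereas you prove the sharper (but unnecessary) equality $\bfA_{\G_{\delta_{0}}}=\bfA_{\t{\G}}$.
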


\begin{proof}
Let $\t\bfS$ be an $F$-rational elliptic twisted maximal torus of $\t\G$ containing $\delta$.
Then, by Proposition \ref{prop:twisted-tori} (3), $\bfS^{\nat}$ is a maximal torus of $\G_{\delta}$.
Since $\G_{\delta}$ is a torus by the regular semisimplicity of $\delta$, this implies that $\G_{\delta}=\bfS^{\nat}$.
As we have $(\G_{\delta_{0}})_{\delta_{+}}\subset \G_{\delta}=\bfS^{\nat}$, we see that $(\G_{\delta_{0}})_{\delta_{+}}=\bfS^{\nat}$ and $\delta_{+}$ is regular semisimple in $\G_{\delta_{0}}$.

Let us check that $\bfS^{\nat}$ is elliptic in $\G_{\delta_{0}}$.
Again by Proposition \ref{prop:twisted-tori} (3), $\bfS^{\nat}$ is a maximal torus of $\G_{\delta_{0}}$.
Since $\t\bfS$ is elliptic, $\bfS^{\nat}$ is anisotropic modulo $\bfA_{\t{\G}}$.
As $\bfA_{\t{\G}}$ is contained in the center of $\G_{\delta_{0}}$, the maximal torus $\bfS^{\nat}$ of $\G_{\delta_{0}}$ is elliptic.
\end{proof}

\begin{prop}\label{prop:twisted-Jordan}
Suppose that $\G$ is tamely ramified and $p$ does not divide the order of the absolute Weyl group $\Omega_{\G}$ of $\G$.
Then any elliptic semisimple element $\delta\in\t{G}$ has a normal $r$-approximation.
\end{prop}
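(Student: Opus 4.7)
The strategy is to reduce to the untwisted theory of Adler--Spice \cite{AS08} via the topological Jordan decomposition of Proposition \ref{prop:Jordan}.

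First, I would verify that $\delta$ is tame in the sense introduced before Lemma \ref{lem:pi_{0}}. Since $\bfG$ is tamely ramified and $p\nmid|\Omega_\G|$, \cite[Theorem 3.3]{Fin21-IMRN} implies that every $F$-rational maximal torus of $\G$ is tame; in particular, given any $F$-rational elliptic twisted maximal torus $\t{\bfS}$ with $\delta\in\t{S}$ (which exists by Definition \ref{defn:elliptic}), the underlying torus $\bfS$ is tame, so $\delta$ is tame. Hence Proposition \ref{prop:Jordan} provides a topological Jordan decomposition $\delta=\delta_0\delta_+$ with $\delta_0\in\t{S}$ topologically semisimple and $\delta_+\in G_{\delta_0,0+}$ topologically unipotent.

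Next, I would want to produce a normal $r$-approximation to $\delta_+$ inside the connected centralizer $\G_{\delta_0}$ by invoking the untwisted Adler--Spice existence result (referenced in the introduction as ``any elliptic regular semisimple element admits a normal $r$-approximation''). For this, I must confirm that $\G_{\delta_0}$ is a tamely ramified connected reductive group and that $p\nmid|\Omega_{\G_{\delta_0}}|$. Connected reductivity follows from Steinberg's theorem \cite[Theorem 1.1.A]{KS99} applied to the quasi-semisimple automorphism $[\delta_0]$ of $\G$; tameness of $\G_{\delta_0}$ and the Weyl-group bound both follow from the analysis of Section \ref{subsec:Steinberg} identifying $\Phi(\G_{\delta_0},\bfS^{\nat})$ as a subset of the restricted root system $\Phi_{\res}(\G,\T)$, together with the hypotheses $p\nmid|\Omega_\G|$ and $p\nmid l$.

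Having verified these hypotheses for $\G_{\delta_0}$, I would apply Adler--Spice to $\delta_+\in \G_{\delta_0}$ to obtain a normal $r$-approximation $\underline{\delta}_+=(\delta_i)_{0<i<r}$ of $\delta_+$ in $\G_{\delta_0}$. By Definition \ref{defn:twisted-approx}, the pair $(\delta=\delta_0\delta_+,\underline{\delta}_+)$ is then the desired normal $r$-approximation to $\delta$. The main obstacle I anticipate is verifying the hypotheses on $\G_{\delta_0}$: in the elliptic regular semisimple case, Lemma \ref{lem:rss-descent} makes $\delta_+$ elliptic regular semisimple in $\G_{\delta_0}$ so Adler--Spice applies directly, but in the general elliptic semisimple case one must invoke Steinberg's structural results on $\G_{\delta_0}$ itself to see that it inherits the required tameness and Weyl-group conditions from $\G$.
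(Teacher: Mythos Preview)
Your strategy matches the paper's: obtain the topological Jordan decomposition from Proposition \ref{prop:Jordan}, then invoke the untwisted Adler--Spice existence theory inside $\G_{\delta_0}$. The execution in the paper differs in one point worth sharpening. The relevant Adler--Spice result is not the statement about elliptic regular semisimple elements (which, as you note, would only cover the case handled by Lemma \ref{lem:rss-descent}), but \cite[Lemma 8.1]{AS08}: any element of $G_{\delta_0}$ that is (i) bounded modulo $Z_{\G_{\delta_0}}$ and (ii) contained in an $F$-rational tame maximal torus of $\G_{\delta_0}$ has a normal $r$-approximation, provided (iii) the condition $(\mathbf{Gd}^{G_{\delta_0}})$ of \cite[Definition 6.3]{AS08} holds. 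Condition (i) is immediate since $\delta_+\in G_{\delta_0,0+}$. For (ii) the concrete witness is $\bfS^\nat$: the proof of Proposition \ref{prop:Jordan} already places $\delta_+\in S^\nat$, and $\bfS^\nat$ is tame because $\bfS$ is. For (iii) one uses that $\Omega_{\G_{\delta_0}}$ embeds into $\Omega_\G$ (see \cite[Section 1.1]{KS99}), so $p\nmid|\Omega_{\G_{\delta_0}}|$, and then \cite[Theorem 3.6]{Fin21-IMRN} gives $(\mathbf{Gd}^{G_{\delta_0}})$. Your phrasing in terms of ``tameness and Weyl-group conditions on $\G_{\delta_0}$'' is heading toward (iii), but you should not need to establish tameness of $\G_{\delta_0}$ itself; what is actually required is that $\delta_+$ sits in a specific tame torus, and this comes for free from the construction of the topological Jordan decomposition.
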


\begin{proof}
Since the existence of a topological Jordan decomposition of $\delta$ is guaranteed by Proposition \ref{prop:Jordan}, we only have to show that $\delta_{+}$ has a normal $r$-approximation in $G_{\delta_{0}}$.
By \cite[Lemma 8.1]{AS08}, any bounded-modulo-$Z_{\G_{\delta_{0}}}$ element of $G_{\delta_{0}}$ belonging to an $F$-rational tame maximal torus of $\G_{\delta_{0}}$ has a normal $r$-approximation as long as the assumption ``$(\mathbf{Gd}^{G_{\delta_{0}}})$'' is satisfied (see \cite[Definition 6.3]{AS08}).
Since $\delta_{+}$ is bounded-modulo-$Z_{\G_{\delta_{0}}}$, it is enough to show that $\delta_{+}$ is contained in an $F$-rational tame maximal torus of $\G_{\delta_{0}}$ and that $(\mathbf{Gd}^{G_{\delta_{0}}})$ is satisfied.

As remarked before, the assumption on $p$ implies that any $F$-rational maximal torus of $\bfG$ is tame, hence we can find an $F$-rational elliptic twisted maximal torus $(\t\bfS,\bfS)$ such that $\delta\in\t{S}$.
This implies that $\bfS^{\nat}$ is an $F$-rational tame maximal torus of $\G_{\delta_{0}}$.
By construction, $\delta_{+}$ belongs to $S^{\nat}$.
As $p$ does not divide the order of the absolute Weyl group $\Omega_{\G_{\delta_{0}}}$ of $\G_{\delta_{0}}$ (note that this is a subgroup of $\Omega_{\G}$; see \cite[Section 1.1]{KS99}), the assumption $(\mathbf{Gd}^{G_{\delta_{0}}})$ is satisfied by \cite[Theorem 3.6]{Fin21-IMRN}.
\end{proof}

\begin{lem}\label{lem:head}
Let $\delta\in\t{G}$ be a tame elliptic semisimple element having a normal $r$-approximation $\delta=\delta_{0}\delta^{+}_{<r}\delta_{\geq r}$.
Then $\delta_{0}$ belongs to $\overline{\langle\delta_{<r}\rangle A_{\t{\G}}}$.
\end{lem}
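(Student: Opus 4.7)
The plan is to exhibit $\delta_0$ explicitly as a limit of elements of $\langle\delta_{<r}\rangle\cdot A_{\t{\G}}$ obtained by raising $\delta_{<r}$ to suitable powers, in the spirit of the convergence argument at the end of the proof of Proposition \ref{prop:Jordan}.

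The crucial preliminary is that $\delta_{<r}^{+}:=\prod_{0<i<r}\delta_i$ is topologically unipotent and commutes with $\delta_0$. Commutativity is built into Definition \ref{defn:twisted-approx}, since each $\delta_i$ lies in $\G_{\delta_{0}}(F)$. For topological unipotency, the definition of a good sequence (Definition \ref{defn:good-seq}) provides an $F$-rational tame torus $\bfS'$ of $\bfG_{\delta_{0}}$ such that $\delta_i\in S'_{i}\subset S'_{0+}$ for every $0<i<r$; since $S'_{0+}$ is pro-$p$, the commutative product $\delta_{<r}^{+}$ lies in it, and in particular $(\delta_{<r}^{+})^{p^{n}}\to 1$ in $G$.

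Now let $m$ be the order of the image of $\delta_0$ in $G^{\dagger}/A_{\t{\G}}$; by Proposition \ref{prop:Jordan}(2), $m$ is finite and prime to $p$. Since $A_{\t{\G}}\subset\G(F)$, the relation $\delta_{0}^{m}\in A_{\t{\G}}$ forces $l\mid m$ (because the natural projection $G^{\dagger}\to\langle\theta\rangle$ factors through $G^{\dagger}/A_{\t{\G}}$). Choose $a\in\Z_{>0}$ with $p^{a}\equiv 1\pmod{m}$, which exists as $p\nmid m$. Using the commutation of $\delta_0$ with $\delta_{<r}^{+}$, for each $n\geq 1$,
\[
\delta_{<r}^{p^{na}} = \delta_{0}^{p^{na}}\cdot(\delta_{<r}^{+})^{p^{na}} = \delta_{0}\cdot z_{n}\cdot u_{n},
\]
where $z_{n}:=\delta_{0}^{p^{na}-1}\in A_{\t{\G}}$ (since $m\mid p^{na}-1$) and $u_{n}:=(\delta_{<r}^{+})^{p^{na}}\to 1$ in $G$. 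Therefore $\delta_{<r}^{p^{na}}\cdot z_{n}^{-1}=\delta_{0}\cdot u_{n}$ belongs to $\langle\delta_{<r}\rangle\cdot A_{\t{\G}}$ and converges to $\delta_0$ as $n\to\infty$, yielding $\delta_{0}\in\overline{\langle\delta_{<r}\rangle A_{\t{\G}}}$.

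The only nontrivial technical ingredient is the topological unipotency of $\delta_{<r}^{+}$, which relies on the good-sequence hypothesis supplying a single commutative tame torus of $\bfG_{\delta_{0}}$ containing all the $\delta_i$, so that the product sits in a pro-$p$ group. Everything else is formal bookkeeping modeled on the root-extraction / convergence trick already used in the proof of Proposition \ref{prop:Jordan}.
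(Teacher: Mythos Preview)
Your proof is correct and follows essentially the same approach as the paper's: both exhibit $\delta_{0}$ as the limit of $\delta_{<r}^{p^{na}}$ (modulo $A_{\t{\G}}$) by using that $\delta_{0}$ has finite prime-to-$p$ order modulo $A_{\t{\G}}$ while $\delta_{<r}^{+}$ is topologically $p$-unipotent. The paper carries out the convergence in the quotient $G^{\dagger}/A_{\t{\G}}$ and then lifts, whereas you work directly in $G^{\dagger}$ while tracking the $A_{\t{\G}}$-factor $z_{n}$ explicitly; these are equivalent, and your extra justification of the topological unipotency of $\delta_{<r}^{+}$ (via the common tame torus from the good-sequence axiom) makes explicit a point the paper leaves implicit.
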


\begin{proof}
We let $\bar{\delta}_{0}$ denote the image of $\delta_{0}$ in $G^{\dagger}/A_{\t{\G}}$.
Let $p'$ be the order of $\bar{\delta}_{0}$, which is prime to $p$.
If we take $k\in\Z_{>0}$ such that $p^{k}\equiv1 \pmod{p'}$, then we have $\bar{\delta}_{0}^{p^{k}}=\bar{\delta}_{0}$.
Hence, for any $n\in\Z_{>0}$, we have $\bar{\delta}_{0}^{p^{nk}}=\bar{\delta}_{0}^{p^{(n-1)k}}=\cdots=\bar{\delta}_{0}$.
Since $\delta^{+}_{<r}$ is topologically $p$-unipotent and commutes with $\delta_{0}$, we have
\[
(\bar{\delta}_{<r})^{p^{nk}}
=(\bar{\delta}_{0})^{p^{nk}}\cdot(\bar{\delta}^{+}_{<r})^{p^{nk}}
=\bar{\delta}_{0}\cdot(\bar{\delta}^{+}_{<r})^{p^{nk}}
\xrightarrow[n\rightarrow\infty]{}\bar{\delta}_{0}.
\]
Thus $\bar{\delta}_{0}$ belongs to $\overline{\langle\bar{\delta}_{<r}\rangle}\subset G^{\dagger}/A_{\t{\G}}$.
It can be easily checked that this implies that $\delta_{0}$ belongs to $\overline{\langle\delta_{<r}\rangle A_{\t{\G}}}\subset\t{G}$.
\end{proof}

\begin{lem}\label{lem:cent-cent}
Let $\delta\in\t{G}$ be a tame elliptic semisimple element having a normal $r$-approximation $\delta=\delta_{0}\delta^{+}_{<r}\delta_{\geq r}$.
Then we have
\[
(\G_{\delta_{0}})_{\delta^{+}_{<r}}
=
\G_{\delta_{<r}}
\]
\end{lem}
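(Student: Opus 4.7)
The plan is to prove the two inclusions $\supset$ and $\subset$ separately, relying on two basic facts: (i) in the topological Jordan decomposition of $\delta_{+}$, the element $\delta_{0}$ commutes with $\delta^{+}_{<r}$, so that $\delta_{<r}=\delta_{0}\cdot\delta^{+}_{<r}$; and (ii) if a connected subgroup $\bfH\subset\bfG$ containing the identity lies inside the full stabilizer $\bfG^{y}$ of some $y\in\t{\bfG}$, then it automatically lies inside the connected stabilizer $\bfG_{y}$.

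The inclusion $(\G_{\delta_{0}})_{\delta^{+}_{<r}}\subset\G_{\delta_{<r}}$ is the easy direction: the left-hand side is a connected subgroup of $\bfG$ centralizing both $\delta_{0}$ and $\delta^{+}_{<r}$, hence centralizing their product $\delta_{<r}$, and by (ii) it sits inside $\G_{\delta_{<r}}$.

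For the reverse inclusion, the strategy is first to establish $\G_{\delta_{<r}}\subset\G_{\delta_{0}}$ using Lemma \ref{lem:head}, and then to deduce commutation with $\delta^{+}_{<r}$ from (i). For the first step, take any $g\in\G_{\delta_{<r}}$. Then $g$ commutes with $\delta_{<r}$ (hence with every power of it) by definition, and with every element of $A_{\t{\G}}$ since $A_{\t{\G}}\subset\bfZ_{\G}$; consequently it centralizes every element of $\langle\delta_{<r}\rangle A_{\t{\G}}\subset\t{G}$. Because the centralizer in $\bfG$ of any subset of $\t{\bfG}$ is Zariski-closed, and therefore $p$-adically closed on $F$-points, $g$ also centralizes the $p$-adic closure $\overline{\langle\delta_{<r}\rangle A_{\t{\G}}}$, which by Lemma \ref{lem:head} contains $\delta_{0}$. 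Thus $g\in\G^{\delta_{0}}$, and (ii) promotes this to $\G_{\delta_{<r}}\subset\G_{\delta_{0}}$. For the second step, once $g\in\G_{\delta_{<r}}$ is known to commute with both $\delta_{0}$ and $\delta_{<r}=\delta_{0}\delta^{+}_{<r}$, it must commute with $\delta^{+}_{<r}$ as well; so $\G_{\delta_{<r}}\subset(\G_{\delta_{0}})^{\delta^{+}_{<r}}$, and a final application of (ii) yields the desired inclusion $\G_{\delta_{<r}}\subset(\G_{\delta_{0}})_{\delta^{+}_{<r}}$.

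The only real subtlety is bookkeeping around the twisted-space setting: since $\delta_{<r}$ and $\delta_{0}$ belong to $\t{\bfG}$ rather than $\bfG$, ``centralize'' must be interpreted as invariance under the conjugate action of $\bfG$ on $\t{\bfG}$, and the closure in Lemma \ref{lem:head} is the $p$-adic closure inside $\t{G}$. Apart from this, the argument is purely formal once Lemma \ref{lem:head} is in hand.
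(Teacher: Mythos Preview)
Your argument is correct and considerably more direct than the paper's. The paper proves the nontrivial inclusion $\G_{\delta_{<r}}\subset(\G_{\delta_{0}})_{\delta^{+}_{<r}}$ by first constructing a twisted maximal torus $\t\bfS$ containing $\delta$, $\delta_{0}$, and $\delta_{<r}$; then both sides share $\bfS^{\nat}$ as a maximal torus, and the proof reduces to matching root systems via the explicit description of $\Phi(\G_{\eta},\bfS^{\nat})\subset\Phi_{\res}(\G,\bfS)$ from Section~\ref{subsec:Steinberg}. The key step there is the arithmetic observation that if $N(\alpha)(\nu_{<r})=\varsigma_{\alpha}$, then splitting along the topological Jordan decomposition forces $N(\alpha)(\nu_{0})=\varsigma_{\alpha}$ and $\alpha(\nu^{+}_{<r})=1$, because the prime-to-$p$ and topologically-$p$-unipotent parts of a root of unity must each be trivial.

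Your route replaces this root computation by the topological fact from Lemma~\ref{lem:head} that $\delta_{0}$ lies in the $p$-adic closure of $\langle\delta_{<r}\rangle A_{\t{\G}}$, so anything centralizing $\delta_{<r}$ centralizes $\delta_{0}$ by continuity. This is shorter and avoids the Steinberg-type structure theory entirely. Two small remarks on bookkeeping: the closure should be taken in $G^{\dagger}$ rather than $\t{G}$, since powers of $\delta_{<r}$ leave the coset $\t{G}$; and the closedness you actually use is that of the fixed-point set of $[g]$ in $G^{\dagger}$ for a fixed $g$, not the centralizer in $\G$ of a subset of $\t\G$ (both statements are true, but only the former gives the implication you want). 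Finally, to promote the inclusion from $F$-points to an equality of algebraic groups, note that the same argument runs unchanged over any finite extension $E/F$ (the limit in the proof of Lemma~\ref{lem:head} already lies in $G^{\dagger}(F)$), or alternatively invoke Zariski density of $G_{\delta_{<r}}$ in $\G_{\delta_{<r}}$.
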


\begin{proof}
The statement can be proved by a similar argument to the untwisted case (cf.\ \cite[Corollary 6.14]{AS08}) as we explain in the following.

We may take an $F$-rational tame twisted maximal torus $\t{\bfS}$ containing $\delta$, $\delta_{0}$, and $\delta_{<r}$.
Indeed, as $\delta_{+}=\delta_{<r}^{+}\delta_{\geq r}$ is a normal $r$-approximation in $\G_{\delta_{0}}$, $\delta_{+}$ belongs to $(\G_{\delta_{0}})_{\delta_{<r}^{+}}$, hence we can find an $F$-rational tame maximal torus $\bfS'$ of $(\G_{\delta_{0}})_{\delta_{<r}^{+}}$ containing $\delta_{+}$ (note that $\delta_{+}$ is semisimple).
Then $\bfS'$ contains $\delta^{+}_{<r}$ and $\delta_{+}$.
By Steinberg's result (see \cite[Theorem 1.1.A]{KS99}), $\bfS\colonequals \bfZ_{\G}(\bfS')$ gives an $F$-rational tame maximal torus of $\G$.
Moreover, $\bfS$ is $[\delta_{0}]$-stable and there exists an $[\delta_{0}]$-stable Borel subgroup containing $\bfS$.
Hence, by Lemma \ref{lem:twisted-torus}, $\t\bfS\colonequals \bfS\delta_{0}$ is an $F$-rational tame twisted maximal torus of $\t{\G}$.
(Note that then $\bfS'=\bfS^{\nat}$.)
By construction, we have $\delta,\delta_{0},\delta_{<r}\in\t{S}$.

Since $\delta_{<r}=\delta_{0}\delta^{+}_{<r}$, the inclusion $(\G_{\delta_{0}})_{\delta^{+}_{<r}}\subset\G_{\delta_{<r}}$ is obvious.
Let us show that this inclusion is in fact the equality.
As $\bfS^{\nat}$ is a maximal torus both in $(\G_{\delta_{0}})_{\delta^{+}_{<r}}$ and $\G_{\delta_{<r}}$, it suffices to check that $\Phi((\G_{\delta_{0}})_{\delta^{+}_{<r}}, \bfS^{\nat})$ equals $\Phi(\G_{\delta_{<r}}, \bfS^{\nat})$.
By taking $g_{\bfS}\in\G$ as in Section \ref{subsec:Steinberg}, we put 
\[
\nu_{<r}\theta\colonequals {}^{g_{\bfS}}\delta_{<r}\in\T\theta,\quad
\nu_{0}\theta\colonequals {}^{g_{\bfS}}\delta_{0}\in\T\theta,\quad
\nu^{+}_{<r}\colonequals {}^{g_{\bfS}}\delta^{+}_{<r} \in \T^{\nat}.
\]\symdef{nu-less-r}{$\nu_{<r}$}\symdef{nu-0}{$\nu_{0}$}\symdef{nu-plus-less-r}{$\nu^{+}_{<r}$}
Then $\Phi((\G_{\delta_{0}})_{\delta^{+}_{<r}}, \bfS^{\nat})$ and $\Phi(\G_{\delta_{<r}}, \bfS^{\nat})$ are identified with $\Phi((\G_{\nu_{0}\theta})_{\nu^{+}_{<r}}, \T^{\nat})$ and $\Phi(\G_{\nu_{<r}\theta}, \T^{\nat})$, respectively.
By the description explained in Section \ref{subsec:Steinberg}, we have 
\[
\Phi(\G_{\nu_{0}\theta},\T^{\nat})
=\{\alpha_{\res} \mid \alpha\in \Phi(\G,\T); N(\alpha)(\nu_{0})=\varsigma_{\alpha}\},
\]
hence
\[
\Phi((\G_{\nu_{0}\theta})_{\nu^{+}_{<r}},\T^{\nat})
=\{\alpha_{\res} \mid \alpha\in \Phi(\G,\T); \text{$N(\alpha)(\nu_{0})=\varsigma_{\alpha}$ and $\alpha_{\res}(\nu^{+}_{<r})=1$}\}.
\]
On the other hand, we have
\[
\Phi(\G_{\nu_{<r}\theta},\T^{\nat})
=\{\alpha_{\res} \mid \alpha\in \Phi(\G,\T); N(\alpha)(\nu_{<r})=\varsigma_{\alpha}\}.
\]

Thus our task is to check that, for any $\alpha\in\Phi(\G,\T)$ satisfying $N(\alpha)(\nu_{<r})=\varsigma_{\alpha}$, we have $N(\alpha)(\nu_{0})=\varsigma_{\alpha}$ and $\alpha_{\res}(\nu^{+}_{<r})=1$.
Let $\alpha\in\Phi(\G,\T)$ be such a root.
By the definition of $l_{\alpha}$, we have $\sum_{i=0}^{l-1}\theta^{i}(\alpha)=\frac{l}{l_{\alpha}}\sum_{i=0}^{l_{\alpha}-1}\theta^{i}(\alpha)$.
Thus we get
\begin{align*}
N(\alpha)(\nu_{<r})^{\frac{l}{l_{\alpha}}}
&=
\biggl(\sum_{i=0}^{l_{\alpha}-1}\theta^{i}(\alpha)\biggr)(\nu_{<r})^{\frac{l}{l_{\alpha}}}\\
&=
\biggl(\sum_{i=0}^{l-1}\theta^{i}(\alpha)\biggr)(\nu_{<r})
=
\alpha\biggl(\prod_{i=0}^{l-1}\theta^{i}(\nu_{<r})\biggr)
=
\alpha\bigl((\nu_{<r}\theta)^{l}\bigr).
\end{align*}
Hence, noting that $(\nu_{<r}\theta)^{l}=(\nu_{0}\theta)^{l}\cdot \nu_{<r}^{+,l}$, we have
\[
\varsigma_{\alpha}^{\frac{l}{l_{\alpha}}}
=
\alpha\bigl((\nu_{<r}\theta)^{l}\bigr)
=
\alpha\bigl((\nu_{0}\theta)^{l}\bigr)\cdot\alpha(\nu^{+}_{<r})^{l}.
\]
Since $\delta_{0}$ is of finite prime-to-$p$ order modulo $A_{\t{\G}}$ and $\delta^{+}_{<r}$ is topologically $p$-unipotent, we see that $\alpha((\nu_{0}\theta)^{l})\in\overline{F}^{\times}$ is of finite prime-to-$p$ order (note that $A_{\t{\G}}$ is killed by any root) and $\alpha(\nu^{+}_{<r})^{l}\in\overline{F}^{\times}$ is topologically $p$-unipotent.
Thus, by noting that $\varsigma_{\alpha}\in\{\pm1\}$ and $p\neq2$, we must have $\alpha(\nu^{+}_{<r})^{l}=1$, which furthermore implies that $\alpha(\nu^{+}_{<r})=1$ since $\alpha(\nu^{+}_{<r})$ is topologically unipotent.
Then we get 
\[
\varsigma_{\alpha}=N(\alpha)(\nu_{<r})=N(\alpha)(\nu_{0})\cdot N(\alpha)(\nu^{+}_{<r})=N(\alpha)(\nu_{0}).
\]
\end{proof}

\section{Regular supercuspidal representations}\label{sec:rsc}

\subsection{Regular supercuspidal representations}\label{subsec:rsc}
From now on, we furthermore assume that
\begin{itemize}
\item
$\G$ is tamely ramified over $F$, and
\item
$p$ does not divide the order of the absolute Weyl group $\Omega_{\G}$ of $\G$.
\end{itemize}
In \cite{Yu01}, Yu introduced the notion of a \textit{cuspidal $\G$-datum} and attached an irreducible supercuspidal representation of $G$ to each cuspidal $\G$-datum.
Recall that a cuspidal $\G$-datum is a quintuple $\Sigma=(\vec{\G},\vec{\vartheta},\vec{r},\x,\rho_{0})$ consisting of the following objects (here we follow the convention of \cite[Section 3.1]{HM08}):
\begin{itemize}
\item
$\vec{\G}$ is a sequence $\G^{0}\subsetneq\G^{1}\subsetneq\cdots\subsetneq\G^{d}=\G$ of tame Levi subgroups such that $\bfZ_{\G^{0}}/\bfZ_{\G}$ is anisotropic,
\item
$\x$ is a vertex of the reduced Bruhat--Tits building $\mcB^{\red}(\G^{0},F)$ of $\G^{0}$,
\item
$\vec{r}$ is a sequence $0\leq r_{0}<\cdots<r_{d-1}\leq r_{d}$ such that $0<r_{0}$ when $d>0$,
\item
$\vec{\vartheta}$ is a sequence $(\vartheta_{0},\ldots,\vartheta_{d})$ of characters $\vartheta_{i}$ of $G^{i}$ satisfying
\begin{itemize}
\item
for $0\leq i<d$, $\vartheta_{i}$ is $\G^{i+1}$-generic of depth $r_{i}$ at $\x$, and
\item
for $i=d$, $\vartheta_{d}$ is of depth of $r_d$ at $\x$ if $r_{d-1}<r_{d}$ and $\vartheta_{d}=\mathbbm{1}$ if $r_{d-1}=r_{d}$.
\end{itemize}
\item
$\rho_{0}$ is an irreducible representation of $G^{0}_{\x}$ whose restriction to $G^{0}_{\x,0}$ contains the inflation of a cuspidal representation of the quotient $G^{0}_{\x,0:0+}$.
\end{itemize}
We call the representations obtained from cuspidal $\G$-data by Yu's construction \textit{tame supercuspidal representations}.

The ``fibers'' of Yu's construction were investigated by Hakim--Murnaghan; in \cite{HM08}, they introduced an equivalence relation called \textit{$\G$-equivalence} and proved that two cuspidal $\G$-data give rise to isomorphic supercuspidal representations if and only if two data are $\G$-equivalent.

In \cite{Kal19}, Kaletha introduced the notion of the \textit{(extra) regularity} for cuspidal $\G$-data (see \cite[Section 3]{Kal19}).
Tame supercuspidal representations arising from (extra) regular cuspidal $\G$-data are called \textit{(extra) regular supercuspidal representations}.
Kaletha discovered that (extra) regular cuspidal $\G$-data can be parametrized by much simpler data called \textit{tame elliptic (extra) regular pairs} (\cite[Proposition 3.7.8]{Kal19}).
If we write $\pi_{(\bfS,\vartheta)}$ for the (extra) regular supercuspidal representation which corresponds to a tame elliptic (extra) regular pair $(\bfS,\vartheta)$, then the situation is summarized as follow:\symdef{pi-S-vartheta}{$\pi_{(\bfS,\vartheta)}$}
\[
\xymatrix{
\{\text{cusp.\ $\G$-data}\}/\text{$\G$-eq.} \ar@{->}[r]^-{1:1}_-{\text{Yu's constr.}} & \{\text{tame s.c.\ rep'ns of $G$}\}/{\sim}\\
\{\text{(ex.) reg.\ cusp.\ $\G$-data}\}/\text{$\G$-eq.}\ar@{}[u]|{\bigcup} \ar@{->}[r]^-{1:1} & \{\text{(ex.) reg.\ s.c.\ rep'ns of $G$}\}/{\sim}\ar@{}[u]|{\bigcup}\\
\{\text{tame ell.\ (ex.) reg.\ pairs}\}/\text{$G$-conj.}\ar@{<->}[u]^-{1:1} \ar@{->}_-{\quad\qquad(\bfS,\vartheta)\mapsto\pi_{(\bfS,\vartheta)}}[ur]&
}
\]

Here, let us recall the definition of a tame elliptic (extra) regular pair:
\begin{defn}[{\cite[Definition 3.7.5]{Kal19}}]\label{defn:ter-pair}
A \textit{tame elliptic regular} (resp.\ \textit{extra regular}) \textit{pair} is a pair $(\bfS,\vartheta)$ consisting of
\begin{itemize}
\item
a tame elliptic $F$-rational maximal torus $\bfS$ of $\G$ and
\item
a character $\vartheta\colon S\rightarrow\C^{\times}$
\end{itemize}
satisfying the following conditions:
\begin{enumerate}
\item
By choosing a finite tamely ramified extension $E$ of $F$ splitting $\bfS$, we put
\[
\Phi_{0+}
\colonequals 
\{\alpha\in \Phi(\G,\bfS) \mid (\vartheta\circ\Nr_{E/F}\circ\alpha^{\vee})|_{E_{0+}^{\times}}\equiv\mathbbm{1} \}.
\]
Then the action of $I_{F}$ on $\Phi_{0+}$ preserves a set of positive roots.
\item
We put $\G^{0}$ to be the tame Levi subgroup of $\G$ with maximal torus $\bfS$ and root system $\Phi_{0+}$.
Then $\vartheta|_{S_{0}}$ has trivial stabilizer for the action of $N_{\G^{0}}(\bfS)/S$ (resp.\ $\Omega_{\G^{0}}(\bfS)(F)$).
\end{enumerate}
\end{defn}

\subsection{Toral supercuspidal representations}\label{subsec:toral-sc}
We next give more detailed explanation of Yu's construction in the case of \textit{toral supercuspidal representations}, which will be of our main interest.

\begin{defn}\label{defn:toral}
We say that a cuspidal $\G$-datum $\Sigma=(\vec{\G},\vec{\vartheta},\vec{r},\x,\rho_{0})$ is \textit{toral} if it satisfies the following:
\begin{itemize}
\item
$\vec{\G}=(\bfG^{0}=\bfS\subset\G=\bfG^{1})$, where $\bfS$ is a tame elliptic maximal torus of $\G$,
\item
$0<r_{0}=r_{1}\,(=:r)$,
\item
$\vec{\vartheta}=(\vartheta,\mathbbm{1})$, where $\vartheta$ is a $\G$-generic character of $S$ of depth $r$,
\item
$\rho_{0}$ is the trivial representation $\mathbbm{1}$.
\end{itemize}
We call a tame supercuspidal representation associated to a toral cuspidal $\G$-datum a \textit{toral supercuspidal representation}.
\end{defn}

\begin{rem}
We caution that, in some literature, the terminology ``toral'' only means that $\G^{0}$ is a torus (i.e., $d$ can be greater than $1$).
For example, in \cite{FS21}, they distinguish these two versions of torality by calling the one of Definition \ref{defn:toral} the ``$0$-torality''.
We use ``toral'' rather than ``$0$-toral'' in this paper following \cite{DS18} and \cite{Kal19}.
\end{rem}

Under the bijection of \cite[Proposition 3.7.8]{Kal19} mentioned above, a tame elliptic regular pair corresponding to a toral cuspidal $\G$-datum $((\bfS\subset\G), (r=r), (\vartheta,\mathbbm{1}),\x,\mathbbm{1})$ is simply given by $(\bfS,\vartheta)$.
Let us call a tame elliptic regular pair obtained in this way a \textit{tame elliptic toral pair}.
We note that the torality implies the extra regularity.

In the following, we fix a tame elliptic toral pair $(\bfS,\vartheta)$.
Let $\x\in\mcB^{\red}(\G,F)$ be the point associated to $\bfS$ and $r\in\R_{>0}$ be the depth of $\vartheta$.
We put $s\colonequals r/2$ and define the subgroups $K$, $J$, and $J_{+}$ of $G$ by
\[
K\colonequals  SG_{\x,s},\quad
J\colonequals  (S,G)_{\x,(r,s)},\quad
J_{+}\colonequals (S,G)_{\x,(r,s+)}, 
\]\symdef{K}{$K$}\symdef{J}{$J$}\symdef{J-plus}{$J_{+}$}
where $(S,G)_{\x,(r,s)}$ and $(S,G)_{\x,(r,s+)}$ are the groups defined according to the manner of \cite[Sections 1 and 2]{Yu01}.
Note that we have $K=SJ$.

Since the depth of $\vartheta$ is $r$, we can extend $\vartheta$ to a character $\hat{\vartheta}$ of $J_{+}$ satisfying $\hat{\vartheta}|_{(S,G)_{\x,(r+,s+)}}\equiv\mathbbm{1}$.
Then, by the definition of the $\G$-genericity, there exists an element $X^{\ast}$\symdef{X-ast}{$X^\ast$} of $\mfs^{\ast}_{-r}$ which is $\G$-generic of depth $r$ in the sense of \cite[Section 8]{Yu01} and satisfies
\[
\hat{\vartheta}(\exp(Y))
=
\psi_{F}(\langle Y,X^{\ast}\rangle)
\]\symdef{hat-vartheta}{$\hat{\vartheta}$}
for any $Y\in\mfg_{\x,s+:r+}$ (or, equivalently, for any $Y\in\mfs_{s+:r+}$).
Here, as explained in \cite[Section 8]{Yu01}, we may regard $\bmfs^{\ast}$ as a subspace of $\bmfg^{\ast}$ by considering the coadjoint action of $\bfS$ on $\bmfg^{\ast}$.
We recall that the definition of $\G$-genericity consists of two conditions \textbf{GE1} and \textbf{GE2}.
The condition \textbf{GE1} requires that $\val_{F}(\langle H_{\alpha},X^{\ast}\rangle)=-r$ for any $\alpha\in\Phi(\G,\bfS)$, where $H_{\alpha}\colonequals d\alpha^{\vee}(1)$.
We do not review the condition \textbf{GE2} because \textbf{GE1} implies \textbf{GE2} by \cite[Lemma 8.1]{Yu01} when $p$ is not a torsion prime for the dual based root datum of $\G$.
(Recall that we have assumed the $p\nmid|\Omega_{\G}|$, which is equivalent to $p\nmid|\Omega_{\h\G}|$. In fact, this implies that $p$ is not a torsion prime for the dual based root datum of $\G$; see \cite[Lemma 3.2]{Fin21-IMRN}.)

The point of the construction is that, by putting $N\colonequals \Ker\hat{\vartheta}\subset J_{+}$, the quotient $J/N$ has the structure of a finite Heisenberg group:
\begin{itemize}
\item
The center of $J/N$ is given by $J_{+}/N$, which is isomorphic to $\mu_{p}\cong\F_{p}$ via $\hat{\vartheta}$ (here we fix an isomorphism between $\mu_{p}\subset\C^\times$ and $\F_{p}$).
\item
The quotient $J/J_{+}$ has a symplectic structure with respect to the pairing
\[
(J/J_{+})\times(J/J_{+}) \rightarrow \mu_{p}\cong\F_{p}\colon (g,g')\mapsto\h\vartheta([g,g'])
\]
(see \cite[Section 11]{Yu01}; we will review the structure of the symplectic space $J/J_{+}$ in more detail in Section \ref{subsec:Heisen-structure}).
\end{itemize}
Therefore, by the Stone--von Neumann theorem, there exists a unique irreducible representation of $J/N$ whose central character on $J_{+}/N$ is given by $\hat{\vartheta}$.
Furthermore, as the conjugate action of $S$ on $J$ preserves $J_{+}$ and $N$ and induces a symplectic action on $J/J_{+}$, we can extend the representation of $J/N$ to the semi-direct group $S\ltimes J$, for which we write $\omega_{(\bfS,\vartheta)}$ (so-called the \textit{Heisenberg--Weil representation}).\symdef{omega-S-vartheta}{$\omega_{(\bfS,\vartheta)}$}
Then the tensor representation $\omega_{(\bfS,\vartheta)}\otimes(\vartheta\ltimes\mathbbm{1})$ of $S\ltimes J$ descends to $SJ=K$, that is, factors through the canonical map $S\ltimes J\twoheadrightarrow K$.
We let $\rho_{(\bfS,\vartheta)}$ be the descended representation of $K$.\symdef{rho-S-vartheta}{$\rho_{(\bfS,\vartheta)}$}
The toral supercuspidal representation $\pi_{(\bfS,\vartheta)}$ is given by 
\[
\pi_{(\bfS,\vartheta)}\colonequals \cInd_{K}^{G}\rho_{(\bfS,\vartheta)}.
\]

We also recall the definitions of a few more groups and representations which will be needed later (for the proof of the character formula, in Sections \ref{sec:TCF1} and \ref{sec:HW-twisted}):
\[
K_{\sigma}
\colonequals 
SG_{\x,0+},\quad
\sigma_{(\bfS,\vartheta)}
\colonequals 
\Ind_{K}^{K_{\sigma}}\rho_{(\bfS,\vartheta)},
\]\symdef{K-sigma}{$K_{\sigma}$}\symdef{sigma-S-vartheta}{$\sigma_{(\bfS,\vartheta)}$}
\[
\tau_{(\bfS,\vartheta)}
\colonequals 
\Ind_{K}^{G_{\x}} \rho_{(\bfS,\vartheta)}\,
(\cong \Ind_{K_{\sigma}}^{G_{\x}} \sigma_{(\bfS,\vartheta)}).
\]\symdef{tau-S-vartheta}{$\tau_{(\bfS,\vartheta)}$}
The situation is summarized in the following diagram:
\[
\xymatrix@=10pt{
& G_{\x}\\ 
& K_\sigma=SG_{\x,0+}\ar@{}[u]|{\bigcup} \\
S\rtimes J \ar@{->>}[r]& K=SG_{\x,s}\ar@{}[u]|{\bigcup} \\
}
\quad\quad
\xymatrix@=10pt{
&& \tau_{(\bfS,\vartheta)}\\ 
&& \sigma_{(\bfS,\vartheta)}\ar_-{\Ind}[u] \\
\omega_{(\bfS,\vartheta)}\otimes(\vartheta\rtimes\mathbbm{1}) \ar^-{\text{descent}}[rr]&& \rho_{(\bfS,\vartheta)}\ar_-{\Ind}[u] \\
}
\]

\section{Twisted Adler--DeBacker--Spice formula: preliminary form}\label{sec:TCF1}

In this and the next sections, we discuss a twisted version of the character formula of Adler--DeBacker--Spice for toral supercuspidal representations (\cite{AS09,DS18}).
Our arguments heavily depend on the work \cite{AS08, AS09,DS18}.
We note that several technical assumptions on $p$ are required so that the theory of Adler--DeBacker--Spice works, but it is enough to assume only the oddness and the non-badness of $p$ (for the root system of $\G$ in the sense of \cite[I.4.1]{SS70}, see also \cite[Section A]{AS08}) whenever $\G$ is tamely ramified by \cite[Section 4.1]{Kal19}. 
Recall that we have assumed that $p$ is odd and does not divide the order of the absolute Weyl group $\Omega_{\G}$ of $\G$; this implies the non-badness of $p$ (\cite[Lemma 3.2]{Fin21-IMRN}).

Let $\t{\bfG}$ be a twisted space as before.
We continue to assume that $p$ does not divide the order $l$ of $\theta$.

\subsection{Twisted character of a $\theta$-stable representation}\label{subsec:TC}
Let us first review the basics of twisted characters of irreducible admissible representations.
See \cite[Section 2.6]{LH17} for more details.

Let $\eta\in\t{G}$.
Then $[\eta]$ is an $F$-rational automorphism of $\G$.
Recall that, for an irreducible admissible representation $\pi$ of $G$ realized on a $\C$-vector space $V$, its \textit{$\eta$-twist} $\pi^{\eta}$ is defined by the action
\[
\pi^{\eta}(g)\colonequals  \pi\circ[\eta](g)=\pi(\eta g\eta^{-1})
\]
on the same representation space $V$.
We say that $\pi$ is \textit{$\eta$-stable} if $\pi^{\eta}$ is isomorphic to $\pi$ as a representation of $G$.

\begin{rem}\label{rem:twist}
Note that, if we write $\eta=\eta^{\circ}\theta$ with an element $\eta^{\circ}\in G$, then we have $[\eta]=[\eta^{\circ}]\circ\theta$.
As $[\eta^{\circ}]$ does not change the isomorphism class of any representation, $\pi$ is $\eta$-stable if and only if $\pi$ is $\theta$-stable.
More explicitly, $\pi(\eta^{\circ})$ gives an intertwiner between $\pi^{\theta}$ and $\pi^{\eta}$, i.e., $\pi^{\eta}(g)\circ\pi(\eta^{\circ})=\pi(\eta^{\circ})\circ\pi^{\theta}(g)$ for any $g\in G$.
\end{rem}

Suppose that $\pi$ is an $\eta$-stable irreducible admissible representation of $G$.
We fix an intertwiner 
\[
I_{\pi}^{\eta}\colon\pi\xrightarrow{\sim}\pi^{\eta}
\]\symdef{I-pi-eta}{$I_{\pi}^{\eta}$}
(note that such an $I_{\pi}^{\eta}$ is unique up to $\C^{\times}$-multiple, as $\pi$ is irreducible)
and put
\[
\t{\pi}(g\eta)\colonequals \pi(g)\circ I_{\pi}^{\eta}
\]
for any $g\eta\in\t{G}$ with $g\in G$.
Then we get a representation $\t{\pi}$ of $\t{G}$ on the representation space $V$ of $\pi$, i.e., a map $\t{\pi}\colon\t{G}\rightarrow\Aut_{\C}(V)$ satisfying the following relation for any $g_{1},g_{2}\in G$ and $\delta\in\t{G}$:
\[
\t{\pi}(g_{1}\cdot\delta\cdot g_{2})=\pi(g_{1})\circ\t{\pi}(\delta)\circ\pi(g_{2}).
\]

For any $f\in C_{c}^{\infty}(\t{G})$, an operator $\t{\pi}(f)$ on $V$ is defined by
\[
\t{\pi}(f)\colonequals  \int_{\t{G}} f(\delta) \t{\pi}(\delta) \, d\delta,
\]
where $d\delta$ is a measure on $\t{G}$ obtained by transferring a Haar measure $dg$ on $G$ by the bijection $G\rightarrow\t{G}\colon g\mapsto g\eta$.
Then, as in the untwisted case, the operator $\t{\pi}(f)$ is of finite rank and hence we can define its trace.
Moreover, the resulting distribution $f\mapsto \tr \t{\pi}(f)$ on $C_{c}^{\infty}(\t{G})$ is known to be locally $L^1$.
In this setting, the \textit{($\eta$-)twisted character} $\Theta_{\t{\pi}}$ of $\pi$ is defined to be the unique locally constant function on $\t{G}_{\rs}$ such that 
\[
\tr \t{\pi}(f)
=
\int_{\t{G}_{\rs}} \Theta_{\t{\pi}}(\delta) f(\delta)\,d\delta
\]\symdef{Theta-pi-tilde}{$\Theta_{\tilde{\pi}}$}
for every $f \in C_{c}^{\infty}(\t{G})$ satisfying $\supp(f) \subseteq \t{G}_{\rs}$, where $\t{G}_{\rs}$ denotes the set of regular semisimple elements of $\t{G}$.

\begin{rem}\label{rem:twist-char}
\begin{enumerate}
\item
We emphasize that the twisted representation $\t{\pi}$ and the twisted character $\Theta_{\t{\pi}}$ depend on the choice of an intertwiner $I_{\pi}^{\eta}$ between $\pi$ and $\pi^{\eta}$ although this dependence is not reflected to the symbol $\t{\pi}$.
For any $c\in\C^{\times}$, $cI_{\pi}^{\eta}\colonequals (c\cdot\id_{V})\circ I_{\pi}^{\eta}$ is again an intertwiner between $\pi$ and $\pi^{\eta}$.
If we define a twisted representation by using $cI_{\pi}^{\eta}$ (let us write $c\t{\pi}$ for it), then its twisted character is simply given by $\Theta_{c\t{\pi}}=c\cdot\Theta_{\t{\pi}}$.
\item
As mentioned in Remark \ref{rem:twist}, for any $\eta=\eta^{\circ}\theta\in\t{G}$, $\pi$ is $\eta$-stable if and only if it is $\theta$-stable.
When $I_{\pi}^{\theta}$ is an intertwiner between $\pi$ and $\pi^{\theta}$, then $I_{\pi}^{\eta}\colonequals \pi(\eta^{\circ})\circ I_{\pi}^{\theta}$ gives an intertwiner between $\pi$ and $\pi^{\eta}$.
If we write $\t{\pi}[I_{\pi}^{\eta}]$ and $\t{\pi}[I_{\pi}^{\theta}]$ for the twisted representations of $\t{G}$ obtained from $\pi$ by using these two intertwiners $I_{\pi}^{\eta}$ and $I_{\pi}^{\theta}$, then we can easily check that $\t{\pi}[I_{\pi}^{\eta}]=\t{\pi}[I_{\pi}^{\theta}]$.
In particular, we have $\Theta_{\t{\pi}[I_{\pi}^{\eta}]}=\Theta_{\t{\pi}[I_{\pi}^{\theta}]}$.
\end{enumerate}
\end{rem}

\subsection{Twist and intertwiner}\label{subsec:twist-int}
Let $\eta\in\t{G}$.
Only in this subsection, for any subgroup $H$ of $G$, we let $H^{\eta}$ denote its conjugate $[\eta]^{-1}(H)=\eta^{-1}H\eta$.
We caution that this usage of notation is temporary; in other places of this paper, the superscript symbol $(-)^{\eta}$ denotes the stabilizer of $\eta$.

For any tame elliptic toral pair $(\bfS,\vartheta)$ of $\G$, its $\eta$-twist
\[
(\bfS,\vartheta)^{\eta}
\colonequals 
([\eta]^{-1}(\bfS),\vartheta\circ[\eta])
\] 
is again a tame elliptic toral pair of $\G$.
Thus we have the toral regular supercuspidal representation $\pi_{(\bfS,\vartheta)^{\eta}}$ associated to $(\bfS,\vartheta)^{\eta}$.
On the other hand, we also have the $\eta$-twist $\pi_{(\bfS,\vartheta)}^{\eta}$ of the toral regular supercuspidal representation $\pi_{(\bfS,\vartheta)}$ associated to $(\bfS,\vartheta)$.
In fact, these representations are isomorphic.
Let us investigate how we can construct an intertwiner $\pi_{(\bfS,\vartheta)^{\eta}}\cong\pi_{(\bfS,\vartheta)}^{\eta}$.

Recall from Section \ref{subsec:rsc} that $\pi_{(\bfS,\vartheta)}$ is defined to be the compact induction $\cInd_{K}^{G}\rho_{(\bfS,\vartheta)}$ of a representation $\rho_{(\bfS,\vartheta)}$ of an open compact-mod-center subgroup $K=SJ$ of $G$.
Hence the $\eta$-twisted representation $\pi_{(\bfS,\vartheta)}^{\eta}$ is isomorphic to the compact induction of $\rho_{(\bfS,\vartheta)}^{\eta}$ from $K^{\eta}$ to $G$ by the following explicit intertwiner:
\[
\cInd_{K^{\eta}}^{G}\rho_{(\bfS,\vartheta)}^{\eta}\xrightarrow{\sim}(\cInd_{K}^{G}\rho_{(\bfS,\vartheta)})^{\eta}=\pi_{(\bfS,\vartheta)}^{\eta}\colon f\mapsto f\circ[\eta]^{-1}.
\tag{1}
\]
On the other hand, we can easily see that the open compact-mod-center subgroup associated to $\eta$-twisted pair $(\bfS,\vartheta)^{\eta}$ is given by $K^{\eta}=S^{\eta}J^{\eta}$.
Thus $\pi_{(\bfS,\vartheta)^{\eta}}$ is given by the compact induction $\cInd_{K^{\eta}}^{G}\rho_{(\bfS,\vartheta)^{\eta}}$ of a representation $\rho_{(\bfS,\vartheta)^{\eta}}$ of $K^{\eta}$.

Let us consider the relationship between the representations $\rho_{(\bfS,\vartheta)}^{\eta}$ and $\rho_{(\bfS,\vartheta)^{\eta}}$ of $K^{\eta}$.
The representation $\rho_{(\bfS,\vartheta)}$ of $K$ is defined to be the push-out of the representation $\omega_{(\bfS,\vartheta)}\otimes(\vartheta\ltimes\mathbbm{1})$ of $S\ltimes J$ along the natural multiplication map $S\ltimes J\twoheadrightarrow SJ$.
Hence $\rho_{(\bfS,\vartheta)}^{\eta}$ is the push-out of $\omega_{(\bfS,\vartheta)}^{\eta}\otimes(\vartheta^{\eta}\ltimes\mathbbm{1})$ along $S^{\eta}\ltimes J^{\eta}\twoheadrightarrow S^{\eta}J^{\eta}$.
On the other hand, $\rho_{(\bfS,\vartheta)^{\eta}}$ is defined to be the push-out of $\omega_{(\bfS,\vartheta)^{\eta}}\otimes(\vartheta^{\eta}\ltimes\mathbbm{1})$ along $S^{\eta}\ltimes J^{\eta}\twoheadrightarrow S^{\eta}J^{\eta}$.
We note that both of $\omega_{(\bfS,\vartheta)}^{\eta}$ and $\omega_{(\bfS,\vartheta)^{\eta}}$ are Heisenberg--Weil representations with central character $\h\vartheta^{\eta}$.
Hence, by the Stone--von Neumann theorem, $\omega_{(\bfS,\vartheta)}^{\eta}$ and $\omega_{(\bfS,\vartheta)^{\eta}}$ are guaranteed to be isomorphic.
Let us fix an intertwiner
\[
I_{\omega_{(\bfS,\vartheta)}}^{\eta}\colon\omega_{(\bfS,\vartheta)^{\eta}}\xrightarrow{\sim}\omega_{(\bfS,\vartheta)}^{\eta},
\]\symdef{I-omega-eta}{$I_{\omega_(\bfS,\vartheta)}^{\eta}$}
which naturally induces an intertwiner
\[
I_{\rho_{(\bfS,\vartheta)}}^{\eta}\colon\rho_{(\bfS,\vartheta)^{\eta}}\xrightarrow{\sim}\rho_{(\bfS,\vartheta)}^{\eta}.
\]\symdef{I-rho-eta}{$I_{\rho_(\bfS,\vartheta)}^{\eta}$}
Then we get an intertwiner between $\cInd_{K^{\eta}}^{G}\rho_{(\bfS,\vartheta)^{\eta}}$ and $\cInd_{K^{\eta}}^{G}\rho_{(\bfS,\vartheta)}^{\eta}$ given by 
\[
\cInd_{K^{\eta}}^{G}\rho_{(\bfS,\vartheta)^{\eta}}\xrightarrow{\sim}\cInd_{K^{\eta}}^{G}\rho_{(\bfS,\vartheta)}^{\eta}\colon f\mapsto I_{\rho_{(\bfS,\vartheta)}}^{\eta}\circ f.
\tag{2}
\]

Therefore, combining (1) with (2), we obtain an intertwiner $I_{\pi_{(\bfS,\vartheta)}}^{\eta}$ between $\pi_{(\bfS,\vartheta)^{\eta}}$ and $\pi_{(\bfS,\vartheta)}^{\eta}$ given by $f\mapsto I_{\rho_{(\bfS,\vartheta)}}^{\eta}\circ f\circ [\eta]^{-1}$:
\[
\pi_{(\bfS,\vartheta)^{\eta}}
=\cInd_{K^{\eta}}^{G}\rho_{(\bfS,\vartheta)^{\eta}}
\xrightarrow{(2)}
\cInd_{K^{\eta}}^{G}\rho_{(\bfS,\vartheta)}^{\eta}
\xrightarrow{(1)}
(\cInd_{K}^{G}\rho_{(\bfS,\vartheta)})^{\eta}
=\pi_{(\bfS,\vartheta)}^{\eta}.
\]

From now on (until the end of Section \ref{sec:HW-twisted}), suppose that we have the following:
\begin{itemize}
\item
an $F$-rational tame elliptic twisted maximal torus $(\t\bfS,\bfS)$ of $\t{\G}$, and
\item 
a tame elliptic toral pair $(\bfS,\vartheta)$ of depth $r\in\R_{>0}$ which is $\theta_\bfS$-invariant, i.e., $\vartheta=\vartheta\circ\theta_\bfS$ (note that this is equivalent to that $(\bfS,\vartheta)=(\bfS,\vartheta)^{\eta}$ for any $\eta\in\t{S}$).
\end{itemize}
Let us fix a base point $\ul{\eta}\in\t{S}$ which is topologically semisimple in the following.
We remark that we can always find such an element since $\t{S}$ is nonempty by the definition of a twisted maximal torus (Definition \ref{defn:twisted-tori}); apply Proposition \ref{prop:Jordan} to any element of $\t{S}$ and take its topologically semisimple part.

Then, by using the intertwiner $I_{\pi_{(\bfS,\vartheta)}}^{\ul{\eta}}$ constructed as above, we obtain a representation $\t{\pi}_{(\bfS,\vartheta)}$ of $\t{G}$ and its twisted character $\Theta_{\t{\pi}_{(\bfS,\vartheta)}}$ (see Section \ref{subsec:TC}).

We note that, since $[\ul{\eta}]$ preserves $\bfS$, the point $\x\in\mcB_{\red}(\G,F)$ associated to $\bfS$ is stabilized by the action on $\mcB_{\red}(\G,F)$ induced by $[\ul{\eta}]$.
Accordingly, every group used in the construction of $\rho_{(\bfS,\vartheta)}$ (such as $G_{\x,s}$, $K$, $J$, $J_{+}$, and so on) is stabilized by $[\ul{\eta}]$.
Hence we also have a twisted representation $\t{\rho}_{(\bfS,\vartheta)}$ of $\t{K}\colonequals K\ul{\eta}$ and its twisted character $\Theta_{\t{\rho}_{(\bfS,\vartheta)}}$, which is a function on $\t{K}$ defined by 
\[
\Theta_{\t{\rho}_{(\bfS,\vartheta)}}(k\ul{\eta})
\colonequals \tr\bigl(\rho_{(\bfS,\vartheta)}(k)\circ I_{\rho_{(\bfS,\vartheta)}}^{\ul{\eta}}\bigr).
\]\symdef{Theta-rho-tilde}{$\Theta_{\tilde{\rho}_{(\bfS,\vartheta)}}$}
Also note that $I_{\rho_{(\bfS,\vartheta)}}^{\ul{\eta}}$ naturally induces an intertwiner between $\sigma_{(\bfS,\vartheta)}$ and its $\ul{\eta}$-twist $\sigma_{(\bfS,\vartheta)}^{\ul{\eta}}$ (recall that $\sigma_{(\bfS,\vartheta)}$ is a representation of $K_{\sigma}$ defined by $\Ind_{K}^{K_{\sigma}}\rho_{(\bfS,\vartheta)}$).
Thus we get a twisted representation $\t{\sigma}_{(\bfS,\vartheta)}$ of $\t{K}_{\sigma}\colonequals K_{\sigma}\ul{\eta}$ and its twisted character $\Theta_{\t{\sigma}_{(\bfS,\vartheta)}}$, which is a function on $\t{K}_{\sigma}$ defined by 
\[
\Theta_{\t{\sigma}_{(\bfS,\vartheta)}}(k\ul{\eta})
\colonequals \tr\bigl(\sigma_{(\bfS,\vartheta)}(k)\circ I_{\sigma_{(\bfS,\vartheta)}}^{\ul{\eta}}\bigr).
\]\symdef{Theta-sigma-tilde}{$\Theta_{\tilde{\sigma}_{(\bfS,\vartheta)}}$}

\begin{rem}
We emphasize that the construction of the intertwiner $I_{\pi_{(\bfS,\vartheta)}}^{\ul{\eta}}$ explained above involves the unspecified choice of an intertwiner $I_{\omega_{(\bfS,\vartheta)}}^{\ul{\eta}}\colon\omega_{(\bfS,\vartheta)^{\ul{\eta}}}\cong\omega_{(\bfS,\vartheta)}^{\ul{\eta}}$ of Heisenberg--Weil representations.
Also, note that $I_{\pi_{(\bfS,\vartheta)}}^{\ul{\eta}}$ depends only on $I_{\omega_{(\bfS,\vartheta)}}^{\ul{\eta}}$.
In Section \ref{subsec:Heisen-int}, we will explain more about how to choose $I_{\omega_{(\bfS,\vartheta)}}^{\ul{\eta}}$.
\end{rem}

We finally recall that, by the torality of $\vartheta$, there exists a $\G$-generic element $X^{\ast}\in\mfs^{\ast}_{-r}$ of depth $r$ which lifts a unique element of $\mfs^{\ast}_{-r:-r+}$ satisfying $\vartheta(\exp(Y))=\psi_{F}(\langle Y,X^{\ast}\rangle)$ for any $Y\in\mfs_{s+:r+}$.
We note that
\[
\vartheta\circ \theta_\bfS(\exp(Y))
=\vartheta(\exp([\eta](Y)))
=\psi_{F}(\langle [\eta](Y),X^{\ast}\rangle)
=\psi_{F}(\langle Y,[\eta](X^{\ast})\rangle),
\]
where we again write $[\eta]$ for the action on $\mfs$ induced by $[\eta]$ and used that $\exp\colon\mfs_{s+:r+}\cong S_{s+:r+}$ is $[\eta]$-equivariant (the action of $[\eta]$ on $X^{\ast}$ is, by definition, given by the identity $\langle [\eta](Y),X^{\ast}\rangle=\langle Y,[\eta](X^{\ast})\rangle$).
Thus, as we have $\vartheta\circ\theta_\bfS=\vartheta$ by the assumption, we see that $[\eta](X^{\ast})$ equals $X^{\ast}$ in $\mfs_{-r:-r+}^{\ast}$.

\begin{lem}\label{lem:X-inv}
We may take $X^{\ast}\in\mfs_{-r}^{\ast}$ to be $[\eta]$-invariant.
\end{lem}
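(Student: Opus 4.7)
The plan is to average $X^{\ast}$ over the $[\eta]$-orbit, which is legal because the order $l$ of $[\eta]|_{\bfS}$ is prime to $p$. First I would note that the Moy--Prasad lattices $\mfs^{\ast}_{-r}$ and $\mfs^{\ast}_{-r+}$ are $\mcO$-modules stable under $[\eta]$, since $[\eta]$ preserves $\bfS$ and its canonical filtration. Because $p\nmid l$, the integer $l$ is a unit in $\mcO$, so multiplication by $1/l$ makes sense on $\mfs^{\ast}_{-r}$.

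Next I would set
\[
X^{\ast}_{\mathrm{avg}}\colonequals \frac{1}{l}\sum_{i=0}^{l-1}[\eta]^{i}(X^{\ast})\in\mfs^{\ast}_{-r},
\]
which is manifestly $[\eta]$-invariant, since $[\eta]^{l}$ acts trivially (as the order of $[\eta]|_{\bfS}$ divides $l$ by Lemma \ref{lem:twist-order}). To see that $X^{\ast}_{\mathrm{avg}}$ has the same defining properties as $X^{\ast}$, I would use the congruence $[\eta](X^{\ast})\equiv X^{\ast} \pmod{\mfs^{\ast}_{-r+}}$ established in the paragraph preceding the lemma. Iterating and using the $[\eta]$-stability of $\mfs^{\ast}_{-r+}$ gives $[\eta]^{i}(X^{\ast})\equiv X^{\ast}\pmod{\mfs^{\ast}_{-r+}}$ for every $0\leq i<l$, so $X^{\ast}_{\mathrm{avg}}\equiv X^{\ast}\pmod{\mfs^{\ast}_{-r+}}$. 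In particular, $X^{\ast}_{\mathrm{avg}}$ represents the same class in $\mfs^{\ast}_{-r:-r+}$ as $X^{\ast}$, hence the identity $\vartheta(\exp(Y))=\psi_{F}(\langle Y,X^{\ast}_{\mathrm{avg}}\rangle)$ continues to hold for all $Y\in\mfs_{s+:r+}$.

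Finally I would check that $\G$-genericity is preserved. Under our assumption $p\nmid|\Omega_{\G}|$, the condition \textbf{GE2} follows from \textbf{GE1} by \cite[Lemma~8.1]{Yu01}, so it suffices to check \textbf{GE1}, namely $\val_{F}(\langle H_{\alpha},X^{\ast}_{\mathrm{avg}}\rangle)=-r$ for every $\alpha\in\Phi(\G,\bfS)$. Since $X^{\ast}_{\mathrm{avg}}-X^{\ast}\in\mfs^{\ast}_{-r+}$, the pairing $\langle H_{\alpha},X^{\ast}_{\mathrm{avg}}-X^{\ast}\rangle$ lies in an $\mcO$-ideal of $F$ whose valuation exceeds $-r$, so $\langle H_{\alpha},X^{\ast}_{\mathrm{avg}}\rangle$ has the same valuation as $\langle H_{\alpha},X^{\ast}\rangle$, which is $-r$. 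Thus $X^{\ast}_{\mathrm{avg}}$ is $\G$-generic of depth $r$ and $[\eta]$-invariant, and we may replace $X^{\ast}$ by $X^{\ast}_{\mathrm{avg}}$. The argument is a routine averaging over a finite cyclic group whose order is invertible in the residue ring, so I do not anticipate any serious obstacle; the only verification requiring a small remark is that genericity is insensitive to the choice of lift within $\mfs^{\ast}_{-r:-r+}$.
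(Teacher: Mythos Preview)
Your proof is correct and follows essentially the same averaging argument as the paper: both replace $X^{\ast}$ by $\frac{1}{l}\sum_{i=0}^{l-1}[\eta]^{i}(X^{\ast})$, using $p\nmid l$ and the fact that the image of $X^{\ast}$ in $\mfs^{\ast}_{-r:-r+}$ is already $[\eta]$-invariant. The paper stops once it observes that the averaged element has the same image in $\mfs^{\ast}_{-r:-r+}$, leaving the preservation of $\G$-genericity implicit; your explicit verification of \textbf{GE1} is a welcome elaboration but not a different approach.
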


\begin{proof}
Since $p\nmid l$, we have $\frac{1}{l}\sum_{i=0}^{l}[\eta]^i(X^{\ast}) \in \mfs^{\ast}_{-r}$.
Note that this element is $[\eta]$-invariant.
Moreover, as the image of $X^{\ast}$ in $\mfs^{\ast}_{-r:-r+}$ is $[\eta]$-invariant, the image of $\frac{1}{l}\sum_{i=0}^{l}[\eta]^i(X^{\ast})$ in $\mfs^{\ast}_{-r:-r+}$ is equal to the image of $X^{\ast}$.
Thus, by replacing $X^{\ast}$ with $\frac{1}{l}\sum_{i=0}^{l}[\eta]^i(X^{\ast})$, we get a desired element.
\end{proof}

In the following, by this lemma, we assume that $X^{\ast}\in\mfs^{\ast}_{-r}$ is invariant under $[\eta]|_{\bfS}=\theta_{\bfS}$.

We finish this subsection by showing one more lemma.
Recall that $\bfS^\natural:=\bfS^{\theta_\bfS,\circ}$.
We put $\vartheta^{\nat}\colonequals \vartheta|_{S^{\nat}}$.

\begin{lem}\label{lem:toral-pair-descent}
For any $\eta\in\t{S}$, $(\bfS^{\nat},\vartheta^{\nat})$ is a tame elliptic toral pair of $\G_{\eta}$ of depth $r$.
\end{lem}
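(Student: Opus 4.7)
The plan is to verify three properties of $(\bfS^{\nat},\vartheta^{\nat})$ inside $\G_{\eta}$: that $\bfS^{\nat}$ is a tame elliptic $F$-rational maximal torus, that $\vartheta^{\nat}$ is $\G_{\eta}$-generic of depth $r$, and that the resulting pair is toral (so that $(\G_{\eta})^{0}=\bfS^{\nat}$ in the sense of Definition \ref{defn:ter-pair}, whence the regularity condition becomes vacuous).  The first of these is easy: Proposition \ref{prop:twisted-tori}(3) gives that $\bfS^{\nat}$ is an $F$-rational maximal torus of $\G_{\eta}$; tameness is inherited from $\bfS$; and ellipticity inside $\G_{\eta}$ follows because $\bfA_{\t{\G}}\subset \bfZ_{\G}^{\theta}$ is centralized by $[\eta]$ and hence lies in $\bfZ_{\G_{\eta}}$, so the anisotropicity of $\bfS^{\nat}$ modulo $\bfA_{\t{\G}}$ (Definition \ref{defn:elliptic}) passes through the surjection $\bfS^{\nat}/\bfA_{\t{\G}}\twoheadrightarrow \bfS^{\nat}/\bfZ_{\G_{\eta}}$ to give anisotropicity modulo $\bfZ_{\G_{\eta}}$ (compare the last paragraph of the proof of Lemma \ref{lem:rss-descent}).

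The heart of the argument is the $\G_{\eta}$-genericity of $\vartheta^{\nat}$.  Let $X^{*,\nat}\in(\bmfs^{\nat})^{*}$ be the restriction of $X^{*}\in\bmfs^{*}_{-r}$, taken to be $\theta_{\bfS}$-invariant thanks to Lemma \ref{lem:X-inv}.  Restricting the formula $\vartheta(\exp Y)=\psi_{F}(\langle Y,X^{*}\rangle)$ to $Y\in\mfs^{\nat}_{s+:r+}$ gives $\vartheta^{\nat}(\exp Y)=\psi_{F}(\langle Y,X^{*,\nat}\rangle)$, so $\vartheta^{\nat}$ has depth $r$ and is represented by $X^{*,\nat}$.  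To verify the genericity condition $\mathbf{GE1}$ for $X^{*,\nat}$, I would transport $(\bfS^{\nat},\bfS)$ to $(\T^{\nat},\T)$ via the element $g_{\bfS}$ of Section \ref{subsec:Steinberg}; each coroot $H_{\beta}$ for $\beta\in\Phi(\G_{\eta},\bfS^{\nat})\cong \Phi(\G_{\nu\theta},\T^{\nat})$ is then identified with $\varrho_{\alpha}\sum_{i=0}^{l_{\alpha}-1}\theta^{i}(H_{\alpha})$ for some $\alpha\in\Phi(\G,\T)$, by the description of $\Phi^{\vee}_{\res}(\G,\T)$ recalled in Section \ref{subsec:Steinberg}.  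Using the $\theta$-invariance of $X^{*}$,
\[
\langle H_{\beta},X^{*,\nat}\rangle
=\varrho_{\alpha}\sum_{i=0}^{l_{\alpha}-1}\langle \theta^{i}(H_{\alpha}),X^{*}\rangle
=\varrho_{\alpha}\,l_{\alpha}\,\langle H_{\alpha},X^{*}\rangle.
\]
Since $p$ is odd and prime to $l$, with $\varrho_{\alpha}\in\{1,2\}$ and $l_{\alpha}\mid l$, the factor $\varrho_{\alpha}l_{\alpha}$ is a unit in $\mcO_{F}$; hence $\val_{F}\langle H_{\beta},X^{*,\nat}\rangle=\val_{F}\langle H_{\alpha},X^{*}\rangle=-r$ by $\G$-genericity of $X^{*}$.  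This is $\mathbf{GE1}$ for $X^{*,\nat}$, and since $p\nmid|\Omega_{\G_{\eta}}|$, \cite[Lemma 8.1]{Yu01} upgrades it to full $\G_{\eta}$-genericity of depth $r$.

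Finally, the genericity just established implies that for every $\beta\in\Phi(\G_{\eta},\bfS^{\nat})$ the character $\vartheta^{\nat}\circ\Nr_{E/F}\circ\beta^{\vee}$ on $E^{\times}$ has depth exactly $r>0$ and is therefore non-trivial on $E_{0+}^{\times}\supset E_{r}^{\times}$.  Consequently the set $\Phi_{0+}$ of Definition \ref{defn:ter-pair}(1) is empty, $(\G_{\eta})^{0}=\bfS^{\nat}$, and the regularity condition (2) holds automatically because $N_{\bfS^{\nat}}(\bfS^{\nat})/S^{\nat}$ is trivial.  The only non-routine step is the coroot computation in the displayed formula: it rests on Steinberg's explicit description of $\Phi^{\vee}(\G_{\eta},\bfS^{\nat})$ in terms of norm cocharacters and on the fact that $p\nmid \varrho_{\alpha}l_{\alpha}$, which is precisely where the running hypothesis that $p$ is odd and prime to $l$ is essential.
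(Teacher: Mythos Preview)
Your proof is correct and follows essentially the same approach as the paper's own proof.  The core step---verifying $\mathbf{GE1}$ by writing each restricted coroot as $\varrho_{\alpha}\sum_{i=0}^{l_{\alpha}-1}\theta^{i}(H_{\alpha})$ via Section \ref{subsec:Steinberg}, using the $\theta_{\bfS}$-invariance of $X^{\ast}$ to collapse the sum to $\varrho_{\alpha}l_{\alpha}\langle H_{\alpha},X^{\ast}\rangle$, and invoking $p\nmid\varrho_{\alpha}l_{\alpha}$---is identical to the paper's computation.  You are simply more explicit than the paper in also writing out the tame-elliptic verification for $\bfS^{\nat}$ and the check that genericity forces $\Phi_{0+}=\varnothing$ (hence $(\G_{\eta})^{0}=\bfS^{\nat}$ and the regularity condition is vacuous), both of which the paper leaves implicit.
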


\begin{proof}
Note that, as we have $\bfS^{\nat}\subset\bfS$, we have $\mfs^{\nat}\subset\mfs$, hence $\mfs^{\ast}\twoheadrightarrow\mfs^{\nat\ast}$.
We can take an element of $\mfs^{\nat\ast}_{-r}$ representing the character $\vartheta^{\nat}$ to be the image of $X^{\ast}$ under the natural map $\mfs^{\ast}\twoheadrightarrow\mfs^{\nat\ast}$.
Our task is to show that $X^{\ast}$ is an $\G_{\eta}$-generic element of depth $r$.
We note that our assumption that $p\nmid|\Omega_{\G}|$ implies that $p\nmid|\Omega_{\G_{\eta}}|$ (recall that $\Omega_{\G_{\eta}}$ is regarded as a subgroup of $\Omega_{\G}$).
Thus it is enough to only check that \textbf{GE1} is satisfied, which requires that $\val_{F}(\langle H_{\alpha_{\res}},X^{\ast}\rangle)=-r$ for any $\alpha_{\res}\in\Phi(\G_{\eta},\bfS^{\nat})$, where $H_{\alpha_{\res}}=d\alpha_{\res}^{\vee}(1)$ (see Section \ref{subsec:toral-sc}).

By the description of $\Phi(\G_{\eta},\bfS^{\nat})$ and $\Phi^{\vee}(\G_{\eta},\bfS^{\nat})$ as in Section \ref{subsec:Steinberg}, we have $H_{\alpha_{\res}}=\varrho_{\alpha}\cdot \sum_{i=0}^{l_{\alpha}-1}H_{\theta_{\bfS}^{i}(\alpha)}$.
As $X^{\ast}$ is $\theta_{\bfS}$-invariant, we have $\langle H_{\theta_{\bfS}^{i}(\alpha)},X^{\ast}\rangle=\langle H_{\alpha},\theta_{\bfS}^{i}(X^{\ast})\rangle=\langle H_{\alpha},X^{\ast}\rangle$.
Hence $\langle H_{\alpha_{\res}},X^{\ast}\rangle=\varrho_{\alpha}\cdot l_{\alpha}\cdot\langle H_{\alpha},X^{\ast}\rangle$.
Since $X^{\ast}$ is $\G$-generic of depth $r$ and $p\nmid\varrho_{\alpha}\cdot l_{\alpha}$, we get $\val_{F}(\langle H_{\alpha_{\res}},X^{\ast}\rangle)=-r$.
\end{proof}

\subsection{Separation lemma}\label{subsec:separation}
In this subsection, we prove some technical lemma and propositions which will be needed later.

The following follows from \cite[Theorem 12.7.1]{KP23} by using the tamely ramified descent for the Bruhat--Tits buildings (\cite[Section 12.9]{KP23}).

\begin{prop}\label{prop:BT-descent}
Let $\delta_{0}\in\t{S}$ be absolutely $p$-semisimple modulo $A_{\t\G}$.
There exists an identification between the building $\mcB(\G_{\delta_{0}},F)$ and the fixed points of $\mcB(\G,F)$ under the action induced by $[\delta_{0}]$ such that $\mcA(\bfS^{\nat},F)$ is mapped to $\mcA(\bfS,F)^{\delta_{0}}$:
\[
\xymatrix@R=10pt{
\mcB(\G_{\delta_{0}},F)\ar^-{\cong}[r]&\mcB(\G,F)^{\delta_{0}}\ar@{}[r]|*{\subset}&\mcB(\G,F)\\
\mcA(\bfS^{\nat},F)\ar^-{\cong}[r]\ar@{}[u]|{\bigcup}&\mcA(\bfS,F)^{\delta_{0}}\ar@{}[r]|*{\subset}\ar@{}[u]|{\bigcup}&\mcA(\bfS,F)\ar@{}[u]|{\bigcup}
}
\]
\end{prop}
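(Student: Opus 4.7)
The plan is to apply the tamely ramified descent for Bruhat--Tits buildings, in the form of \cite[Theorem 12.7.1]{KP23} combined with the discussion in \cite[Section 12.9]{KP23}, to the $F$-rational automorphism of $\G$ given by conjugation by $\delta_{0}$.

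The first step is to verify the hypotheses of the descent theorem, namely that $[\delta_{0}]$ is an $F$-rational automorphism of $\G$ of finite order prime to $p$ with connected reductive fixed-point subgroup. Since $\delta_{0}\in\t{G}$ is absolutely $p$-semisimple modulo $A_{\t\G}$, its image in $\t{G}/A_{\t\G}$ has finite prime-to-$p$ order $m$; because $\bfA_{\t\G}\subset\bfZ_{\G}$ acts trivially under conjugation, we have $[\delta_{0}]^{m}=\id_{\G}$, so $[\delta_{0}]$ is $F$-rational of order dividing $m$ and hence prime to $p$. The connectedness and reductivity of $\G_{\delta_{0}}$, together with the fact that $\bfS^{\nat}$ is a maximal torus of $\G_{\delta_{0}}$, were already recorded in Section \ref{subsec:Steinberg} as a consequence of Steinberg's theorem.

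Applying \cite[Theorem 12.7.1]{KP23} to the finite cyclic group $\langle[\delta_{0}]\rangle$ then produces a canonical $G_{\delta_{0}}$-equivariant identification $\mcB(\G_{\delta_{0}},F)\xrightarrow{\sim}\mcB(\G,F)^{\delta_{0}}$. For the apartment claim, note that $[\delta_{0}]$ preserves $\bfS$ through $\theta_{\bfS}$ and hence acts on $\mcA(\bfS,F)$, where the fixed-point set $\mcA(\bfS,F)^{\delta_{0}}$ is an affine space under $(X_{\ast}(\bfS)\otimes_{\Z}\R)^{\theta_{\bfS}}=X_{\ast}(\bfS^{\nat})\otimes_{\Z}\R$. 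Since the construction of \cite[Section 12.9]{KP23} proceeds by tame Galois descent from a splitting field of $\bfS$ and is by design apartment-compatible, the identification carries $\mcA(\bfS^{\nat},F)$ onto $\mcA(\bfS,F)^{\delta_{0}}$, giving the commutativity of the diagram.

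The main subtlety is to justify that the descent of \cite[Section 12.9]{KP23}, presented there in the language of tamely ramified Galois actions on a connected reductive $F$-group, specializes correctly to the conjugation action of a single element $\delta_{0}$ of prime-to-$p$ order; this is handled by first passing to a tame splitting field of $\bfS$, where the cyclic group $\langle[\delta_{0}]\rangle$ acts compatibly with the Galois action, and then combining the Galois descent with the descent for the cyclic group, as in the variant formulations explained in \cite[Section 12.9]{KP23}.
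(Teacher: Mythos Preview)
Your proposal is correct and follows exactly the same route as the paper: the paper's proof consists of a single sentence citing \cite[Theorem 12.7.1]{KP23} together with the tamely ramified descent of \cite[Section 12.9]{KP23}, which is precisely what you invoke. Your additional verification of the hypotheses (finite prime-to-$p$ order of $[\delta_{0}]$, connected reductive fixed-point group via Steinberg) and the apartment compatibility are reasonable elaborations that the paper leaves implicit.
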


\begin{prop}\label{prop:MP-descent}
Let $\delta_{0}\in\t{S}$ be absolutely $p$-semisimple modulo $A_{\t\G}$.
Suppose that a point $\x$ associated to $\bfS$ belongs to $\mcA(\bfS^{\nat},F)$ under the identification as in Proposition \ref{prop:BT-descent}.
Then we have the following for any $r,s\in\widetilde{\R}_{>0}$ satisfying $r<s$:
\begin{enumerate}
\item
$S^{\nat}_{r}=(S_{r})^{\delta_{0}}\,(=(S_{r})^{\theta_{\bfS}})$ and $S^{\nat}_{0+:r}\cong(S_{0+:r})^{\delta_{0}}$,
\item
$G_{\delta_{0},\x,r}=(G_{\x,r})^{\delta_{0}}$ and $(S^{\nat},G_{\delta_{0}})_{\x,(r,s(+))}=(S,G)_{\x,(r,s(+))}^{\delta_{0}}$,
\item
$S^{\nat}_{0+}G_{\delta_{0},\x,r}=(S_{0+}G_{\x,r})^{\delta_{0}}$,
\item
$(S^{\nat},G_{\delta_{0}})_{\x,(r,s):(r,s+)}\cong(S,G)_{\x,(r,s):(r,s+)}^{\delta_{0}}$.
\end{enumerate}
\end{prop}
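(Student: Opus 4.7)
The plan is to reduce every assertion to the Lie algebra, where the additive structure permits averaging under the finite order action, and then transfer back to the group via the mock-exponential map fixed earlier. The essential input is that the action of $[\delta_{0}]$ on $\bfS$ factors through $\theta_{\bfS}$, which by Lemma \ref{lem:twist-order} has order $l$, and by our standing assumption $p\nmid l$. Thus on any $\Z_{p}$-module acted on by $\langle\theta_{\bfS}\rangle$, the $\theta_{\bfS}$-invariants are a direct summand, and the cohomology $H^{1}(\langle\theta_{\bfS}\rangle,-)$ vanishes on pro-$p$ submodules. I would also freely use Proposition \ref{prop:BT-descent} to identify $\mcA(\bfS^{\nat},F)$ with the $\theta_{\bfS}$-fixed points of $\mcA(\bfS,F)$, so that the base point $\x$ makes simultaneous sense in both $\mcB(\G_{\delta_{0}},F)$ and $\mcB(\G,F)$.

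For (1), the inclusion $S^{\nat}_{r}\subset (S_{r})^{\theta_{\bfS}}$ is tautological. Conversely, if $s\in S_{r}\cap S^{\theta_{\bfS}}$ with $r>0$, then $s$ is topologically $p$-unipotent, and since $\bfS^{\theta_{\bfS}}/\bfS^{\nat}$ is finite of order prime to $p$ by Lemma \ref{lem:pi_{0}}, we must have $s\in S^{\nat}$; the fact that this places $s$ in the $r$-th Moy--Prasad filtration $S^{\nat}_{r}$ of $\bfS^{\nat}$ follows from defining Moy--Prasad filtrations via a tamely ramified splitting field and the compatibility of the splitting with $\theta_{\bfS}$. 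The isomorphism $S^{\nat}_{0+:r}\cong (S_{0+:r})^{\theta_{\bfS}}$ is then obtained by taking $\theta_{\bfS}$-invariants of the exact sequence $1\to S_{r}\to S_{0+}\to S_{0+:r}\to 1$ and using the vanishing of $H^{1}(\langle\theta_{\bfS}\rangle,S_{r})$.

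For (2), the equality $G_{\delta_{0},\x,r}=(G_{\x,r})^{\delta_{0}}$ is the standard descent of Moy--Prasad filtrations under a tamely ramified, prime-to-$p$, finite order automorphism: decompose $\bmfg_{\x,r}$ into $\theta_{\bfS}$-isotypic components and identify the invariant component with $\bmfg_{\delta_{0},\x,r}$ using the description in Section \ref{subsec:Steinberg} of roots and root spaces of $\bfG_{\delta_{0}}$ relative to $\bfS^{\nat}$ as $\theta_{\bfS}$-orbit sums; transfer to the group via the mock-exponential map (which is $\theta_{\bfS}$-equivariant by its construction in \cite[Appendix A]{AS09}). The equality $(S^{\nat},G_{\delta_{0}})_{\x,(r,s(+))}=(S,G)_{\x,(r,s(+))}^{\delta_{0}}$ is then reduced to the same analysis, since these groups are generated by $S_{r}$ (or $S_{r+}$) together with affine root groups of depth $s$ (or $s+$), and the $\theta_{\bfS}$-invariants of a $\theta_{\bfS}$-orbit product of affine root groups are precisely the affine root group of $\bfG_{\delta_{0}}$ indexed by the restricted root. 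Assertions (3) and (4) are then formal consequences: (3) from combining (1) and (2) with the quotient $S_{0+}G_{\x,r}/G_{\x,r}\hookrightarrow S_{0+}/(S_{0+}\cap G_{\x,r})$, after checking that the intersection is again $\theta_{\bfS}$-stable with pro-$p$ cokernel; (4) from applying the $H^{1}$-vanishing argument to the short exact sequence $1\to (S,G)_{\x,(r,s+)}\to (S,G)_{\x,(r,s)}\to (S,G)_{\x,(r,s):(r,s+)}\to 1$.

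I expect the main obstacle to be the compatibility for the mixed Moy--Prasad groups $(S,G)_{\x,(r,s)}$, because the precise form of Yu's concave-function filtration must be tracked under the orbit-wise descent of root groups; one must verify that for each $\theta_{\bfS}$-orbit $\{\theta_{\bfS}^{i}(\alpha)\}$, the $\theta_{\bfS}$-invariants of $\prod_{i}U_{\alpha,\x,s}$ coincide with the affine root group of $\bfG_{\delta_{0}}$ at $\alpha_{\res}$ with the correct depth, with appropriate attention to the scaling factor $\varrho_{\alpha}$ appearing in Section \ref{subsec:Steinberg}. The remaining steps are essentially bookkeeping once this identification is in place.
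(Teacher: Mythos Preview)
Your proposal is essentially correct and, for parts (1), (3), and (4), tracks the paper's argument closely: the paper also uses Lemma~\ref{lem:pi_{0}} and topological $p$-unipotence for (1), an explicit $l$-th-root averaging (equivalent to your $H^{1}$-vanishing) for the quotient statement, the coset argument via $S_{0+}\cap G_{\x,r}=S_{r}$ for (3), and the same short-exact-sequence cohomology vanishing for (4).

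The one substantive difference is part (2). The paper does not carry out the root-space-by-root-space analysis you sketch; it simply invokes \cite[Proposition 12.8.5]{KP23} (descent of Moy--Prasad filtrations under a finite-order prime-to-$p$ automorphism), observing that the hypothesis of that proposition is exactly what part (1) supplies. Your direct approach via $\theta_{\bfS}$-isotypic decomposition of $\bmfg_{\x,r}$ and orbit-wise identification of affine root groups is viable---it is in effect what the Kaletha--Prasad result encapsulates---but you have correctly identified it as the delicate step: matching the depth on the restricted-root side with the concave-function depth on the $\bfG$-side, including the scaling behavior, requires real bookkeeping. The paper's citation sidesteps that work entirely, so if the reference is available to you, using it is both cleaner and safer than the hand-rolled argument.
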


\begin{proof}
Let us first show (1).
Recall that $\delta_{0}$ acts on $\bfS$ via $\theta_{\bfS}$ and we put $\bfS^{\nat}\colonequals \bfS^{\theta_{\bfS},\circ}$.
The $r$-th filtration of $S$ is defined by 
\[
S_{r}\colonequals \{t\in S_{0} \mid \text{$\val_{F}(\chi(t)-1)\geq r$ for any $\chi\in X^{\ast}(\bfS)$} \},
\]
where $S_{0}$ denotes the unique parahoric subgroup of $S$ (see \cite[Definitions 2.5.13 and B.5.1]{KP23}).
Similarly, the $r$-th filtration of $S^{\nat}$ is defined by 
\[
S^{\nat}_{r}\colonequals \{t\in S^{\nat}_0 \mid \text{$\val_{F}(\chi(t)-1)\geq r$ for any $\chi\in X^{\ast}(\bfS^{\nat})$} \}.
\]
Thus, noting that $S^{\nat}_0$ is contained in $S_{0}$, we have $S^{\nat}_{r}\subset(S_{r})^{\theta_{\bfS}}$.
To show the converse inclusion, we take any element $t\in (S_{r})^{\theta_{\bfS}}$.
By Lemma \ref{lem:pi_{0}}, there exists a prime-to-$p$ number $k\in\Z_{>0}$ satisfying $t^{k}\in S^{\nat}_{r}$.
Then, as discussed in the proof of Proposition \ref{prop:Jordan}, we can remove $k$ to get $t\in S^{\nat}_{r}$ since $p\nmid k$ and $t$ is topologically $p$-unipotent.
We consider the latter part of (1).
By the former part which we just showed, we have $S^{\nat}_{0+:r}\cong(S_{0+})^{\delta_{0}}/(S_{r})^{\delta_{0}}$.
Note that we have $(S_{0+})^{\delta_{0}}/(S_{r})^{\delta_{0}}\hookrightarrow (S_{0+:r})^{\delta_{0}}$.
Let $\bar{s}$ be an element of $(S_{0+:r})^{\delta_{0}}$ represented by $s\in S_{0+}$.
Then $\prod_{i=0}^{l-1}\theta_{\bfS}^{i}(s)$ is an element of $(S_{0+})^{\delta_{0}}$.
Again by noting that $\prod_{i=0}^{l-1}\theta_{\bfS}^{i}(s)$ is topologically $p$-unipotent and $p\nmid l$, we can find an element $t\in (S_{0+})^{\delta_{0}}$ satisfying $t^{l}=\prod_{i=0}^{l-1}\theta_{\bfS}^{i}(s)$.
Then we have $\bar{t}^{l}=\bar{s}^{l}$, hence $\bar{t}=\bar{s}$ since the order of $(S_{0+:r})^{\delta_{0}}$ is prime to $l$.
Hence we obtained the surjectivity of the map $(S_{0+})^{\delta_{0}}/(S_{r})^{\delta_{0}}\hookrightarrow (S_{0+:r})^{\delta_{0}}$.

The assertion (2) follows from \cite[Proposition 12.8.5]{KP23} (together with the tamely ramified descent of Bruhat--Tits theory).
Note that the assumption of \cite[Proposition 12.8.5]{KP23} is satisfied by (1).

Let us show (3).
The inclusion $S^{\nat}_{0+}G_{\delta_{0},\x,r}\subset(S_{0+}G_{\x,r})^{\delta_{0}}$ is obvious.
To check the converse inclusion, let us take an element $g$ of $(S_{0+}G_{\x,r})^{\delta_{0}}$.
Since we have $S_{0+}\cap G_{\x,r}=S_{r}$ (see \cite[Proposition 4.6]{AS08}), we have a bijection
\[
S_{0+:r}=S_{0+}/S_{r}\xrightarrow{1:1} S_{0+}G_{\x,r}/G_{\x,r}.
\]
This implies that the coset $gG_{\x,r}$ is represented by an element $s$ of $S_{0+}$.
As $g$ is $[\delta_{0}]$-invariant and the above bijection is $[\delta_{0}]$-equivariant, the coset $sS_{r}$ is also $[\delta_{0}]$-invariant.
Since we have $(S_{0+:r})^{\delta_{0}}=S^{\nat}_{0+:r}$ by (1), we know that $s$ can be taken to be an element of $S^{\nat}_{0+}$.
Now let us write $g=sg'$ with $s\in S^{\nat}_{0+}$ and $g'\in G_{\x,r}$.
Since $g$ and $s$ are $\delta_{0}$-invariant, so is $g'$.
By (2), this implies that $g'\in G_{\delta_{0},\x,r}$.

The assertion (4) follows from the same argument as in the proof of assertion (1) by using (2).
\end{proof}

\begin{lem}\label{lem:sep-depth0}
Let $\delta$ be an elliptic regular semisimple element of $\t{G}$ with a topological Jordan decomposition $\delta=\delta_{0}\delta_{+}$.
If $\delta$ belongs to $\t{S}G_{\x,r}$, then $\delta_{0}$ belongs to ${}^{G_{\x,r}}\t{S}\colonequals \{{}^{g}s \mid g\in G_{\x,r}, s\in\t{S}\}$.
\end{lem}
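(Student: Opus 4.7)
The plan is to reduce the statement, via the uniqueness of the topological Jordan decomposition, to finding an element of $G_{\x,r}$ that conjugates $\delta$ into a product $\eta_0 \cdot v$ where $\eta_0 \in \t S$ is topologically semisimple and $v$ is topologically unipotent and commutes with $\eta_0$. Concretely, I would write $\delta = \eta g$ with $\eta \in \t S$ and $g \in G_{\x,r}$ (possible by the hypothesis) and apply Proposition~\ref{prop:Jordan} to $\eta$ to get a topological Jordan decomposition $\eta = \eta_0 \eta_+$ with $\eta_0 \in \t S$ topologically semisimple and $\eta_+ \in S^\nat_{0+}$ topologically unipotent. Then
\[
\delta = \eta_0 \cdot (\eta_+ g), \qquad \eta_+ g \in G_{\x,0+},
\]
so $\delta$ is a product of a topologically semisimple and a topologically unipotent element, but these two factors need not commute because $\eta_0$ need not commute with $g$. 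The crucial observation is that modulo $G_{\x,r}$ the unipotent factor $\eta_+ g$ reduces to $\eta_+ \in S^\nat_{0+}$, which does commute with $\eta_0$; so the failure of commutativity is confined to the deep subgroup $G_{\x,r}$.

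To exploit this, I would produce $h \in G_{\x,r}$ such that $h^{-1}\delta h = \eta_0 \cdot v$ with $v \in G_{\eta_0,\x,0+}$. Given such an $h$, uniqueness of the topological Jordan decomposition (Remark~\ref{rem:Jordan}) forces $h^{-1}\delta_0 h \equiv \eta_0 \pmod{A_{\t\G}}$, and since $A_{\t\G} \subset S$ we conclude $h^{-1}\delta_0 h \in \eta_0 \cdot S = \t S$, hence $\delta_0 \in {}^{G_{\x,r}}\t S$. To construct $h$, I iterate along the Moy--Prasad filtration. First, because $\bfS$ is elliptic, the image of $\x$ in $\mcB^{\red}(\G,F)$ is the unique $\bfS$-fixed point and is therefore fixed by $[\eta_0]$; by Proposition~\ref{prop:BT-descent}, $\x \in \mcA(\bfS^\nat,F) \subset \mcB(\G_{\eta_0},F)$. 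Second, since $\eta_0$ is absolutely $p$-semisimple modulo $A_{\t\G}$, the automorphism $[\eta_0]$ has finite prime-to-$p$ order on $\bfG$; combined with Proposition~\ref{prop:MP-descent}, this yields a direct-sum decomposition
\[
G_{\x,s:s+} = G_{\eta_0,\x,s:s+} \oplus V_s,
\]
with $1-[\eta_0]$ invertible on the complementary subspace $V_s$. Starting from $h_r := 1$, which already gives $h_r^{-1}\delta h_r \in \eta_0 \cdot G_{\eta_0,\x,0+} \cdot G_{\x,r}$ by the key observation above, I inductively produce $h_s \in G_{\x,r}$ satisfying $h_s^{-1}\delta h_s \in \eta_0 \cdot G_{\eta_0,\x,0+} \cdot G_{\x,s}$ by multiplying the previous approximation by an element of $G_{\x,s}$ whose $V_s$-image is $-(1-[\eta_0])^{-1}$ applied to the current $V_s$-obstruction. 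Completeness of the filtration yields a limit $h \in G_{\x,r}$ with the desired property.

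The principal obstacle is verifying that the inductive correction indeed only requires elements of the depth-$s$ filtration subgroup for $s \geq r$; this is precisely what lets $h$ be assembled inside $G_{\x,r}$ rather than in the larger $G_{\x,0+}$. The hinge is the base-case observation that $\eta_+ g \equiv \eta_+ \pmod{G_{\x,r}}$ already lies in $G_{\eta_0,\x,0+}$, so that no ``shallow'' obstruction at depths in $(0,r)$ enters the picture. Once that is in place, the rest is a Moy--Prasad / Lang-style fixed-point computation powered by the invertibility of $1-[\eta_0]$ on each $V_s$, formally parallel to the arguments used in \cite{AS08}.
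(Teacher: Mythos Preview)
Your approach is correct but differs from the paper's. The paper first observes that $\delta_0$ itself already lies in $\t S G_{\x,r}$ modulo $A_{\t\G}$, using that $\delta_0 \in \overline{\langle\delta\rangle A_{\t\G}}$ (Proposition~\ref{prop:Jordan}(4)). Writing $\delta_0 = g_+ s_0$ with $g_+ \in G_{\x,r}$ and $s_0 \in \t S$, the finite prime-to-$p$ order of $\overline{\delta_0}$ turns the problem into the triviality of a $1$-cocycle $\Z/p'\Z \to \overline{G_{\x,r}}$, which vanishes since a prime-to-$p$ cyclic group acting on a pro-$p$ group has trivial $H^1$. This produces $k \in G_{\x,r}$ with ${}^k\delta_0 \in \t S$ in one step. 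Your route instead manufactures a candidate semisimple part $\eta_0$ by decomposing $\eta$ rather than $\delta$, and then runs a Moy--Prasad successive-approximation scheme on $\delta$ to force $\eta_0$ to become the semisimple part of a conjugate. Underneath, both arguments rest on the same mechanism (invertibility of $1-[\eta_0]$ on the non-fixed part of each graded piece, equivalently $H^1(\text{prime-to-}p,\text{pro-}p)=1$), but the paper's packaging is shorter because it never has to track the unipotent factor $\eta_+ g$ or assemble a limit. Your argument has the minor advantage of being constructive layer by layer, and it makes visible exactly where the depth-$r$ hypothesis enters (your base-case observation that $\eta_+ g \equiv \eta_+ \pmod{G_{\x,r}}$). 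One small point: when you invoke $\eta_+ \in S^\nat_{0+}$, note this comes from the \emph{proof} of Proposition~\ref{prop:Jordan} rather than its statement, and when you use $S^\nat_{0+} \subset G_{\eta_0,\x,0+}$ you are implicitly using $\x \in \mcA(\bfS^\nat,F)$, which the paper only imposes explicitly in the subsequent Proposition~\ref{prop:sep}; your justification via ellipticity of $\bfS$ is fine but should be stated.
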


\begin{proof}
For any element $g\in G^{\dagger}$, we write $\bar{g}$ for its image in $G^{\dagger}/A_{\t{\G}}$.
Similarly, we write $\overline{G_{\x,r}}$ and $\overline{S}$ for the images of $G_{\x,r}$ and $S$ in $G^{\dagger}/A_{\t\G}$, respectively.

Since $\overline{\delta_{0}}$ belongs to the closure of $\langle\overline{\delta}\rangle$ in $G^{\dagger}/A_{\t{\G}}$ (see Proposition \ref{prop:Jordan}), the assumption $\delta\in\t{S}G_{\x,r}$ implies that $\overline{\delta_{0}}\in\t{S}G_{\x,r}/A_{\t{\G}}$.
Let us take elements $s_{0}\in\t{S}$ and $g_{+}\in G_{\x,r}$ satisfying $\delta_{0}=g_{+}s_{0}$.
If we let $p'$ be the order of $\ol{\delta_{0}}$, which is prime to $p$, then we have
\[
1=\ol{\delta_{0}}^{p'}
=\prod_{i=0}^{p'-1}[s_{0}]^{i}(\ol{g_{+}})\cdot \ol{s_{0}}^{p'}.
\]
Since $\prod_{i=0}^{p'-1}[s_{0}]^{i}(\ol{g_{+}})\in \ol{G_{\x,r}}$, this implies that $s_{0}^{p'}$ lies in $A_{\t\G}(S\cap G_{\x,r})=A_{\t\G}S_{r}$ (see \cite[Proposition 4.6]{AS08} for the equality).
Furthermore, by noting that $s_{0}^{p'}$ is fixed by $[s_{0}]$ and $[s_{0}]$ acts on $S$ as $\theta_{\bfS}$ and on $A_{\t{\G}}$ trivially, we have $s_{0}^{p'}\in A_{\t\G}(S_{r})^{\theta_{\bfS}}$.
Thus, by Proposition \ref{prop:MP-descent} (1), we get $s_{0}^{p'}\in A_{\t\G}S^{\nat}_{r}$.
As $p'$ is prime to $p$, we can find an element $s_{r}\in S^{\nat}_{r}$ such that $s_{0}^{p'}\in A_{\t\G}\cdot s_{r}^{p'}$ (see the proof of Proposition \ref{prop:Jordan}, the same argument as in the construction of $\delta_{+}$ works).
Then, by replacing $s_{0}\in\t{S}$ with $s_{0}s_{r}^{-1}\in\t{S}$ and $g_{+}\in G_{\x,r}$ with $g_{+}s_{r}$, respectively, we may assume that 
\[
\prod_{i=0}^{p'-1}[s_{0}]^{i}(\ol{g_{+}})=1
\quad
\text{and}
\quad
\ol{s_{0}}^{p'}=1.
\]

In other words, we have an action of a finite cyclic group $\Z/p'\Z$ on $\ol{G_{\x,r}}$ given by $\bar{i}\cdot g=[s_{0}]^{i}(g)$ and a $1$-cocycle $\Z/p'\Z\rightarrow \ol{G_{\x,r}}$ given by $\bar{1}\mapsto \ol{g_{+}}$.
Since $p'$ is prime to $p$ and $\ol{G_{\x,r}}$ is a pro-$p$ group, the first group cohomology $H^{1}(\Z/p'\Z,\ol{G_{\x,r}})$ is trivial.
(This follows from a standard argument by using that the action of $\Z/p'\Z$ is filtration-preserving; see the proof of \cite[Theorem 13.8.5]{KP23} for the details).
Hence the cohomology class of the $1$-cocycle $[\bar{1}\mapsto \ol{g_{+}}]$ is trivial.
Namely, there exists an element $k\in G_{\x,r}$ such that $kg_{+}[s_{0}](k)^{-1}\in A_{\t{\G}}$.
This means that 
\[
{}^{k}\delta_{0}
=kg_{+}s_{0}k^{-1}
=kg_{+}[s_{0}](k)^{-1}\cdot s_{0}
\in A_{\t\G}\cdot\t{S}=\t{S}.
\]
\end{proof}

\begin{prop}\label{prop:sep}
Suppose that a point $\x$ associated to $\bfS$ belongs to $\mcA(\bfS^{\nat},F)$ under the identification as in Proposition \ref{prop:BT-descent}.
Let $\delta$ be an elliptic regular semisimple element of $\t{G}$ with a normal $r$-approximation $\delta=\delta_{0}\delta^{+}_{<r}\delta_{\geq r}$.
If $\delta$ belongs to ${}^{G_{\x,0+}}(\t{S}G_{\x,r})$, then $\delta_{\geq r} \in G_{\delta_{<r},\x,r}$ and there exists $k\in G_{\x,0+}$ such that
\[
{}^{k}\delta_{0}\in\t{S},\quad
{}^{k}\delta^{+}_{<r}\in S^{\nat}.
\]
Here, the point $\x\in\mcB(\G_{\delta_{0}},F)$ is regarded as a point of $\mcB(\G_{\delta_{<r}},F)$ by an embedding $\mcB(\G_{\delta_{<r}},F)\hookrightarrow \mcB(\G_{\delta_{0}},F)$.
\end{prop}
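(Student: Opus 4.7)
The plan is to reduce, via two successive conjugations that move $\delta_{0}$ into $\t{S}$, to the untwisted separation lemma of Adler--Spice \cite{AS08} applied inside the connected reductive group $\bfG_{\delta_{0}}$. By hypothesis there exists $g\in G_{\x,0+}$ with ${}^{g}\delta\in\t{S}G_{\x,r}$; replacing $\delta$ by ${}^{g}\delta$ (the topological Jordan decomposition and the normal $r$-approximation transport along this conjugation by uniqueness), we may assume $\delta\in\t{S}G_{\x,r}$. Lemma \ref{lem:sep-depth0} then supplies $k_{1}\in G_{\x,r}\subset G_{\x,0+}$ with ${}^{k_{1}}\delta_{0}\in\t{S}$, and a further conjugation puts us in the situation $\delta_{0}\in\t{S}$ and $\delta\in\t{S}G_{\x,r}$ simultaneously.

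Write $\delta=t\gamma$ with $t\in\t{S}$ and $\gamma\in G_{\x,r}$. Since $\delta_{0},t\in\t{S}$ and $\t\bfS$ is a bi-$\bfS$-torsor, $\delta_{0}^{-1}t\in S$, so $\delta_{+}=\delta_{0}^{-1}\delta\in SG_{\x,r}$. As $S$ normalizes $G_{\x,r}$ (because $\x$ is associated to $\bfS$), the quotient $SG_{\x,r}/G_{\x,r}\cong S/S_{r}$ is a group, and the image of $\delta_{+}$ there is topologically unipotent, because $\delta_{+}\in G_{\delta_{0},0+}\subset G_{0+}$ by Proposition \ref{prop:Jordan}(3). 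Hence that image lies in $S_{0+}/S_{r}$, giving $\delta_{+}\in S_{0+}G_{\x,r}$. Combined with the fact that $\delta_{+}$ commutes with $\delta_{0}$, Proposition \ref{prop:MP-descent}(3) yields $\delta_{+}\in(S_{0+}G_{\x,r})^{\delta_{0}}=S^{\nat}_{0+}\cdot G_{\delta_{0},\x,r}$.

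At this point we invoke the untwisted analog of the present proposition---the separation lemma of Adler--Spice \cite{AS08}---applied to the normal $r$-approximation $\delta_{+}=\delta^{+}_{<r}\cdot\delta_{\geq r}$ of $\delta_{+}$ in the connected reductive group $\bfG_{\delta_{0}}$, the tame elliptic maximal torus $\bfS^{\nat}$, and the associated point $\x\in\mcB(\bfG_{\delta_{0}},F)$. This provides $k_{2}\in G_{\delta_{0},\x,0+}$ with ${}^{k_{2}}\delta^{+}_{<r}\in S^{\nat}$ and ${}^{k_{2}}\delta_{\geq r}\in(\bfG_{\delta_{0}})_{\delta^{+}_{<r},\x,r}$; the latter subgroup coincides with $G_{\delta_{<r},\x,r}$ by Lemma \ref{lem:cent-cent}. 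Since $k_{2}\in G_{\delta_{0}}$ centralizes $\delta_{0}$ and $G_{\delta_{0},\x,0+}\subset G_{\x,0+}$ by Proposition \ref{prop:MP-descent}(2), the cumulative conjugating element $k=k_{2}k_{1}g$ lies in $G_{\x,0+}$ and satisfies all three conclusions.

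The main obstacle is the descent $\delta_{+}\in S_{0+}G_{\x,r}$ and the subsequent identification with $S^{\nat}_{0+}\cdot G_{\delta_{0},\x,r}$ via Proposition \ref{prop:MP-descent}(3); this is what translates the twisted situation in $\t{G}$ into a genuinely untwisted problem inside the centralizer $\bfG_{\delta_{0}}$, after which the Adler--Spice machinery runs essentially unchanged. The cohomological argument underlying Proposition \ref{prop:MP-descent}---vanishing of $H^{1}(\langle\theta_{\bfS}\rangle,-)$ on pro-$p$ groups for $p\nmid l$---mirrors the one already used in the proof of Lemma \ref{lem:sep-depth0}, so no genuinely new input is needed beyond what has been established in Section \ref{subsec:approx}.
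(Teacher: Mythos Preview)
Your proof is correct and follows essentially the same approach as the paper: reduce to $\delta_{0}\in\t{S}$ via Lemma~\ref{lem:sep-depth0}, descend $\delta_{+}$ to $S^{\nat}_{0+}G_{\delta_{0},\x,r}$ using Proposition~\ref{prop:MP-descent}(3), and then invoke the untwisted Adler--Spice machinery inside $\G_{\delta_{0}}$. The only notable differences are cosmetic: the paper cites the specific results \cite[Corollary~9.16, Lemmas~9.6 and~9.13, Proposition~4.6]{AS08} rather than a generic ``separation lemma,'' and it derives the tail conclusion $\delta_{\geq r}\in G_{\delta_{<r},\x,r}$ directly (without the extra conjugation by $k_{2}$), whereas you package both conclusions into a single invocation---but since $k_{2}\in G_{\delta_{0},\x,0+}$ fixes $\x$ and centralizes $\delta_{0}$, the two formulations are equivalent.
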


\begin{proof}
By replacing $\delta$ with its $G_{\x,0+}$-conjugate, we may assume that $\delta$ itself belongs to $\t{S}G_{\x,r}$.
By Lemma \ref{lem:sep-depth0}, we have ${}^{k}\delta_{0}\in \t{S}$ for some element $k\in G_{\x,r}$.
By furthermore replacing $\delta$ with ${}^{k}\delta$ (this is again an element of $\t{S}G_{\x,r}$ since $G_{\x,r}$ normalizes $\t{S}G_{\x,r}$), we may also assume that $\delta_0\in \t{S}$.
Then we have $\delta_{+}\in SG_{\x,r}$.

Recall that, by the construction of a topological Jordan decomposition (Proposition \ref{prop:Jordan}), $\delta_{+}$ belongs to $G_{\delta_{0},0+}$.
Hence we get $\delta_{+}\in (SG_{\x,r})\cap G_{\delta_{0},0+}=S^{\nat}_{0+}G_{\delta_{0},\x,r}$ by Proposition \ref{prop:MP-descent} (3).

Now the problem is reduced to the untwisted setting.
By applying \cite[Corollary 9.16]{AS08} to $\delta_{+}\in S^{\nat}_{0+}G_{\delta_{0},\x,r}$, we get $\delta^{+}_{<r}\in {}^{G_{\delta_{0},\x,0+}}S^{\nat}$.
In other words, we can find an element $k\in G_{\delta_{0},\x,0+}$ such that ${}^{k}\delta^{+}_{<r}\in S^{\nat}$.
Thus the remaining task is to show that $\delta_{\geq r} \in G_{\delta_{<r},\x,r}$.

By \cite[Lemma 9.13]{AS08}, the point $\x$ belongs to the set ``$\mcB_{r}(\delta_{+})$'' (which is considered in the group $\G_{\delta_{0}}$; see \cite[Definition 9.5]{AS08} for the definition).
By the description of the set $\mcB_{r}(\delta_{+})$ in \cite[Lemma 9.6]{AS08}, we have
\[
\mcB_{r}(\delta_{+})
=
\{\y\in\mcB(\bfC_{\G_{\delta_{0}}}^{(r)}(\delta_{+}),F) \mid \delta_{\geq r}\in G_{\delta_{0},\y,r}\}.
\]
Hence $\x$ belongs to the building of $\bfC_{\G_{\delta_{0}}}^{(r)}(\delta_{+})=(\G_{\delta_{0}})_{\delta^{+}_{<r}}$ (see \cite[Corollary 6.14]{AS08}), which furthermore equals $\G_{\delta_{<r}}$ by Lemma \ref{lem:cent-cent}, and we have $\delta_{\geq r}\in G_{\delta_{0},\x,r}$.
By the definition of a normal approximation, $\delta_{\geq r}$ belongs to $(G_{\delta_{0}})_{\delta_{<r}^{+}}=G_{\delta_{<r}}$.
Thus $\delta_{\geq r}$ lies in $G_{\delta_{0},\x,r}\cap G_{\delta_{<r}}$, which equals $G_{\delta_{<r},\x,r}$ by \cite[Proposition 4.6]{AS08}.
\end{proof}

\subsection{Twisted character formula of 1st form}\label{subsec:TCF-1st}

Recall that we have fixed a pair $(\bfS,\vartheta)$ in Section \ref{subsec:twist-int}.
Thus, from now on, we simply write $\omega$, $\rho$, $\sigma$, $\tau$, and $\pi$ for the representations $\omega_{(\bfS,\vartheta)}$, $\rho_{(\bfS,\vartheta)}$, $\sigma_{(\bfS,\vartheta)}$, $\tau_{(\bfS,\vartheta)}$, and $\pi_{(\bfS,\vartheta)}$ (see Section \ref{subsec:toral-sc}), respectively.
Similarly, we simply write $\t\rho$, $\t\sigma$, and $\t\pi$ for the twisted representations as introduced in Section \ref{subsec:twist-int}.
We use the identification of Bruhat--Tits buildings and apartments as in Proposition \ref{prop:BT-descent} in the following.
We may suppose that a point $\x$ associated to $\bfS$ comes from $\mcA(\bfS^{\nat},F)$.

In the following, we fix an elliptic regular semisimple element $\delta\in\t{G}$ and a normal $r$-approximation $\delta=\delta_{0}\delta^{+}_{<r}\delta_{\geq r}$ to $\delta$, which exists by Proposition \ref{prop:twisted-Jordan}.
We write $\eta\colonequals \delta_{<r}$ in short.
Our final goal is to establish an explicit formula of the twisted character $\Theta_{\tilde{\pi}}(\delta).$

As a first step, we show the following lemma, which is a twisted version of \cite[Lemma 6.1]{AS09}:

\begin{lem}\label{lem:finiteness}
The set $S\backslash\{g\in G\mid {}^{g}\eta\in\t{S}\}/G_{\eta}$ is finite.
\end{lem}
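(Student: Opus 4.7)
The plan is to construct a natural map from $S\backslash\{g\in G\mid {}^{g}\eta\in\t{S}\}/G_{\eta}$ into a finite set, exploiting the rigidity of twisted maximal tori supplied by Proposition~\ref{prop:twisted-tori}.  Set $X\colonequals \{g\in G\mid {}^{g}\eta\in\t{\bfS}(F)\}$ and define
\[
\psi\colon X\longrightarrow \bigl\{F\text{-rational maximal tori of }\bfG_{\eta}\bigr\},\qquad g\longmapsto g^{-1}\bfS^{\nat}g.
\]
First I will check that $\psi$ is well defined.  For $g\in X$, the element ${}^{g}\eta$ lies in $\t{S}$, so Proposition~\ref{prop:twisted-tori}\,(3) shows that $\bfS^{\nat}$ is a maximal torus of $\bfG_{{}^{g}\eta}={}^{g}\bfG_{\eta}$; conjugating back by $g^{-1}$ places $g^{-1}\bfS^{\nat}g$ inside $\bfG_{\eta}$, and since both $g\in G$ and $\bfS^{\nat}$ are $F$-rational, this is an $F$-rational maximal torus of $\bfG_{\eta}$.

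Next I will establish the equivariance needed to pass to the double-coset quotient.  By Proposition~\ref{prop:twisted-tori}\,(1) we have $\bfS=\bfZ_{\bfG}(\bfS^{\nat})$, so $S$ centralizes $\bfS^{\nat}$ and $\psi(sg)=\psi(g)$ for $s\in S$.  On the right, $\psi(gh)=h^{-1}\psi(g)h$ for $h\in G_{\eta}$, so $\psi$ descends to
\[
\b\psi\colon S\backslash X/G_{\eta}\longrightarrow \bigl\{\text{$G_{\eta}$-conjugacy classes of $F$-rational maximal tori of }\bfG_{\eta}\bigr\}.
\]
The target is finite by the standard fact that, over a $p$-adic field, any connected reductive group admits only finitely many rational conjugacy classes of $F$-rational maximal tori (stable classes are parametrized by $H^{1}(F,\bfW)$ for the finite Weyl group $\bfW$, and each stable class contains finitely many rational classes).

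Finally I will show that the fibers of $\b\psi$ are finite.  If $\psi(g_{1})=\psi(g_{2})$ then $g_{2}g_{1}^{-1}$ normalizes $\bfS^{\nat}$, and since $\bfS=\bfZ_{\bfG}(\bfS^{\nat})$ this forces $g_{2}g_{1}^{-1}\in N_{G}(\bfS)$.  Hence, after using the right $G_{\eta}$-action to normalize $\psi(g)$ to a fixed representative $\bfT$ within its $G_{\eta}$-conjugacy class, all $g$ in that fiber lie in a single $N_{G}(\bfS)$-coset, and so form at most $|N_{G}(\bfS)/S|\leq|\Omega_{\bfG}(\bfS)(F)|<\infty$ distinct classes modulo the left $S$-action.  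Combining these two finiteness statements yields the lemma.  The main substantive input is invoking the finiteness-of-conjugacy-classes-of-tori theorem for $\bfG_{\eta}$; beyond that the only subtlety is that $\bfN_{\bfG}(\bfS^{\nat})$ can be strictly smaller than $\bfN_{\bfG}(\bfS)$, but the containment $\bfN_{\bfG}(\bfS^{\nat})\subset\bfN_{\bfG}(\bfS)$ is all that the Weyl-group bound requires.
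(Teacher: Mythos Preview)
Your proof is correct and follows essentially the same approach as the paper's: map $g\mapsto g^{-1}\bfS^{\nat}g$ into the $F$-rational maximal tori of $\bfG_{\eta}$, use Proposition~\ref{prop:twisted-tori} to check this lands where claimed, and invoke finiteness of $G_{\eta}$-conjugacy classes of such tori together with the containment $\bfN_{\bfG}(\bfS^{\nat})\subset\bfN_{\bfG}(\bfS)$ to control fibers. The only cosmetic difference is that the paper first replaces $S$ by $N_{G}(\bfS^{\nat})$ (noting the finite index) and then observes that the resulting map is an \emph{injection}, whereas you keep $S$ on the left and bound fibers at the end; the content is identical.
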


\begin{proof}
First we note that $\bfZ_{\G}(\bfS^{\nat})=\bfS$ (Proposition \ref{prop:twisted-tori} (1)).
Thus, if an element $n\in\G$ belongs to $\bfN_{\G}(\bfS^{\nat})$, i.e., satisfies $n\bfS^{\nat}n^{-1}\subset\bfS^{\nat}$, then we get $n\bfS n^{-1}\supset\bfS$ by taking the centralizer groups in $\G$.
Hence $\bfN_{\G}(\bfS^{\nat}) \subset \bfN_{\G}(\bfS)$.
Since $\bfS$ is contained in $\bfN_{\G}(\bfS^{\nat})$ and of finite index in $\bfN_{\G}(\bfS)$, $S$ is of finite index in $N_{G}(\bfS^{\nat})$.
Thus it is enough to show that $N_{G}(\bfS^{\nat})\backslash\{g\in G\mid {}^{g}\eta\in\t{S}\}/G_{\eta}$ is finite.

For any element $g\in G$ satisfying ${}^{g}\eta\in\t{S}$, by Proposition \ref{prop:twisted-tori} (2), we have 
\[
{}^{g}\G_{\eta}
=\G_{{}^{g}\eta}
=\bfZ_{\G}({}^{g}\eta)^{\circ}
\supset\bfZ_{\G}(\t{\bfS})^{\circ}
=\bfS^{\nat}.
\]
In other words, we have ${}^{g^{-1}}\bfS^{\nat}\subset\G_{\eta}$.
Since $\bfS^{\nat}$ is an $F$-rational maximal torus of $\G_{\eta}$ (see Proposition \ref{prop:twisted-tori} (3)), so is ${}^{g^{-1}}\bfS^{\nat}$.
Therefore we get an injection
\[
N_{G}(\bfS^{\nat})\backslash\{g\in G\mid {}^{g}\eta\in\t{S}\}
\hookrightarrow
\{\text{$F$-rational maximal tori of $\G_{\eta}$}\}
\colon g\mapsto {}^{g^{-1}}\bfS^{\nat}.
\]
By taking the quotients with respect to the action of $G_{\eta}$, we furthermore get
\[
N_{G}(\bfS^{\nat})\backslash\{g\in G\mid {}^{g}\eta\in\t{S}\}/G_{\eta}
\hookrightarrow
\{\text{$F$-rational maximal tori of $\G_{\eta}$}\}/{\sim_{G_{\eta}}},
\]
where the symbol $\sim_{G_{\eta}}$ denotes the equivalence class given by $G_{\eta}$-conjugation.
As the right-hand side is finite, $N_{G}(\bfS^{\nat})\backslash\{g\in G\mid {}^{g}\eta\in\t{S}\}/G_{\eta}$ is also finite.
\end{proof}

Recall that $\t{\sigma}$ is a representation of $\t{K}_{\sigma}=\t{S}G_{\x,0+}$  and that $\Theta_{\t{\sigma}}$ is its twisted character with respect to the intertwiner chosen in Section \ref{subsec:twist-int}.
Let $\dot{\Theta}_{\t{\sigma}}$ be the zero extension of $\Theta_{\t{\sigma}}$ from $\t{K}_{\sigma}=\t{S}G_{\x,0+}$ to $\t{G}$.

The following lemma is a twisted version of \cite[Proposition 4.3]{AS09}.

\begin{lem}\label{lem:sigma}
For any $g\in G$, if $\dot{\Theta}_{\t{\sigma}}({}^{g}\delta)\neq0$, then we have ${}^{g}\delta\in{}^{G_{\x,0+}}(\t{S}G_{\x,r})$.
\end{lem}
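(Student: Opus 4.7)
The strategy adapts the untwisted template of \cite[Proposition 4.3]{AS09} to the twisted setting. I would proceed in two stages: a twisted Frobenius reduction from $\t\sigma$ down to $\t\rho$, followed by a support analysis using the Heisenberg--Weil structure of $\rho$.

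Since $\dot\Theta_{\t\sigma}$ is the zero extension from $\t K_\sigma$, the hypothesis $\dot\Theta_{\t\sigma}({}^g\delta)\neq 0$ first yields ${}^g\delta\in \t K_\sigma=\t SG_{\x,0+}$. I then apply the twisted Frobenius character formula to the induction $\sigma=\Ind_K^{K_\sigma}\rho$, which is routine since $K$, $K_\sigma$, and the intertwiner $I_\rho^{\ul\eta}$ are all compatible with $\ul\eta$-conjugation:
\[
\Theta_{\t\sigma}({}^g\delta)=\sum_{k\in K\backslash K_\sigma,\;{}^{kg}\delta\in\t K}\Theta_{\t\rho}({}^{kg}\delta).
\]
At least one summand must be non-zero, so there exists $k_0\in K_\sigma=SG_{\x,0+}$ with ${}^{k_0g}\delta\in\t K=\t SG_{\x,s}$ and $\Theta_{\t\rho}({}^{k_0g}\delta)\neq 0$. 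Writing $k_0=s_0k_+$ with $s_0\in S$ and $k_+\in G_{\x,0+}$, and noting that $S$-conjugation (with $s_0\in S\subseteq K$) preserves both $\t K$ and the function $\Theta_{\t\rho}$, I may replace $k_0$ by $k_+\in G_{\x,0+}$.

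Next, write ${}^{k_0g}\delta=\t s\cdot j$ with $\t s\in\t S$ and $j\in G_{\x,s}$. The description of $\rho$ as the descent of $\omega\otimes(\vartheta\ltimes\mathbbm{1})$ from $S\ltimes J$, together with the construction of $I_\rho^{\ul\eta}$ from $I_\omega^{\ul\eta}$, expresses $\Theta_{\t\rho}(\t s\cdot j)$ as the product of $\vartheta(s)$ and a trace of a twisted Heisenberg--Weil operator built from the symplectic automorphism $[\t s]$ of $V=J/J_+$ and the image of $j$ in the Heisenberg quotient $J/N$. A standard analysis of the finite Heisenberg group (mirroring the untwisted argument of \cite[Section 5]{DS18}) shows that this trace vanishes unless the image $\ul j\in V$ of $j$ is fixed by $[\t s]$. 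The $\bfG$-genericity of $X^\ast$ combined with the regular semisimplicity of $\t s$ (inherited from that of $\delta$) forces $[\t s]-1$ to be invertible on $V$, so $j\in J_+\subseteq S_rG_{\x,s+}$. Iterating this vanishing argument along the Moy--Prasad filtration between depths $s$ and $r$ eventually gives $j\in S_rG_{\x,r}$; absorbing the $S_r$-factor into $\t s\in\t S$ then yields ${}^{k_0g}\delta\in\t SG_{\x,r}$, which is the desired conclusion.

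The main technical obstacle is the twisted Heisenberg--Weil vanishing. In the untwisted case it follows directly from G\'erardin's character formula applied factor-by-factor to the orthogonal decomposition $V=\bigoplus_{\Sigma\alpha\in\ddot\Phi}V_{\Sigma\alpha}$. In the twisted case $[\t s]$ permutes these factors along $\theta$-orbits of roots, so the honest computation must be carried out on $\Theta$-orbit aggregates $\bigoplus_{i}V_{\Sigma\theta^i\alpha}$ via the twisted extension of G\'erardin's formula developed in Section \ref{sec:HW-twisted} and Appendix \ref{sec:twisted-HW}. For the present lemma, however, only the vanishing (not the exact character value) is needed, and the requisite invertibility of $[\t s]-1$ on each orbit aggregate reduces, via the valuation estimates of \cite[Section 9]{AS08}, to the $\bfG$-genericity of $X^\ast$.
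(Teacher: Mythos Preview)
Your approach diverges from the paper's and contains a genuine gap at the Heisenberg--Weil vanishing step.

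The claim ``this trace vanishes unless the image $\ul j\in V$ of $j$ is fixed by $[\t s]$'' is not correct. For the Heisenberg--Weil representation of $\Sp(V)\ltimes\bbH(V)$, the support condition for $\tr\bigl(\omega(g)\circ\omega(v,0)\bigr)$ is $v\in(g-1)V$, not $v\in V^{g}$. Thus if $[\t s]-1$ were invertible on $V$, the character would impose \emph{no} constraint on $\ul j$ at all, and you could not conclude $j\in J_{+}$. Conversely, when $[\t s]-1$ is not invertible, the constraint is on the image of $[\t s]-1$, not on its kernel, so again $\ul j=0$ does not follow.

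There is a second gap upstream: the factorisation ${}^{k_0g}\delta=\t s\cdot j$ with $\t s\in\t S$ and $j\in G_{\x,s}$ is arbitrary, and regular semisimplicity of $\delta$ does \emph{not} transfer to $\t s$. Indeed $\t s$ can be perturbed by any element of $S\cap G_{\x,s}=S_{s}$, so it is only well-defined modulo $S_{s}$. The $\G$-genericity of $X^{\ast}$ controls valuations of $\langle H_{\alpha},X^{\ast}\rangle$, not residual values of $\alpha(\t s)$, so it cannot by itself force $[\t s]-1$ to be invertible on the depth-quotient $V$.

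The paper's proof avoids the Frobenius reduction to $\t\rho$ and the Heisenberg--Weil machinery altogether. It works directly with $\t\sigma$: after using Proposition~\ref{prop:sep} (with $r$ replaced by $0+$) to place $\delta'_{0}$ in ${}^{G_{\x,0+}}\t S$ and $\delta'_{+}$ in $G_{\delta'_{0},\x,0+}$, one assumes for contradiction that the depth $t$ of $\delta'_{+}$ satisfies $t<r$. Then for $h\in G_{\delta'_{<t},\x,r-t}$ one has $[\delta'^{-1},h]\in G_{\x,r}$, and the $\hat\vartheta$-isotypicity of $\sigma$ on $G_{\x,r}$ together with $K_{\sigma}$-conjugation invariance of $\Theta_{\t\sigma}$ gives $\Theta_{\t\sigma}(\delta')=\Theta_{\t\sigma}(\delta')\cdot\hat\vartheta([\delta'^{-1}_{\geq t},h])$. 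The $\G$-genericity of $X^{\ast}$ enters only here, to guarantee that $h\mapsto\hat\vartheta([\delta'^{-1}_{\geq t},h])$ is nontrivial on $G_{\delta'_{<t},\x,r-t}$ (exactly as in the last paragraph of the proof of \cite[Proposition~4.3]{AS09}), forcing $\Theta_{\t\sigma}(\delta')=0$. The key structural input is the normal $r$-approximation of $\delta$ (via Definition~\ref{defn:twisted-approx}), not an ad hoc $\t S\cdot G_{\x,s}$ factorisation.
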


\begin{proof}
We put $\delta'\colonequals {}^{g}\delta$.
By letting $\delta'_{0}:={}^{g}\delta_{0}$, $\delta'^{+}_{<r}:={}^{g}\delta^{+}_{<r}$, $\delta'_{\geq r}:={}^{g}\delta_{\geq r}$, we get a normal $r$-approximation $\delta'=\delta'_{0}\delta'^{+}_{<r}\delta'_{\geq r}$ to $\delta'$.
Suppose that $\dot{\Theta}_{\t{\sigma}}(\delta')\neq0$, in particular, $\delta'$ belongs to $\t{K}_{\sigma}=\t{S}G_{\x,0+}$.
Then, by Proposition \ref{prop:sep} (take $r$ in Proposition \ref{prop:sep} to be $0+$), we know that $\delta'_{0}\in {}^{G_{\x,0+}}\t{S}$ and $\delta'_{+} \in G_{\delta'_{0},\x,0+}$.

Let $t\in\R_{>0}$ be the largest number such that $\delta'_{+}\in G_{\delta'_{0},\x,t}\smallsetminus G_{\delta'_{0},\x,t+}$.
To complete the proof, it suffices to show that $t\geq r$.
Let us suppose that $t<r$ for the sake of a contradiction.

We take $k\in G_{\x,0+}$ satisfying $\delta'_{0}\in{}^{k}\t{S}$ and put $(\t{\bfS}',\bfS')\colonequals ({}^{k}\t{\bfS},{}^{k}\bfS)$.
By \cite[Lemma 9.13]{AS09} (we take $(\G',\G)$ to be $(\bfS^{\prime\nat},\G_{\delta'_{0}})$), we know that $\x\in\mcB_{t}(\delta'_{+})$.
In other words, $\x$ belongs to the building of $\bfC_{\G_{\delta'_{0}}}^{(t)}(\delta'_{+})=(\G_{\delta'_{0}})_{\delta^{\prime+}_{<t}}=\G_{\delta'_{<t}}$ and we have $\delta'_{\geq t}\in G_{\delta'_{<t},\x,t}$ (cf.\ the proof of Proposition \ref{prop:sep}).
For any $h\in C_{\bfG_{\delta'_{0}}}^{(t)}(\delta'_{+})_{\x,r-t}=G_{\delta'_{<t},\x,r-t}$, we have $[\delta'^{-1},h]=[\delta'^{-1}_{\geq t},h]\in G_{\delta'_{0},\x,r}$.
Thus, by noting that $\sigma$ is $\hat{\vartheta}$-isotypic on $G_{\x,r}$ (\cite[Lemma 2.5]{AS08}) and that $\Theta_{\t{\sigma}}$ is invariant under $K_{\sigma}$-conjugation, we get
\[
\Theta_{\t{\sigma}}(\delta')
=\Theta_{\t{\sigma}}({}^{h}\delta')
=\Theta_{\t{\sigma}}(\delta'\cdot[\delta'^{-1},h])
=\Theta_{\t{\sigma}}(\delta')\cdot\hat{\vartheta}([\delta'^{-1}_{\geq t},h])
\]
for any $h\in G_{\delta'_{<r},\x,r-t}$.
Since $\hat{\vartheta}([\delta'^{-1}_{\geq t},-])$ is nontrivial on $G_{\delta'_{<r},\x,r-t}$ as proved in the final paragraph of the proof of \cite[Proposition 4.3]{AS09}, we conclude that $\Theta_{\t{\sigma}}(\delta')$ equals zero.
This is a contradiction.
\end{proof}

We next establish a twisted version of \cite[Lemma 6.3]{AS08}.

\begin{lem}\label{lem:ave}
Let $\mcK_{\eta}$ be an open compact subgroup of $G_{\eta}$.
Then the function
\[
\mcF\colon G \rightarrow \C
;\quad
g \mapsto \int_{\mcK_{\eta}} \dot{\Theta}_{\t{\sigma}}\bigl({}^{gk}\delta\bigr)\,dk
\]
is compactly supported modulo $Z_{\G}$, where $dk$ is any Haar measure on $\mcK_\eta$.
\end{lem}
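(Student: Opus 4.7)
The plan is to combine Lemma \ref{lem:sigma} with Lemma \ref{lem:finiteness} and the ellipticity of $\bfS$ and $\t{\bfS}$ to confine the support of $\mcF$ to a product of factors that are compact modulo $Z_{\G}$.

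First I will show that a nonzero value of the integrand forces $g$ to satisfy ${}^{g}\eta\in{}^{G_{\x,0+}}\t{S}$. Suppose $\dot{\Theta}_{\t{\sigma}}({}^{gk}\delta)\neq 0$ for some $k\in\mcK_{\eta}$. Lemma \ref{lem:sigma} gives ${}^{gk}\delta\in{}^{G_{\x,0+}}(\t{S}G_{\x,r})$. The element ${}^{gk}\delta$ has normal $r$-approximation ${}^{gk}\delta={}^{gk}\delta_{0}\cdot{}^{gk}\delta^{+}_{<r}\cdot{}^{gk}\delta_{\geq r}$, obtained by $gk$-conjugating the original approximation, so Proposition \ref{prop:sep} produces $k'\in G_{\x,0+}$ with ${}^{k'gk}\delta_{0}\in\t{S}$ and ${}^{k'gk}\delta^{+}_{<r}\in S^{\nat}\subset S$. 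Since $\t{S}$ is an $\bfS$-torsor, $\t{S}\cdot S=\t{S}$, hence ${}^{k'gk}\eta={}^{k'gk}\delta_{0}\cdot{}^{k'gk}\delta^{+}_{<r}\in\t{S}$. As $k\in\mcK_{\eta}\subset G_{\eta}$ centralizes $\eta$, we have ${}^{gk}\eta={}^{g}\eta$, so ${}^{g}\eta\in{}^{G_{\x,0+}}\t{S}$.

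Setting $X\colonequals\{g\in G\mid {}^{g}\eta\in\t{S}\}$, the previous paragraph shows $\supp(\mcF)\subset G_{\x,0+}\cdot X$. By Lemma \ref{lem:finiteness} the double cosets $S\backslash X/G_{\eta}$ are finite; choosing representatives $g_{1},\ldots,g_{n}$, we have $X=\bigcup_{i=1}^{n}Sg_{i}G_{\eta}$, and hence
\[
\supp(\mcF)\subset G_{\x,0+}\cdot S\cdot\{g_{1},\ldots,g_{n}\}\cdot G_{\eta}.
\]

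Finally I will verify that this set is compact modulo $Z_{\G}$. The group $G_{\x,0+}$ is compact and $\{g_{1},\ldots,g_{n}\}$ is finite. The ellipticity of $\bfS$ in $\bfG$ makes $\bfS/\bfZ_{\G}$ anisotropic, so $S/Z_{\G}$ is compact. For $G_{\eta}$, note that $\bfS^{\nat}=\bfG_{\delta}$ by the regular semisimplicity of $\delta$, and that $\bfS^{\nat}$ is a maximal torus of $\bfG_{\delta_{0}}$ by Proposition \ref{prop:twisted-tori}(3); therefore it is also a maximal torus of the intermediate group $\bfG_{\eta}$. The ellipticity of $\t{\bfS}$ makes $\bfS^{\nat}$ anisotropic modulo $\bfA_{\t{\G}}$, and since all maximal tori of a connected reductive group are conjugate, $\bfG_{\eta}$ itself is anisotropic modulo $\bfA_{\t{\G}}\subset\bfZ_{\G}$. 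Hence $G_{\eta}/Z_{\G}$ is compact. Writing $S\subset C_{S}Z_{\G}$ and $G_{\eta}\subset C_{\eta}Z_{\G}$ for compact $C_{S},C_{\eta}\subset G$ and using the centrality of $Z_{\G}$, we conclude that $\supp(\mcF)$ is contained in $(G_{\x,0+}\cdot C_{S}\cdot\{g_{1},\ldots,g_{n}\}\cdot C_{\eta})\cdot Z_{\G}$, which is compact modulo $Z_{\G}$.

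The main obstacle is the first step: one must carefully check that Proposition \ref{prop:sep} applies to ${}^{gk}\delta$, identify its normal $r$-approximation as the $gk$-conjugate of the pieces of $\delta$, and reconstruct the head $\eta=\delta_{<r}$ from its topologically semisimple and unipotent components after conjugation. The remaining steps are routine once the compactness of $G_{\eta}/Z_{\G}$ is extracted from the ellipticity of $\t{\bfS}$.
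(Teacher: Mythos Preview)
Your first two steps are fine and match the paper: you correctly argue that $\supp(\mcF)\subset G_{\x,0+}\cdot X$ and then use Lemma~\ref{lem:finiteness} to reduce to finitely many double cosets $Sg_iG_\eta$.

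The gap is in the final step, where you assert that $G_\eta/Z_\G$ is compact. Your justification---``since all maximal tori of a connected reductive group are conjugate, $\G_\eta$ itself is anisotropic modulo $\bfA_{\t\G}$''---is incorrect. Conjugacy of maximal tori holds only over the algebraic closure; over $F$ a connected reductive group can contain both anisotropic and split maximal tori. Possessing an $F$-anisotropic maximal torus does \emph{not} force the group to be $F$-anisotropic (e.g., $\SL_2$ over a $p$-adic field has elliptic tori but is certainly not compact). In fact $\G_\eta$ need not be anisotropic: $\eta=\delta_{<r}$ is only the head of the approximation and is typically not regular, so $\G_\eta$ can be strictly larger than the torus $\bfS^\nat$. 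In the extreme case where $\eta$ is central, $\G_\eta=\G$.

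The paper's proof confronts exactly this issue. After restricting to a single double coset $K_\sigma g G_\eta$ with ${}^g\eta\in\t S$, it does \emph{not} claim $G_\eta$ is compact modulo $Z_\G$. Instead it shows that on $G_\eta$ the integrand factors as $\Theta_{\t\sigma}({}^g\eta)\cdot\mathbbm{1}_{G_{{}^g\eta,\x,r}}({}^{ghk}\delta_{\geq r})\cdot\hat\vartheta({}^{ghk}\delta_{\geq r})$, recognises the second factor as $\dot\Theta_{\sigma_{\eta'}}$ for a genuine toral supercuspidal datum $(\bfS^\nat,\vartheta^\nat)$ on the descended group $\G_{\eta'}$, and then invokes Harish-Chandra's compactness-of-support result \cite[Lemma~23]{HC70} for compactly induced supercuspidals. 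This yields compact support modulo $Z_{\G_{\eta'}}$, and only then does one use ellipticity of $\t\bfS$ to see that $Z_{\G_{\eta'}}\subset S^\nat$ is compact modulo $A_{\t\G}$. The descent to $\G_{\eta'}$ and the appeal to Harish-Chandra's lemma are the missing ingredients in your argument.
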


\begin{proof}
We first note that the support of $\mcF$ is contained the following set:
\[
\bigl\{g\in G \mid {}^{g}\eta\in{}^{G_{\x,0+}}\t{S}\bigr\}.
\]
Indeed, if ${}^{gk}{\delta}$ belongs to the support of $\dot{\Theta}_{\t{\sigma}}$, then ${}^{gk}{\delta}$ lies in ${}^{G_{\x,0+}}(\t{S}G_{\x,r})$ by Lemma \ref{lem:sigma}.
On the other hand, as $\mcK_{\eta}$ is a subset of $G_{\eta}$, every $k\in \mcK_{\eta}$ commutes with $\eta=\delta_{<r}$.
Hence we have ${}^{gk}\delta = {}^{g}\eta \cdot{}^{gk}\delta_{\geq r}$.
Thus, by Proposition \ref{prop:sep}, ${}^{g}\eta$ necessarily belongs to ${}^{G_{\x,0+}}\t{S}$.

We consider the following double quotient:
\[
K_{\sigma}
\backslash
\bigl\{g\in G \mid {}^{g}\eta\in{}^{G_{\x,0+}}\t{S}\bigr\}
/
G_{\eta}.
\]
Since $K_{\sigma}=SG_{\x,0+}$, we have a natural surjection
\[
S
\backslash
\bigl\{g\in G \mid {}^{g}\eta\in\t{S}\bigr\}
/
G_{\eta}
\twoheadrightarrow
K_{\sigma}
\backslash
\bigl\{g\in G \mid {}^{g}\eta\in{}^{G_{\x,0+}}\t{S}\bigr\}
/
G_{\eta}.
\]
As the former set is finite by Lemma \ref{lem:finiteness}, so is the latter set.
Therefore, in order to show that $\mcF$ is compactly supported modulo $Z_{\G}$, it is enough to show that $\mcF$ is compactly supported modulo $Z_{\G}$ on each double coset $K_{\sigma}gG_{\eta}$.
From now on, we fix an element $g\in G$ satisfying ${}^{g}\eta\in{}^{G_{\x,0+}}\t{S}$.
By replacing $g$ with some other representative in the double coset $K_{\sigma}gG_{\eta}$ if necessary, we may suppose that $g$ satisfies ${}^{g}\eta\in\t{S}$.

We define a function $\mcF_{g}\colon G_{\eta} \rightarrow \C$ by
\[
\mcF_{g}(h)
\colonequals 
\int_{\mcK_{\eta}} \dot{\Theta}_{\t{\sigma}}\bigl({}^{ghk}\delta\bigr)\,dk.
\]
Note that the function $\dot{\Theta}_{\t{\sigma}}$ is invariant under $K_{\sigma}$-conjugation.
Thus the function $\mcF$ is left-$K_{\sigma}$-invariant, and the restriction of $\mcF$ to the double coset $K_{\sigma}gG_{\eta}$ is given by $\mcF|_{K_{\sigma}gG_{\eta}}(lgh)=\mcF_{g}(h)$.
As $K_{\sigma}$ is compact modulo $Z_{\G}$, it is enough to show that $\mcF_{g}$ is compactly supported modulo $G_{\eta}\cap Z_{\G}$.
Since $\bfA_{\t\G}$ is defined to be the maximal split subtorus of $\bfZ_{\G}^{\theta}$, we have $\bfA_{\t\G}\subset \G_{\eta}\cap \bfZ_{\G} \subset \bfZ_{\G}^{\theta}$.
Hence it suffices to show that $\mcF_{g}$ is compactly supported modulo $A_{\t{\G}}$.

We compute $\dot{\Theta}_{\t{\sigma}}({}^{ghk}\delta)$ in the integrand of $\mcF_{g}$.
Since $g$ is chosen to satisfy ${}^{g}\eta\in\t{S}$, by also noting that $h,k\in G_{\eta}$, we have ${}^{ghk}\eta={}^{g}\eta\in\t{S}$.
On the other hand, if $\dot{\Theta}_{\t{\sigma}}({}^{ghk}\delta)$ is not zero, then we have ${}^{ghk}\delta_{\geq r} \in G_{{}^{g}\eta,\x,r}$ by Lemma \ref{lem:sigma} and Proposition \ref{prop:sep}.
Therefore, by noting that the restriction of $\sigma$ on $G_{\x,r}$ is $\hat{\vartheta}$-isotypic (\cite[Lemma 2.5]{AS09}), we get
\[
\dot{\Theta}_{\t{\sigma}}({}^{ghk}\delta)
=
\dot{\Theta}_{\t{\sigma}}({}^{g}\eta)
\mathbbm{1}_{G_{{}^{g}\eta,\x,r}}({}^{ghk}\delta_{\geq r})\hat{\vartheta}({}^{ghk}\delta_{\geq r}).
\]
Since the term $\dot{\Theta}_{\t{\sigma}}({}^{g}\eta)$ does not depend on $h$ or $k$, it suffices to show that the function
\[
\tilde{\mcF}_{g}\colon G_{\eta} \rightarrow \C;\quad
h \mapsto \int_{\mcK_{\eta}} 
\mathbbm{1}_{G_{{}^{g}\eta,\x,r}}({}^{ghk}\delta_{\geq r})\hat{\vartheta}({}^{ghk}\delta_{\geq r})
\,dk
\]
is compactly supported modulo $A_{\t{\G}}$.

In the following, we put $\eta'\colonequals {}^{g}\eta$.
Now recall that our toral cuspidal $\G$-datum is given by $((\bfS\subset\G), \x, (r,r), (\vartheta,\mathbbm{1}), \mathbbm{1})$.
We consider a toral cuspidal $\G_{\eta'}$-datum $((\bfS^{\nat}\subset\G_{\eta'}), \x, (r,r), (\vartheta^{\nat},\mathbbm{1}), \mathbbm{1})$, where we put $\vartheta^{\nat}\colonequals \vartheta|_{S^{\nat}}$.
(Note that the torality is guaranteed by Lemma \ref{lem:toral-pair-descent}.)
We express various objects appearing in Yu's construction for this cuspidal $\G_{\eta'}$-datum by adding a subscript $\eta'$ to the notation used in Section \ref{subsec:rsc}.
Then, again by using Lemma \ref{lem:sigma}, Proposition \ref{prop:sep}, and \cite[Lemma 2.5]{AS09}, we have
\[
\dot{\Theta}_{\sigma_{\eta'}}({}^{ghk}\delta_{\geq r})
=
\mathbbm{1}_{G_{\eta',\x,r}}({}^{ghk}\delta_{\geq r})\hat{\vartheta}_{\eta'}({}^{ghk}\delta_{\geq r}).
\]
Namely, we get
\[
\tilde{\mcF}_{g}(h)
=
\int_{\mcK_{\eta}}
\dot{\Theta}_{\sigma_{\eta'}}({}^{ghk}\delta_{\geq r})\,dk.
\]
Since the representation $\cInd_{K_{\sigma_{\eta'}}}^{G_{\eta'}}\sigma_{\eta'}$ is supercuspidal by Yu's theory, this function is compactly supported modulo $Z_{\G_{\eta'}}$, by Harish-Chandra's well-known result (\cite[Lemma 23]{HC70}).
Therefore, now our assertion is reduced to the compactness of the quotient $Z_{\G_{\eta'}}/A_{\t{\G}}$.

Since we have $\G_{\eta'}\supset\bfS^{\nat}$, we have $\bfZ_{\G_{\eta'}} \subset \bfS^{\nat}$.
As $\t{\bfS}$ is an $F$-rational elliptic twisted maximal torus of $\t{\G}$, $\bfS^{\nat}$ is anisotropic modulo $\bfA_{\t{\G}}$ (see Definition \ref{defn:elliptic}), hence $S^\natural$ is compact modulo $A_{\t{\G}}$.
Thus $Z_{\G_{\eta'}}$ is compact modulo $A_{\t{\G}}$.
\end{proof}

Before we state the ``first form'' of a twisted version of Adler--DeBacker--Spice character formula, we introduce some notation.
Recall that, for any connected reductive group $\J$ and a regular semisimple element $X_{J}^{\ast}\in\mfj^{\ast}$, the Fourier transform of the orbital integral $\hat{\mu}^{\J}_{X_{J}^{\ast}}$ is defined as follows (see \cite[Section 4.2]{Kal19} for the details).
We consider the distribution $O_{X_{J}^{\ast}}(-)$ on $\mfj^{\ast}$ given by
\[
O_{X_{J}^{\ast}}(f)\colonequals \int_{J/Z_{\J}(X^{\ast})^{\circ}} f^{\ast}(hX_{J}^{\ast}h^{-1})\,dh
\]
for $f^{\ast}\in C_{c}^{\infty}(\mfj^{\ast})$, where we fix a Haar measure $dh$ on $J$.
For any element $f\in C_{c}^{\infty}(\mfj)$, we let $\hat{f}$ denote its Fourier transform with respect to the fixed additive character $\psi_{F}$, that is, $\hat{f}$ is an element of $C_{c}^{\infty}(\mfj^{\ast})$ given by
\[
\hat{f}(Y^{\ast})\colonequals \int_{\mfj} f(Y)\cdot\psi_{F}(\langle Y,Y^{\ast}\rangle)\,dY,
\]
where $dY$ is a Haar measure on $\mfj$.
Then the distribution $f\mapsto O_{X_{J}^{\ast}}(\hat{f})$ on $\mfj$ is represented by a function $\hat{\mu}^{\J}_{X_{J}^{\ast}}$ on $\mfj$, i.e., we have
\[
O_{X_{J}^{\ast}}(\hat{f})
=
\int_{\mfj} \hat{\mu}^{\J}_{X_{J}^{\ast}}(Y)\cdot f(Y)\,dY
\]
for any $f\in C_{c}^{\infty}(\mfj)$.
We emphasize that the function $\hat{\mu}^{\J}_{X_{J}^{\ast}}$ does not depend on the choice of $dY$, but depends on the choice of $dh$.

Recall that, as discussed in the proof of Lemma \ref{lem:ave}, we have a tame elliptic toral pair $(\bfS^{\nat},\vartheta^{\nat})$ of $\G_{\eta'}$ (here, $\eta'\colonequals {}^{g}\eta$ for an element $g\in G$ satisfying ${}^{g}\eta\in\t{S}$) represented by $X^{\ast}\in\mfs_{-r}^{\nat\ast}$, which is the image of the fixed element $X^{\ast}\in(\mfs_{-r}^{\ast})^{\theta_{\bfS}}$ representing the character $\vartheta|_{S_{r}}$ (see Section \ref{subsec:twist-int}).
Since $X^{\ast}$ is $\G_{\eta'}$-generic of depth $r$ (regarded as an element of $\mfg_{\eta'}^{\ast}$), in particular, $X^\ast$ is regular semisimple in $\G_{\eta'}$.
Hence we have $\bfZ_{\G_{\eta'}}(X^{\ast})^{\circ}=\bfS^{\nat}$.
By noting that $G_{\eta'}/S^{\nat}$ is the quotient of $G_{\eta'}/(G_{\eta'}\cap Z_{\G})$ by $S^{\nat}/(G_{\eta'}\cap Z_{\G})$, we choose a measure on  $G_{\eta'}/S^{\nat}$ which is the quotient of the following two measures:
\begin{itemize}
\item
the Haar measure $dh$ on $G_{\eta'}/(G_{\eta'}\cap Z_{\G})$ satisfying $dh\bigl((G_{\eta'}\cap K_{\sigma})/(G_{\eta'}\cap Z_{\G})\bigr)=1$;
\item
the Haar measure on $S^{\nat}/(G_{\eta'}\cap Z_{\G})$ whose total volume is $1$ (note that $G_{\eta'}\cap Z_{\G}$ is co-compact in $S^{\nat}$, which follows from that $S^{\nat}$ is compact modulo $Z_{\G_{\eta'}}$; cf.\ the final step of the proof of Lemma \ref{lem:ave}).
\end{itemize}

The following is the twisted version of \cite[Theorem 6.4]{AS09}:

\begin{thm}\label{thm:TCF-0}
We have
\begin{align}\label{eq:thm:TCF-0}
\Theta_{\t{\pi}}(\delta)
=
\sum_{\begin{subarray}{c} g\in S\backslash G/G_{\eta} \\ {}^{g}\eta\in\t{S}\end{subarray}}
\Theta_{\t{\sigma}}({}^{g}\eta)
\cdot
\hat{\mu}^{\G_{{}^{g}\eta}}_{X^{\ast}} \bigl(\log({}^{g}\delta_{\geq r})\bigr).
\end{align}
Here, note that the condition ${}^{g}\eta\in\t{S}$ implies that $\bfS^{\nat}\subset\G_{{}^{g}\eta}$, hence the function $\hat{\mu}^{\G_{{}^{g}\eta}}_{X^{\ast}}(-)$ makes sense as explained above.
In the definition of $\hat{\mu}^{\G_{{}^{g}\eta}}_{X^{\ast}}$, we use the Haar measure on $G_{{}^{g}\eta}/S^{\nat}$ explained above.
\end{thm}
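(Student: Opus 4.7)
The plan is to adapt the proof of \cite[Theorem 6.4]{AS09} to the twisted setting, leveraging Lemmas \ref{lem:finiteness}, \ref{lem:sigma}, and \ref{lem:ave} together with Proposition \ref{prop:sep}.

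First I would invoke the twisted Harish-Chandra integration formula for a representation compactly induced from an $\ul{\eta}$-stable open compact-modulo-center subgroup. Since $\pi=\cInd_{K_\sigma}^{G}\sigma$ and $K_\sigma$ is $\ul{\eta}$-stable (Section \ref{subsec:twist-int}), a routine trace computation using the intertwiner $I_\pi^{\ul\eta}$ yields
\[
\Theta_{\t{\pi}}(\delta)
=\frac{\deg\pi}{\dim\sigma}\int_{G/Z_\G}\int_{\mcK} \dot{\Theta}_{\t{\sigma}}\bigl({}^{\dot{g}k}\delta\bigr)\,dk\,d\dot{g}
\]
for a sufficiently small compact open subgroup $\mcK\subset K_\sigma$, with absolute convergence and interchange of integrals justified by Lemma \ref{lem:ave}.

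Next, taking $\mcK$ to lie inside a compact open subgroup $\mcK_\eta\subset G_\eta$, I would decompose the outer integral along the finite set $S\backslash G/G_\eta$ from Lemma \ref{lem:finiteness}. By Lemma \ref{lem:sigma} combined with Proposition \ref{prop:sep}, only cosets $g$ satisfying ${}^g\eta\in\t{S}$ contribute. For each such representative $g$, setting $\eta':={}^g\eta$, using that $k\in\mcK_\eta$ commutes with $\eta=\delta_{<r}$ and that the restriction of $\sigma$ to $G_{\x,r}$ is $\hat{\vartheta}$-isotypic (\cite[Lemma 2.5]{AS09}), the integrand factorizes as
\[
\dot{\Theta}_{\t{\sigma}}\bigl({}^{\dot{g}k}\delta\bigr)
=\Theta_{\t{\sigma}}({}^g\eta)\cdot\mathbbm{1}_{G_{\eta',\x,r}}\bigl({}^{\dot{g}k}\delta_{\geq r}\bigr)\cdot\hat{\vartheta}\bigl({}^{\dot{g}k}\delta_{\geq r}\bigr).
\]
Refolding the measure $d\dot g\,dk$ on $G/Z_\G\times\mcK$ along the coset decomposition rewrites the remaining integration as one over $G_{\eta'}/Z_{\G_{\eta'}}$, where $Z_{\G_{\eta'}}$ is compact modulo $Z_\G$ since $\bfS^\nat$ is anisotropic modulo $\bfA_{\t\G}$ (cf.\ the last step of the proof of Lemma \ref{lem:ave}).

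Finally, I would transport this residual integral through the mock-exponential map into $\mfg_{\eta',\x,r}$ and use Lemma \ref{lem:toral-pair-descent} to view $X^\ast$ as a $\G_{\eta'}$-generic element representing $\vartheta^\nat$. The standard Fourier-transform identity for the indicator of a Moy--Prasad lattice twisted by $\psi_F\circ\langle\,\cdot\,,X^\ast\rangle$ (\cite[Lemma 6.2]{AS09}) then identifies the resulting $G_{\eta'}/S^\nat$-integral with $\hat{\mu}^{\G_{\eta'}}_{X^\ast}\bigl(\log({}^g\delta_{\geq r})\bigr)$ under the measure normalization stipulated just above the theorem. The main obstacle is the bookkeeping of Haar measures: the formal-degree factor $\deg\pi/\dim\sigma$, the volumes of $\mcK$ and $S\cap\mcK$, and the measure on $G_{\eta'}/S^\nat$ entering the definition of $\hat{\mu}^{\G_{\eta'}}_{X^\ast}$ must all reconcile so that the final formula carries no extraneous constant; this is essentially the same cancellation executed in \cite{AS09}, with the $\ul{\eta}$-twist entering only through the intertwiner implicit in $\Theta_{\t\sigma}$ and the replacement of $\G_{\gamma_{<r}}$ by $\G_{{}^g\eta}$.
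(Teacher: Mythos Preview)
Your outline follows the same strategy as the paper, but there are two concrete problems.

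First, a small but real confusion: you write ``taking $\mcK$ to lie inside a compact open subgroup $\mcK_\eta\subset G_\eta$''. This is backwards. The group $\mcK$ must be open in $G$, not in $G_\eta$; one takes $\mcK_\eta\colonequals\mcK\cap G_\eta$ and then uses a Fubini argument (Lemma~\ref{lem:ave} for the compact support, plus inserting and removing an extra integral over $\mcK_\eta$) to replace the $\mcK$-integral by a $\mcK_\eta$-integral. Your factorization of the integrand then holds for $k\in\mcK_\eta$, not for $k\in\mcK$.

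Second, and this is the genuine gap: you decompose the outer integral directly along $S\backslash G/G_\eta$. But $\dot{\Theta}_{\t\sigma}$ is invariant under $K_\sigma$-conjugation, not merely $S$-conjugation, so the natural decomposition is along $K_\sigma\backslash G/G_\eta$. Passing from $K_\sigma$-cosets to $S$-cosets requires knowing that the natural surjection
\[
S\backslash\{g\in G\mid {}^{g}\eta\in\t{S}\}/G_\eta
\twoheadrightarrow
K_\sigma\backslash\{g\in G\mid {}^{g}\eta\in{}^{G_{\x,0+}}\t{S}\}/G_\eta
\]
is actually a bijection; otherwise the sum over $S$-cosets overcounts. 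The paper isolates this as a separate Lemma (Lemma~\ref{lem:index}), and its proof is not routine: it uses the vanishing of $H^1$ of a prime-to-$p$ cyclic group in pro-$p$ coefficients together with \cite[Lemma~9.10]{AS08} applied inside $\G_{\eta'_0}$. Your proposal does not address this step. Everything else---the factorization via $\hat\vartheta$-isotypicity, the volume computation giving $\dim\sigma/\deg\pi$, and the identification of the residual integral with $\hat{\mu}^{\G_{\eta'}}_{X^\ast}$ via the Adler--Spice orbital integral lemma (the paper cites \cite[Lemma~B.4]{AS09} rather than Lemma~6.2)---is as you describe.
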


\begin{proof}
The starting point of the proof is the twisted version of Harish-Chandra's integration formula (see \cite[Partie I, Th\'eor\`eme 6.2.1 (2)]{LH17}):
\[
\Theta_{\t{\pi}}(\delta)
=
\frac{\deg\pi}{\dim\sigma}
\int_{G/Z_{\G}}
\int_{\mcK} \dot{\Theta}_{\t{\sigma}} ({}^{\dot{g}k}\delta)\, dk\,d\dot{g},
\]
where $\mcK$ is an open compact subgroup of $G$, $dk$ is the Haar measure on $\mcK$ satisfying $dk(\mcK)=1$, and $d\dot{g}$ is a Haar measure on $G/Z_{\G}$ and $\deg\pi$ denotes the formal degree of $\pi$ with respect to the measure $d\dot{g}$.

We take an open compact subgroup $\mcK_{\eta}$ of $G_{\eta}$ to be $\mcK_{\eta}=\mcK\cap G_{\eta}$.
We let $dc$ be the Haar measure of $\mcK_{\eta}$ satisfying $dc(\mcK_{\eta})=1$.
Then we can replace the integral over $\mcK$ in Harish-Chandra's integration formula with an integral over $\mcK_{\eta}$ by the following standard argument.
First, since $\mcK_{\eta}\subset\mcK$ and $dc(\mcK_{\eta})=1$, we have
\[
\int_{G/Z_{\G}}
\int_{\mcK} \dot{\Theta}_{\t{\sigma}} ({}^{\dot{g}k}\delta)\, dk\,d\dot{g}
=
\int_{G/Z_{\G}}
\int_{\mcK_{\eta}}
\int_{\mcK} \dot{\Theta}_{\t{\sigma}} ({}^{\dot{g}kc}\delta)\, dk\,dc\,d\dot{g}.
\]
By applying Fubini's theorem to the inner double integral (note that both of $\mcK_{\eta}$ and $\mcK$ are compact), we get
\[
\int_{G/Z_{\G}}
\int_{\mcK_{\eta}}
\int_{\mcK} \dot{\Theta}_{\t{\sigma}} ({}^{\dot{g}kc}\delta)\, dk\,dc\,d\dot{g}
=
\int_{G/Z_{\G}}
\int_{\mcK}
\int_{\mcK_{\eta}} \dot{\Theta}_{\t{\sigma}} ({}^{\dot{g}kc}\delta)\, dc\,dk\,d\dot{g}.
\]
Then, since the inner integral over $\mcK_{\eta}$ is compactly supported as a function on $\dot{g}\in G/Z_{\G}$ (Lemma \ref{lem:ave}), we can apply Fubini's theorem to the outer double integral:
\[
\int_{G/Z_{\G}}
\int_{\mcK}
\int_{\mcK_{\eta}} \dot{\Theta}_{\t{\sigma}} ({}^{\dot{g}kc}\delta)\, dc\,dk\,d\dot{g}
=
\int_{\mcK}
\int_{G/Z_{\G}}
\int_{\mcK_{\eta}} \dot{\Theta}_{\t{\sigma}} ({}^{\dot{g}kc}\delta)\, dc\,d\dot{g}\,dk.
\]
Finally, by using that $d\dot{g}$ is right $G$-invariant and that $dk(\mcK)=1$, we get 
\[
\int_{\mcK}
\int_{G/Z_{\G}}
\int_{\mcK_{\eta}} \dot{\Theta}_{\t{\sigma}} ({}^{\dot{g}kc}\delta)\, dc\,d\dot{g}\,dk
=
\int_{G/Z_{\G}}
\int_{\mcK_{\eta}} \dot{\Theta}_{\t{\sigma}} ({}^{\dot{g}c}\delta)\, dc\,d\dot{g}.
\]

Now we consider the following partition of $G/Z_{\G}$ into double cosets:
\[
\int_{G/Z_{\G}}
\int_{\mcK_{\eta}} \dot{\Theta}_{\t{\sigma}} ({}^{\dot{g}c}\delta)\, dc\,d\dot{g}
=
\sum_{g\in K_{\sigma}\backslash G/G_{\eta}}
\int_{K_{\sigma}gG_{\eta}/Z_{\G}}
\int_{\mcK_{\eta}} \dot{\Theta}_{\t{\sigma}} ({}^{\dot{g}c}\delta)\, dc\,d\dot{g}.
\]
Note that, by Proposition \ref{prop:sep} and Lemma \ref{lem:sigma}, if the contribution of the summand with respect to $g\in K_{\sigma}\backslash G/G_{\eta}$ is nonzero, then there exists an element $g'$ in the double coset satisfying ${}^{g'}\eta\in{}^{G_{\x,0+}}\t{S}$.
By Lemma \ref{lem:index}, which will be proved later, the natural surjective map
\[
S\backslash\bigl\{g\in G \mid {}^{g}\eta\in\t{S}\bigr\}/G_{\eta}
\twoheadrightarrow
K_{\sigma}\backslash\bigl\{g\in G \mid {}^{g}\eta\in{}^{G_{\x,0+}}\t{S}\bigr\}/G_{\eta}
\]
is in fact bijective.
Hence we see that the above sum of double integrals equals
\begin{align}\label{eq:thm:TCF-1}
\sum_{\begin{subarray}{c} g\in S\backslash G/G_{\eta} \\ {}^{g}\eta\in\t{S} \end{subarray}}
\int_{K_{\sigma}gG_{\eta}/Z_{\G}}
\int_{\mcK_{\eta}} \dot{\Theta}_{\t{\sigma}} ({}^{\dot{g}c}\delta)\, dc\,d\dot{g}.
\end{align}

Let us compute each summand by fixing $g\in S\backslash G/G_{\eta}$ satisfying ${}^{g}\eta\in\t{S}$.
We put
\[
y\colonequals \dot{g}g^{-1}
\quad\text{and}\quad
c'\colonequals {}^{g}c=gcg^{-1}.
\]
Then, letting $dy$ and $dc'$ be the Haar measures on $K_{\sigma}{}^{g}G_{\eta}/Z_{\G}$ and ${}^{g}\mcK_{\eta}$ naturally induced from $d\dot{g}$ and $dc$, respectively, we get
\[
\int_{K_{\sigma}gG_{\eta}/Z_{\G}}
\int_{\mcK_{\eta}} \dot{\Theta}_{\t{\sigma}} ({}^{\dot{g}c}\delta)\, dc\,d\dot{g}
=
\int_{K_{\sigma}{}^{g}G_{\eta}/Z_{\G}}
\int_{{}^{g}\mcK_{\eta}} \dot{\Theta}_{\t{\sigma}} ({}^{yc'g}\delta)\, dc'\,dy.
\]
By putting $\delta'\colonequals {}^{g}\delta$, $\eta'\colonequals {}^{g}\eta$, and $\delta'_{\geq r}\colonequals {}^{g}\delta_{\geq r}$, we get
\[
\int_{K_{\sigma}{}^{g}G_{\eta}/Z_{\G}}
\int_{{}^{g}\mcK_{\eta}} \dot{\Theta}_{\t{\sigma}} ({}^{yc'g}\delta)\, dc'\,dy
=
\int_{K_{\sigma}G_{\eta'}/Z_{\G}}
\int_{\mcK_{\eta'}} \dot{\Theta}_{\t{\sigma}} ({}^{yc'}\delta')\, dc'\,dy.
\]

We let $dh$ be the Haar measure on $G_{\eta'}/(G_{\eta'}\cap Z_{\G})\cong G_{\eta'}Z_{\G}/Z_{\G}$ 
normalized so that $dh\bigl((G_{\eta'}\cap K_{\sigma})/(G_{\eta'}\cap Z_{\G})\bigr)=1$.
Let $d\dot{y}$ be the quotient measure on $K_{\sigma}G_{\eta'}/G_{\eta'}Z_{\G}$ of $dy$ by $dh$.
Then we have
\begin{multline*}
\int_{K_{\sigma}G_{\eta'}/Z_{\G}}
\int_{\mcK_{\eta'}} \dot{\Theta}_{\t{\sigma}} ({}^{yc'}\delta')\, dc'\,dy\\
=
\int_{K_{\sigma}G_{\eta'}/G_{\eta'}Z_{\G}}
\int_{G_{\eta'}/G_{\eta'}\cap Z_{\G}}
\int_{\mcK_{\eta'}} \dot{\Theta}_{\t{\sigma}} ({}^{\dot{y}hc'}\delta')\, dc'\,dh\,d\dot{y}.
\end{multline*}
Since $\dot{\Theta}_{\t{\sigma}}$ is left-$K_{\sigma}$-invariant, this triple integral equals
\begin{align}\label{eq:thm:TCF-2}
d\dot{y}(K_{\sigma}G_{\eta'}/G_{\eta'}Z_{\G})
\int_{G_{\eta'}/G_{\eta'}\cap Z_{\G}}
\int_{\mcK_{\eta'}} \dot{\Theta}_{\t{\sigma}} ({}^{hc'}\delta')\, dc'\,dh.
\end{align}

Let us compute the volume $d\dot{y}(K_{\sigma}G_{\eta'}/G_{\eta'}Z_{\G})$.
Since $K_{\sigma}G_{\eta'}/G_{\eta'}Z_{\G}\cong K_{\sigma}/(G_{\eta'}\cap K_{\sigma})Z_{\G}$ is equal to the quotient of $K_{\sigma}/Z_{\G}$ by $(G_{\eta'}\cap K_{\sigma})Z_{\G}/Z_{\G}\cong(G_{\eta'}\cap K_{\sigma})/(G_{\eta'}\cap Z_{\G})$, the volume $d\dot{y}(K_{\sigma}G_{\eta'}/G_{\eta'}Z_{\G})$ is given by
\[
dy(K_{\sigma}/Z_{\G})
\cdot
dh\bigl((G_{\eta'}\cap K_{\sigma})/(G_{\eta'}\cap Z_{\G})\bigr)^{-1}.
\]
By our choice of $dh$, we have $dh((G_{\eta'}\cap K_{\sigma})/(G_{\eta'}\cap Z_{\G}))=1$.
On the other hand, we have 
\[
dy(K_{\sigma}/Z_{\G})=d\dot{g}(K_{\sigma}g/Z_{\G})=d\dot{g}(K_{\sigma}/Z_{\G})
=
\frac{\dim\sigma}{\deg\pi}.
\]
(The final equality is a well-known formula for the formal degree of a compactly induced supercuspidal representation; see, for example, \cite[Partie I, Th\'eor\`eme 6.2.1 (1)]{LH17}.)
Hence we obtain $d\dot{y}(K_{\sigma}G_{\eta'}/G_{\eta'}Z_{\G})=\dim\sigma/\deg\pi$.

Let us next compute the double integral in \eqref{eq:thm:TCF-2}.
Recall that in the proof of Lemma \ref{lem:ave} we showed that
\[
\dot{\Theta}_{\t{\sigma}}({}^{hc'}\delta')
=
\Theta_{\t{\sigma}}(\eta')
\mathbbm{1}_{G_{\eta',\x,r}}({}^{hc'}\delta'_{\geq r})\hat{\vartheta}({}^{hc'}\delta'_{\geq r}).
\]
Note that $\delta'_{+}$ is regular semisimple in $\G_{\delta'_{0}}$ by Lemma \ref{lem:rss-descent}.
As $\delta'_{+}=\delta^{\prime+}_{<r}\delta'_{\geq r}$ is a normal $r$-approximation in $\G_{\delta'_{0}}$, the regular semisimplicity of $\delta'_{+}$ in $\G_{\delta'_{0}}$ implies that of $\delta'_{\geq r}$ in $(\G_{\delta'_{0}})_{\delta^{\prime+}_{<r}}=\G_{\eta'}$ (Lemma \ref{lem:cent-cent} and \cite[Corollary 6.14]{AS08}).
Thus $\log(\delta'_{\geq r})\in\mfg_{\eta',\x,r}$ is also regular semisimple.
By the orbital integral formula of Adler--Spice \cite[Lemma B.4]{AS09}, we get
\[
\int_{G_{\eta'}/G_{\eta'}\cap Z_{\G}}
\int_{\mcK_{\eta'}}
\mathbbm{1}_{G_{\eta',\x,r}}({}^{hc'}\delta'_{\geq r})\hat{\vartheta}({}^{hc'}\delta'_{\geq r}) \,dc'\,dh
=
\hat{\mu}^{\G_{\eta'}}_{X^{\ast}}\bigl(\log(\delta'_{\geq r})\bigr).
\]
\end{proof}

\begin{lem}\label{lem:index}
The natural surjective map
\[
S\backslash\bigl\{g\in G \mid {}^{g}\eta\in\t{S}\bigr\}/G_{\eta}
\twoheadrightarrow
K_{\sigma}\backslash\bigl\{g\in G \mid {}^{g}\eta\in{}^{G_{\x,0+}}\t{S}\bigr\}/G_{\eta}
\]
is bijective.
\end{lem}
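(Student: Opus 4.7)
The plan is to verify surjectivity by inspection and to establish injectivity through a Bruhat--Tits conjugacy argument inside the centralizer $\bfG_{{}^{k_0}\eta'}$. Surjectivity is immediate: given $g$ with ${}^g\eta \in {}^{G_{\x,0+}}\t{S}$, by definition there exists $k \in G_{\x,0+}$ with ${}^{kg}\eta \in \t{S}$; this element $kg$ lies in the left-hand set, and $K_\sigma(kg) = K_\sigma g$ since $G_{\x,0+} \subset K_\sigma$.

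For injectivity I would take $g_1, g_2$ with ${}^{g_i}\eta \in \t{S}$ representing the same $K_\sigma$-$G_\eta$ double coset, so $g_2 = s k_0 g_1 h$ for some $s \in S$, $k_0 \in G_{\x,0+}$, and $h \in G_\eta$. Absorbing the $s$ and $h$, which are freely available on both sides, reduces the problem to showing $k_0 \in S \cdot G_{\eta'}$, where $\eta' := {}^{g_1}\eta \in \t{S}$. The identity ${}^{g_2}\eta = s \cdot {}^{k_0}\eta' \cdot s^{-1}$, combined with the fact that $S$ preserves $\t{S}$, supplies the crucial hypothesis ${}^{k_0}\eta' \in \t{S}$.

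The core argument then takes place inside the connected reductive group $\bfG_{{}^{k_0}\eta'}$. By Proposition \ref{prop:twisted-tori}, both $\bfS^\nat$ and ${}^{k_0}\bfS^\nat$ are $F$-rational maximal tori there, and both apartments contain $\x$ (using Proposition \ref{prop:BT-descent} and the fact that $k_0 \in G_{\x,0+}$ fixes $\x$). Since $k_0$ acts trivially on the reductive quotient of the parahoric at $\x$, the two tori have the same image there. A standard Bruhat--Tits conjugacy theorem then produces an element $h_0 \in G_{{}^{k_0}\eta',\x,0+} = {}^{k_0}(G_{\eta',\x,0+})$ with ${}^{h_0}\bfS^\nat = {}^{k_0}\bfS^\nat$. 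The element $h_0^{-1}k_0$ then lies in $N_G(\bfS^\nat) \cap G_{\x,0+}$, which equals $S_{0+}$ because $N_G(\bfS^\nat) = N_G(\bfS)$ by Proposition \ref{prop:twisted-tori}(1), the quotient $N_G(\bfS)/S$ has order prime to $p$, and $G_{\x,0+}$ is pro-$p$. Writing $h_0 = k_0 g_0 k_0^{-1}$ with $g_0 \in G_{\eta'}$ and $h_0^{-1}k_0 =: s_0 \in S_{0+}$, a short manipulation yields $k_0 = s_0 g_0 \in S \cdot G_{\eta'}$, completing the proof.

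The principal obstacle is the Bruhat--Tits conjugacy statement invoked above: that two $F$-rational maximal tori of a tame connected reductive $p$-adic group whose apartments both contain a common point $\x$ and which have the same image in the reductive quotient of the parahoric at $\x$ are conjugate by an element of the pro-unipotent radical at $\x$. This is a standard but nontrivial ingredient (compare results of Adler--DeBacker--Spice or Kaletha--Prasad); applying it inside the possibly ramified group $\bfG_{{}^{k_0}\eta'}$ requires combining it carefully with Proposition \ref{prop:MP-descent} in order to identify the relevant Moy--Prasad filtrations with those descended from $\bfG$.
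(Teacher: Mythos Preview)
Your reduction to the core claim—that $k_0 \in S\cdot G_{\eta'}$ whenever $k_0 \in G_{\x,0+}$ and both $\eta',\,{}^{k_0}\eta' \in \t{S}$—matches the paper exactly. From there the routes diverge.

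The paper does not work directly in $\G_{{}^{k_0}\eta'}$. It first separates out the topologically semisimple part: writing $\eta'=\eta'_0\eta'_+$, Lemma~\ref{lem:head} gives $\eta'_0,\,{}^{k_0}\eta'_0\in\t{S}$, so ${}^{k_0}\eta'_0=s_+\eta'_0$ with $s_+\in S_{0+}$; a cocycle argument (vanishing of $H^1(\Z/p'\Z,-)$ for the pro-$p$ groups $\overline{S_{0+}}$ and $A_{\t\G,0+}$) lets one modify $k_0$ within $S_{0+}$ so that $k_0\in G_{\eta'_0,\x,0+}$. Only at this point do Propositions~\ref{prop:BT-descent} and~\ref{prop:MP-descent} apply. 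The second step is then \cite[Lemma~9.10]{AS08} in the untwisted group $\G_{\eta'_0}$, with $(\G',\gamma)=(\bfS^\nat,\eta'_+)$, yielding $k_0\in S^\nat_{0+}G_{\eta',\x,0+}$.

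Your direct argument has a gap where you invoke Proposition~\ref{prop:BT-descent}: that result is stated only for topologically semisimple elements of $\t{S}$, whereas ${}^{k_0}\eta'=\,{}^{k_0}\delta_{<r}$ generally has a nontrivial topologically unipotent part $\eta'_+$. To embed $\mcB(\G_{{}^{k_0}\eta'},F)$ in $\mcB(\G,F)$ compatibly with the Moy--Prasad filtrations—so that statements like $G_{{}^{k_0}\eta',\x,0+}={}^{k_0}(G_{\eta',\x,0+})$ and the comparison of reductive quotients make sense—you must pass through $\G_{{}^{k_0}\eta'_0}$. That in turn requires ${}^{k_0}\eta'_0\in\t{S}$ (Lemma~\ref{lem:head}) and hence reintroduces exactly the topological-Jordan reduction you were hoping to bypass. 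Once that reduction is in place, your torus-conjugacy step becomes a legitimate alternative to \cite[Lemma~9.10]{AS08}; but the substantive work lies in getting there, and the ``standard Bruhat--Tits conjugacy theorem'' you flag as the principal obstacle is, by comparison, routine once the building identifications are available.
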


\begin{proof}
Suppose that two double cosets $SgG_{\eta}$ and $Sg'G_{\eta}$ map to the same double coset $K_{\sigma}gG_{\eta}$.
Then, as $K_{\sigma}=SG_{\x,0+}$ and $S$ normalizes $G_{\x,0+}$, we may assume that $g'$ is given by $kg$ with some $k\in G_{\x,0+}$.
We write $\eta'\colonequals {}^{g}\eta$.
As we have $SgG_{\eta}=SG_{\eta'}g$ and $SkgG_{\eta}=SkG_{\eta'}g$, it suffices to show that $SG_{\eta'}=SkG_{\eta'}$ (for $\eta'\in\t{S}$ and $k\in G_{\x,0+}$ satisfying ${}^{k}\eta'\in\t{S}$).

Let $\eta'=\eta'_{0}\eta'_{+}$ and ${}^{k}\eta'={}^{k}\eta'_{0}{}^{k}\eta'_{+}$ be the topological Jordan decompositions induced from $\eta=\eta_{0}\eta_{+}$.
Let $p'$ be the order of $\eta_{0}$ modulo $A_{\t{\G}}$, which is prime-to-$p$.
By Lemma \ref{lem:head}, the conditions $\eta',{}^{k}\eta'\in\t{S}$ implies that $\eta'_{0},{}^{k}\eta'_{0}\in\t{S}$.
Thus there exists an element $s_{+}\in S$ such that ${}^{k}\eta'_{0}=s_{+}\eta'_{0}$.
By noting that 
\[
s_{+}
={}^{k}\eta'_{0}\cdot{\eta'_{0}}^{-1}
=k\cdot ({\eta'_{0}}k{\eta'_{0}}^{-1})^{-1} \in G_{\x,0+},
\]
$s_{+}$ belongs to $S\cap G_{\x,0+}=S_{0+}$.
Since the order of $\eta'_{0}$ and ${}^{k}\eta'_{0}$ modulo $A_{\t{\G}}$ is given by $p'$, we get
\[
1={}^{k}{\eta'_{0}}^{p'}
=\prod_{i=0}^{p'-1} [\eta'_{0}]^{i}(s_{+})\cdot {\eta'_{0}}^{p'}
=\prod_{i=0}^{p'-1} [\eta'_{0}]^{i}(s_{+})
\]
in $S/A_{\t{\G}}$.
Thus, by the same argument as in the proof of Lemma \ref{lem:sep-depth0} (i.e., using the vanishing of $H^{1}(\Z/p'\Z, \overline{S_{0+}})$), we can find an element $t_{+}\in S_{0+}$ satisfying $t_{+}s_{+}[\eta'_{0}](t_{+})^{-1}\in A_{\t{\G}}$, hence $t_{+}s_{+}[\eta'_{0}](t_{+})^{-1}\in A_{\t{\G},0+}$.
Then we have
\[
{}^{t_{+}k}\eta'_{0}
={}^{t_{+}}(s_{+}\eta'_{0})
=t_{+}s_{+}[\eta'_{0}](t_{+})^{-1}\cdot\eta'_{0}.
\]
Hence, by replacing $k$ with $t_{+}k$, we may assume that ${}^{k}\eta'_{0}=a\eta'_{0}$ for some $a\in A_{\t{\G},0+}$.
In other words, the image $\ol{k}$ of $k$ in $G_{\x,0+}/A_{\t{\G},0+}$ is fixed by $[\eta'_{0}]$.
We note that the short exact sequence
\[
1
\rightarrow
A_{\t{\G},0+}
\rightarrow
G_{\x,0+}
\rightarrow
G_{\x,0+}/A_{\t{\G},0+}
\rightarrow
1
\]
induces an exact sequence
\[
1
\rightarrow
A_{\t{\G},0+}^{\eta'_{0}}
\rightarrow
G_{\x,0+}^{\eta'_{0}}
\rightarrow
(G_{\x,0+}/A_{\t{\G},0+})^{\eta'_{0}}
\rightarrow
H^{1}(\langle[\eta'_{0}]\rangle, A_{\t{\G},0+})
\]
and that $H^{1}(\langle[\eta'_{0}]\rangle, A_{\t{\G},0+})$ vanishes since $\langle[\eta'_{0}]\rangle$ is of finite prime-to-$p$ order and $A_{\t{\G},0+}$ is a pro-$p$ group.
Thus we can find an element $k'\in G_{\x,0+}^{\eta'_{0}}$ whose image $\ol{k'}$ in $G_{\x,0+}/A_{\t{\G},0+}$ equals $\ol{k}$.
As we have $G_{\x,0+}^{\eta'_{0}}=G_{\eta'_{0},\x,0+}$ by Proposition \ref{prop:MP-descent} (2), this implies that $k$ belongs to $G_{\eta'_{0},\x,0+}A_{\t{\G},0+}=G_{\eta'_{0},\x,0+}$.

Now we utilize \cite[Lemma 9.10]{AS08}, which asserts that if
\begin{itemize}
\item
$(\G',\G)$ is a tame reductive $F$-sequence,
\item
$\gamma\in G$ is an element having a normal $r$-approximation,
\item
$\x\in\mcB(\bfC_{\G}^{(r)}(\gamma),F)\cap\mcB(\G',F)$,
\item
$k\in G_{\x,0+}$, and
\item
$Z_{\bfG}^{(r)}(\gamma)\subset G'$ and ${}^{k}Z_{\bfG}^{(r)}(\gamma)\subset G'$,
\end{itemize}
the element $k$ belongs to $G'_{\x,0+}C_{\bfG}^{(r)}(\gamma)_{\x,0+}$.
If we take 
\begin{itemize}
\item
$(\G,\G')\colonequals (\G_{\eta'_{0}},\bfS^{\nat})$,
\item
$\gamma\colonequals \eta'_{+}$ (then $\bfC_{\G_{\eta'_{0}}}^{(r)}(\gamma)=(\G_{\eta'_{0}})_{\eta'_{+}}=\G_{\eta'}$ by Lemma \ref{lem:cent-cent}),
\item
$\x$ to be the point $\x$ belonging to $\mcA(\bfS^{\nat},F)$, and
\item
$k\in G_{\eta'_{0},\x,0+}$,
\end{itemize}
then the assumptions of \cite[Lemma 9.10]{AS08} are satisfied.
Indeed, we have $Z_{\bfG}^{(r)}(\gamma)=Z_{\bfC_{\G}^{(r)}(\gamma)}=Z_{\G_{\eta'}}$.
As we have $\eta'\in\t{S}$, we have $G_{\eta'}\supset S^{\nat}$, hence $Z_{\bfG}^{(r)}(\gamma)\subset S^{\nat}$.
Similarly, we have ${}^{k}Z_{\bfG}^{(r)}(\gamma)\subset S^{\nat}$.
Thus we conclude that $k$ belongs to $S^{\nat}_{0+}G_{\eta',\x,0+} =G_{\eta',\x,0+}$.
In particular, we get $SG_{\eta'}=SkG_{\eta'}$.
\end{proof}

Now, by Theorem \ref{thm:TCF-0}, our task is to describe each summand of the right-hand side of \eqref{eq:thm:TCF-0}.
We next show the following proposition, which is a twisted version of \cite[Proposition 5.3.2]{AS09}:

\begin{prop}\label{prop:5.3.2}
For any element $\eta'\in\t{S}$ with a topological Jordan decomposition $\eta'=\eta'_{0}\eta'_{+}$, we have
\[
\Theta_{\t{\sigma}}(\eta')
=
\sum_{g\in S^{\nat}_{0+}G_{\eta'_{0},\x,s}\backslash[\![ \eta'_{+};\x,r]\!]_{G_{\eta'_{0}}}^{(s)}} \Theta_{\t{\rho}}({}^{g}\eta'),
\]
where $[\![ \eta'_{+};\x,r]\!]_{G_{\eta'_{0}}}^{(s)}$ is the subgroup as in \cite[Definition 6.6]{AS08} (taken in $G_{\eta'_{0}}$).
\end{prop}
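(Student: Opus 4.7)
The plan is to adapt the proof of \cite[Proposition 5.3.2]{AS09} to the twisted setting. Recall that $\sigma=\Ind_{K}^{K_{\sigma}}\rho$, and by the construction of Section \ref{subsec:twist-int} the intertwiner $I_{\sigma}^{\ul{\eta}}$ is the one naturally induced from $I_{\rho}^{\ul{\eta}}$ via the functoriality of induction. Starting from the twisted Frobenius character formula for induction from an open subgroup of finite index (available here because $K_{\sigma}/K\cong G_{\x,0+}/G_{\x,s}$ is finite, so no convergence issues arise), a direct Mackey-type calculation yields
\begin{equation}\label{eq:plan-Frob}
\Theta_{\t{\sigma}}(\eta')
=\sum_{\substack{gK\in K_{\sigma}/K\\ {}^{g}\eta'\in\t{K}}}\Theta_{\t{\rho}}({}^{g}\eta').
\end{equation}
Thus the remaining task is to identify the indexing set of \eqref{eq:plan-Frob} with $S^{\nat}_{0+}G_{\eta'_{0},\x,s}\backslash[\![\eta'_{+};\x,r]\!]^{(s)}_{\G_{\eta'_{0}}}$.

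For the support analysis, I will combine the topological Jordan decomposition $\eta'=\eta'_{0}\eta'_{+}$ with the separation results of Section \ref{subsec:separation}. Given a representative $g\in K_{\sigma}=SG_{\x,0+}$ with ${}^{g}\eta'\in\t{K}=SG_{\x,s}\ul{\eta}$, Lemma \ref{lem:head} applied to the (trivial $0+$-)approximation furnished by the topological Jordan decomposition forces ${}^{g}\eta'_{0}\in\t{S}$; then Lemma \ref{lem:sep-depth0} together with Proposition \ref{prop:MP-descent}~(3) lets me adjust $g$ on the left by an element of $S\cdot G_{\eta'_{0},\x,0+}$ so that in fact ${}^{g}\eta'_{0}=\eta'_{0}$ and $g\in S^{\nat}\cdot G_{\eta'_{0},\x,0+}$. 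Absorbing the $S^{\nat}$-part into $K$ reduces the sum in \eqref{eq:plan-Frob} to one over cosets represented by elements of $G_{\eta'_{0},\x,0+}$, modulo $S^{\nat}_{0+}G_{\eta'_{0},\x,s}$.

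Finally, I invoke the $\hat{\vartheta}$-isotypicity of $\rho|_{G_{\x,r}}$ (\cite[Lemma 2.5]{AS09}), which propagates to $\t{\rho}$ because the intertwiner $I_{\rho}^{\ul{\eta}}$ is built from a Heisenberg--Weil intertwiner that commutes with the central action. Exactly as in the untwisted argument of \cite[Proposition 5.3.2]{AS09}, this isotypicity forces the non-vanishing cosets to lie precisely in $S^{\nat}_{0+}G_{\eta'_{0},\x,s}\backslash[\![\eta'_{+};\x,r]\!]^{(s)}_{\G_{\eta'_{0}}}$, where the subgroup $[\![\eta'_{+};\x,r]\!]^{(s)}_{\G_{\eta'_{0}}}\subset G_{\eta'_{0},\x,0+}$ is the one introduced in \cite[Definition 6.6]{AS08}. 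Combined with \eqref{eq:plan-Frob}, this yields the asserted formula.

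The main obstacle will be the second step: carefully verifying that the conjugation bookkeeping under the twisted intertwiner $I_{\sigma}^{\ul{\eta}}$ is compatible with the reduction to $G_{\eta'_{0}}$. Once the separation tools of Section \ref{subsec:separation} are used to arrange ${}^{g}\eta'_{0}=\eta'_{0}$, the remaining analysis is essentially untwisted because $\eta'_{+}\in G_{\eta'_{0}}$, and the results of \cite{AS08,AS09} apply to $(\G_{\eta'_{0}},\bfS^{\nat},\vartheta^{\nat})$ by Lemma \ref{lem:toral-pair-descent}, Proposition \ref{prop:BT-descent}, and Proposition \ref{prop:MP-descent}.
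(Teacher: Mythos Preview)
Your overall plan---Frobenius formula, then descend the index set to $G_{\eta'_0}$, then quote \cite[Proposition~5.3.2]{AS09} in the descended group---is exactly the paper's. However, your middle step is not correctly executed. From ${}^g\eta'\in\t{S}G_{\x,s}$, Proposition~\ref{prop:Jordan}~(4) (or Lemma~\ref{lem:head}) only yields ${}^g\eta'_0\in\t{S}G_{\x,s}$, not ${}^g\eta'_0\in\t{S}$: the closure $\overline{\langle{}^g\eta'\rangle A_{\t\G}}$ merely stays in the closed coset $\t{S}G_{\x,s}$. And Lemma~\ref{lem:sep-depth0} is stated for elliptic \emph{regular} semisimple elements, which $\eta'$ is not assumed to be here; in any case its conclusion is not what you need, and the group ``$S\cdot G_{\eta'_0,\x,0+}$'' you propose to adjust by is not contained in $K$, so it does not preserve the $K$-coset in the Frobenius sum.

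The correct argument, which the paper carries out, is the $H^1$-vanishing trick extracted from the \emph{proof} of Lemma~\ref{lem:sep-depth0} (cf.\ also Lemma~\ref{lem:index}): since $\eta'_0\in\t{S}$ one writes ${}^g\eta'_0=g_+\eta'_0$ and checks $g_+\in SG_{\x,s}\cap G_{\x,0+}=S_{0+}G_{\x,s}$; the resulting $1$-cocycle of the prime-to-$p$ cyclic group $\langle[\eta'_0]\rangle$ valued in the pro-$p$ group $S_{0+}G_{\x,s}$ is a coboundary, giving $k\in S_{0+}G_{\x,s}\subset K$ with ${}^{kg}\eta'_0=\eta'_0$. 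After replacing $g$ by $kg$ (same $K$-coset) one has $g\in G_{\x,0+}\cap G^{\eta'_0}=G_{\eta'_0,\x,0+}$ by Proposition~\ref{prop:MP-descent}~(2), and then ${}^g\eta'_+\in S_{0+}G_{\x,s}\cap G^{\eta'_0}=S^\nat_{0+}G_{\eta'_0,\x,s}$ by Proposition~\ref{prop:MP-descent}~(3). With this correction, your final step (invoking $\hat\vartheta$-isotypicity and the untwisted argument in $G_{\eta'_0}$) matches the paper.
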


\begin{proof}
Recall that the representation $\sigma$ of $SG_{\x,0+}$ is defined by inducing the representation $\rho$ of $SG_{\x,s}$.
Thus, by the Frobenius character formula for induced representations, we have 
\[
\Theta_{\t{\sigma}}(\eta')
=\sum_{\begin{subarray}{c}g\in SG_{\x,s}\backslash SG_{\x,0+} \\ {}^{g}\eta'\in \t{S}G_{\x,s} \end{subarray}} \Theta_{\t{\rho}}({}^{g}\eta')
=\sum_{\begin{subarray}{c}g\in S_{0+}G_{\x,s}\backslash G_{\x,0+} \\ {}^{g}\eta'\in \t{S}G_{\x,s} \end{subarray}} \Theta_{\t{\rho}}({}^{g}\eta').
\]

Let us show that the index set on the most-right-hand side can be replaced with the set $\{g\in S^{\nat}_{0+}G_{\eta'_{0},\x,s}\backslash G_{\eta'_{0},\x,0+} \mid {}^{g}\eta'_{+}\in S^{\nat}_{0+}G_{\eta'_{0},\x,s} \}$.
Suppose that a coset in $S_{0+}G_{\x,s}\backslash G_{\x,0+}$ contains an element $g$ satisfying ${}^{g}\eta'\in \t{S}G_{\x,s}$.
Then, since ${}^{g}\eta'={}^{g}\eta'_{0}{}^{g}\eta'_{+}$ gives a topological Jordan decomposition of ${}^{g}\eta'$, Proposition \ref{prop:Jordan} (4) implies that ${}^{g}\eta'_{0}\in\t{S}G_{\x,s}$.
On the other hand, as $\eta'$ belongs to $\t{S}$, we have $\eta'_{0}\in\t{S}$ (this also follows from Proposition \ref{prop:Jordan} (4)).
Thus there exists an element $g_{+}\in SG_{\x,s}$ satisfying ${}^{g}\eta'_{0}=g_{+}\eta'_{0}$.
As we have
\[
g_{+}={}^{g}\eta_{0}'\eta_{0}^{\prime-1}=g\cdot \eta'_{0}g^{-1}\eta_{0}^{\prime-1}\in G_{\x,0+},
\]
$g_{+}$ belongs to $SG_{\x,s}\cap G_{\x,0+}=S_{0+}G_{\x,s}$.
Then, by the same argument as in the proofs of Lemma \ref{lem:sep-depth0} or Lemma \ref{lem:index} using the vanishing of $H^{1}(\Z/p'\Z, S_{0+}G_{\x,s})$, we can find an element $k\in S_{0+}G_{\x,s}$ such that ${}^{kg}\eta'_{0}=\eta'_{0}$.
Therefore, by replacing $g$ with $kg$, we may assume that $g$ belongs to $G_{\x,0+}\cap G^{\eta'_{0}}=G_{\eta'_{0},\x,0+}$ (Proposition \ref{prop:MP-descent} (2)).
In other words, we may assume that our coset in $S_{0+}G_{\x,s}\backslash G_{\x,0+}$ comes from a coset in $S^{\nat}_{0+}G_{\eta'_{0},\x,s}\backslash G_{\eta'_{0},\x,0+}$ (note that $S_{0+}G_{\x,s}\cap G_{\eta'_{0},\x,0+}=S^{\nat}_{0+}G_{\eta'_{0},\x,s}$ by Proposition \ref{prop:MP-descent} (3)).
Furthermore, as ${}^{g}\eta'_{+}$ belongs to $S_{0+}G_{\x,s}$ and commutes with ${}^{g}\eta'_{0}=\eta'_{0}$, we get ${}^{g}\eta'_{+}\in S_{0+}G_{\x,s}\cap G^{\eta'_{0}}=S^{\nat}_{0+}G_{\eta'_{0},\x,s}$.

Now the same argument as in the proof of \cite[Proposition 5.3.2]{AS09} can be applied to the descended group $G_{\eta'_{0}}$.
(One of the most important inputs in the proof of \cite[Proposition 5.3.2]{AS09} is the $\hat{\vartheta}$-isotypicity of the representation. In the current situation, $\rho$ is $\hat{\vartheta}$-isotypic on $G_{\x,r}$, hence also on $G_{\eta'_{0},\x,r}$.)
Then we get
\[
\Theta_{\t\sigma}(\eta')
=
\sum_{\begin{subarray}{c}g\in S^{\nat}_{0+}G_{\eta'_{0},\x,s}\backslash G_{\eta'_{0},\x,0+} \\ {}^{g}\eta'_{+}\in S^{\nat}_{0+}G_{\eta'_{0},\x,s} \end{subarray}} \Theta_{\t{\rho}}({}^{g}\eta')
=
\sum_{g\in S^{\nat}_{0+}G_{\eta'_{0},\x,s}\backslash[\![ \eta'_{+};\x,r]\!]_{G_{\eta'_{0}}}^{(s)}} \Theta_{\t{\rho}}({}^{g}\eta').
\]
(We caution that our notation are different from those of Adler--Spice. Especially, the representation $\t{\rho}$ in \cite[Proposition 5.3.2]{DS18} is nothing but our $\rho_{(\bfS,\vartheta)}$. In this paper, the symbol $\sim$ basically means that a representation is twisted.)
\end{proof}

\begin{cor}\label{cor:5.3.2}
Let $\eta\in\t{S}$ be an element with a topological Jordan decomposition $\eta'=\eta'_{0}\eta'_{+}$.
Then we have
\[
\Theta_{\t{\sigma}}(\eta')
=
\Theta_{\t{\rho}}(\eta')
\cdot|\t{\mfG}_{\G_{\eta'_{0}}}(\vartheta,\eta'_{+})|
\cdot\mfG_{\G_{\eta'_{0}}}(\vartheta,\eta'_{+}),
\]
where $\t{\mfG}_{\G_{\eta'_{0}}}(\vartheta,\eta'_{+})$ and $\mfG_{\G_{\eta'_{0}}}(\vartheta,\eta'_{+})$ are the quantities defined in \cite[Definition 5.2.4]{AS09} (in the group $\G_{\eta'_{0}}$).
\end{cor}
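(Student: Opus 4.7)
The plan is to reduce the assertion to the untwisted Adler--Spice computation inside the descended group $\G_{\eta'_{0}}$, by first using the twisted Frobenius formula from Proposition \ref{prop:5.3.2} and then exploiting the $\hat{\vartheta}$-isotypicity of $\rho$ on $G_{\x,r}$ to strip off the contribution of the conjugating element.

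First, from Proposition \ref{prop:5.3.2} we have
\[
\Theta_{\t{\sigma}}(\eta')
=
\sum_{g\in S^{\nat}_{0+}G_{\eta'_{0},\x,s}\backslash[\![ \eta'_{+};\x,r]\!]_{G_{\eta'_{0}}}^{(s)}} \Theta_{\t{\rho}}({}^{g}\eta').
\]
For each representative $g$ in the indexing set, since $g\in G_{\eta'_{0}}$ commutes with $\eta'_{0}$, the conjugate decomposes as ${}^{g}\eta'=\eta'_{0}\cdot{}^{g}\eta'_{+}=\eta'\cdot[\eta_{+}^{\prime -1},g]$, and by the defining property of $[\![\eta'_{+};\x,r]\!]_{G_{\eta'_{0}}}^{(s)}$ the commutator $[\eta_{+}^{\prime -1},g]$ lies in $G_{\eta'_{0},\x,r}\subset G_{\x,r}$. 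Applying the twisted-representation multiplicativity
\(
\t{\rho}(\eta'\cdot h)=\t{\rho}(\eta')\circ\rho(h)
\)
for $h\in K$, together with the fact that $\rho|_{G_{\x,r}}$ is $\hat{\vartheta}$-isotypic (\cite[Lemma 2.5]{AS09}), I get
\[
\Theta_{\t{\rho}}({}^{g}\eta')
=\hat{\vartheta}\bigl([\eta_{+}^{\prime -1},g]\bigr)\cdot\Theta_{\t{\rho}}(\eta').
\]
The twist $\eta'_{0}$ has now been isolated in the factor $\Theta_{\t{\rho}}(\eta')$.

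It then remains to identify
\[
\sum_{g\in S^{\nat}_{0+}G_{\eta'_{0},\x,s}\backslash[\![ \eta'_{+};\x,r]\!]_{G_{\eta'_{0}}}^{(s)}} \hat{\vartheta}\bigl([\eta_{+}^{\prime -1},g]\bigr)
=
|\t{\mfG}_{\G_{\eta'_{0}}}(\vartheta,\eta'_{+})|\cdot\mfG_{\G_{\eta'_{0}}}(\vartheta,\eta'_{+}).
\]
This sum is formed entirely from objects attached to the \emph{untwisted} reductive group $\G_{\eta'_{0}}$: the tame elliptic toral pair $(\bfS^{\nat},\vartheta^{\nat})$ from Lemma \ref{lem:toral-pair-descent}, the point $\x\in\mcB(\G_{\eta'_{0}},F)$ (identified via Proposition \ref{prop:BT-descent}), and the shallow topologically unipotent element $\eta'_{+}\in G_{\eta'_{0},\x,0+}$. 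Under these identifications, this is exactly the sum whose value is computed to be $|\t{\mfG}|\cdot\mfG$ in the original Adler--Spice setting (see \cite[Definition 5.2.4]{AS09} and the step immediately preceding the untwisted version of Proposition \ref{prop:5.3.2}), so the identification is obtained by directly invoking their evaluation inside $\G_{\eta'_{0}}$.

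The main thing to verify carefully is that the constructions of $\t{\mfG}$ and $\mfG$ transfer cleanly from $\G$ to $\G_{\eta'_{0}}$. Concretely, one must check that the descended toral datum $(\bfS^{\nat},\vartheta^{\nat})$ with representing element $X^{\ast}\in\mfs^{\nat\ast}_{-r}$ (the image of the $\theta_{\bfS}$-invariant representative fixed in Section \ref{subsec:twist-int}) produces the same Heisenberg--symplectic structure on the quotients $(S^{\nat},\G_{\eta'_{0}})_{\x,(r,s):(r,s+)}$ as the one used in Adler--Spice's definition, so that the indexing set $[\![\eta'_{+};\x,r]\!]_{G_{\eta'_{0}}}^{(s)}/S^{\nat}_{0+}G_{\eta'_{0},\x,s}$ and the pairing $\hat{\vartheta}([\eta_{+}^{\prime -1},-])$ match those used to define $\mfG_{\G_{\eta'_{0}}}(\vartheta,\eta'_{+})$. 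This verification is a matter of unwinding definitions with Proposition \ref{prop:MP-descent} supplying the compatibility between Moy--Prasad filtrations in $\G$ and $\G_{\eta'_{0}}$, and is the only non-routine step.
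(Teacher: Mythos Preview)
Your argument follows the paper's route exactly: invoke Proposition~\ref{prop:5.3.2}, rewrite ${}^{g}\eta'=\eta'\cdot[\eta_{+}^{\prime-1},g]$, use the $\hat\vartheta$-isotypicity of $\rho$ to pull out $\Theta_{\t\rho}(\eta')$, and recognise the remaining sum as $\t{\mfG}_{\G_{\eta'_{0}}}(\vartheta,\eta'_{+})=|\t{\mfG}_{\G_{\eta'_{0}}}(\vartheta,\eta'_{+})|\cdot\mfG_{\G_{\eta'_{0}}}(\vartheta,\eta'_{+})$.

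Two small corrections are worth noting. First, the commutator $[\eta_{+}^{\prime-1},g]$ is not placed in $G_{\eta'_{0},\x,r}$ by any ``defining property'' of $[\![\eta'_{+};\x,r]\!]_{G_{\eta'_{0}}}^{(s)}$; the paper shows only that it lies in $J_{+}=(S,G)_{\x,(r,s+)}$, and this already requires \cite[Lemmas~5.30 and~5.32]{AS08}. Since $J_{+}\supset G_{\x,r}$ and $\rho$ is $\hat\vartheta$-isotypic on all of $J_{+}$, your factorisation step is still valid, but the justification you give is not the right one. Second, the identification of the sum $\sum_{g}\hat\vartheta([\eta_{+}^{\prime-1},g])$ with $\t{\mfG}_{\G_{\eta'_{0}}}(\vartheta,\eta'_{+})$ is \emph{literally} \cite[Definition~5.2.4]{AS09} applied inside $\G_{\eta'_{0}}$; there is nothing to ``transfer'' from $\G$, so your closing paragraph is unnecessary.
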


\begin{proof}
By Proposition \ref{prop:5.3.2}, $\Theta_{\t{\sigma}}(\eta')$ is given by the sum of $\Theta_{\t{\rho}}({}^{g}\eta')$ over the set $g\in S^{\nat}_{0+}G_{\eta'_{0},\x,s}\backslash[\![ \eta'_{+};\x,r]\!]_{G_{\eta'_{0}}}^{(s)}$.
For any element $g\in[\![ \eta'_{+};\x,r]\!]_{G_{\eta'_{0}}}^{(s)}$, we have
\[
{}^{g}\eta'
=g\eta' g^{-1}
=\eta'\cdot\eta^{\prime-1}g\eta' g^{-1}
=\eta'\cdot[\eta^{\prime-1},g]
=\eta'\cdot[\eta_{+}^{\prime-1},g].
\]
As $g$ belongs to $[\![ \eta_{+};\x,r]\!]_{G_{\eta_{0}}}^{(s)}$, we know that $[\eta_{+}^{-1},g]$ belongs to $J_{+}$.
(Note that this fact is necessary so that \cite[Definition 5.2.4]{AS09} makes sense and essentially proved in \cite[Section 5]{AS09}; one can verify this property by using \cite[Lemmas 5.30 and 5.32]{AS08}).
By $\hat{\vartheta}$-isotypicity of $\rho$ on $J_{+}$, we have
\[
\sum_{g\in S^{\nat}_{0+}G_{\eta'_{0},\x,s}\backslash[\![ \eta'_{+};\x,r]\!]_{G_{\eta'_{0}}}^{(s)}} \Theta_{\t{\rho}}({}^{g}\eta')
=
\Theta_{\t{\rho}}(\eta')
\cdot\sum_{g\in S^{\nat}_{0+}G_{\eta'_{0},\x,s}\backslash[\![ \eta'_{+};\x,r]\!]_{G_{\eta'_{0}}}^{(s)}} \hat{\vartheta}([\eta_{+}^{\prime-1},g]).
\]
The sum on the right-hand side is nothing but $\t{\mfG}_{\G_{\eta'_{0}}}(\vartheta,\eta'_{+})$ by definition.
Since we have $\t{\mfG}_{\G_{\eta'_{0}}}(\vartheta,\eta'_{+})=|\t{\mfG}_{\G_{\eta'_{0}}}(\vartheta,\eta'_{+})|\cdot\mfG_{\G_{\eta'_{0}}}(\vartheta,\eta'_{+})$, we get the assertion.
\end{proof}

In summary, by combining Theorem \ref{thm:TCF-0} with Corollary \ref{cor:5.3.2}, we obtain the following:
\begin{thm}\label{thm:TCF-pre}
We have
\begin{align*}
\Theta_{\t{\pi}}(\delta)
=
\sum_{\begin{subarray}{c} g\in S\backslash G/G_{\eta} \\ {}^{g}\eta\in\t{S}\end{subarray}}
\Theta_{\t{\rho}}({}^{g}\eta)
\cdot|\t{\mfG}_{\G_{{}^{g}\eta_{0}}}(\vartheta,{}^{g}\eta_{+})|
\cdot\mfG_{\G_{{}^{g}\eta_{0}}}(\vartheta,{}^{g}\eta_{+})
\cdot
\hat{\mu}^{\G_{{}^{g}\eta}}_{X^{\ast}} \bigl(\log({}^{g}\delta_{\geq r})\bigr).
\end{align*}
\end{thm}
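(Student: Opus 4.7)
The proof is essentially a bookkeeping exercise combining the two main results already established in this section, so the plan is to simply substitute one into the other. First, I would invoke Theorem \ref{thm:TCF-0}, which expresses $\Theta_{\t\pi}(\delta)$ as a finite sum indexed by $g \in S\backslash G/G_\eta$ satisfying ${}^{g}\eta \in \t{S}$, with each summand of the form
\[
\Theta_{\t\sigma}({}^{g}\eta) \cdot \hat{\mu}^{\G_{{}^{g}\eta}}_{X^{\ast}}\bigl(\log({}^{g}\delta_{\geq r})\bigr).
\]
Granted this as a starting point, the remaining task is to replace the $\t\sigma$-character factor with a $\t\rho$-character factor for each $g$.

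Next, I would verify that the ``conjugated'' element ${}^g\eta \in \t{S}$ inherits a topological Jordan decomposition of the form ${}^g\eta = {}^g\eta_0 \cdot {}^g\eta_+$. This is immediate from the uniqueness of topological Jordan decompositions modulo $A_{\t\G}$ (Remark \ref{rem:Jordan}) applied to the decomposition obtained by conjugating $\eta = \eta_0 \eta_+$ by $g$: the element ${}^g\eta_0$ remains absolutely $p$-semisimple modulo $A_{\t\G}$, the element ${}^g\eta_+$ remains topologically $p$-unipotent, and they commute. With this identification in hand, Corollary \ref{cor:5.3.2} (applied with $\eta'={}^{g}\eta$, so that $\eta'_0={}^g\eta_0$ and $\eta'_+={}^g\eta_+$) yields
\[
\Theta_{\t\sigma}({}^{g}\eta)
=
\Theta_{\t\rho}({}^{g}\eta)
\cdot |\t\mfG_{\G_{{}^g\eta_0}}(\vartheta,{}^g\eta_+)|
\cdot \mfG_{\G_{{}^g\eta_0}}(\vartheta,{}^g\eta_+).
\]
Substituting this identity termwise into the formula from Theorem \ref{thm:TCF-0} gives precisely the stated expression.

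There is no real obstacle here; the heavy lifting has already been done in proving Theorem \ref{thm:TCF-0} (which uses Harish-Chandra's twisted integration formula, Lemma \ref{lem:ave} on compact support, Lemma \ref{lem:index} identifying the relevant double coset space, and the Adler--Spice orbital integral formula) and Corollary \ref{cor:5.3.2} (which uses Proposition \ref{prop:5.3.2} and the $\hat\vartheta$-isotypicity of $\rho$ on $J_+$). The only point that needs a word of care is that one must apply the corollary to the conjugated pair $({}^g\eta_0,{}^g\eta_+)$ rather than $(\eta_0,\eta_+)$; the descent arguments in Section \ref{subsec:separation} guarantee that ${}^g\eta \in \t{S}$ ensures $\bfS^{\nat} \subset \G_{{}^g\eta_0}$, so that the quantities $\t\mfG_{\G_{{}^g\eta_0}}$ and $\mfG_{\G_{{}^g\eta_0}}$ are well defined in the descended group $\G_{{}^g\eta_0}$ with respect to the transported toral pair, and the Fourier transform $\hat\mu^{\G_{{}^g\eta}}_{X^\ast}$ is normalized using the measure on $G_{{}^g\eta}/S^{\nat}$ fixed just before Theorem \ref{thm:TCF-0}.
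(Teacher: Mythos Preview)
Your proposal is correct and follows exactly the same approach as the paper, which simply states that the theorem is obtained ``by combining Theorem \ref{thm:TCF-0} with Corollary \ref{cor:5.3.2}.'' Your additional remarks about the conjugated topological Jordan decomposition and well-definedness of the descended quantities are accurate and make explicit what the paper leaves implicit.
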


\section{Twisted Heisenberg--Weil representations}\label{sec:HW-twisted}
Let us keep the notation as in the previous section.
Our aim in this section is to compute $\Theta_{\t{\rho}}({}^{g}\eta)$ in each summand of the right-hand side of Theorem \ref{thm:TCF-pre}.
Recall that the representation $\rho=\rho_{(\bfS,\vartheta)}$ is defined by descending $\omega_{(\bfS,\vartheta)}\otimes(\vartheta\ltimes\mathbbm{1})$ from $S\ltimes J$ to $K=SJ$.
Hence, noting that ${}^{g}\eta\in\t{S}$, the computation of $\Theta_{\t{\rho}}({}^{g}\eta)$ is reduced to the computation of the twisted character of the Heisenberg--Weil representation $\omega_{(\bfS,\vartheta)}$.
For this, we repeat the computations in the proofs of \cite[Proposition 3.8]{AS09} and \cite[Proposition 4.21]{DS18} by taking the effect of the ``twist'' into consideration.

In the following, we additionally assume that
\[
\textbf{no restricted root of type $2$ or $3$ appears in $\Phi_{\res}(\G,\T)$.}
\]

\begin{rem}\label{rem:Prasad}
We believe that this assumption is not very harmless for our purpose, namely, study of the $\theta$-stable toral supercuspidal representations.
As explained in Remark \ref{rem:A_{2n}}, restricted roots of type $2$ or $3$ appear only when $\G$ contains a factor of type $A_{2n}$ on which some power of $\theta$ acts nontrivially.
However, for example, it is known that $\GL_{2n+1}$ does not have a $\theta$-stable irreducible supercuspidal representation for such $\theta$ whenever $p\neq2$ (see, e.g., \cite[Proposition 4]{Pra99}).
\end{rem}

\subsection{Structure of the Heisenberg quotient}\label{subsec:Heisen-structure}
We first recall the description of the group $J/J_{+}$ according to Adler--Spice (\cite[Proof of Proposition 3.8]{AS09}).
By fixing a finite tamely ramified extension $E$ of $F$ splitting $\bfS$, we put
\[
\bfV\colonequals \Lie(\bfS,\G)(E)_{\x,(r,s):(r,s+)}
\quad
\text{and}
\quad
V\colonequals \bfV^{\Gamma}.
\]\symdef{V}{$V$}
Recall that we have
\[
J/J_{+}
\cong
(S,G)_{\x,(r,s):(r,s+)}
=
(\bfS,\G)(E)_{\x,(r,s):(r,s+)}^{\Gamma}.
\]
Thus the exponential map $\Lie(\bfS,\G)(E)_{\x,(r,s):(r,s+)}\xrightarrow{\sim}(\bfS,\G)(E)_{\x,(r,s):(r,s+)}$ induces an identification
\[
V\xrightarrow{\sim}J/J_{+}.
\]

Let us investigate the space $V$ by using the root space decomposition of $\bmfg$ with respect to $\bfS$.
For $\alpha\in\Phi(\G,\bfS)$, we put $\bfV_{\alpha}$ to be the image of $\bmfg_{\alpha}(E)\cap\Lie(\bfS,\G)(E)_{\x,(r,s)}$ in $\Lie(\bfS,\G)(E)_{\x,(r,s):(r,s+)}$.
Then the root space decomposition $\bmfg\cong\bmfs\oplus\bigoplus_{\alpha\in\Phi(\G,\bfS)}\bmfg_{\alpha}$ naturally induces a decomposition
\[
\bfV
=
\bigoplus_{\alpha\in\Phi(\G,\bfS)}\bfV_{\alpha}.
\]
For each $\alpha\in\Phi(\G,\bfS)$, we put $V_{\alpha}\colonequals \bfV_{\alpha}^{\Gamma_{\alpha}}$ (recall that $\Gamma_{\alpha}$ is the stabilizer of $\alpha$ in $\Gamma$).
Here, note that $\bfV_{\alpha}$ and $V_{\alpha}$ could be zero depending on $\alpha\in\Phi(\G,\bfS)$.
We define a subset $\Xi(\G,\bfS)$ of $\Phi(\G,\bfS)$ by
\[
\Xi(\G,\bfS)\colonequals \{\alpha\in\Phi(\G,\bfS)\mid V_{\alpha}\neq0\}.
\]\symdef{Xi-G-S}{$\Xi(\G,\bfS)$}
Note that, for any $\alpha\in\Xi(\G,\bfS)$, the space $V_{\alpha}$ is (noncanonically) isomorphic to the residue field $k_{\alpha}$ of $F_{\alpha}$.
Also note that $\Xi(\G,\bfS)$ is preserved by the action of $\Sigma=\Gamma\times\{\pm1\}$ on $\Phi(\G,\bfS)$.
In the following, we simply write $\Xi$ for $\Xi(\G,\bfS)$.

For $\dot{\alpha}\in\dot{\Phi}(\G,\bfS)$, we put
\[
\bfV_{\dot{\alpha}}\colonequals \bigoplus_{\beta\in\dot{\alpha}}\bfV_{\beta}
\quad\text{and}\quad
V_{\dot{\alpha}}\colonequals \bfV_{\dot{\alpha}}^{\Gamma}.
\]
Then, for any $\dot{\alpha}\in\dot{\Phi}(\G,\bfS)$, we have
\[
V_{\alpha}
\xrightarrow{\sim}
V_{\dot{\alpha}}=\Bigl(\bigoplus_{\beta\in\dot{\alpha}}\bfV_{\beta}\Bigr)^{\Gamma}
\colon X_{\alpha} \mapsto \sum_{\sigma\in\Gamma/\Gamma_{\alpha}}\sigma(X_{\alpha}).
\]\symdef{V-alpha}{$V_{\alpha}$}
Therefore we get
\begin{align}\label{eq:decomp}
V
\cong
\bigoplus_{\dot{\alpha}\in\dot{\Xi}}V_{\dot{\alpha}}
=
\bigoplus_{\ddot{\alpha}\in\ddot{\Xi}}V_{\ddot{\alpha}},
\end{align}
where we put $V_{\ddot{\alpha}}\colonequals V_{\dot{\alpha}}\oplus V_{-\dot{\alpha}}$ for $\alpha\in\Xi_{\asym}$ and $V_{\ddot{\alpha}}\colonequals V_{\dot{\alpha}}$ for $\alpha\in\Xi_{\sym}$.\symdef{V-alpha-dot}{$V_{\dot{\alpha}}$}\symdef{V-alpha-ddot}{$V_{\ddot{\alpha}}$}

Recall that $J/J_{+}$ has a structure of a symplectic $\F_{p}$-vector space given by 
\[
(J/J_{+})\times (J/J_{+}) \rightarrow \mu_{p}\cong\F_{p}\colon (g,g')\mapsto\h\vartheta([g,g'])
\]
(see Section \ref{subsec:toral-sc}).
Under the identification $V\cong J/J_{+}$, this is transformed into the symplectic form on $V$ given by
\[
V\times V \rightarrow \F_{p}\colon
(X_{1},X_{2})\mapsto c\cdot\Tr_{k/\F_{p}}(\langle X^{\ast},[X_{1},X_{2}] \rangle),
\]
where $c\in \F_{p}^{\times}$ is a constant determined by the fixed identification $\mu_{p}\cong\F_{p}$.
Here, $\langle X^{\ast},[X_{1},X_{2}] \rangle\in k$ denotes the pairing of $X^{\ast}\in\mfs^{\ast}_{r:r+}$ with the $\mfs$-part of $[X_{1},X_{2}]\in\mfg_{\x,r:r+}$ (i.e., the trivial isotypic component with respect to the $\bfS$-action).
In fact, the above decomposition (\ref{eq:decomp}) gives an orthogonal decomposition of $V$ into symplectic subspaces.
Each symplectic subspace $V_{\ddot{\alpha}}$ is described as follows.

\begin{description}
\item[Asymmetric case]
Suppose that $\alpha\in\Xi_{\asym}$.
We put $V_{\pm\alpha}\colonequals V_{\alpha}\oplus V_{-\alpha}$.
Recall that the identification $V_{\alpha}\cong V_{\dot{\alpha}}\subset V$ is given by $X_{\alpha}\mapsto \sum_{\sigma\in\Gamma/\Gamma_{\alpha}}\sigma(X_{\alpha})$.
By noting that, for any $\alpha_{1},\alpha_{2}\in\Xi$, we have $\langle X^{\ast},[X_{\alpha_{1}},X_{\alpha_{2}}]\rangle\neq0$ only if $\alpha_{2}=-\alpha_{1}$, the resulting symplectic form $V_{\pm\alpha}\times V_{\pm\alpha} \rightarrow \F_{p}$ maps $(X_{\alpha}+X_{-\alpha},Y_{\alpha}+Y_{-\alpha})$ to 
\begin{align*}
&c\cdot\Tr_{k/\F_{p}}\Bigl(\sum_{\sigma\in\Gamma/\Gamma_{\alpha}}\sum_{\sigma'\in\Gamma/\Gamma_{\alpha}}\langle X^{\ast},[\sigma(X_{\alpha}+X_{-\alpha}), \sigma'(Y_{\alpha}+Y_{-\alpha})] \rangle\Bigr)\\
&=c\cdot \sum_{\tau\in\Gal(k/\F_p)}\tau\Bigl(\sum_{\sigma\in\Gamma/\Gamma_{\alpha}}\langle X^{\ast},\sigma([X_{\alpha},Y_{-\alpha}]+[X_{-\alpha},Y_{\alpha}]\rangle)\Bigr).
\end{align*}
Since $X^{\ast}$ is $F$-rational, this equals $c\cdot e_{\alpha}\cdot\Tr_{k_{\alpha}/\F_{p}}(\langle X^{\ast},[X_{\alpha},Y_{-\alpha}]+[X_{-\alpha},Y_{\alpha}] \rangle)$, where $e_{\alpha}$ denotes the ramification index of $F_\alpha/F$.

We recall that $V_{\alpha}\cong k_{\alpha}$.
Hence, by fixing nonzero elements $X_{\alpha}\in V_{\alpha}$ and $X_{-\alpha}\in V_{-\alpha}$ so that $X_{\alpha}\in V_{\alpha}$ and $X_{-\alpha}\in V_{-\alpha}$ are identified with $1\in k_{\alpha}$, we may think of the above symplectic form as the symplectic form on $k_{\alpha}\oplus k_{\alpha}$ which maps $(x_{+}+x_{-},y_{+}+y_{-})$ to
\[
\Tr_{k_{\alpha}/\F_{p}}(C\cdot (x_{+}y_{-}-x_{-}y_{+})),
\]
where we put $C\colonequals c\cdot e_{\alpha}\cdot \langle X^{\ast},[X_{\alpha},X_{-\alpha}]\rangle\in k_{\alpha}^{\times}$.\symdef{C}{$C$}\symdef{c}{$c$}\symdef{e-alpha}{$e_{\alpha}$}

\item[Symmetric case]
Suppose that $\alpha\in\Xi_{\sym}$.
Let $\tau_{\alpha}\in\Gamma/\Gamma_{\alpha}$ be the unique element satisfying $\tau_{\alpha}(\alpha)=-\alpha$.
By the same discussion as in the asymmetric case, we see that the symplectic form on $V_{\alpha}$ induced from that on $J/J_{+}$ is given by
\begin{align*}
(X_{\alpha},Y_{\alpha})
&\mapsto c\cdot\Tr_{k/\F_{p}}\Bigl(\sum_{\sigma\in\Gamma/\Gamma_{\alpha}}\sum_{\sigma'\in\Gamma/\Gamma_{\alpha}}\langle X^{\ast},[\sigma(X_{\alpha}), \sigma'(Y_{\alpha})] \rangle \Bigr)\\
&=c\cdot \sum_{\tau\in\Gal(k/\F_p)} \tau\Bigl(\sum_{\sigma\in\Gamma/\Gamma_{\alpha}}\langle X^{\ast},\sigma([X_{\alpha},\tau_{\alpha}(Y_{\alpha})]) \rangle\Bigr)\\
&=c\cdot e_{\alpha}\cdot\Tr_{k_{\alpha}/\F_{p}}(\langle X^{\ast},[X_{\alpha},\tau_{\alpha}(Y_{\alpha})] \rangle).
\end{align*}
By recalling that $V_{\alpha}\cong k_{\alpha}$ and fixing a nonzero element $X_{\alpha}\in V_{\alpha}$, we may think of the above symplectic form as the symplectic form on $k_{\alpha}$ which maps $(x,y)$ to
\[
\Tr_{k_{\alpha}/\F_{p}}(C\cdot x\tau_{\alpha}(y)),
\]
where we put $C\colonequals c\cdot e_{\alpha}\cdot [X_{\alpha},\tau_{\alpha}(X_{\alpha})]\in k_{\alpha}^{\times}$.
Note that $\tau_{\alpha}(C)=-C$.

\end{description}

Let us introduce one particular property of the set $\Xi$ deduced from the above description of the symplectic form:

\begin{lem}\label{lem:ram-empty}
The set $\Xi$ does not contain any symmetric ramified root.
\end{lem}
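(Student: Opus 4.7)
The plan is to argue by contradiction and exploit the interplay between the constraint $\tau_{\alpha}(C)=-C$ on the constant $C$ appearing in the symplectic form on $V_{\alpha}$ and the fact that $\tau_{\alpha}$ acts trivially on the residue field in the ramified case.

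First I would suppose, for contradiction, that there exists $\alpha\in\Xi_{\sym}\cap\Xi_{\ram}$, so that $V_{\alpha}\neq0$. Fix a nonzero $X_{\alpha}\in V_{\alpha}$; by the analysis preceding the lemma, the symplectic form on $V_{\alpha}\cong k_{\alpha}$ is then
\[
(x,y)\mapsto\Tr_{k_{\alpha}/\F_{p}}\bigl(C\cdot x\tau_{\alpha}(y)\bigr),\qquad C\colonequals c\cdot e_{\alpha}\cdot\langle X^{\ast},[X_{\alpha},\tau_{\alpha}(X_{\alpha})]\rangle.
\]
Next I would verify that $C\in k_{\alpha}^{\times}$. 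Since $\tau_{\alpha}\in\Gamma/\Gamma_{\alpha}$ sends $\alpha$ to $-\alpha$, the element $\tau_{\alpha}(X_{\alpha})$ lies in $\bmfg_{-\alpha}$ and is nonzero, so $[X_{\alpha},\tau_{\alpha}(X_{\alpha})]$ is a nonzero scalar multiple of the coroot vector $H_{\alpha}$. The $\G$-genericity condition \textbf{GE1} on $X^{\ast}$ then forces $\langle X^{\ast},H_{\alpha}\rangle\neq0$, whence $C\neq0$.

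Second I would derive the identity $\tau_{\alpha}(C)=-C$ (already recorded in the excerpt): applying $\tau_{\alpha}$ to $C$ and using that $X^{\ast}\in\mfs^{\ast}$ is $F$-rational, that $\tau_{\alpha}^{2}\in\Gamma_{\alpha}$ fixes $X_{\alpha}$, and that the Lie bracket is antisymmetric, we obtain
\[
\tau_{\alpha}(C)=c\cdot e_{\alpha}\cdot\langle X^{\ast},[\tau_{\alpha}(X_{\alpha}),X_{\alpha}]\rangle=-C.
\]
The final step is the ramification input: since $\alpha\in\Xi_{\ram}$, the extension $F_{\alpha}/F_{\pm\alpha}$ is ramified, so the residue fields satisfy $k_{\alpha}=k_{\pm\alpha}$ and the generator $\tau_{\alpha}$ of $\Gal(F_{\alpha}/F_{\pm\alpha})$ acts trivially on $k_{\alpha}$. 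Therefore $\tau_{\alpha}(C)=C$, and combined with $\tau_{\alpha}(C)=-C$ this yields $2C=0$ in $k_{\alpha}$. Because $p$ is odd this forces $C=0$, contradicting $C\in k_{\alpha}^{\times}$. Hence no symmetric ramified root can lie in $\Xi$.

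The argument is short and the main (minor) obstacle is simply unpacking the definition of $\tau_{\alpha}$ as a coset representative in $\Gamma/\Gamma_{\alpha}$ of order two and checking that the expression for $C$ is well defined and nonzero; everything else reduces to the observation that a nondegenerate alternating form on a one-dimensional $k_{\alpha}$-space built from $\tau_{\alpha}$ cannot exist when $\tau_{\alpha}$ is trivial on $k_{\alpha}$ and $p\neq2$.
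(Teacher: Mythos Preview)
Your argument is correct and follows essentially the same approach as the paper's proof: both exploit the identity $\tau_{\alpha}(C)=-C$ for the constant $C\in k_{\alpha}^{\times}$ governing the symplectic form on $V_{\alpha}$, and observe that in the ramified case $\tau_{\alpha}$ acts trivially on $k_{\alpha}$, forcing $C=-C$ and hence $C=0$ since $p$ is odd. You supply slightly more detail (the genericity justification for $C\neq0$ and the explicit derivation of $\tau_{\alpha}(C)=-C$), but the core idea is identical.
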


\begin{proof}
This fact is explained in the proof of \cite[Proposition 4.21]{DS18}.
For the sake of completeness, we explain it here.
Let $\alpha\in \Xi_{\sym}$.
Then, as explained above, we have $V_{\alpha}\cong k_{\alpha}$ and the symplectic form on $V_{\alpha}\times V_{\alpha}$ is given by 
\[
k_{\alpha}\times k_{\alpha}\rightarrow \F_{p}\colon 
(x,y)\mapsto \Tr_{k_{\alpha}/\F_{p}}(C\cdot x\tau_{\alpha}(y))
\]
with an element $C\in k_{\alpha}^{\times}$ satisfying $\tau_{\alpha}(C)=-C$.
If $\alpha$ is ramified, $\tau_{\alpha}$ acts trivially on $k_{\alpha}$, hence there cannot exist such an element $C$.
Thus $\alpha$ must be unramified.
\end{proof}

\subsection{Intertwiner of Heisenberg--Weil representations}\label{subsec:Heisen-int}

Recall that we fixed a topologically semisimple element $\ul{\eta}\in\t{S}$ (Section \ref{subsec:twist-int}).
Hence any element $\eta'\in\t{S}$ is written as $\eta'=s\cdot \ul{\eta}$ with a unique element $s\in S$.
Note that the action of $[\ul{\eta}]$ on $\bmfg$ induces an action on the set $\Phi(\G,\bfS)$ of order $l$, which does not depend on the choice of $\ul{\eta}\in\t{S}$.
By abuse of notation, let us write $\theta_{\bfS}$ for this action and $\Theta_{\bfS}$ for the group $\langle\theta_{\bfS}\rangle$ generated by $\theta_{\bfS}$.
To be more precise, for any $\alpha\in\Phi(\G,\bfS)$, $\theta_{\bfS}(\alpha)$ is the root given by $\theta_{\bfS}(\alpha)=\alpha\circ[\ul{\eta}]^{-1}$.
Whenever there is no risk of confusion, we abbreviate $\theta_{\bfS}(\alpha)$ and $\Theta_{\bfS}(\alpha)$ even as $\theta(\alpha)$ and $\Theta(\alpha)$, respectively.
We note that, for $X_{\alpha}\in\bmfg_{\alpha}$, $[\ul{\eta}](X_{\alpha})$ belongs to $\mfg_{\theta(\alpha)}$.
We also note that, as $\ul{\eta}$ is $F$-rational, the actions of $\Theta_{\bfS}$ and $\Sigma=\Gamma\times\{\pm1\}$ on $\Phi(\G,\bfS)$ commute.
Especially, the symmetry of $\Phi(\G,\bfS)$ is preserved by $\Theta_{\bfS}$.

Let us investigate the action $[\ul{\eta}]$ on $J/J_{+}$ through the isomorphism $V\cong J/J_{+}$ and the above decomposition (\ref{eq:decomp}) of $V$.
Note that $[\ul{\eta}]$ preserves the symplectic structure of $V$.
Indeed, for any $g,g'\in J/J_{+}$, we have
\[
\h\vartheta([[\ul{\eta}](g),[\ul{\eta}](g')])
=\h\vartheta([\ul{\eta} g\ul{\eta}^{-1},\ul{\eta} g'\ul{\eta}^{-1}])
=\h\vartheta([\ul{\eta}]([g,g']))
=\h\vartheta([g,g']).
\]
Moreover, each $V_{\ddot{\alpha}}$ is mapped onto $V_{\theta(\ddot{\alpha})}$, respectively.
In particular, the action of $\Theta_{\bfS}$ on $\Phi(\G,\bfS)$ preserves $\Xi$.

As in Section \ref{subsec:Steinberg}, for any $\alpha\in\Phi(\G,\bfS)$, we let $l_{\alpha}$ be the cardinality of the $\Theta_{\bfS}$-orbit of $\alpha$ (note that then $[\ul{\eta}]^{l_\alpha}(X_\alpha)=X_\alpha$ for any $X_\alpha\in \bmfg_\alpha$; see \cite[(1.3.5)]{KS99}).

\begin{defn}\label{defn:l'}
For $\alpha\in\Phi(\G,\bfS)$, let $m_{\alpha}$ be the order of $\Sigma\backslash(\Sigma\times\Theta_{\bfS})\alpha$.
In other words, $m_{\alpha}$ is the smallest positive integer such that $\theta^{m_{\alpha}}(\ddot{\alpha})=\ddot{\alpha}$ (hence $m_{\alpha}\mid l_{\alpha}$).
\end{defn}\symdef{m-alpha}{$m_{\alpha}$}

For $\ddot{\alpha}\in\ddot{\Xi}$, let us write $(\omega_{\ddot{\alpha}},W_{\ddot{\alpha}})$ for the Heisenberg--Weil representation of $\Sp(V_{\ddot{\alpha}})\ltimes\bbH(V_{\ddot{\alpha}})$ with central character given by $\hat{\vartheta}$, which is unique up to isomorphism.
Since the action $[\ul{\eta}]$ on $V$ induces an symplectic isomorphism from $V_{\ddot{\alpha}}$ to $V_{\theta(\ddot{\alpha})}$, an isomorphism from $\Sp(V_{\ddot{\alpha}})\ltimes\bbH(V_{\ddot{\alpha}})$ to $\Sp(V_{\theta(\ddot{\alpha})})\ltimes\bbH(V_{\theta(\ddot{\alpha})})$ is induced (for which we write $[\ul{\eta}]_{\ast}$).
Then the pull back $(\omega_{\theta(\ddot{\alpha})}^{\ul{\eta}},W_{\theta(\ddot{\alpha})})$ of the Heisenberg--Weil representation $W_{\theta(\ddot{\alpha})}$ of $\Sp(V_{\theta(\ddot{\alpha})})\ltimes\bbH(V_{\theta(\ddot{\alpha})})$ to $\Sp(V_{\ddot{\alpha}})\ltimes\bbH(V_{\ddot{\alpha}})$ via $[\ul{\eta}]_{\ast}$ is isomorphic to $(\omega_{\ddot{\alpha}},W_{\ddot{\alpha}})$.
For each $\ddot{\alpha}\in \ddot{\Xi}$, we fix an intertwiner
\[
I_{\ddot{\alpha}}^{\ul{\eta}}\colon \omega_{\ddot{\alpha}}\xrightarrow{\sim} \omega_{\theta(\ddot{\alpha})}^{\ul{\eta}}.
\]\symdef{I-alpha-eta}{$I_{\ddot{\alpha}}^{\ul{\eta}}$}

Recall that the representation $\omega=\omega_{(\bfS,\vartheta)}$ is the Heisenberg--Weil representation of $\Sp(V)\ltimes\bbH(V)$ with central character $\hat{\vartheta}$.
Since we have the decomposition \eqref{eq:decomp}, $\omega$ can be realized by tensoring Heisenberg--Weil representations $\omega_{\ddot{\alpha}}$ for $\ddot{\alpha}\in\ddot{\Xi}$ (see Section \ref{subsec:twisted-HW}).
Furthermore, by tensoring the fixed intertwiners $I_{\ddot{\alpha}}^{\ul{\eta}}$, we get an intertwiner between $\omega$ and its $[\ul{\eta}]_{\ast}$-twist $\omega^{\ul{\eta}}$.
Let us write $I_{\omega}^{\ul{\eta}}$ for the intertwiner obtained in this way:
\[
I_{\omega}^{\ul{\eta}}=I_{\omega_{(\bfS,\vartheta)}}^{\ul{\eta}}\colon\omega_{(\bfS,\vartheta)}\xrightarrow{\sim}\omega_{(\bfS,\vartheta)}^{\ul{\eta}}.
\]

Then we have the following:

\begin{prop}\label{prop:twisted-HW-Yu}
For each $\ddot{\alpha}\in\ddot{\Xi}$, we put
\[
I^{\ul{\eta}}_{\Theta(\ddot{\alpha})}
\colonequals 
I_{\theta^{m_{\alpha}-1}(\ddot{\alpha})}^{\ul{\eta}} \circ\cdots\circ I_{\ddot{\alpha}}^{\ul{\eta}}.
\]\symdef{I-Theta-alpha-eta}{$I_{\Theta(\ddot{\alpha})}^{\ul{\eta}}$}
Then, for any $\eta'=s\ul{\eta}\in\t{S}$, we have
\[
\tr\bigl(\omega([s])\circ I_{\omega}^{\ul{\eta}}\bigr)
=
\prod_{\ddot{\alpha}\in\Theta_{\bfS}\backslash\ddot{\Xi}}
\tr\Bigl(\omega_{\ddot{\alpha}}([\eta']^{m_{\alpha}}\circ[\ul{\eta}]^{-m_{\alpha}})\circ I^{\ul{\eta}}_{\Theta(\ddot{\alpha})}\Bigr).
\]
\end{prop}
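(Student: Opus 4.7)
The plan is to exploit the orthogonal decomposition $V=\bigoplus_{\Sigma\alpha\in\ddot{\Xi}}V_{\Sigma\alpha}$ established in Section \ref{subsec:Heisen-structure}. Since the Heisenberg--Weil representation is multiplicative over orthogonal symplectic summands, $\omega$ realizes as the tensor product $\bigotimes_{\Sigma\alpha}\omega_{\Sigma\alpha}$. The element $[s]\in\Sp(V)$ preserves each $V_{\Sigma\alpha}$ (because $S$ acts on $V_{\beta}$ through the character $\beta$), so $\omega([s])=\bigotimes_{\Sigma\alpha}\omega_{\Sigma\alpha}([s])$. By construction $I_{\omega}^{\ul{\eta}}$ is built from the component intertwiners $I_{\Sigma\alpha}^{\ul{\eta}}\colon W_{\Sigma\alpha}\to W_{\Sigma\theta(\alpha)}$, so on the underlying tensor product it combines these with the permutation of slots induced by $\Sigma\alpha\mapsto\Sigma\theta(\alpha)$. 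Partitioning $\ddot{\Xi}$ into $\Theta_{\bfS}$-orbits, the operator $\omega([s])\circ I_{\omega}^{\ul{\eta}}$ splits as an external tensor product over these orbits, so its trace factors as the product over $\Theta_{\bfS}\backslash\ddot{\Xi}$.

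Next, I would fix one orbit $\{\Sigma\theta^{i}(\alpha)\}_{i=0}^{m_{\alpha}-1}$ and abbreviate $W_{i}:=W_{\Sigma\theta^{i}(\alpha)}$ and $I_{i}:=I_{\Sigma\theta^{i}(\alpha)}^{\ul{\eta}}$. On the orbit-tensor $W_{0}\otimes\cdots\otimes W_{m_{\alpha}-1}$, the operator $\omega([s])\circ I_{\omega}^{\ul{\eta}}$ sends $v_{0}\otimes\cdots\otimes v_{m_{\alpha}-1}$ to the tuple whose $i$-th slot is $\omega_{\Sigma\theta^{i}(\alpha)}([s])(I_{i-1}(v_{i-1}))$ (indices modulo $m_{\alpha}$). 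The standard cyclic trace identity then expresses its trace as the trace, on $W_{\Sigma\alpha}=W_{0}$, of the composition $M:=\omega_{\Sigma\alpha}([s])\circ I_{m_{\alpha}-1}\circ\omega_{\Sigma\theta^{m_{\alpha}-1}(\alpha)}([s])\circ I_{m_{\alpha}-2}\circ\cdots\circ\omega_{\Sigma\theta(\alpha)}([s])\circ I_{0}$ that traverses the cycle once.

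To identify $\tr(M)$ with $\tr(\omega_{\Sigma\alpha}([\eta']^{m_{\alpha}}\circ[\ul{\eta}]^{-m_{\alpha}})\circ I_{\Sigma\Theta(\alpha)}^{\ul{\eta}})$, I plan to move each $\omega([s])$-factor leftward through the intertwiners preceding it, using the defining property $I_{i}\circ\omega_{\Sigma\theta^{i}(\alpha)}(g)=\omega_{\Sigma\theta^{i+1}(\alpha)}([\ul{\eta}]_{\ast}g)\circ I_{i}$. Each such step replaces $[s]$ by $[\ul{\eta}][s][\ul{\eta}]^{-1}=[\theta_{\bfS}(s)]$, so after iterating the composition $M$ becomes $\omega_{\Sigma\alpha}([s]\cdot[\theta_{\bfS}(s)]\cdots[\theta_{\bfS}^{m_{\alpha}-1}(s)])\circ I_{\Sigma\Theta(\alpha)}^{\ul{\eta}}$. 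A direct induction using $\eta'=s\ul{\eta}$ and $\ul{\eta}s\ul{\eta}^{-1}=\theta_{\bfS}(s)$ yields $\eta'^{m_{\alpha}}=s\cdot\theta_{\bfS}(s)\cdots\theta_{\bfS}^{m_{\alpha}-1}(s)\cdot\ul{\eta}^{m_{\alpha}}$, identifying the accumulated product with $[\eta']^{m_{\alpha}}\circ[\ul{\eta}]^{-m_{\alpha}}$ and finishing the orbit computation. The main obstacle is the careful bookkeeping of these commutations --- in particular, correctly handling the cyclic identification $\omega_{\Sigma\theta^{m_{\alpha}}(\alpha)}=\omega_{\Sigma\alpha}$ when the last $\omega$-factor returns to the $\Sigma\alpha$-slot; apart from this, the argument is a direct twisted adaptation of the untwisted computations of \cite{AS09} and \cite{DS18}.
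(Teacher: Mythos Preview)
Your proposal is correct and follows essentially the same approach as the paper: the paper packages your ``standard cyclic trace identity'' and the subsequent commutation of the $\omega([s])$-factors through the intertwiners into the appendix results Lemma~\ref{lem:tr} and Proposition~\ref{prop:twisted-HW}, and then applies them with $\iota=[\ul{\eta}]$, $V^{i}_{j}=V_{\Sigma\theta^{j}(\alpha_{i})}$, and $g^{i}_{j}=[s]$, arriving at exactly your accumulated product $[s]\circ[\ul{\eta}]_{\ast}([s])\circ\cdots\circ[\ul{\eta}]_{\ast}^{m_{\alpha}-1}([s])=[\eta']^{m_{\alpha}}\circ[\ul{\eta}]^{-m_{\alpha}}$. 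The only cosmetic difference is that the paper writes the final identification as $([s]\circ[\ul{\eta}])^{m_{\alpha}}\circ[\ul{\eta}]^{-m_{\alpha}}$ rather than via your expansion $\eta'^{m_{\alpha}}=s\cdot\theta_{\bfS}(s)\cdots\theta_{\bfS}^{m_{\alpha}-1}(s)\cdot\ul{\eta}^{m_{\alpha}}$, but these are the same computation.
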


\begin{proof}
Let us fix a set $\{\alpha_{0},\ldots,\alpha_{r}\}$ of representatives of $\Theta_{\bfS}\backslash\ddot{\Xi}$.
Then we can utilize the results of Section \ref{subsec:twisted-HW}, by taking $\iota\colonequals [\ul{\eta}]$, $l_{i}\colonequals m_{\alpha_{i}}-1$ for each $0\leq i\leq r$ and putting $V^{i}_{j}\colonequals V_{\theta^{j}(\ddot{\alpha}_{i})}$.
Since the symplectic automorphism $[s]$ preserves each $V^{i}_{j}$, Proposition \ref{prop:twisted-HW} implies that the trace of $\omega([s])\circ I_{\omega}^{\ul{\eta}}$ is given by
\[
\prod_{\ddot{\alpha}\in\Theta_{\bfS}\backslash\ddot{\Xi}}
\tr\Bigl(\omega_{\ddot{\alpha}}\bigl([s]\circ[\ul{\eta}]_{\ast}([s])\circ\cdots\circ[\ul{\eta}]_{\ast}^{m_{\alpha}-1}([s])\bigr)\circ I^{\ul{\eta}}_{\Theta(\ddot{\alpha})}\Bigr).
\]
By noting that $[\ul{\eta}]_{\ast}^{i}([s])=[\ul{\eta}]^{i}\circ[s]\circ[\ul{\eta}]^{-i}$, we have
\[
[s]\circ[\ul{\eta}]_{\ast}([s])\circ\cdots\circ[\ul{\eta}]_{\ast}^{m_{\alpha}-1}([s])
=([s]\circ[\ul{\eta}])^{m_{\alpha}}\circ[\ul{\eta}]^{-m_{\alpha}}
=[s\ul{\eta}]^{m_{\alpha}}\circ[\ul{\eta}]^{-m_{\alpha}}.
\]
Thus we get the desired result.
\end{proof}

We still have not specified the choice of each $I^{\ul{\eta}}_{\ddot{\alpha}}$ so far.
This means that also $I_{\omega}^{\ul{\eta}}$ still has an ambiguity of a scalar multiple.
Now we explain our choice of $I^{\ul{\eta}}_{\ddot{\alpha}}$.
For any $\ddot{\alpha}\in\ddot{\Xi}$, note that $I^{\ul{\eta}}_{\Theta(\ddot{\alpha})}$ is an automorphism of $W_{\ddot{\alpha}}$ such that 
\[
\xymatrix{
W_{\ddot{\alpha}}\ar^-{I^{\ul{\eta}}_{\Theta(\ddot{\alpha})}}[rr]\ar_-{\omega_{\ddot{\alpha}}(g,h)}[d]&&W_{\ddot{\alpha}}\ar^-{\omega_{\ddot{\alpha}}([\ul{\eta}]^{m_{\alpha}}_{\ast}(g,h))}[d]\\
W_{\ddot{\alpha}}\ar_-{I^{\ul{\eta}}_{\Theta(\ddot{\alpha})}}[rr]&&W_{\ddot{\alpha}}
}
\]
is commutative for any $(g,h)\in \Sp(V_{\ddot{\alpha}})\ltimes\bbH(V_{\ddot{\alpha}})$.
Note that $[\ul{\eta}]^{m_{\alpha}}$ is a symplectic automorphism of $V$ preserving $V_{\ddot{\alpha}}$.
Hence $I^{\ul{\eta}}_{\Theta(\ddot{\alpha})}$ must be a scalar multiple of the Heisenberg--Weil action $\omega_{\ddot{\alpha}}([\ul{\eta}]^{m_{\alpha}})$.
We choose $I^{\ul{\eta}}_{\ddot{\alpha}}$ for $\ddot{\alpha}\in\ddot{\Xi}$ so that we have
\[
I^{\ul{\eta}}_{\Theta(\ddot{\alpha})}
=
\omega_{\ddot{\alpha}}([\ul{\eta}]^{m_{\alpha}}).
\]

\begin{rem}
We eventually do not specify the choice of $I^{\ul{\eta}}_{\ddot{\alpha}}$ in a unique way.
Our normalization just requires that their composition $I^{\ul{\eta}}_{\Theta(\ddot{\alpha})}=I_{\theta^{m_{\alpha}-1}(\ddot{\alpha})}^{\ul{\eta}} \circ\cdots\circ I_{\ddot{\alpha}}^{\ul{\eta}}$ exactly coincides with $\omega_{\ddot{\alpha}}([\ul{\eta}]^{m_{\alpha}})$.
In fact, this is enough for our purpose, i.e., what affects the value of the twisted character is only $I^{\ul{\eta}}_{\Theta(\ddot{\alpha})}$, not each individual $I^{\ul{\eta}}_{\ddot{\alpha}}$, by Proposition \ref{prop:twisted-HW-Yu}.
\end{rem}

\begin{cor}\label{cor:twisted-HW-Yu}
With the above choice of an intertwiner $I_{\omega}^{\ul{\eta}}$, for any $\eta'=s\ul{\eta}\in\t{S}$ with a topological Jordan decomposition $\eta'_{0}\eta'_{+}$,
\[
\tr\bigl(\omega([s])\circ I_{\omega}^{\ul{\eta}}\bigr)
=
\prod_{\ddot{\alpha}\in\Theta_{\bfS}\backslash\ddot{\Xi}}
\Theta_{\omega_{\ddot{\alpha}}}([\eta'_{0}]^{m_{\alpha}}).
\]
\end{cor}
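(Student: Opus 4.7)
The plan is to derive the corollary as a direct simplification of Proposition \ref{prop:twisted-HW-Yu} using two ingredients: the chosen normalization of the intertwiners $I^{\ul{\eta}}_{\Sigma\Theta(\alpha)}$, and the triviality of the action of the topologically unipotent part $\eta'_{+}$ on each piece $V_{\Sigma\alpha}$.

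First, I would substitute the normalization $I^{\ul{\eta}}_{\Sigma\Theta(\alpha)} = \omega_{\Sigma\alpha}([\ul{\eta}]^{m_{\alpha}})$ into the formula from Proposition \ref{prop:twisted-HW-Yu}. Since $m_{\alpha}$ is defined precisely so that $[\ul{\eta}]^{m_{\alpha}}$ (and hence $[\eta']^{m_{\alpha}}$) preserves $V_{\Sigma\alpha}$, the operator $\omega_{\Sigma\alpha}([\ul{\eta}]^{m_{\alpha}})$ makes sense, and the composition $\omega_{\Sigma\alpha}([\eta']^{m_{\alpha}}\circ[\ul{\eta}]^{-m_{\alpha}})\circ \omega_{\Sigma\alpha}([\ul{\eta}]^{m_{\alpha}})$ collapses to $\omega_{\Sigma\alpha}([\eta']^{m_{\alpha}})$ by the homomorphism property of the Heisenberg--Weil representation on the symplectic factor. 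After this substitution, the product formula reads $\prod_{\alpha\in\Theta_{\bfS}\backslash\ddot{\Xi}}\tr\bigl(\omega_{\Sigma\alpha}([\eta']^{m_{\alpha}})\bigr)$.

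Next I would replace $\eta'$ by $\eta'_{0}$ in each factor. Since $\eta'_{0}$ and $\eta'_{+}$ commute, $[\eta']^{m_{\alpha}} = [\eta'_{0}]^{m_{\alpha}}\circ[\eta'_{+}]^{m_{\alpha}}$ as symplectic automorphisms of $V_{\Sigma\alpha}$, so it suffices to prove that $[\eta'_{+}]$ acts trivially on $V$. This is where I would use the explicit description of $V$ from Section \ref{subsec:Heisen-structure}: $V_{\alpha}$ sits inside the graded piece $\bmfg_{\alpha}(E)_{\x,r:r+}$ on which $S$ acts via the character $\alpha$. By the topological Jordan decomposition (Proposition \ref{prop:Jordan}), $\eta'_{+}$ lies in $S^{\nat}_{0+}$, so $\alpha(\eta'_{+})\in 1+\mfp_{\ol{F}}$, whose image in the residue field $k_{\alpha}$ is $1$. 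Hence $[\eta'_{+}]$ acts as the identity on each $V_{\alpha}$, and therefore on each $V_{\Sigma\alpha}$; applying $\omega_{\Sigma\alpha}$ gives $\omega_{\Sigma\alpha}([\eta'_{+}]^{m_{\alpha}})=\id$. Consequently $\tr\bigl(\omega_{\Sigma\alpha}([\eta']^{m_{\alpha}})\bigr)=\tr\bigl(\omega_{\Sigma\alpha}([\eta'_{0}]^{m_{\alpha}})\bigr)=\Theta_{\omega_{\Sigma\alpha}}([\eta'_{0}]^{m_{\alpha}})$, yielding the formula.

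There is no real obstacle: both steps are formal once the setup is in place. The only small thing to verify carefully is that the substitution in the first step is valid, namely that $[\ul{\eta}]^{m_\alpha}$ and $[\eta']^{m_\alpha}$ really do preserve $V_{\Sigma\alpha}$ (rather than merely $V_{\Sigma\alpha}\oplus V_{\Sigma\theta(\alpha)}\oplus\cdots$), but this is immediate from the definition of $m_\alpha$ as the least positive integer with $\Sigma\theta^{m_{\alpha}}(\alpha)=\Sigma\alpha$. The argument is otherwise entirely mechanical.
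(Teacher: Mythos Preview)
Your proposal is correct and follows essentially the same approach as the paper's proof: substitute the normalization $I^{\ul{\eta}}_{\Sigma\Theta(\alpha)}=\omega_{\Sigma\alpha}([\ul{\eta}]^{m_\alpha})$ into Proposition~\ref{prop:twisted-HW-Yu} to obtain $\prod_\alpha\Theta_{\omega_{\Sigma\alpha}}([\eta']^{m_\alpha})$, then observe that $\eta'_+$ acts trivially on each $V_{\Sigma\alpha}$. In fact, you supply more justification for the second step than the paper does (the paper simply asserts that $\eta'_+$ acts trivially via conjugation without elaborating on why).
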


\begin{proof}
With the choice of an intertwiner $I_{\omega}^{\ul{\eta}}$ explained as above, we get
\[
\tr\bigl(\omega([s])\circ I_{\omega}^{\ul{\eta}}\bigr)
=
\prod_{\ddot{\alpha}\in\Theta_{\bfS}\backslash\ddot{\Xi}}
\Theta_{\omega_{\ddot{\alpha}}}([\eta']^{m_{\alpha}})
\]
by Proposition \ref{prop:twisted-HW-Yu}.
Noting that the topologically unipotent part $\eta'_{+}$ acts on $V_{\dot{\alpha}}$ and $V_{\ddot{\alpha}}$ trivially via conjugation, we get the assertion.
\end{proof}

\subsection{Descent of the Heisenberg quotient}\label{subsec:descent-Yu}
Recall that we have fixed an elliptic regular semisimple element $\delta\in\t{S}$ and written $\eta$ for $\delta_{<r}$ so far.
However, to discuss more generally, we temporarily (until the end of Section \ref{subsec:twisted-Weil-summary}) let $\eta\in\t{S}$ denote any topologically semisimple element.
(Our intent here is to use the symbol $\eta$ instead of $\eta'_0$ in Corollary \ref{cor:twisted-HW-Yu} to make the notation lighter.)

In Section \ref{subsec:Steinberg}, we introduced the notion of a restricted root.
Although we discussed it for $\Phi(\G,\T)$ in Section \ref{subsec:Steinberg}, the same also holds for $\Phi(\G,\bfS)$, i.e., we have the set of restricted roots $\Phi_{\res}(\G,\bfS)$ equipped with a natural map
\[
\Phi(\G,\bfS)
\twoheadrightarrow\Phi(\G,\bfS)/\Theta_{\bfS}
\xrightarrow{1:1}\Phi_{\res}(\G,\bfS)
\colon \alpha\mapsto\alpha_{\res}.
\]
Note that, since $\Phi_{\res}(\G,\bfS)$ carries a Galois action induced from that of $\Phi(\G,\bfS)$, we can also discuss the symmetry of a restricted root.
For any $\alpha\in\Phi(\G,\bfS)$, we put $\Gamma_{\alpha_{\res}}$ to be the stabilizer in $\Gamma$ of the restricted root $\alpha_{\res}$ (or, equivalently, the $\Theta_{\bfS}$-orbit $\Theta_{\bfS}\alpha$ of $\alpha$):
\[
\Gamma_{\alpha_{\res}}
\colonequals \{\sigma\in\Gamma\mid \sigma(\alpha_{\res})=\alpha_{\res}\}
=\{\sigma\in\Gamma\mid \sigma(\Theta_{\bfS}\alpha)=\Theta_{\bfS}\alpha\}.
\]
Similarly, we put $\Gamma_{\pm\alpha_{\res}}$ to be the stabilizer in $\Gamma$ of the set $\{\pm\alpha_{\res}\}$:
\[
\Gamma_{\pm\alpha_{\res}}
\colonequals \{\sigma\in\Gamma\mid \sigma(\{\pm\alpha_{\res}\})=\{\pm\alpha_{\res}\}\}
=\{\sigma\in\Gamma\mid \sigma(\{\pm\Theta_{\bfS}\alpha\})=\{\pm\Theta_{\bfS}\alpha\}\}.
\]
Let $F_{\alpha_{\res}}$ and $F_{\pm\alpha_{\res}}$ denote the subfields of $\ol{F}$ fixed by $\Gamma_{\alpha_{\res}}$ and $\Gamma_{\pm\alpha_{\res}}$, respectively.
\[
\xymatrix@R=10pt{
F_{\alpha_{\res}}\ar@{}[r]|*{\subset}&F_{\alpha}&&\Gamma_{\alpha_{\res}}\ar@{}[d]|{\bigcap}&\Gamma_{\alpha}\ar@{}[l]|*{\supset}\ar@{}[d]|{\bigcap}\\
F_{\pm\alpha_{\res}}\ar@{}[r]|*{\subset}\ar@{}[u]|{\bigcup}&F_{\pm\alpha}\ar@{}[u]|{\bigcup}&&\Gamma_{\pm\alpha_{\res}}&\Gamma_{\pm\alpha}\ar@{}[l]|*{\supset}
}
\]
Note that it can be easily check that $\Gamma_\alpha$ is normal in $\Gamma_{\pm\alpha_\res}$ (and the same is true for any two subgroups in this diagram).
Also note that, when $\alpha$ is symmetric, we have $\Gamma_{\alpha_\res}\cap\Gamma_{\pm\alpha}=\Gamma_\alpha$; this follows from that the action $\theta_\bfS$ on $\Phi(\bfG,\bfS)$ preserves a set of positive roots, especially, $\alpha$ cannot be sent to $-\alpha$.

Recall that $m_\alpha$ is the smallest positive integer such that $\theta^{m_\alpha}(\alpha)\in\ddot{\alpha}$.
We fix an element $\sigma_\alpha\in\Gamma_{\pm\alpha_\res}/\Gamma_\alpha$ as follows:
\begin{enumerate}
    \item When $\alpha$ is asymmetric, we choose $\sigma_\alpha$ such that $\theta^{m_\alpha}(\alpha)=\pm\sigma_\alpha(\alpha)$. Note that, as $\alpha$ is asymmetric, exactly one of $\theta^{m_\alpha}(\alpha)$ or $-\theta^{m_\alpha}(\alpha)$ belongs to $\dot{\alpha}$. In other words, the sign ``$\pm$'' in $\theta^{m_\alpha}(\alpha)=\pm\sigma_\alpha(\alpha)$ is determined by $\alpha$.
    \item When $\alpha$ is symmetric, we have $\theta^{m_\alpha}(\alpha)\in\ddot{\alpha}=\dot{\alpha}$. Noting this, we choose $\sigma_\alpha$ such that $\theta^{m_\alpha}(\alpha)=\sigma_\alpha(\alpha)$.
\end{enumerate}
When $\alpha$ is symmetric, we also fix a nontrivial element $\tau_\alpha\in\Gamma_{\pm\alpha}/\Gamma_{\alpha}$.

\begin{lem}
\begin{enumerate}
    \item When $\alpha$ is asymmetric, the extension $F_{\alpha}/F_{\pm\alpha_{\res}}$ is cyclic of order $l_\alpha/m_\alpha$ whose Galois group is given by $\langle\sigma_{\alpha}\rangle$.
    \item When $\alpha$ is symmetric, the extension $F_{\alpha}/F_{\alpha_{\res}}$ is cyclic of order $l_\alpha/m_\alpha$ whose Galois group is given by $\langle\sigma_{\alpha}\rangle$. Moreover, $\tau_\alpha$ and $\sigma_\alpha$ commute and generate the Galois group of $F_{\alpha}/F_{\pm\alpha_\res}$.
\end{enumerate}
\end{lem}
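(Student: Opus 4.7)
The plan is to analyze the cyclic quotients $\Gamma_{\pm\alpha_{\res}}/\Gamma_{\alpha}$ (asymmetric case) and $\Gamma_{\alpha_{\res}}/\Gamma_{\alpha}$, $\Gamma_{\pm\alpha_{\res}}/\Gamma_{\alpha}$ (symmetric case) directly via their actions on the finite set $\Theta_{\bfS}\alpha \cup (-\Theta_{\bfS}\alpha)$, using systematically the fact that the $\Gamma$- and $\Theta_{\bfS}$-actions on $\Phi(\bfG,\bfS)$ commute. The crucial preliminary observation, which I would establish first, is a divisibility lemma: if $\tau \in \Gamma$ satisfies $\tau(\alpha) = \pm\theta^{k}(\alpha)$ for some $k \in \Z_{\geq 0}$, then $m_{\alpha} \mid k$. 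This follows by writing $k = qm_{\alpha}+r$ with $0\leq r < m_{\alpha}$ and noting that $\theta^{m_{\alpha}}$ preserves $\Sigma\alpha$ (since $\theta^{m_{\alpha}}(\alpha) \in \Sigma\alpha$ by definition and $\theta$ commutes with $\Sigma$), so that $\theta^{r}(\alpha) = \theta^{-qm_{\alpha}}(\theta^{k}(\alpha)) \in \Sigma\alpha$, contradicting the minimality of $m_{\alpha}$ unless $r = 0$.

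For the asymmetric case, I would then show that every $\tau \in \Gamma_{\pm\alpha_{\res}}$ lies in $\sigma_{\alpha}^{j}\Gamma_{\alpha}$ for some $j$: writing $\tau(\alpha) = \pm\theta^{jm_{\alpha}}(\alpha)$ (possible by the divisibility lemma), the sign is uniquely determined by $j$ because $\Gamma\alpha$ and $-\Gamma\alpha$ are disjoint in the asymmetric case. Iterating the defining relation yields $\sigma_{\alpha}^{j}(\alpha) = \epsilon^{j}\theta^{jm_{\alpha}}(\alpha)$, where $\epsilon \in \{\pm1\}$ is the sign in $\theta^{m_{\alpha}}(\alpha) = \epsilon\sigma_{\alpha}(\alpha)$. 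Asymmetry forces $\epsilon^{l_{\alpha}/m_{\alpha}} = 1$, since otherwise $\sigma_{\alpha}^{l_{\alpha}/m_{\alpha}}(\alpha) = -\alpha$ would produce $-\alpha \in \Gamma\alpha$; hence $\sigma_{\alpha}^{l_{\alpha}/m_{\alpha}}(\alpha) = \alpha$, and the order of $\sigma_{\alpha}$ modulo $\Gamma_{\alpha}$ is exactly $l_{\alpha}/m_{\alpha}$.

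For the symmetric case, the same divisibility argument (now without sign ambiguity, since $\Sigma\alpha = \Gamma\alpha$) yields $\Gamma_{\alpha_{\res}}/\Gamma_{\alpha} = \langle\sigma_{\alpha}\rangle$ of order $l_{\alpha}/m_{\alpha}$. The identity
\[
\sigma_{\alpha}\tau_{\alpha}(\alpha) = \sigma_{\alpha}(-\alpha) = -\theta^{m_{\alpha}}(\alpha) = \theta^{m_{\alpha}}(-\alpha) = \tau_{\alpha}\sigma_{\alpha}(\alpha)
\]
gives $[\sigma_{\alpha},\tau_{\alpha}] \in \Gamma_{\alpha}$. For any $\tau \in \Gamma_{\pm\alpha_{\res}}$ with $\tau(\alpha) = -\theta^{k}(\alpha)$, rewriting $-\theta^{k}(\alpha) = \theta^{k}(\tau_{\alpha}(\alpha)) = \tau_{\alpha}\theta^{k}(\alpha)$ puts $\tau_{\alpha}^{-1}\tau$ in $\Gamma_{\alpha_{\res}} \subset \langle\sigma_{\alpha}\rangle\Gamma_{\alpha}$, yielding $\Gamma_{\pm\alpha_{\res}} = \langle\sigma_{\alpha},\tau_{\alpha}\rangle\Gamma_{\alpha}$.

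The main obstacle I anticipate is the sign-bookkeeping in the asymmetric case when $\epsilon = -1$: here one must combine the definition of $\sigma_{\alpha}$ with asymmetry to confirm that $l_{\alpha}/m_{\alpha}$ is necessarily even (intuitively, the $\langle\theta^{m_{\alpha}}\rangle$-orbit of $\alpha$ alternates between $\Sigma\alpha \cap \Gamma\alpha$ and $\Sigma\alpha \cap (-\Gamma\alpha)$, so closing up requires an even number of steps), so that the order of $\sigma_{\alpha}$ works out to $l_{\alpha}/m_{\alpha}$ uniformly regardless of the sign $\epsilon$.
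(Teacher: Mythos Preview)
Your approach is essentially the same as the paper's: both argue via the divisibility observation that $\tau(\alpha)\in\pm\Theta_{\bfS}\alpha$ forces $\tau(\alpha)=\pm\theta^{jm_{\alpha}}(\alpha)$ for some $j$, and then match signs (using asymmetry in case~(1)) to conclude $\tau\equiv\sigma_{\alpha}^{j}\pmod{\Gamma_{\alpha}}$. The paper's written proof is in fact briefer than yours---it records only the generation statement and leaves the order computation and the symmetric case to the reader.

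One small point on your anticipated obstacle: the parity observation (that $l_{\alpha}/m_{\alpha}$ is even when $\epsilon=-1$) is correct, but by itself it does not pin down the order of $\sigma_{\alpha}$ modulo $\Gamma_{\alpha}$; for instance it does not exclude $\sigma_{\alpha}^{d}\in\Gamma_{\alpha}$ with $d=l_{\alpha}/(2m_{\alpha})$ odd, which would require $\theta^{dm_{\alpha}}(\alpha)=-\alpha$. What actually closes this is the remark the paper records just before the lemma: $\theta_{\bfS}$ (arising from conjugation by an element of $\t{S}$ normalizing a Borel containing $\bfS$) preserves a set of positive roots, so no power of $\theta_{\bfS}$ sends $\alpha$ to $-\alpha$. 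With that in hand, $\sigma_{\alpha}^{k}(\alpha)=\epsilon^{k}\theta^{km_{\alpha}}(\alpha)=\alpha$ forces $\theta^{km_{\alpha}}(\alpha)=\alpha$ (the alternative $\theta^{km_{\alpha}}(\alpha)=-\alpha$ being excluded), hence $(l_{\alpha}/m_{\alpha})\mid k$, and the order is exactly $l_{\alpha}/m_{\alpha}$ without any further casework on the sign~$\epsilon$.
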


\begin{proof}
%
Let us show (1); suppose that $\alpha$ is asymmetric.
We take any element $\sigma\in\Gamma_{\pm\alpha_{\res}}$ and suppose that $\sigma(\alpha)=\pm\theta_{\bfS}^{i}(\alpha)$ for some $i$.
Then, as $m_{\alpha}$ is the smallest positive integer satisfying $\theta_{\bfS}^{m_{\alpha}}(\alpha)\in\ddot{\alpha}$, we have $i\equiv jm_{\alpha} \pmod{l_{\alpha}}$ for some $j\in\Z$.
Hence we have $\sigma(\alpha)=\sigma_{\alpha}^{j}(\alpha)$.
This implies that $\sigma^{-1}\sigma_{\alpha}^{j}$ belongs to $\Gamma_{\alpha}$.
In other words, $\sigma_{\alpha}$ generates $\Gamma_{\alpha_{\res}}/\Gamma_{\alpha}$.

We can also show (2) in a similar argument.
\end{proof}

We put $f_{\alpha}$ to be the residue degree of the extension $F_{\alpha}/F_{\alpha_{\res}}$.\symdef{f-alpha}{$f_{\alpha}$}

\begin{lem}\label{lem:symm-rest}
Let $\alpha\in\Phi(\bfG,\bfS)$.
    \begin{enumerate}
    \item If $\alpha$ is asymmetric and $\alpha_\res$ is symmetric, then $\alpha_{\res}$ is
    \[
    \begin{cases}
    \text{unramified} & \text{if $[k_{\alpha}:k_{\pm\alpha_\res}]$ is even,} \\
    \text{ramified} & \text{if $[k_{\alpha}:k_{\pm\alpha_\res}]$ is odd.}
    \end{cases}
    \]
    \item If $\alpha$ is symmetric unramified, then $\alpha_\res$ is symmetric and 
    \[
    \begin{cases}
    \text{unramified} & \text{if $f_\alpha$ is odd,} \\
    \text{ramified} & \text{if $f_\alpha$ is even.}
    \end{cases}
    \]
    \item If $\alpha$ is symmetric ramified, then $\alpha_{\res}$ is symmetric ramified.
\end{enumerate}
\end{lem}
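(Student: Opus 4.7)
The plan is, for each case, to analyze the tower $F \subset F_{\pm\alpha_\res} \subset F_{\alpha_\res} \subset F_\alpha$ (together with $F_{\pm\alpha} \subset F_\alpha$ in the symmetric cases) and to determine the ramification of $F_{\alpha_\res}/F_{\pm\alpha_\res}$ by deciding whether a chosen generator of $\Gal(F_{\alpha_\res}/F_{\pm\alpha_\res})$ lies in the inertia subgroup; equivalently, by lifting this generator to $\Gal(F_\alpha/F_{\pm\alpha_\res})$ and checking whether its action on $k_\alpha$ restricts trivially to $k_{\alpha_\res}$. Tameness of $\bfS$ ensures that every extension in sight is tame, so inertia subgroups are cyclic.

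For (3), first I would check that $\alpha_\res$ is symmetric: $\tau_\alpha$ sends $\alpha$ to $-\alpha$, hence $\alpha_\res$ to $-\alpha_\res$, and so $\tau_\alpha \in \Gamma_{\pm\alpha_\res}$; moreover $-\alpha \notin \Theta_\bfS\alpha$ (else $\alpha_\res = 0$), so $\tau_\alpha \notin \Gamma_{\alpha_\res}$. For the ramification statement, since $F_\alpha/F_{\pm\alpha}$ is totally ramified, $\tau_\alpha$ acts trivially on $k_\alpha$, hence trivially on $k_{\alpha_\res} \subset k_\alpha$; because $\tau_\alpha$ induces the nontrivial element of $\Gal(F_{\alpha_\res}/F_{\pm\alpha_\res})$, this extension must be (totally) ramified.

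For (2), the symmetry of $\alpha_\res$ follows exactly as in (3). Since $F_\alpha/F_{\pm\alpha}$ is unramified of degree $2$, $\tau_\alpha$ acts on $k_\alpha$ as the unique nontrivial element of $\Gal(k_\alpha/k_{\pm\alpha})$, i.e.\ as $\phi^{[k_{\pm\alpha}:k_F]}$ where $\phi$ is the Frobenius of $k_\alpha$ over $k_F$. Writing $m = [k_{\alpha_\res}:k_F]$, we have $[k_\alpha:k_F] = m f_\alpha$ and $[k_{\pm\alpha}:k_F] = mf_\alpha/2$, so $\tau_\alpha|_{k_\alpha} = \phi^{mf_\alpha/2}$ lies in $\Gal(k_\alpha/k_{\alpha_\res}) = \langle\phi^m\rangle$ if and only if $f_\alpha$ is even. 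This is the criterion for $\tau_\alpha$ to act trivially on $k_{\alpha_\res}$, equivalently for $F_{\alpha_\res}/F_{\pm\alpha_\res}$ to be ramified, giving the claimed dichotomy.

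For (1), since $\alpha$ is asymmetric $F_\alpha = F_{\pm\alpha}$, and $\Gal(F_\alpha/F_{\pm\alpha_\res}) = \Gamma_{\pm\alpha_\res}/\Gamma_\alpha = \langle\sigma_\alpha\rangle$ is cyclic of order $l_\alpha/m_\alpha$. The hypothesis that $\alpha_\res$ is symmetric forces the ``minus sign'' case in the definition of $\sigma_\alpha$, namely $\sigma_\alpha(\alpha) = -\theta^{m_\alpha}(\alpha)$, so that $\sigma_\alpha \notin \Gamma_{\alpha_\res}$ and $\Gamma_{\alpha_\res}/\Gamma_\alpha = \langle\sigma_\alpha^2\rangle$; in particular $l_\alpha/m_\alpha = 2n$ is even. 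By tameness the cyclic extension $F_\alpha/F_{\pm\alpha_\res}$ is tame, so its inertia subgroup is $I = \langle\sigma_\alpha^d\rangle$ for a unique divisor $d$ of $2n$. Then $F_{\alpha_\res}/F_{\pm\alpha_\res}$ is unramified iff the image of $I$ in the quotient $\langle\sigma_\alpha\rangle/\langle\sigma_\alpha^2\rangle = \Gal(F_{\alpha_\res}/F_{\pm\alpha_\res})$ is trivial, iff $\sigma_\alpha^d \in \langle\sigma_\alpha^2\rangle$, iff $d$ is even; meanwhile $[k_\alpha:k_{\pm\alpha_\res}] = |\langle\sigma_\alpha\rangle/I| = \gcd(d, 2n)$ has the same parity as $d$, yielding the stated equivalence. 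The main obstacle is this case: one must carefully verify that the symmetry of $\alpha_\res$ forces $\sigma_\alpha \notin \Gamma_{\alpha_\res}$ (and hence the $2n$-structure) by tracking the interaction of $\theta_\bfS$ with the Galois action on $\Phi(\bfG,\bfS)$, before the inertia-parity bookkeeping can be brought to bear.
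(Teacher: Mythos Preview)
Your proof is correct and follows essentially the same approach as the paper's: both reduce the question of whether $F_{\alpha_\res}/F_{\pm\alpha_\res}$ is ramified to a residue-field computation by lifting a generator of $\Gal(F_{\alpha_\res}/F_{\pm\alpha_\res})$ to $\Gal(F_\alpha/F_{\pm\alpha_\res})$ and checking its action on $k_\alpha$. The paper phrases (1) as ``$\sigma_\alpha$ and $\sigma_\alpha^2$ generate the same subgroup of $\Gal(k_\alpha/k_{\pm\alpha_\res})$ iff that cyclic group has odd order,'' and (2)--(3) via the containment $k_{\alpha_\res}\cap k_{\pm\alpha}=k_{\pm\alpha_\res}$; you instead parametrize the inertia subgroup as $\langle\sigma_\alpha^d\rangle$ and test $d$ for parity in (1), and compute the Frobenius power representing $\tau_\alpha$ in (2). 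These are equivalent bookkeepings of the same idea. Two small remarks: in (1) your expression $\gcd(d,2n)$ is just $d$ since you already assumed $d\mid 2n$; and the fact you flag as the ``main obstacle'' (that symmetry of $\alpha_\res$ forces $\theta^{m_\alpha}(\alpha)=-\sigma_\alpha(\alpha)$, hence $\sigma_\alpha\notin\Gamma_{\alpha_\res}$) is established in the paper immediately before the lemma, so you may simply cite it.
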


\begin{proof}

We first show (1).
Let $\alpha$ be an asymmetric root whose $\alpha_\res$ is symmetric.
As shown above, $\sigma_\alpha$ generates the Galois group of $F_{\alpha}/F_{\pm\alpha_\res}$, hence also that of $k_{\alpha}/k_{\pm\alpha_\res}$.
Similarly, $\sigma_\alpha^2$ generates the Galois group of $F_{\alpha}/F_{\alpha_\res}$, hence also that of $k_{\alpha}/k_{\alpha_\res}$.
Hence $k_{\alpha_\res}=k_{\pm\alpha_\res}$ if and only if $k_{\alpha}/k_{\pm\alpha_\res}$ is of odd degree.

We next show (2) and (3).
Let $\alpha$ be a symmetric root.
Since $\tau_{\alpha}(\alpha_{\res})=-\alpha_{\res}$, $\alpha_{\res}$ is also symmetric.
Note that we have $k_{\alpha_{\res}}\cap k_{\pm\alpha}=k_{\pm\alpha_{\res}}$ and $k_{\alpha}=k_{\alpha_{\res}}k_{\pm\alpha}$.

First suppose that $\alpha$ is unramified.
If $f_{\alpha}=[k_{\alpha}:k_{\alpha_{\res}}]$ is odd, then $k_{\pm\alpha}$ does not contain $k_{\alpha_{\res}}$.
Hence $k_{\alpha_{\res}}\cap k_{\pm\alpha}=k_{\pm\alpha_{\res}}$ is strictly smaller than $k_{\alpha_{\res}}$, which implies that $\alpha_{\res}$ is unramified.
On the other hand, if $f_{\alpha}$ is even, then $k_{\pm\alpha}$ contains $k_{\alpha_{\res}}$.
Hence $k_{\alpha_{\res}}\cap k_{\pm\alpha}=k_{\pm\alpha_{\res}}$ is equal to $k_{\alpha_{\res}}$, which implies that $\alpha_{\res}$ is ramified.

Let us next suppose that $\alpha$ is ramified.
In this case, as we have $k_{\alpha_{\res}}\cap k_{\pm\alpha}=k_{\alpha_{\res}}\cap k_{\alpha}=k_{\alpha_{\res}}$, we get $k_{\alpha_{\res}}=k_{\pm\alpha_{\res}}$.
Thus $\alpha_{\res}$ is ramified.
\end{proof}

As reviewed in Section \ref{subsec:Steinberg}, the group $\G_{\eta}$ is a connected reductive group with a maximal torus $\bfS^{\nat}$.
Furthermore, $\Phi(\G_{\eta},\bfS^{\nat})$ is regarded as a subset of $\Phi_{\res}(\G,\bfS)$.
By Proposition \ref{prop:BT-descent}, we may suppose that the point $\x$ is contained in $\mcA(\bfS^{\nat},F)\subset\mcB(\G_{\eta},F)$.
We introduce the subgroups $J_{\eta}$ (resp.\ $J_{\eta,+}$) in the same way as $J$ (resp.\ $J_{+}$) by using $(\G_{\eta},\bfS^{\nat},\x,r,s(+))$ instead of $(\G,\bfS,\x,r,s(+))$, i.e.,
\[
J_{\eta}\colonequals (S^{\nat},G_{\eta})_{\x,(r,s)}
\quad\text{and}\quad
J_{\eta,+}\colonequals (S^{\nat},G_{\eta})_{\x,(r,s+)}.
\]
By the same discussion as in Section \ref{subsec:Heisen-structure}, if we put
\[
\bfV_{\eta}\colonequals \Lie(\bfS_{\eta},\G_{\eta})(E)_{\x,(r,s):(r,s+)}
\quad
\text{and}
\quad
V_{\eta}\colonequals \bfV_{\eta}^{\Gamma}
\]
then we have $J_{\eta}/J_{\eta,+}\cong V_{\eta}$ and a root space decomposition similar to (\ref{eq:decomp}):
\[
V_{\eta}
\cong
\bigoplus_{\ddot{\alpha}_{\res}\in\ddot{\Xi}_{\eta}}V_{\eta,\ddot{\alpha}_{\res}},
\]
where we use the notation defined in the same way as in Section \ref{subsec:Heisen-structure}, e.g., 
\[
\Xi_{\eta}
\colonequals \Xi(\bfG_{\eta},\bfS^{\nat})
\colonequals \{\alpha_{\res}\in\Phi(\G_{\eta},\bfS^{\nat}) \mid V_{\eta,\alpha_{\res}}\neq0\}.
\]

By Proposition \ref{prop:MP-descent} (4), we have a natural identification
\[
V_{\eta}
\cong(S^{\nat},G_{\eta})_{\x,(r,s):(r,s+)}
\cong(S,G)_{\x,(r,s):(r,s+)}^{\eta}
\cong V^{\eta},
\]
where $(S,G)_{\x,(r,s):(r,s+)}^{\eta}$ and $V^{\eta}$ denote the set of $[\eta]$-fixed points in $(S,G)_{\x,(r,s):(r,s+)}$ and $V$, respectively.
Let us investigate this identification more precisely.
The Lie algebra $\bmfg_{\eta}$ of $\G_{\eta}$ is naturally identified with the $[\eta]$-fixed points $\bmfg^{\eta}$ of the Lie algebra $\bmfg$ of $\G$.
If $\alpha_{\res}\in\Phi(\G_{\eta},\bfS^{\nat})$ is a restricted root obtained from $\alpha\in\Phi(\G,\bfS)$, then the root subspace $\bmfg_{\eta,\alpha_{\res}}$ of $\bmfg_{\eta}$ is identified with the $[\eta]$-fixed points in the sum $\bigoplus_{\alpha'\in\Theta\alpha}\bmfg_{\alpha'}$ of root subspaces of $\bmfg$:
\[
\bmfg_{\eta,\alpha_{\res}}
\cong
\Bigl(\bigoplus_{\alpha'\in\Theta\alpha}\bmfg_{\alpha}\Bigr)^{\eta}.
\]
This induces an identification
\[
V_{\eta,\ddot{\alpha}_{\res}}
\cong
\Bigl(\bigoplus_{\ddot{\alpha}'\in\Sigma\backslash(\Sigma\times\Theta)\alpha} V_{\ddot{\alpha}'}\Bigr)^{\eta}
\]
for any $\alpha_{\res}\in\Xi_{\eta}$.
Let us put $V_{\Theta(\ddot{\alpha})}\colonequals \bigoplus_{\ddot{\alpha}'\in\Sigma\backslash(\Sigma\times\Theta)\alpha} V_{\ddot{\alpha}'}$.
In particular, by letting $\Xi_{\res}$ be the set of restricted roots associated to $\Xi$, the set $\Xi_{\eta}$ can be thought of as the set of restricted roots $\alpha_{\res}\in\Xi_{\res}$ such that the $[\eta]$-action has a nonzero fixed point in $V_{\Theta(\ddot{\alpha})}$.
\[
\xymatrix@R=10pt{
\Phi(\G,\bfS)\ar@{->>}[r]&\Phi(\G,\bfS)/\Theta_{\bfS} \ar^-{1:1}[r]&\Phi(\G,\bfS)_{\res}&\Phi(\G_{\eta},\bfS^{\nat})\ar@{}[l]|*{\supset}\\
\Xi\ar@{->>}[r]\ar@{}[u]|{\bigcup}&\Xi/\Theta_{\bfS} \ar^-{1:1}[r]\ar@{}[u]|{\bigcup}&\Xi_{\res}\ar@{}[u]|{\bigcup}&\Xi_{\eta}\ar@{}[l]|*{\supset}\ar@{}[u]|{\bigcup}
}
\]

\subsection{Twisted characters of Weil representations: asymmetric roots}\label{subsec:twisted-Weil-asym}
Let $\alpha\in\Xi_{\asym}$.
We compute $\Theta_{\omega_{\ddot{\alpha}}}([\eta]^{m_{\alpha}})$, which constitutes the right-hand side of Corollary \ref{cor:twisted-HW-Yu}.
Recall that $V_{\ddot{\alpha}}=V_{\dot{\alpha}}\oplus V_{-\dot{\alpha}}\cong V_{\alpha}\oplus V_{-\alpha}$, where $V_{\alpha}$ and $V_{-\alpha}$ are $1$-dimensional $k_{\alpha}$-vector spaces, which are identified with $k_{\alpha}$ by fixing nonzero elements $X_{\alpha}\in V_{\alpha}$ and $X_{-\alpha}\in V_{-\alpha}$ (see Section \ref{subsec:Heisen-structure}).

We carry out a case-by-case computation on the symmetry of $\alpha_\res$.
Recall that we have exactly one of $\theta^{m_\alpha}(\alpha)\in\dot{\alpha}$ or $\theta^{m_\alpha}(\alpha)\in-\dot{\alpha}$; we have fixed a $\sigma_\alpha$ satisfying $\theta^{m_\alpha}(\alpha)=\sigma_\alpha(\alpha)$ or $-\sigma_\alpha(\alpha)$ according to it.
In the former case where $\theta^{m_\alpha}(\alpha)=\sigma_\alpha(\alpha)$, $\sigma_\alpha$ belongs to $\Gamma_{\alpha_\res}$, which implies that $\alpha_\res$ is asymmetric.
On the other hand, in the latter case where $\theta^{m_\alpha}(\alpha)=-\sigma_\alpha(\alpha)$, the Galois group of $F_\alpha/F_{\alpha_\res}$ is a proper subgroup of that of $F_\alpha/F_{\pm\alpha_\res}$ generated by $\sigma_\alpha^2$, which implies that $\alpha_\res$ is symmetric.
Thus, also recalling Lemma \ref{lem:symm-rest}, we have the following possibilities:
\begin{itemize}
    \item[(1)] $\alpha_\res$ is asymmetric ($\theta^{m_\alpha}(\alpha)=\sigma_\alpha(\alpha)$);
    \item[(2-I)] $\alpha_\res$ is symmetric ($\theta^{m_\alpha}(\alpha)=-\sigma_\alpha(\alpha)$) and unramified;
    \item[(2-II)] $\alpha_\res$ is symmetric ($\theta^{m_\alpha}(\alpha)=-\sigma_\alpha(\alpha)$) and ramified.
\end{itemize}

To make the notation lighter, we write $\varsigma_\alpha$ for $\sigma_\alpha^{-1}$ in the following.

\subsubsection{The case where $\alpha_\res$ is asymmetric}

We first consider the case where $\alpha_\res$ is asymmetric ($\theta^{m_\alpha}(\alpha)=\sigma_\alpha(\alpha)$).
In this case, $F_{\alpha}/F_{\alpha_\res}$ is a cyclic extension of degree $l_\alpha/m_\alpha$ and $F_{\alpha_{\res}}=F_{\pm\alpha_{\res}}$.
We write $g_\alpha:=[k_{\pm\alpha_\res}:\F_p]$.
\[
\xymatrix{
F_{\alpha_{\res}}\ar^-{l_\alpha/m_\alpha}_-{\langle\sigma_\alpha\rangle}@{-}[r]&F_{\alpha}\\
F_{\pm\alpha_{\res}}\ar@{-}[r]\ar@{=}[u]&F_{\pm\alpha}\ar@{=}[u]
}
\qquad
\xymatrix{
&k_{\alpha_{\res}}\ar^-{f_\alpha}@{-}[r]&k_{\alpha}\\
\F_p\ar^-{g_\alpha}@{-}[r]&k_{\pm\alpha_{\res}}\ar@{-}[r]\ar@{=}[u]&k_{\pm\alpha}\ar@{=}[u]
}
\]

The action of $[\eta]^{m_\alpha}$ on $V_{\ddot{\alpha}}$ preserves $V_{\dot{\alpha}}$ and $V_{-\dot{\alpha}}$.
Since $[\eta]^{m_\alpha}(X_{\alpha})$ belongs to $V_{\theta^{m_\alpha}(\alpha)}=V_{\sigma_{\alpha}(\alpha)}$, the induced action of $[\eta]^{m_\alpha}$ on $V_{\alpha}$ is $\varsigma_\alpha$-linear:
\[
\xymatrix{
V_{\alpha}\ar^-{\cong}[r]& V_{\dot{\alpha}} \ar^-{[\eta]^{m_\alpha}}[d]& X_{\alpha}\ar@{|->}[r]& \sum_{\sigma\in\Gamma/\Gamma_{\alpha}} \sigma(X_{\alpha})\ar@{|->}[d]\\
V_{\alpha}& V_{\dot{\alpha}} \ar^-{\cong}[l]& [\eta]^{m_\alpha}\circ\varsigma_\alpha(X_{\alpha})& \sum_{\sigma\in\Gamma/\Gamma_{\alpha}} [\eta]^{m_\alpha}\circ\sigma(X_{\alpha}) \ar@{|->}[l]
}
\]
More explicitly, if we let $\eta_{\alpha}$ (resp.\ $\eta_{-\alpha}$) be the element of $k_{\alpha}^{\times}$ such that $[\eta]^{m_\alpha}\circ\varsigma_\alpha(X_{\alpha})=\eta_{\alpha}X_{\alpha}$ (resp.\ $[\eta]^{m_\alpha}\circ\varsigma_\alpha(X_{-\alpha})=\eta_{-\alpha}X_{-\alpha}$), then $[\eta]$ is given by 
\[
x_{+}+x_{-}
\mapsto
\eta_{\alpha}\varsigma_\alpha(x_{+})+\eta_{-\alpha}\varsigma_\alpha(x_{-})
\]
as an element of $\Sp(V_{\ddot{\alpha}})\cong\Sp(k_{\alpha}\oplus k_{\alpha})$.
Hence, by Corollary \ref{cor:Gerardin}, we get
\begin{align*}
\Theta_{\omega_{\ddot{\alpha}}}([\eta]^{m_\alpha})
&=\sgn_{\F_{p}^{\times}}(\det(\eta_{\alpha}\circ\varsigma_\alpha \mid k_{\alpha}))\cdot |V_{\ddot{\alpha}}^{\eta^{m_\alpha}}|^{\frac{1}{2}}\\
&=\sgn_{\F_{p}^{\times}}(\det(\varsigma_{\alpha}\mid k_{\alpha}))\cdot\sgn_{k_{\alpha}^{\times}}(\eta_{\alpha})\cdot|V_{\ddot{\alpha}}^{\eta^{m_\alpha}}|^{\frac{1}{2}}.
\end{align*}

Since $\varsigma_\alpha$ is a generator of the Galois group of the cyclic extension $k_\alpha/k_{\alpha_\res}$ of degree $f_\alpha$, its determinant as an $k_{\alpha_\res}$-automorphism is given by $(-1)^{f_\alpha-1}$.
(Indeed, after splitting $k_\alpha\otimes_{k_{\alpha_\res}}\overline{\F}_p\cong \prod_{i=0}^{f_\alpha-1}\overline{\F}_p$, the element $\varsigma_\alpha$ is identified with the cyclic permutation matrix of size $f_\alpha$.)
Hence we have $\sgn_{\F_{p}^{\times}}(\det(\varsigma_{\alpha}\mid k_{\alpha}))=\sgn_{\F_{p}^{\times}}(-1)^{g_\alpha(f_\alpha-1)}$.
In summary, 
\[
\Theta_{\omega_{\ddot{\alpha}}}([\eta]^{m_\alpha})
=\sgn_{\F_{p}^{\times}}(-1)^{g_\alpha(f_\alpha-1)}\cdot\sgn_{k_{\alpha}^{\times}}(\eta_{\alpha})\cdot|V_{\ddot{\alpha}}^{\eta^{m_\alpha}}|^{\frac{1}{2}}.
\]

\subsubsection{The case where $\alpha_\res$ is symmetric}

We next consider the case where $\alpha_\res$ is symmetric ($\theta^{m_\alpha}(\alpha)=-\sigma_\alpha(\alpha)$).
In this case, $F_{\alpha}/F_{\pm\alpha_\res}$ is a cyclic extension of degree $l_\alpha/m_\alpha$ and $F_{\alpha_{\res}}/F_{\pm\alpha_{\res}}$ is quadratic.
\[
\xymatrix{
F_{\alpha_{\res}}\ar^-{l_\alpha/(2m_\alpha)}_-{\langle\sigma_\alpha^2\rangle}@{-}[r]&F_{\alpha}\\
F_{\pm\alpha_{\res}}\ar@{-}[r]\ar^-{2}_-{\langle\sigma_\alpha\rangle}@{-}[u]&F_{\pm\alpha}\ar@{=}[u]
}
\qquad
\xymatrix{
k_{\alpha_{\res}}\ar^-{f_\alpha}@{-}[r]&k_{\alpha}\\
k_{\pm\alpha_{\res}}\ar@{-}[r]\ar^-{\leq2}@{-}[u]&k_{\pm\alpha}\ar@{=}[u]
}
\]

The action of $[\eta]^{m_\alpha}$ on $V_{\ddot{\alpha}}$ swaps $V_{\dot{\alpha}}$ and $V_{-\dot{\alpha}}$.
Since $[\eta]^{m_\alpha}(X_{\alpha})$ belongs to $V_{\theta^{m_\alpha}(\alpha)}=V_{-\sigma_{\alpha}(\alpha)}$, the isomorphism from $V_{\alpha}$ to $V_{-\alpha}$ induced by $[\eta]^{m_\alpha}$ is $\varsigma_\alpha$-linear:
\[
\xymatrix{
V_{\alpha}\ar^-{\cong}[r]& V_{\dot{\alpha}} \ar^-{[\eta]^{m_\alpha}}[d]& X_{\alpha}\ar@{|->}[r]& \sum_{\sigma\in\Gamma/\Gamma_{\alpha}} \sigma(X_{\alpha})\ar@{|->}[d]\\
V_{-\alpha}& V_{-\dot{\alpha}} \ar^-{\cong}[l]& [\eta]^{m_\alpha}\circ\varsigma_\alpha(X_{\alpha})& \sum_{\sigma\in\Gamma/\Gamma_{\alpha}} [\eta]^{m_\alpha}\circ\sigma(X_{\alpha}) \ar@{|->}[l]
}
\]
More explicitly, if we let $\eta_{-\alpha}$ (resp.\ $\eta_{\alpha}$) be the element of $k_{\alpha}^{\times}$ such that $[\eta]^{m_\alpha}\circ\varsigma_\alpha(X_{\alpha})=\eta_{-\alpha}X_{-\alpha}$ (resp.\ $[\eta]^{m_\alpha}\circ\varsigma_\alpha(X_{-\alpha})=\eta_{\alpha}X_{\alpha}$), then $[\eta]^{m_\alpha}$ is given by 
\[
x_{+}+x_{-}
\mapsto
\eta_{\alpha}\varsigma_\alpha(x_{-})+\eta_{-\alpha}\varsigma_\alpha(x_{+})
\]
as an element of $\Sp(V_{\ddot{\alpha}})\cong\Sp(k_{\alpha}\oplus k_{\alpha})$.
Since this automorphism preserves the symplectic form $(x_{+}+x_{-},y_{+}+y_{-}) \mapsto\Tr_{k_{\alpha}/\F_{p}}(C\cdot(x_{+}y_{-}-x_{-}y_{+}))$ (see Section \ref{subsec:Heisen-structure}), we must have
\[
\Tr_{k_{\alpha}/\F_{p}}(C\cdot(x_{+}y_{-}-x_{-}y_{+}))
=
\Tr_{k_{\alpha}/\F_{p}}(C\cdot\eta_{\alpha}\eta_{-\alpha}\cdot\varsigma_\alpha(x_{-}y_{+}-x_{+}y_{-}))
\]
for any $x_{+},x_{-},y_{+},y_{-}\in k_{\alpha}$.
In other words, we have $\eta_{\alpha}\eta_{-\alpha}=-\varsigma_\alpha(C)\cdot C^{-1}$.

To compute $\Theta_{\omega_{\ddot{\alpha}}}([\eta]^{m_\alpha})$, we utilize G{\'e}rardin's formula (Proposition \ref{prop:Gerardin2}). 
Note that $[\eta]^{m_\alpha}$ is of finite prime-to-$p$ order, hence a semisimple element of $\Sp(V_{\ddot{\alpha}})$.
We let $T$ be an $\F_p$-rational maximal torus of $\Sp(V_{\ddot{\alpha}})$ containing $[\eta]^{m_\alpha}$ (note that such a $T$ may not be unique, so we fix one).
Following the notation of Section \ref{subsec:Gerardin}, we write $P(V_{\ddot{\alpha}}, T)$ for the set of weights of $T$ in $V_{\ddot{\alpha}}$, which is a finite set equipped with an action of $\Sigma_{\F_p}:=\pm\Gamma_{\F_p}$ (the notion of being symmetric or asymmetric for the weights are defined with respect to this action).
Then Proposition \ref{prop:Gerardin2} gives 
\begin{align}\label{eq:Gerardin}
\Theta_{\omega_{\ddot{\alpha}}}([\eta]^{m_\alpha})
=
(-1)^{l(V_{\ddot{\alpha}},T;[\eta]^{m_\alpha})}\cdot p^{N(V_{\ddot{\alpha}};[\eta]^{m_\alpha})}\cdot\chi^{T}([\eta]^{m_\alpha}).
\end{align}
Here, 
\begin{itemize}
\item
$l(V_{\ddot{\alpha}},T;[\eta]^{m_\alpha})\colonequals |\{\omega\in P(V_{\ddot{\alpha}},T)/\Gamma_{\F_{p}} \mid \text{$\epsilon([\eta]^{m_\alpha}) \neq1$ for an(y) $\epsilon\in\omega$}\}|$,
\item
$N(V_{\ddot{\alpha}};[\eta]^{m_\alpha})\colonequals \frac{1}{2}\dim_{\F_{p}}{V_{\ddot{\alpha}}^{\eta^{m_\alpha}}}$ (hence $p^{N(V_{\ddot{\alpha}};[\eta]^{m_\alpha})}=|V_{\ddot{\alpha}}^{\eta^{m_\alpha}}|^{\frac{1}{2}}$),
\item 
$\chi^{T}\colonequals \prod_{\Omega\in P(V,T)/\Sigma_{\F_{p}}}\chi^{T}_{\Omega}$, where
\[
\chi^{T}_{\Omega}
:=
\begin{cases}
\epsilon(t)^{\frac{1-q_{\Omega}}{2}}&\text{if an(y) $\omega\subset\Omega$ is asymmetric,}\\
\epsilon(t)^{\frac{1+q_{\Omega}}{2}}&\text{if an(y) $\omega\subset\Omega$ is symmetric,}
\end{cases}
\]
($q_{\Omega}\colonequals p^{\frac{1}{2}|\Omega|}$).

\end{itemize}

Therefore, our task is to find an $\F_p$-rational maximal torus $T$ of $\Sp(V_{\ddot{\alpha}})$ containing $[\eta]^{m_\alpha}$ and compute the set $P(V_{\ddot{\alpha}}, T)$ including its Galois action.
For this, noting that any semisimple conjugacy class of $\Sp(V_{\ddot{\alpha}})$ is uniquely determined by the multi-set of eigenvalues (Lemma \ref{lem:eigenvalue-conjugate}), we first determine the multi-set of eigenvalues of $[\eta]^{m_\alpha}$ as an $\F_p$-automorphism of $V_{\ddot{\alpha}}$.

Let us introduce a notation for convenience: for any finite-dimensional $K$-vector space $W$ (where $K$ is any field) and its $K$-endomorphism $\varphi$, we let $\mcE_{K}(\varphi\mid W)$ denote the multi-set of the eigenvalues of $\varphi$.\symdef{E-K-varphi-W}{$\mcE_{K}(\varphi\mid W)$}

The following lemma can be proved by a straightforward computation (cf.\ Lemma \ref{lem:eigen-s-th-root}):
\begin{lem}
Let $K$ be any field and $V=V_{1}\oplus V_{2}$ a finite-dimensional $K$-vector space equipped with isomorphisms $A_{1}\colon V_{1}\rightarrow V_{2}$ and $A_{2}\colon V_{2}\rightarrow V_{1}$.
If we put $A\colonequals A_{1}\oplus A_{2}$, then we have
\[
\det(T\cdot \id_{V}-A_{1}\oplus A_{2} \mid V)
=\det(T^{2}\cdot \id_{V_1}-A_{2}\circ A_{1} \mid V_{1}).
\]
In particular, $\mcE_K(A\mid V)=\mcE_K(A_2\circ A_1\mid V_1)^{1/2}$, where $(-)^{1/2}$ on the right-hand side denotes the multi-set of square roots of elements of $(-)$.
\end{lem}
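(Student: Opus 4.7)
The plan is to express $A$ as a block matrix with respect to the decomposition $V=V_{1}\oplus V_{2}$ and apply the standard Schur-complement formula for block determinants. Concretely, writing a general element of $V$ as a column vector $(v_{1},v_{2})^{t}$ with $v_{i}\in V_{i}$, the hypothesis on $A_{1}$, $A_{2}$ gives
\[
A=\begin{pmatrix} 0 & A_{2}\\ A_{1} & 0 \end{pmatrix},\qquad
T\cdot\id_{V}-A=\begin{pmatrix} T\cdot\id_{V_{1}} & -A_{2}\\ -A_{1} & T\cdot\id_{V_{2}} \end{pmatrix}.
\]

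Next, working over the field $K(T)$ so that the block $T\cdot\id_{V_{2}}$ is invertible, I would apply the Schur-complement identity
\[
\det\begin{pmatrix} P & Q\\ R & S \end{pmatrix}
=\det(S)\cdot\det\bigl(P-QS^{-1}R\bigr),
\]
which yields
\[
\det(T\cdot\id_{V}-A)
=T^{\dim V_{2}}\cdot\det\bigl(T\cdot\id_{V_{1}}-T^{-1}A_{2}\circ A_{1}\bigr).
\]
Since $A_{1}$ and $A_{2}$ are isomorphisms we have $\dim V_{1}=\dim V_{2}$, so pulling out a factor of $T^{-\dim V_{1}}$ from the right-hand determinant turns the expression into $\det(T^{2}\cdot\id_{V_{1}}-A_{2}\circ A_{1})$, which is the claimed equality of polynomials in $K[T]$.

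The ``in particular'' statement about eigenvalues is then an immediate consequence: the roots (with multiplicity) of $\det(T\cdot\id_{V}-A)$ are $\mcE_{K}(A\mid V)$, while the roots of $\det(T^{2}\cdot\id_{V_{1}}-A_{2}\circ A_{1})$ are exactly the square roots (in an algebraic closure) of the elements of $\mcE_{K}(A_{2}\circ A_{1}\mid V_{1})$, each occurring with the same multiplicity. There is no real obstacle here; the only mild care needed is to treat $T$ as an indeterminate so that Schur complement is legitimate, and to note that $\dim V_{1}=\dim V_{2}$ forces the powers of $T$ to cancel cleanly. A more conceptual alternative, which I would mention as a sanity check, is the direct eigenvector argument: if $v_{1}\oplus v_{2}\in V$ satisfies $A(v_{1}\oplus v_{2})=\lambda(v_{1}\oplus v_{2})$, then $A_{2}v_{2}=\lambda v_{1}$ and $A_{1}v_{1}=\lambda v_{2}$, hence $(A_{2}\circ A_{1})v_{1}=\lambda^{2}v_{1}$, showing that $\lambda^{2}$ is an eigenvalue of $A_{2}\circ A_{1}$.
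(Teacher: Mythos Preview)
Your proof is correct. The paper does not actually give a proof of this lemma, stating only that it ``can be proved by a straightforward computation (cf.\ Lemma~\ref{lem:eigen-s-th-root})'', so your Schur-complement argument over $K(T)$ is a perfectly acceptable realization of that straightforward computation.

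The cross-reference in the paper points to a slightly different route: the proof of Lemma~\ref{lem:eigen-s-th-root} computes a suitable power of the operator (here $A^{2}$, which is block-diagonal with blocks $A_{2}\circ A_{1}$ and $A_{1}\circ A_{2}$), treats the matrix entries as formal variables, and then argues by irreducibility and degree-counting to pin down the characteristic polynomial exactly. Your Schur-complement approach is more direct for the $2\times 2$ block case and avoids the formal-variable trick entirely; the paper's referenced method has the advantage of generalizing immediately to longer cyclic permutations of blocks, which is what Lemma~\ref{lem:eigen-s-th-root} actually needs. Both are standard, and either would be accepted without comment.
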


By this lemma, we see
\[
\mcE_{\F_p}([\eta]^{m_\alpha}\mid k_{\alpha}\oplus k_{\alpha})
=
\mcE_{\F_p}((\eta_{\alpha}\circ\varsigma_\alpha)\circ(\eta_{-\alpha}\circ\varsigma_\alpha) \mid k_{\alpha})^{1/2}.
\]
Note that 
\[
(\eta_{\alpha}\circ\varsigma_\alpha)\circ(\eta_{-\alpha}\circ\varsigma_\alpha)
=\eta_{\alpha}\cdot\varsigma_\alpha(\eta_{-\alpha})\circ \varsigma_\alpha^{2}
=-\varsigma_\alpha(\eta_{-\alpha}C)/(\eta_{-\alpha}C)\circ\varsigma_\alpha^{2}
\]
and that this is a $k_{\alpha_\res}$-linear automorphism of $k_\alpha$.
Thus
\[
\mcE_{\F_p}((\eta_{\alpha}\circ\varsigma_\alpha)\circ(\eta_{-\alpha}\circ\varsigma_\alpha) \mid k_{\alpha})
=
\Gal(k_{\alpha_{\res}}/\F_p)\cdot\mcE_{k_{\alpha_{\res}}}(-\varsigma_\alpha(\eta_{-\alpha}C)/(\eta_{-\alpha}C)\circ\varsigma_\alpha^{2} \mid k_{\alpha}).
\]
Here, the right-hand side denotes the multi-set consisting of the elements of the form $\sigma(e)$, where $\sigma\in \Gal(k_{\alpha_{\res}}/\F_p)$ and $e\in \mcE_{k_{\alpha_{\res}}}(-\varsigma_\alpha(\eta_{-\alpha}C)/(\eta_{-\alpha}C)\circ\varsigma_\alpha^{2} \mid k_{\alpha})$ ($\sigma$ acts on $e$ by implicitly fixing an extension of $\sigma$ to $\overline{\F}_p$; the resulting multi-set does not depend on the choice of such extensions.)

We put $\gamma:=\varsigma_\alpha(\eta_{-\alpha}C)/(\eta_{-\alpha}C)\in k_\alpha^\times$.
Note that $\varsigma_\alpha^2$ is a generator of the Galois group of $k_{\alpha}/k_{\alpha_\res}$.
Under the identification
\[
k_\alpha\otimes_{k_{\alpha_\res}}\overline{\F}_p\cong \prod_{i=0}^{f_\alpha-1}\overline{\F}_p
\colon x\otimes y \mapsto ((\varsigma_\alpha^2)^{i}(x)\cdot y)_{i},
\]
the $(-\gamma)$-multiplication map on $k_\alpha$ is represented by the diagonal matrix 
\[
    \mathrm{diag}\bigl(-\gamma,(\varsigma_\alpha^2)(-\gamma),\dots,(\varsigma_\alpha^{2(f_\alpha-1)})(-\gamma)\bigr)
\]
and $\varsigma_\alpha^{2}$ is represented by the permutation matrix of length $f_\alpha$.
Hence, by Lemma \ref{lem:eigen-s-th-root}, we get
\begin{align*}
    \mcE_{k_{\alpha_{\res}}}(-\varsigma_\alpha(\eta_{-\alpha}C)/(\eta_{-\alpha}C)\circ\varsigma_\alpha^{2} \mid k_{\alpha})
    &=
    \{-\gamma\cdot(\varsigma_\alpha^2)(-\gamma)\cdots(\varsigma_\alpha^{2(f_\alpha-1)})(-\gamma)\}^{1/f_\alpha}\\
    &=
    \{\Nr_{k_\alpha/k_{\alpha_\res}}(-\gamma)\}^{1/f_\alpha}.
\end{align*}

In summary, we get
\[
    \mcE_{\F_p}([\eta]^{m_\alpha}\mid k_{\alpha}\oplus k_{\alpha})
    =
    \Gal(k_{\alpha_{\res}}/\F_p)\cdot\{\Nr_{k_\alpha/k_{\alpha_\res}}(-\gamma)\}^{\frac{1}{2f_\alpha}}.
\]

\noindent{\textbf{(I) Unramified case.}} 

We first consider the case where $\alpha_{\res}$ is unramified.
\[
\xymatrix{
k_{\alpha_{\res}}\ar^-{f_\alpha}@{-}[r]&k_{\alpha}\\
k_{\pm\alpha_{\res}}\ar@{-}[r]\ar^-{2}@{-}[u]&k_{\pm\alpha}\ar@{=}[u]
}
\]

In the following, we explain an algorithm to pick up an $\F_p$-rational maximal torus $T$ of $\Sp(V_{\ddot{\alpha}})$ containing $[\eta]^{m_\alpha}$.

Let us write $\delta:=\Nr_{k_\alpha/k_{\alpha_\res}}(-\gamma)$.
As $\gamma=\varsigma_\alpha(\eta_{-\alpha}C)/(\eta_{-\alpha}C)$, we have $\Nr_{k_\alpha/k_{\pm\alpha_\res}}(-\gamma)=1$.
This implies that $\delta\in k_{\alpha_\res}^1$.
Hence, by Lemma \ref{lem:finite-field} (1), $\delta$ has a square root in $k_{\alpha_\res}$.
If we let $\beta\in k_{\alpha_\res}$ be a square root of $\delta$, then we have
\[
    \mcE_{\F_p}([\eta]^{m_\alpha}\mid k_{\alpha}\oplus k_{\alpha})
    =
    \Gal(k_{\alpha_{\res}}/\F_p)\cdot\{\beta\}^{\frac{1}{f_\alpha}}\sqcup \Gal(k_{\alpha_{\res}}/\F_p)\cdot\{-\beta\}^{\frac{1}{f_\alpha}}.
\]

\begin{description}
\item[(1) The case where $\Nr_{k_{\alpha_\res}/k_{\pm\alpha_\res}}(\beta)=-1$]

We first consider the case where $\Nr_{k_{\alpha_\res}/k_{\pm\alpha_\res}}(\beta)=-1$; in other words, we have $\sigma_\alpha(\beta)=-\beta^{-1}$.
Hence,
\begin{align*}
    \mcE_{\F_p}([\eta]^{m_\alpha}\mid k_{\alpha}\oplus k_{\alpha})
    &=
    \Gal(k_{\alpha_{\res}}/\F_p)\cdot\{\beta\}^{\frac{1}{f_\alpha}}\sqcup \Gal(k_{\alpha_{\res}}/\F_p)\cdot\{-\beta\}^{\frac{1}{f_\alpha}}\\
    &=
    \Gal(k_{\alpha_{\res}}/\F_p)\cdot\{\beta\}^{\frac{1}{f_\alpha}}\sqcup \Gal(k_{\alpha_{\res}}/\F_p)\cdot\{\beta^{-1}\}^{\frac{1}{f_\alpha}}.
\end{align*}
We define field extensions $k_1,\ldots,k_r$ of $k_{\alpha_\res}$ so that
\[
    k_{\alpha_\res}[X]/(X^{f_\alpha}-\beta)
    \cong
    \bigoplus_{i=1}^r k_i.
\]
Note that the polynomial $X^{f_\alpha}-\beta$ is reduced since $f_\alpha$ is prime-to-$p$ by assumption ($f_\alpha$ divides $l_\alpha$, hence divides $l$, which is assumed to be prime to $p$).
Also note that, by Lemma \ref{lem:finite-field} (1), each $k_i$ is contained in $k_\alpha$.
If we let $(x_i)_i\in \bigoplus_{i=1}^r k_i$ be the image of $X\in k_{\alpha_\res}[X]/(X^{f_\alpha}-\beta)$ under the above identification, we have
\[
    \Gal(k_{\alpha_{\res}}/\F_p)\cdot\{\beta\}^{\frac{1}{f_\alpha}}
    =
    \bigsqcup_{i=1}^{r}\{\sigma(x_i) \mid \sigma\in\Gal(k_i/\F_p)\}.
\]
Hence,
\[
    \mcE_{\F_p}([\eta]^{m_\alpha}\mid k_{\alpha}\oplus k_{\alpha})
    =
    \bigsqcup_{i=1}^{r}\{\sigma(x_i)^{\pm1} \mid \sigma\in\Gal(k_i/\F_p)\}.
\]

Therefore, if we choose an $\F_p$-rational maximal torus $T$ of $\Sp(V_{\ddot{\alpha}})$ to be $\prod_{i=1}^{r}k_i^\times$, then its element $(x_i)_i$ has the same multi-set of eigenvalues as $[\eta]^{m_\alpha}$ (see Lemma \ref{lem:Sp-tori}).

The partition of $P(V_{\ddot{\alpha}},T)$ into $\Gamma_{\F_p}$-orbit is given by 
\[
    \bigsqcup_{i=1}^{r}\{\sigma(-)\mid \sigma\in\Gal(k_i/\F_p)\}
    \sqcup
    \bigsqcup_{i=1}^{r}\{\sigma(-)^{-1}\mid \sigma\in\Gal(k_i/\F_p)\}
\]
as described in Lemma \ref{lem:Sp-tori} (note that each $\Gamma_{\F_p}$-orbit is asymmetric).
In particular, the number $l(V_{\ddot{\alpha}},T;[\eta]^{m_\alpha})$ is always even, hence the sign $(-1)^{l(V_{\ddot{\alpha}},T;[\eta]^{m_\alpha})}$ in the formula \eqref{eq:Gerardin} is trivial.
Since $\epsilon(-)^{\frac{1-q_\Omega}{2}}$ for each $\Omega=\{\sigma(-)^{\pm1}\mid \sigma\in\Gal(k_i/\F_p)\}$ is nothing but the unique quadratic character of $k_i^\times$, we have $\chi_T=\prod_{i=1}^{r}\sgn_{k_i^\times}$.
In summary, we get
\[
\Theta_{\omega_{\ddot{\alpha}}}([\eta]^{m_\alpha})
=
\prod_{i=1}^{r}\sgn_{k_i^\times}(x_i)\cdot |V_{\ddot{\alpha}}^{\eta^{m_\alpha}}|^{\frac{1}{2}}.
\]

\item[(2) The case where $\Nr_{k_{\alpha_\res}/k_{\pm\alpha_\res}}(\beta)=1$ and $f_\alpha$ is odd]

We next consider the case where $\Nr_{k_{\alpha_\res}/k_{\pm\alpha_\res}}(\beta)=1$ and $f_\alpha$ is odd.
Let us again define field extensions $k_1,\ldots,k_r$ of $k_{\alpha_\res}$ so that
\[
    k_{\alpha_\res}[X]/(X^{f_\alpha}-\beta)
    \cong
    \bigoplus_{i=1}^r k_i
\]
and define $(x_i)_i$ to be the image of $X$.
By Lemma \ref{lem:finite-field} (1), each $k_i$ is contained in $k_{\alpha}$.
Similarly, we define field extensions $k'_1,\ldots,k'_s$ of $k_{\alpha_\res}$
\[
    k_{\alpha_\res}[X]/(X^{f_\alpha}+\beta)
    \cong
    \bigoplus_{j=1}^{s} k'_j
\]
and define $(x'_j)_j$ to be the image of $X$.
Then we have 
\begin{align*}
    &\mcE_{\F_p}([\eta]^{m_\alpha}\mid k_{\alpha}\oplus k_{\alpha})\\
    &=
    \Gal(k_{\alpha_{\res}}/\F_p)\cdot\{\beta\}^{\frac{1}{f_\alpha}}\sqcup \Gal(k_{\alpha_{\res}}/\F_p)\cdot\{-\beta\}^{\frac{1}{f_\alpha}}\\
    &=
    \bigsqcup_{i=1}^{r}\{\sigma(x_i) \mid \sigma\in\Gal(k_i/\F_p)\}\sqcup \bigsqcup_{j=1}^{s}\{\sigma(x'_i) \mid \sigma\in\Gal(k'_j/\F_p)\}.
\end{align*}
Since $f_\alpha$ is odd, each $x_i\in k_i\subset k_\alpha$ automatically belongs to $k_{\alpha}^{1}$ by Lemma \ref{lem:finite-field} (2); this implies that $x_i\in k_i^1$.
Similarly, we also get $x'_j\in (k'_j)^1$.

Therefore, if we choose an $\F_p$-rational maximal torus $T$ of $\Sp(V_{\ddot{\alpha}})$ to be $\prod_{i=1}^{r}k_i^1\times \prod_{j=1}^{s}(k'_j)^1$, then its element $((x_i)_i,(x'_j)_j)$ has the same multi-set of eigenvalues as $[\eta]^{m_\alpha}$ (see Lemma \ref{lem:Sp-tori}).

The partition of $P(V_{\ddot{\alpha}},T)$ into $\Gamma_{\F_p}$-orbit is given by 
\[
    \bigsqcup_{i=1}^{r}\{\sigma(-)\mid \sigma\in\Gal(k_i/\F_p)\}
    \sqcup
    \bigsqcup_{j=1}^{s}\{\sigma(-)\mid \sigma\in\Gal(k'_j/\F_p)\}
\]
as described in Lemma \ref{lem:Sp-tori} (each $\Gamma_{\F_p}$-orbit is symmetric).
Note that $\epsilon([\eta]^{m_\alpha})=1$ for some $\epsilon\in P(V_{\ddot{\alpha}},T)$ if and only if $\beta=\pm1$, which is equivalent to that $\delta=1$.
(This is furthermore equivalent to that $V_{\ddot{\alpha}}^{\eta^{m_\alpha}}\neq0$.)
In this case, there exists a unique $\Gamma_{\F_p}$-orbit of such weights, hence the number $l(V_{\ddot{\alpha}},T;[\eta]^{m_\alpha})$ is given by $r+s-1$.
Otherwise, we have $l(V_{\ddot{\alpha}},T;[\eta]^{m_\alpha})=r+s$.
Since $\epsilon(-)^{\frac{1+q_\Omega}{2}}$ for each $\Omega=\{\sigma(-)\mid \sigma\in\Gal(k_i/\F_p)\}$ is nothing but the unique quadratic character of $k_i^1$ (and the same is true for $(k'_j)^1$), we have $\chi_T=\prod_{i=1}^{r}\sgn_{k_i^1}\cdot \prod_{j=1}^{s}\sgn_{(k'_j)^1}$.
In summary, we get
\[
\Theta_{\omega_{\ddot{\alpha}}}([\eta]^{m_\alpha})
=
(-1)^{r+s-n_\alpha(\eta)}\cdot\prod_{i=1}^{r}\sgn_{k_i^1}(x_i)\cdot \prod_{j=1}^{s}\sgn_{{k'_j}^1}(x'_j)\cdot |V_{\ddot{\alpha}}^{\eta^{m_\alpha}}|^{\frac{1}{2}},
\]
where $n_\alpha(\eta):=1$ if $V_{\ddot{\alpha}}^{\eta^{m_\alpha}}\neq0$ and $n_\alpha(\eta):=0$ if $V_{\ddot{\alpha}}^{\eta^{m_\alpha}}=0$.

\item[(3) The case where $\Nr_{k_{\alpha_\res}/k_{\pm\alpha_\res}}(\beta)=1$ and $f_\alpha$ is even]

We finally consider the case where $\Nr_{k_{\alpha_\res}/k_{\pm\alpha_\res}}(\beta)=1$ and $f_\alpha$ is even.
We first note that, again by Lemma \ref{lem:finite-field} (1), we can find a square root $\beta^{\frac{1}{2}}$ of $\beta\in k_{\alpha_\res}$ in $k_{\alpha_\res}$.
Let us write 
\[
  \Gal(k_{\alpha_{\res}}/\F_p)\cdot\{\beta\}^{\frac{1}{f_\alpha}}
  =
  \Gal(k_{\alpha_{\res}}/\F_p)\cdot\{\beta^{\frac{1}{2}}\}^{\frac{1}{f_\alpha^\circ}}\sqcup\Gal(k_{\alpha_{\res}}/\F_p)\cdot\{-\beta^{\frac{1}{2}}\}^{\frac{1}{f_\alpha^\circ}},
\]
where $f_\alpha^\circ=\frac{f_\alpha}{2}$.
Since $\Nr_{k_{\alpha_\res}/k_{\pm\alpha_\res}}(\beta)=1$, we must have $\Nr_{k_{\alpha_\res}/k_{\pm\alpha_\res}}(\beta^{\frac{1}{2}})=\pm1$.

\begin{itemize}
\item If $\Nr_{k_{\alpha_\res}/k_{\pm\alpha_\res}}(\beta^{\frac{1}{2}})=-1$, then we can apply the discussion in the case (1) to find a subtorus of $\Sp(V_{\ddot{\alpha}})$ with an element whose multi-set of eigenvalues is $\Gal(k_{\alpha_{\res}}/\F_p)\cdot\{\beta\}^{\frac{1}{f_\alpha}}$.
\item If $\Nr_{k_{\alpha_\res}/k_{\pm\alpha_\res}}(\beta^{\frac{1}{2}})=1$ and $f_\alpha^\circ$ is odd, then we can apply the discussion in the case (2) to find a subtorus of $\Sp(V_{\ddot{\alpha}})$ with an element whose multi-set of eigenvalues is $\Gal(k_{\alpha_{\res}}/\F_p)\cdot\{\beta\}^{\frac{1}{f_\alpha}}$.
\item If $\Nr_{k_{\alpha_\res}/k_{\pm\alpha_\res}}(\beta^{\frac{1}{2}})=1$ and $f_\alpha^\circ$ is even, then we repeat the discussion so far by furthermore replacing $\beta^{\frac{1}{2}}$ and $f_\alpha^\circ$ with $\beta^{\frac{1}{4}}$ and $\frac{f_\alpha^\circ}{2}$. Repeating this procedure, we can finally find a subtorus of $\Sp(V_{\ddot{\alpha}})$ with an element whose multi-set of eigenvalues is $\Gal(k_{\alpha_{\res}}/\F_p)\cdot\{\beta\}^{\frac{1}{f_\alpha}}$.
\end{itemize}

By applying the same consideration to $\Gal(k_{\alpha_{\res}}/\F_p)\cdot\{-\beta\}^{\frac{1}{f_\alpha}}$, in total, we can find an $\F_p$-rational maximal torus $T$ of $\Sp(V_{\ddot{\alpha}})$ with an element whose multi-set of eigenvalues is the same as $[\eta]^{m_\alpha}$.
\end{description}

As this algorithm shows, the computation of a maximal torus ``$T$'' is particularly simple when $f_\alpha$ is odd (because then only the cases (1) and (2) happen).
In fact, from a practical point of view, even assuming $f_{\alpha}=1$ is sufficient to cover a wide range of cases.
For example, if $\theta$ is an involution ($l=2$), the degree of $F_{\alpha}/F_{\pm\alpha_\res}$ is at most $2$. Thus, if $F_{\alpha_\res}/F_{\pm\alpha_\res}$ is quadratic, then $F_{\alpha}/F_{\alpha_\res}$ is necessarily trivial.
    

For our application, let us examine the case where $f_\alpha=1$.
Note that, in this case, we have $k_{\alpha}=k_{\alpha_\res}$ and $\delta=N_{k_\alpha/k_{\alpha_\res}}(-\gamma)=-\gamma$.

\begin{description}
\item[(1) The case where $\Nr_{k_{\alpha_\res}/k_{\pm\alpha_\res}}(\beta)=-1$]

In this case, we have
\begin{itemize}
    \item $r=1$ and $k_1=k_{\alpha}$,
    \item $x_1=\beta=\delta^{\frac{1}{2}}$.
\end{itemize}
Hence we get 
\[
    \Theta_{\omega_{\ddot{\alpha}}}([\eta]^{m_\alpha})
    =
    \sgn_{k_{\alpha}^\times}(\beta)\cdot |V_{\ddot{\alpha}}^{\eta^{m_\alpha}}|^{\frac{1}{2}}.
\]
Note that, if we let $q_\alpha:=|k_\alpha|$ and $q_\alpha^\circ:=|k_{\alpha_\res}|$, then 
\[
    \sgn_{k_{\alpha}^\times}(\beta)
    =\beta^{\frac{q_\alpha-1}{2}}
    =(\beta^{q_\alpha^\circ+1})^{\frac{q_\alpha^\circ-1}{2}}
    =(-1)^{\frac{q_\alpha^\circ-1}{2}}
\]
(we used that $\Nr_{k_{\alpha_\res}/k_{\pm\alpha_\res}}(\beta)=\beta^{q_\alpha^\circ+1}=-1$ in the last equality).
On the other hand, 
\begin{align*}
    \sgn_{k_{\alpha}^1}(\gamma)
    &=\sgn_{k_{\alpha}^1}(-1)\cdot \sgn_{k_{\alpha}^1}(-\gamma)\\
    &=(-1)^{\frac{q_\alpha^\circ+1}{2}}\cdot \delta^{\frac{q_\alpha^\circ+1}{2}}\\
    &=(-1)^{\frac{q_\alpha^\circ+1}{2}}\cdot \beta^{q_\alpha^\circ+1}
    =(-1)^{\frac{q_\alpha^\circ-1}{2}}
\end{align*}
(we again used that $\Nr_{k_{\alpha_\res}/k_{\pm\alpha_\res}}(\beta)=-1$ in the last equality).
By furthermore recalling that $\gamma=\varsigma_\alpha(\eta_{-\alpha}C)/(\eta_{-\alpha}C)$ (hence, $\sgn_{k_{\alpha}^\times}(\eta_{-\alpha}C)=\sgn_{k_{\alpha}^1}(\gamma)$), we finally obtain
\[
    \Theta_{\omega_{\ddot{\alpha}}}([\eta]^{m_\alpha})
    =
    \sgn_{k_{\alpha}^\times}(\eta_{-\alpha}C)\cdot |V_{\ddot{\alpha}}^{\eta^{m_\alpha}}|^{\frac{1}{2}}.
\]

\item[(2) The case where $\Nr_{k_{\alpha_\res}/k_{\pm\alpha_\res}}(\beta)=1$]

In this case, we have
\begin{itemize}
    \item $r=s=1$,
    \item $k_1=k'_1=k_{\alpha}$,
    \item $x_1=\beta=\delta^{\frac{1}{2}}$ and $x'_1=-\beta=-\delta^{\frac{1}{2}}$.
\end{itemize}
Hence we get 
\[
    \Theta_{\omega_{\ddot{\alpha}}}([\eta]^{m_\alpha})
    =
    (-1)^{n_\alpha(\eta)}\cdot\sgn_{k_{\alpha}^1}(\beta)\cdot\sgn_{k_{\alpha}^1}(-\beta)\cdot |V_{\ddot{\alpha}}^{\eta^{m_\alpha}}|^{\frac{1}{2}},
\]
where $n_\alpha(\eta)=1$ if $V_{\ddot{\alpha}}^{\eta^{m_\alpha}}\neq0$ and $n_\alpha(\eta)=0$ if $V_{\ddot{\alpha}}^{\eta^{m_\alpha}}=0$.
By noting that
\[
    \sgn_{k_{\alpha}^1}(\beta)\cdot\sgn_{k_{\alpha}^1}(-\beta)
    =\sgn_{k_{\alpha}^1}(-\beta^2)
    =\sgn_{k_{\alpha}^1}(\gamma)
    =\sgn_{k_{\alpha}^\times}(\eta_{-\alpha}C),
\]
we obtain
\[
    \Theta_{\omega_{\ddot{\alpha}}}([\eta]^{m_\alpha})
    =
    (-1)^{n_\alpha(\eta)}\cdot\sgn_{k_{\alpha}^\times}(\eta_{-\alpha}C)\cdot |V_{\ddot{\alpha}}^{\eta^{m_\alpha}}|^{\frac{1}{2}}.
\]
\end{description}

We furthermore note that, in the former case ($\Nr_{k_{\alpha_\res}/k_{\pm\alpha_\res}}(\beta)=-1$), the multi-set of eigenvalues $\mcE_{\F_p}([\eta]^{m_\alpha}\mid k_\alpha\oplus k_\alpha)$ cannot contain $1$.
In other words, $|V_{\ddot{\alpha}}^{\eta^{m_\alpha}}|^{\frac{1}{2}}$ is in fact trivial.
Keeping this in mind, we can write the result as
\[
    \Theta_{\omega_{\ddot{\alpha}}}([\eta]^{m_\alpha})
    =
    (-1)^{n_\alpha(\eta)}\cdot\sgn_{k_{\alpha}^\times}(\eta_{-\alpha}C)\cdot |V_{\ddot{\alpha}}^{\eta^{m_\alpha}}|^{\frac{1}{2}}
\]
regardless of whether $\Nr_{k_{\alpha_\res}/k_{\pm\alpha_\res}}(\beta)=-1$ or $1$.

\noindent{\textbf{(II) Ramified case.}} 

We next consider the case where $\alpha_{\res}$ is ramified (note that $f_\alpha$ is odd by Lemma \ref{lem:symm-rest}).
In this case, noting that $\gamma=\varsigma_\alpha(\eta_{-\alpha}C)/(\eta_{-\alpha}C)$, we have
\[
    \Nr_{k_\alpha/k_{\alpha_\res}}(-\gamma)
    =(-1)^{f_\alpha}\cdot \Nr_{k_\alpha/k_{\alpha_\res}}(\gamma)
    =-1.
\]
Hence we get 
\[
    \mcE_{\F_p}([\eta]^{m_\alpha}\mid k_{\alpha}\oplus k_{\alpha})
    =
    \Gal(k_{\alpha_{\res}}/\F_p)\cdot\{-1\}^{\frac{1}{2f_\alpha}}.
\]
This means that the semisimple conjugacy class of the element $[\eta]^{m_\alpha}\in\Sp(k_\alpha\oplus k_\alpha)$ does not depend on the choice of $\eta$.
In other words, $\Theta_{\omega_{\ddot{\alpha}}}([\eta]^{m_\alpha})$ is a constant determined only by $\alpha$.

\begin{rem}
It is possible to explicate $\Theta_{\omega_{\ddot{\alpha}}}([\eta]^{m_\alpha})$ by the same argument as in the unramified case, but we do not insist on it in this paper.
\end{rem}

\begin{rem}\label{rem:asym-sym-triv}   
    Note that asymmetric root $\alpha$ can be restricted to a symmetric root only when $l_\alpha$ is even.
    In particular, when $l$ is odd, $\alpha_\res$ is always asymmetric whenever $\alpha$ is asymmetric.
\end{rem}

\subsection{Twisted characters of Weil representations: symmetric roots}\label{subsec:twisted-Weil-sym}

We next compute $\Theta_{\omega_{\ddot{\alpha}}}([\eta]^{m_{\alpha}})$ in the case where $\alpha\in\Xi_{\sym}$.
Note that $\alpha$ must be unramified by Lemma \ref{lem:ram-empty}.
Recall that $V_{\ddot{\alpha}}=V_{\dot{\alpha}}\cong V_{\alpha}$ and that $V_{\alpha}$ is a $1$-dimensional $k_{\alpha}$-vector spaces, which is identified with $k_{\alpha}$ by fixing nonzero elements $X_{\alpha}\in V_{\alpha}$ (see Section \ref{subsec:Heisen-structure}).

We carry out a case-by-case computation on the ramification of $\alpha_\res$.
Recall that we have fixed $\sigma_\alpha$ satisfying $\theta^{m_\alpha}(\alpha)=\sigma_\alpha(\alpha)$ and also $\tau_\alpha$ satisfying $\tau_\alpha(\alpha)=-\alpha$.
Also recall that $\alpha_\res$ is unramified if and only if $f_\alpha=[k_\alpha:k_{\alpha_\res}]$ is odd (Lemma \ref{lem:symm-rest}).
So we are divided into the following two cases:
\begin{enumerate}
    \item[(I)] $\alpha_\res$ is symmetric unramified ($f_\alpha$ is odd);
    \item[(II)] $\alpha_\res$ is symmetric ramified ($f_\alpha$ is even).
\end{enumerate}

To make the notation lighter, we write $\varsigma_\alpha$ for $\sigma_\alpha^{-1}$ in the following.

\[
\xymatrix{
F_{\alpha_{\res}}\ar^-{l_\alpha/m_\alpha}_-{\langle\sigma_{\alpha}\rangle}@{-}[r]&F_{\alpha}\\
F_{\pm\alpha_{\res}}\ar@{-}[r]\ar^-{2}_-{\langle\tau_{\alpha}\rangle}@{-}[u]&F_{\pm\alpha}\ar^-{2}_-{\langle\tau_{\alpha}\rangle}@{-}[u]
}
\qquad
\xymatrix{
k_{\alpha_{\res}}\ar^-{f_\alpha}@{-}[r]&k_{\alpha}\\
k_{\pm\alpha_{\res}}\ar@{-}[r]\ar^-{\leq2}@{-}[u]&k_{\pm\alpha}\ar^-{2}@{-}[u]
}
\]

Since $[\eta]^{m_\alpha}(X_{\alpha})$ belongs to $V_{\theta^{m_\alpha}(\alpha)}=V_{\sigma_{\alpha}(\alpha)}$, the induced action of $[\eta]^{m_\alpha}$ on $V_{\alpha}$ is $\varsigma_\alpha$-linear (note that $[\eta]^{m_\alpha}$ is $k_{\alpha}$-linear on $V_{\dot{\alpha}}$):
\[
\xymatrix{
V_{\alpha}\ar^-{\cong}[r]& V_{\dot{\alpha}} \ar^-{[\eta]^{m_\alpha}}[d]& X_{\alpha}\ar@{|->}[r]& \sum_{\sigma\in\Gamma/\Gamma_{\alpha}} \sigma(X_{\alpha})\ar@{|->}[d]\\
V_{\alpha}& V_{\dot{\alpha}} \ar^-{\cong}[l]& [\eta]^{m_\alpha}\circ\varsigma_\alpha(X_{\alpha})& \sum_{\sigma\in\Gamma/\Gamma_{\alpha}} [\eta]^{m_\alpha}\circ\sigma(X_{\alpha}) \ar@{|->}[l]
}
\]
More explicitly, if we let $\eta_{\alpha}$ be the element of $k_{\alpha}^{\times}$ such that $[\eta]^{m_\alpha}\circ\varsigma_\alpha(X_{\alpha})=\eta_{\alpha}X_{\alpha}$, then $[\eta]^{m_\alpha}$ is given by 
\[
    x\mapsto \eta_\alpha\varsigma_\alpha(x)
\]
as an element of $\Sp(V_{\ddot{\alpha}})\cong\Sp(k_{\alpha})$.
Note that, since $[\eta]^{m_\alpha}$ preserves the symplectic form as described in Section \ref{subsec:Heisen-structure}, we must have
\begin{align}\label{eq:sym-symp-preserve}
\Tr_{k_{\alpha}/\F_{p}}(C\cdot \eta_{\alpha}\varsigma_{\alpha}(x)\tau_{\alpha}(\eta_{\alpha}\varsigma_{\alpha}(y)))
=\Tr_{k_{\alpha}/\F_{p}}(C\cdot x\tau_{\alpha}(y))
\end{align}
for any $x,y\in k_{\alpha}$.
In other words, we necessarily have that $\eta_{\alpha}\tau_{\alpha}(\eta_{\alpha})=\varsigma_{\alpha}(C)/C$.

To compute $\Theta_{\omega_{\ddot{\alpha}}}([\eta]^{m_\alpha})$, we follow the same strategy as in the case where $\alpha$ is asymmetric and $\alpha_\res$ is symmetric, i.e., appeal to G{\'e}rardin's formula (Proposition \ref{prop:Gerardin2}) by computing the eigenvalues of the action $[\eta]^{m_\alpha}$ on $k_\alpha$.
By Lemma \ref{lem:eigen-s-th-root},
\begin{align*}
    \mcE_{k_{\alpha_\res}}(\eta_{\alpha}\circ\varsigma_{\alpha}\mid k_\alpha)
    =\{\eta_{\alpha}\cdot\varsigma_{\alpha}(\eta_{\alpha})\cdots\varsigma_{\alpha}^{f_\alpha-1}(\eta_{\alpha})\}^{1/f_\alpha}
    =\{\Nr_{k_\alpha/k_{\alpha_\res}}(\eta_\alpha)\}^{1/f_\alpha}.
\end{align*}
Hence, 
\[
    \mcE_{\F_p}(\eta_{\alpha}\circ\varsigma_{\alpha}\mid k_\alpha)
    =\Gal(k_{\alpha_\res}/\F_p)\cdot\{\Nr_{k_\alpha/k_{\alpha_\res}}(\eta_\alpha)\}^{1/f_\alpha}.
\]

\noindent{\textbf{(I) Unramified case.}} 

We first consider the case where $\alpha_{\res}$ is unramified.
\[
\xymatrix{
k_{\alpha_{\res}}\ar^-{\text{$f_\alpha$: odd}}@{-}[r]&k_{\alpha}\\
k_{\pm\alpha_{\res}}\ar@{-}[r]\ar^-{2}@{-}[u]&k_{\pm\alpha}\ar^-{2}@{-}[u]
}
\]

Our computation here is similar to that in the case where $\alpha$ is asymmetric and $\alpha_\res$ is symmetric unramified.
We put $\beta:=\Nr_{k_\alpha/k_{\alpha_\res}}(\eta_\alpha)$.
We define extensions $k_1,\ldots,k_r$ of $k_{\alpha_\res}$ so that
\[
    k_{\alpha_\res}[X]/(X^{f_\alpha}-\beta)
    \cong
    \bigoplus_{i=1}^r k_i
\]
and also define $(x_i)_i$ to be the image of $X$.
Since $\eta_{\alpha}\tau_{\alpha}(\eta_{\alpha})=\varsigma_{\alpha}(C)/C$, we see that $\Nr_{k_\alpha/k_{\alpha_\res}}(\eta_\alpha\tau_\alpha(\eta_\alpha))=1$.
In other words, $\beta$ belongs to $k_{\alpha_\res}^{1}$.
Hence, by Lemma \ref{lem:finite-field} (2), any $f_\alpha$-th root of $\beta$ belongs to $k_\alpha^{1}$.
This implies that each $k_i$ is contained in $k_\alpha$ and also that $x_i$ belongs to $k_\alpha^{1}$.

Then we have 
\begin{align*}
    \Gal(k_{\alpha_\res}/\F_p)\cdot\{\Nr_{k_\alpha/k_{\alpha_\res}}(\eta_\alpha)\}^{1/f_\alpha}
    =
    \bigsqcup_{i=1}^{r}\{\sigma(x_i) \mid \sigma\in\Gal(k_i/\F_p)\}.
\end{align*}
Therefore, if we choose an $\F_p$-rational maximal torus $T$ of $\Sp(V_{\ddot{\alpha}})$ to be $\prod_{i=1}^{r}k_i^1$, then its element $(x_i)_i$ has the same multi-set of eigenvalues as $[\eta]^{m_\alpha}$ (see Lemma \ref{lem:Sp-tori}).

The partition of $P(V_{\ddot{\alpha}},T)$ into $\Gamma_{\F_p}$-orbit is given by 
\[
    \bigsqcup_{i=1}^{r}\{\sigma(-)\mid \sigma\in\Gal(k_i/\F_p)\}
\]
as described in Lemma \ref{lem:Sp-tori} (note that each $\Gamma_{\F_p}$-orbit is symmetric).
Note that $\epsilon([\eta]^{m_\alpha})=1$ for some $\epsilon\in P(V_{\ddot{\alpha}},T)$ if and only if $\beta=1$.
(This is furthermore equivalent to that $V_{\ddot{\alpha}}^{\eta^{m_\alpha}}\neq0$.)
In this case, there exists a unique $\Gamma_{\F_p}$-orbit of such weights, hence the number $l(V_{\ddot{\alpha}},T;[\eta]^{m_\alpha})$ is given by $r-1$.
Otherwise, we have $l(V_{\ddot{\alpha}},T;[\eta]^{m_\alpha})=r$.
Since $\epsilon(-)^{\frac{1+q_\Omega}{2}}$ for each $\Omega=\{\sigma(-)\mid \sigma\in\Gal(k_i/\F_p)\}$ is nothing but the unique quadratic character of $k_i^1$, we have $\chi_T=\prod_{i=1}^{r}\sgn_{k_i^1}$.
In summary, we get
\[
    \Theta_{\omega_{\ddot{\alpha}}}([\eta]^{m_\alpha})
    =
    (-1)^{r-n_\alpha(\eta)}\cdot\prod_{i=1}^{r}\sgn_{k_i^1}(x_i)\cdot |V_{\ddot{\alpha}}^{\eta^{m_\alpha}}|^{\frac{1}{2}},
\]
where $n_\alpha(\eta)=1$ if $V_{\ddot{\alpha}}^{\eta^{m_\alpha}}\neq0$ and $n_\alpha(\eta)=0$ if $V_{\ddot{\alpha}}^{\eta^{m_\alpha}}=0$.

Now, similarly to the case where $\alpha$ is asymmetric, let us examine the particular case where $f_\alpha=1$.
In this case, we have $r=1$, $k_1=k_{\alpha_\res}=k_\alpha$.
Thus $x_1=\beta=\eta_\alpha$, which implies that 
\[
    \Theta_{\omega_{\ddot{\alpha}}}([\eta]^{m_\alpha})
    =
    (-1)^{1-n_\alpha(\eta)}\cdot\sgn_{k_{\alpha}^{1}}(\eta_\alpha)\cdot|V_{\ddot{\alpha}}^{\eta^{m_\alpha}}|^{\frac{1}{2}}.
\]

\noindent{\textbf{(II) Ramified case.}} 

We next consider the case where $\alpha_{\res}$ is ramified.
Recall that $f_\alpha$ must be even in this case (say $f_\alpha=2f_\alpha^\circ$):
\[
\xymatrix{
k_{\alpha_{\res}}\ar^-{f_\alpha=2f_\alpha^\circ}@{-}[r]&k_{\alpha}\\
k_{\pm\alpha_{\res}}\ar^-{f_\alpha^\circ}@{-}[r]\ar@{=}[u]&k_{\pm\alpha}\ar^-{2}@{-}[u]
}
\]

Note that $\tau_\alpha=\sigma_\alpha^{f_\alpha^\circ}=\varsigma_\alpha^{f_\alpha^\circ}$.
Hence, 
\begin{align*}
    \Nr_{k_\alpha/k_{\alpha_\res}}(\eta_\alpha)
    =\prod_{i=0}^{f_\alpha-1}\varsigma_\alpha^i(\eta_\alpha)
    &=\prod_{i=0}^{f_\alpha^\circ-1}\varsigma_\alpha^i(\eta_\alpha\cdot\tau_\alpha(\eta_\alpha))\\
    &=\prod_{i=0}^{f_\alpha^\circ-1}\varsigma_\alpha^i(\varsigma_\alpha(C)/C)
    =\varsigma_\alpha^{f_\alpha^\circ}(C)/C
    =\tau_\alpha(C)/C.
\end{align*}
Recalling that the constant $C$ satisfies $\tau_\alpha(C)=-C$, we get $\Nr_{k_\alpha/k_{\alpha_\res}}(\eta_\alpha)=-1$.
Thus 
\[
    \mcE_{k_{\alpha_\res}}(\eta_{\alpha}\circ\varsigma_{\alpha}\mid k_\alpha)
    =\Gal(k_{\alpha_\res}/\F_p)\cdot\{-1\}^{1/f_\alpha}.
\]
This means that, similarly to the case where $\alpha$ is asymmetric and $\alpha_\res$ is symmetric ramified, the semisimple conjugacy class of the element $[\eta]^{m_\alpha}\in\Sp(k_\alpha)$ does not depend on  $\eta$.
In other words, $\Theta_{\omega_{\ddot{\alpha}}}([\eta]^{m_\alpha})$ is a constant determined only by $\alpha$.

\subsection{Twisted characters of Weil representations: summary}\label{subsec:twisted-Weil-summary}

Let us summarize the results in Sections \ref{subsec:twisted-Weil-asym} and \ref{subsec:twisted-Weil-sym}.

In all cases, $\Theta_{\omega_{\ddot{\alpha}}}([\eta]^{m_\alpha})$ is given by
\[
    \Theta_{\omega_{\ddot{\alpha}}}([\eta]^{m_\alpha})
    =\epsilon_{\ddot{\alpha}}([\eta]^{m_\alpha})\cdot|V_{\ddot{\alpha}}^{\eta^{m_\alpha}}|^{\frac{1}{2}},
\]
where we let $\epsilon_{\ddot{\alpha}}([\eta]^{m_\alpha})$ denote the sign depending on the root $\alpha$ and the element $[\eta]^{m_\alpha}$ as described in Sections \ref{subsec:twisted-Weil-asym} and \ref{subsec:twisted-Weil-sym}.
We did not explicate the sign $\epsilon_{\ddot{\alpha}}([\eta]^{m_\alpha})$ in general, so here let us state the result just in the following way:

\begin{prop}\label{prop:twisted-Weil-product-ambiguous}
We have
\[
    \prod_{\ddot{\alpha}\in\Theta_{\bfS}\backslash\ddot{\Xi}}
    \Theta_{\omega_{\ddot{\alpha}}}([\eta]^{m_{\alpha}})
    =
    |V_{\eta}|^{\frac{1}{2}}
    \cdot
    \prod_{\ddot{\alpha}\in\Theta_{\bfS}\backslash\ddot{\Xi}} \epsilon_{\ddot{\alpha}}([\eta]^{m_\alpha}).
\]
\end{prop}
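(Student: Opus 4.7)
The proposal is to combine the case-by-case computations already carried out in Sections \ref{subsec:twisted-Weil-asym} and \ref{subsec:twisted-Weil-sym} with a dimension-counting identity that collapses the product of local fixed-point contributions into the global $|V_\eta|^{1/2}$.

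First, I would recall that in every case (asymmetric $\alpha$ with asymmetric, symmetric unramified, or symmetric ramified $\alpha_\res$, and symmetric $\alpha$ with symmetric unramified or symmetric ramified $\alpha_\res$), the computations culminated in a formula of the shape
\[
\Theta_{\omega_{\Sigma\alpha}}([\eta]^{m_\alpha})
=\epsilon_{\Sigma\alpha}([\eta]^{m_\alpha})\cdot|V_{\Sigma\alpha}^{\eta^{m_\alpha}}|^{\frac{1}{2}},
\]
where $\epsilon_{\Sigma\alpha}([\eta]^{m_\alpha})$ is the product of the sign factor and root-of-unity term coming from G\'erardin's formula (Proposition \ref{prop:Gerardin2}). (In the ramified cases $V_{\Sigma\alpha}^{\eta^{m_\alpha}}=0$, so the factor $|V_{\Sigma\alpha}^{\eta^{m_\alpha}}|^{\frac{1}{2}}$ is simply $1$ and the identity is still consistent by absorbing everything into $\epsilon_{\Sigma\alpha}$.) Taking the product over $\alpha\in\Theta_\bfS\backslash\ddot{\Xi}$, the sign parts factor out, and the claim reduces to the purely combinatorial identity
\[
\prod_{\alpha\in\Theta_{\bfS}\backslash\ddot{\Xi}}|V_{\Sigma\alpha}^{\eta^{m_\alpha}}|^{\frac{1}{2}}=|V_{\eta}|^{\frac{1}{2}}.
\]

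To establish this identity, I would unpack the descent decomposition from Section \ref{subsec:descent-Yu}. For each $\Theta_\bfS$-orbit of $\Sigma\alpha\in\ddot{\Xi}$, set $V_{\Sigma\Theta(\alpha)}\colonequals \bigoplus_{i=0}^{m_\alpha-1}V_{\Sigma\theta^{i}(\alpha)}$, so that $V$ decomposes orthogonally as $V=\bigoplus_{\alpha\in\Theta_\bfS\backslash\ddot{\Xi}}V_{\Sigma\Theta(\alpha)}$ and this decomposition is preserved by $[\eta]$. Taking $[\eta]$-invariants on both sides and recalling from Section \ref{subsec:descent-Yu} that $V_\eta\cong V^\eta$ and $V_{\eta,\Sigma\alpha_\res}\cong V_{\Sigma\Theta(\alpha)}^{\eta}$, one gets $|V_\eta|=\prod_{\alpha\in\Theta_\bfS\backslash\ddot{\Xi}}|V_{\Sigma\Theta(\alpha)}^{\eta}|$.

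The key step is then to exhibit an $\F_p$-linear isomorphism $V_{\Sigma\Theta(\alpha)}^{\eta}\cong V_{\Sigma\alpha}^{\eta^{m_\alpha}}$. This is done by the ``diagonal embedding'': since $[\eta]$ cyclically permutes the summands $V_{\Sigma\alpha}, V_{\Sigma\theta(\alpha)},\ldots,V_{\Sigma\theta^{m_\alpha-1}(\alpha)}$ (mapping the $i$-th to the $(i+1)$-th) and $[\eta]^{m_\alpha}$ preserves $V_{\Sigma\alpha}$, an element $(w_0,\ldots,w_{m_\alpha-1})\in V_{\Sigma\Theta(\alpha)}$ is $[\eta]$-fixed if and only if $w_i=[\eta]^i(w_0)$ for all $i$ and $[\eta]^{m_\alpha}(w_0)=w_0$. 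This gives a bijection with $V_{\Sigma\alpha}^{\eta^{m_\alpha}}$ via projection onto the first coordinate. Taking cardinalities and square roots yields the desired identity, which completes the proof.

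The main (very minor) subtlety is bookkeeping: one should confirm that orbits $\alpha\in\Theta_\bfS\backslash\ddot{\Xi}$ with $V_{\Sigma\alpha}^{\eta^{m_\alpha}}=0$ are precisely those absent from $\Theta_\bfS\backslash\ddot{\Xi}_\eta$, so that the product over $\ddot{\Xi}$ and over $\ddot{\Xi}_\eta$ agree (the vanishing orbits each contribute $|0|^{1/2}$, which the convention should read as $1$, or equivalently one just restricts the index set to $\ddot{\Xi}_\eta$ without loss). Since everything else is a straightforward consequence of formulas already derived, this combinatorial collapse is the only actual content of the proof.
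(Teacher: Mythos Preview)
Your proposal is correct and matches the paper's approach: the proposition is stated in the paper as an immediate summary of the preceding case-by-case formula $\Theta_{\omega_{\Sigma\alpha}}([\eta]^{m_\alpha})=\epsilon_{\Sigma\alpha}([\eta]^{m_\alpha})\cdot|V_{\Sigma\alpha}^{\eta^{m_\alpha}}|^{1/2}$, with the collapse $\prod_{\alpha}|V_{\Sigma\alpha}^{\eta^{m_\alpha}}|^{1/2}=|V_\eta|^{1/2}$ left implicit via the descent identifications of Section~\ref{subsec:descent-Yu}. In fact you supply more detail than the paper does, spelling out the diagonal-embedding bijection $V_{\Sigma\Theta(\alpha)}^{\eta}\cong V_{\Sigma\alpha}^{\eta^{m_\alpha}}$ that the paper only records as $V_{\eta,\Sigma\alpha_{\res}}\cong V_{\Sigma\Theta(\alpha)}^{\eta}$ without further justification.
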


However, recall that our computation of each sign factor $\epsilon_{\ddot{\alpha}}([\eta]^{m_\alpha})$ becomes quite simple in the case where $f_\alpha=1$.
From a practical point of view (especially, towards an application to the twisted endoscopic character relation), let us rewrite the formula of Proposition \ref{prop:twisted-Weil-product-ambiguous} in this case where $f_\alpha=1$ holds for all $\alpha$. 

We introduce characters $\epsilon_{\alpha}\colon S\rightarrow\C^{\times}$ for $\alpha\in\Phi(\G,\bfS)\smallsetminus\Phi(\G,\bfS)_{\ram}$ as follows:
\[
\epsilon_{\alpha}(s)\colonequals 
\begin{cases}
\sgn_{k_{\alpha}^{\times}}(\overline{\alpha(s)}) & \text{if $\alpha\in\Phi_{\asym}(\G,\bfS)$,}\\
\sgn_{k_{\alpha}^{1}}(\overline{\alpha(s)}) & \text{if $\alpha\in\Phi_{\ur}(\G,\bfS)$.}
\end{cases}
\]\symdef{epsilon-alpha}{$\epsilon_{\alpha}$}

With the notation in Sections \ref{subsec:twisted-Weil-asym} and \ref{subsec:twisted-Weil-sym}, we have
\[
    \epsilon_{\ddot{\alpha}}([\eta]^{m_\alpha})
    =
    \begin{cases}
        \sgn_{\F_p^\times}(-1)^{g_\alpha(f_\alpha-1)}\cdot\sgn_{k_\alpha^\times}(\eta_\alpha)  & \text{$\alpha$: $\asym$ \& $\alpha_\res$: $\asym$,}\\
        (-1)^{n_\alpha(\eta)} \cdot \sgn_{k_\alpha^\times}(\eta_{-\alpha}C) & \text{$\alpha$: $\asym$ \& $\alpha_\res$: sym.\ unram,}\\
        \epsilon_{\ddot{\alpha}} & \text{$\alpha$: $\sym$ \& $\alpha_\res$: sym.\ ram,}\\
        (-1)^{1-n_\alpha(\eta)} \cdot \sgn_{k_\alpha^1}(\eta_{\alpha}) & \text{$\alpha$: $\sym$ \& $\alpha_\res$: sym.\ unram,}\\
        \epsilon_{\ddot{\alpha}} & \text{$\alpha$: $\sym$ \& $\alpha_\res$: sym.\  ram,}
    \end{cases}
\]\symdef{epsilon-alpha-ddot}{$\epsilon_{\ddot{\alpha}}$}
where $\epsilon_{\ddot{\alpha}}$ denotes the sign only determined by $\alpha$ in the case where $\alpha_\res$ is symmetric ramified.

Recall that we have fixed a topologically semisimple element $\ul{\eta}\in \t{S}$ (``a base point'', see Section \ref{subsec:twist-int}).
Let $\eta\in\t{S}$ be any topologically semisimple element; then we may write $\eta=s\ul{\eta}$ with $s\in S$.
In Section \ref{subsec:twisted-Weil-asym}, $\eta_\alpha$ is defined as the element of $k_\alpha^{\times}$ such that $[\eta]^{m_\alpha}\circ\varsigma_\alpha(X_{\alpha})=\eta_{\alpha}X_{\alpha}$.
Since $\ul{\eta}(-)\ul{\eta}^{-1}=\theta_{\bfS}(-)$ on $S$, 
\[
[\eta]^{m_\alpha}
=[s\ul{\eta}]^{m_\alpha}
=\prod_{i=0}^{m_\alpha-1}[\theta_{\bfS}^{i}(s)]\circ[\ul{\eta}]^{m_\alpha}.
\]
Hence
\begin{align*}
    [\eta]^{m_\alpha}\circ\varsigma_\alpha(X_{\alpha})
    &=\prod_{i=0}^{m_\alpha-1}[\theta_{\bfS}^{i}(s)]\circ[\ul{\eta}]^{m_\alpha}\circ\varsigma_\alpha(X_{\alpha})\\
    &=\prod_{i=0}^{m_\alpha-1}[\theta_{\bfS}^{i}(s)](\ul{\eta}_\alpha X_{\alpha})\\
    &=\prod_{i=0}^{m_\alpha-1}\theta^{-i}(\alpha)(s)\cdot\ul{\eta}_\alpha X_{\alpha}
    =\prod_{i=0}^{m_\alpha-1}\theta^{i}(\alpha)(s)\cdot\ul{\eta}_\alpha X_{\alpha}.
\end{align*}
This means that
\[
    \eta_\alpha
    =
    \prod_{i=0}^{m_\alpha-1}\theta^{i}(\alpha)(s)\cdot\ul{\eta}_\alpha.
\]
Therefore, when $\alpha$ is asymmetric and $\alpha_{\res}$ is asymmetric, we obtain
\begin{align*}
    \epsilon_{\ddot{\alpha}}([\eta]^{m_\alpha})
    =\sgn_{\F_{p}^{\times}}(-1)^{g_\alpha(f_\alpha-1)}\cdot\sgn_{k_{\alpha}^{\times}}(\ul{\eta}_{\alpha})\cdot\prod_{i=0}^{m_\alpha-1}\epsilon_{\theta^{i}(\alpha)}(s).
\end{align*}
By the same consideration, when $\alpha$ is asymmetric and $\alpha_{\res}$ is symmetric unramified,
\begin{align*}
    \epsilon_{\ddot{\alpha}}([\eta]^{m_\alpha})
    =(-1)^{n_\alpha(\eta)}\cdot\sgn_{k_{\alpha}^{\times}}(\ul{\eta}_{-\alpha}C)\cdot\prod_{i=0}^{m_\alpha-1}\epsilon_{\theta^{i}(\alpha)}(s).
\end{align*}
Also, when $\alpha$ is symmetric and $\alpha_{\res}$ is symmetric unramified,
\begin{align*}
    \epsilon_{\ddot{\alpha}}([\eta]^{m_\alpha})
    =(-1)^{1-n_\alpha(\eta)}\cdot\sgn_{k_{\alpha}^{1}}(\ul{\eta}_{\alpha})\cdot\prod_{i=0}^{m_\alpha-1}\epsilon_{\theta^{i}(\alpha)}(s).
\end{align*}
We define a constant $\epsilon_{\ddot{\alpha}}$ for $\alpha$ whose $\alpha_\res$ is asymmetric or symmetric ramified by
\[
    \epsilon_{\ddot{\alpha}}
    :=
    \begin{cases}
        \sgn_{\F_{p}^{\times}}(-1)^{g_\alpha(f_\alpha-1)}\cdot\sgn_{k_{\alpha}^{\times}}(\ul{\eta}_{\alpha}) & \text{$\alpha$: asym \& $\alpha_\res$: asym,}\\
        \sgn_{k_{\alpha}^{\times}}(\ul{\eta}_{-\alpha}C) & \text{$\alpha$: asym \& $\alpha_\res$: sym unram,}\\
        -\sgn_{k_{\alpha}^{1}}(\ul{\eta}_{\alpha}) & \text{$\alpha$: sym \& $\alpha_\res$: sym unram.}
    \end{cases}
\]
(Note that this constant depends only on $\alpha$ and the choice of the fixed base point $\ul{\eta}$ and does not depend on $\eta$.)
We define a constant $C_{\ul{\eta}}$ to be the product of $\epsilon_{\ddot{\alpha}}$'s:
\[
    C_{\ul{\eta}}
    :=
    \prod_{\ddot{\alpha}\in\Theta_{\bfS}\backslash\ddot{\Xi}}\epsilon_{\ddot{\alpha}}.
\]\symdef{C-eta}{$C_{\eta}$}
Also, we put 
\[
\t\epsilon_{\Xi}(s)
\colonequals 
\prod_{\begin{subarray}{c} \ddot{\alpha}\in\ddot{\Xi}\\ \alpha_{\res}: \, \asym/\ur\end{subarray}}\epsilon_{\alpha}(s).
\]\symdef{t-epsilon-Xi}{$\tilde{\epsilon}_{\Xi}$}
Then we get the following (recall the description of $V_{\eta}$ in Section \ref{subsec:descent-Yu}):

\begin{prop}\label{prop:twisted-Weil-product}
We have
\[
\prod_{\ddot{\alpha}\in\Theta_{\bfS}\backslash\ddot{\Xi}}
\Theta_{\omega_{\ddot{\alpha}}}([\eta]^{m_{\alpha}})
=
C_{\ul{\eta}}\cdot (-1)^{|\ddot{\Xi}_{\eta,\ur}|}\cdot |V_{\eta}|^{\frac{1}{2}}\cdot\t\epsilon_{\Xi}(s).
\]
\end{prop}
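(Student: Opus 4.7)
The plan is to combine Proposition \ref{prop:twisted-Weil-product-ambiguous} with the case-by-case formulas summarized at the start of Section \ref{subsec:twisted-Weil-summary} (valid under the standing hypothesis $f_\alpha = 1$). Since Proposition \ref{prop:twisted-Weil-product-ambiguous} already yields the $|V_\eta|^{1/2}$ factor on the right-hand side, the problem reduces to establishing
\[
\prod_{\alpha \in \Theta_\bfS \backslash \ddot\Xi} \epsilon_{\Sigma\alpha}([\eta]^{m_\alpha}) = C_{\ul\eta}\cdot(-1)^{|\ddot\Xi_{\eta,\ur}|}\cdot\tilde\epsilon_\Xi(s).
\]

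First I would decompose each $\epsilon_{\Sigma\alpha}([\eta]^{m_\alpha})$ into a constant and an $s$-dependent part using $\eta = s\ul\eta$. The identity $\eta_\alpha = \prod_{i=0}^{m_\alpha-1}\theta^i(\alpha)(s)\cdot\ul\eta_\alpha$ derived after Definition \ref{defn:l'}, combined with the multiplicativity and $x \mapsto x^{-1}$-invariance of $\sgn_{k_\alpha^\times}$ and $\sgn_{k_\alpha^1}$, shows that each $\epsilon_{\Sigma\alpha}([\eta]^{m_\alpha})$ splits as $\epsilon_{\Sigma\alpha}$ (the constant from Section \ref{subsec:twisted-Weil-summary}) times a $(-1)$-power times $\prod_{i=0}^{m_\alpha-1}\epsilon_{\theta^i(\alpha)}(s)$. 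The key observation is that the $-1$ hidden in the definition $\epsilon_{\Sigma\alpha} = -\sgn_{k_\alpha^1}(\ul\eta_\alpha)$ for the sym/sym-unramified case is precisely what converts the residual $(-1)^{1-n_\alpha(\eta)}$ appearing there into $(-1)^{n_\alpha(\eta)}$, matching the residual sign in the asym/sym-unramified case. The asym/asym case contributes no residual sign, and both sym-ramified cases contribute neither a residual sign nor any dynamic factor.

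Taking the product over $\alpha \in \Theta_\bfS \backslash \ddot\Xi$, the constants assemble into $C_{\ul\eta}$ by definition. For the dynamic part, as $\alpha$ runs over $\Theta_\bfS \backslash \ddot\Xi$ and $i$ over $\{0,\dots,m_\alpha-1\}$, the elements $\theta^i(\alpha)$ cover $\ddot\Xi$ exactly once; restricting to those $\alpha$ with $\alpha_\res$ asymmetric or symmetric unramified (the only cases with nontrivial dynamic part) recovers $\tilde\epsilon_\Xi(s)$. The residual signs aggregate to $\prod_{\alpha:\alpha_\res\text{ sym.\ unram}}(-1)^{n_\alpha(\eta)}$, which by the descent identification $V_{\eta,\Sigma\alpha_\res} \cong V_{\Sigma\Theta(\alpha)}^\eta$ from Section \ref{subsec:descent-Yu}, together with the bijection $\alpha \mapsto \alpha_\res$ on symmetric-unramified classes, equals $(-1)^{|\ddot\Xi_{\eta,\ur}|}$. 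The main step requiring care is this final sign bookkeeping: verifying that the $-1$ absorbed into $C_{\ul\eta}$ exactly harmonizes the two symmetric-unramified-restricted cases, and confirming that the nonvanishing condition $n_\alpha(\eta) = 1$ corresponds precisely to membership of $\alpha_\res$ in $\Xi_\eta$ via the descent isomorphism.
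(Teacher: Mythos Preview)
Your proposal is correct and follows essentially the same approach as the paper, which presents the proposition as a direct consequence of the preceding case-by-case computations in Section~\ref{subsec:twisted-Weil-summary} together with Proposition~\ref{prop:twisted-Weil-product-ambiguous}. Your identification of the key bookkeeping step—the $-1$ absorbed into $\epsilon_{\Sigma\alpha}$ in the sym/sym-unramified case harmonizing the two $(-1)^{n_\alpha(\eta)}$ contributions, and the translation of $n_\alpha(\eta)=1$ into $\alpha_{\res}\in\Xi_\eta$ via $V_{\Sigma\Theta(\alpha)}^{\eta}\cong V_{\Sigma\alpha}^{[\eta]^{m_\alpha}}$—is exactly what the paper leaves implicit.
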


\begin{prop}\label{prop:rho-final}
If we write $\eta=s\ul{\eta}$ with an element $s\in S$, then we have
\[
    \Theta_{\t{\rho}}(\eta)
    =
    |V_{\eta_{0}}|^{\frac{1}{2}}\cdot
    \prod_{\ddot{\alpha}\in\Theta_{\bfS}\backslash\ddot{\Xi}}
    \Theta_{\omega_{\ddot{\alpha}}}([\eta_{0}]^{m_{\alpha}})
    \cdot\vartheta(s).
\]
Furthermore, when $f_\alpha=1$ for all $\alpha\in\Phi(\bfG,\bfS)$, we have    
\[
    \Theta_{\t{\rho}}(\eta)
    =
    C_{\ul{\eta}}\cdot |V_{\eta_{0}}|^{\frac{1}{2}}\cdot (-1)^{|\ddot{\Xi}_{\eta_{0},\ur}|}\cdot\t\epsilon_{\Xi}(s)\cdot\vartheta(s).
\]
\end{prop}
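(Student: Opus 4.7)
The plan is to unravel the definition of $\Theta_{\t\rho}$ and reduce to Corollary~\ref{cor:twisted-HW-Yu} together with Proposition~\ref{prop:twisted-Weil-product}. Writing $\eta=s\ul\eta$ with $s\in S$, the definition of the twisted character from Section~\ref{subsec:twist-int} gives $\Theta_{\t\rho}(\eta)=\tr(\rho(s)\circ I^{\ul\eta}_\rho)$. Since $\rho$ is the descent to $K=SJ$ of $\omega\otimes(\vartheta\ltimes\mathbbm{1})$ along $S\ltimes J\twoheadrightarrow SJ$, and since the $\theta_\bfS$-invariance of $\vartheta$ together with that of $X^{\ast}$ (Lemma~\ref{lem:X-inv}) forces the intertwiner $I^{\ul\eta}_\rho$ to factor as $I^{\ul\eta}_\omega\otimes\id$ on the two tensor factors, the trace splits as
\[
\Theta_{\t\rho}(\eta)=\tr\bigl(\omega([s])\circ I^{\ul\eta}_\omega\bigr)\cdot\vartheta(s).
\]

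The first formula is then an immediate consequence of Corollary~\ref{cor:twisted-HW-Yu}, which rewrites $\tr(\omega([s])\circ I^{\ul\eta}_\omega)$ as $\prod_{\alpha\in\Theta_{\bfS}\backslash\ddot{\Xi}}\Theta_{\omega_{\Sigma\alpha}}([\eta_0]^{m_\alpha})$; crucially, this reduction uses that the topologically unipotent part $\eta_+$ acts trivially on each $V_{\Sigma\alpha}$ by conjugation (since $\eta_+\in S^{\nat}_{0+}$ forces $\alpha(\eta_+)\in 1+\mfp$), so that only the topologically semisimple part $\eta_0$ appears on the right.

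For the second formula (the case $f_\alpha=1$), I substitute the explicit evaluation of Proposition~\ref{prop:twisted-Weil-product} applied to the topologically semisimple element $\eta_0=s_0\ul\eta$ in place of $\eta=s\ul\eta$, which rewrites the product as $C_{\ul\eta}\cdot(-1)^{|\ddot{\Xi}_{\eta_0,\ur}|}\cdot|V_{\eta_0}|^{1/2}\cdot\t\epsilon_{\Xi}(s_0)$. The relation $\ul\eta\cdot g=\theta_\bfS(g)\cdot\ul\eta$ for $g\in S$ yields the identity $s=s_0\cdot\theta_\bfS(\eta_+)$, and combining this with the vanishing $\epsilon_\alpha(\eta_+)=1$ (from the same topologically unipotent observation applied to each sign character) gives $\t\epsilon_{\Xi}(s)=\t\epsilon_{\Xi}(s_0)$, which produces the stated formula.

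The main subtle step will be the clean factorization $I^{\ul\eta}_\rho=I^{\ul\eta}_\omega\otimes\id$ after descent: one must verify that the natural intertwiner on the $\vartheta$-factor can be normalized to the identity (rather than an unspecified scalar), which rests on $\vartheta\circ\theta_\bfS=\vartheta$, and that the factor-intertwiners combine compatibly with the quotient map $S\ltimes J\twoheadrightarrow SJ$.
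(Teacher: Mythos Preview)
Your proposal is correct and follows essentially the same route as the paper: reduce $\Theta_{\t\rho}(\eta)$ to $\tr(\omega([s])\circ I_\omega^{\ul\eta})\cdot\vartheta(s)$ via the construction of Section~\ref{subsec:twist-int}, apply Corollary~\ref{cor:twisted-HW-Yu}, and for the $f_\alpha=1$ case invoke Proposition~\ref{prop:twisted-Weil-product} at $\eta_0$ together with $\epsilon_\alpha(\eta_+)=1$ to pass from $s_0$ to $s$. Note that the factorization you flag as subtle is actually definitional (Section~\ref{subsec:twist-int} constructs $I_\rho^{\ul\eta}$ directly from $I_\omega^{\ul\eta}$), and your relation $s=s_0\cdot\theta_\bfS(\eta_+)$ coincides with the paper's $s=\eta_+ s'$ since $\eta_+\in S^\natural$ is $\theta_\bfS$-fixed.
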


\begin{proof}
By the definition of $\t{\rho}$ and its twisted character (see Sections \ref{subsec:toral-sc} and \ref{subsec:twist-int}), 
\[
\Theta_{\t{\rho}}(\eta)
=\tr\bigl(\rho(s)\circ I_{\rho}^{\ul{\eta}}\bigr)
=\tr\bigl(\omega([s])\circ I_{\omega}^{\ul{\eta}}\bigr)\cdot\vartheta(s).
\]
By Corollary \ref{cor:twisted-HW-Yu} and Proposition \ref{prop:twisted-Weil-product}, we have
\begin{align*}
    \tr\bigl(\omega([s])\circ I_{\omega}^{\ul{\eta}}\bigr)
    =
    |V_{\eta_{0}}|^{\frac{1}{2}}\cdot
    \prod_{\ddot{\alpha}\in\Theta_{\bfS}\backslash\ddot{\Xi}}
    \Theta_{\omega_{\ddot{\alpha}}}([\eta_{0}]^{m_{\ddot{\alpha}}}).
\end{align*}
When the assumption on $f_\alpha$ is satisfied, 
\begin{align*}
    \prod_{\ddot{\alpha}\in\Theta_{\bfS}\backslash\ddot{\Xi}} \Theta_{\omega_{\ddot{\alpha}}}([\eta_0]^{m_{\ddot{\alpha}}})
    =
    C_{\ul{\eta}}\cdot(-1)^{|\ddot{\Xi}_{\eta_{0},\ur}|}\cdot \t\epsilon_{\Xi}(s')
\end{align*}
by Proposition \ref{prop:twisted-Weil-product}, where $s'\in S$ is the element satisfying $\eta_{0}=s'\ul{\eta}$.
Since $\eta_{0}$ commutes with $\eta_{+}$, we have $\eta_{+}s'\ul{\eta}=\eta_+\eta_0=\eta=s\ul{\eta}$, hence $\eta_{+}s'=s$.
This implies that $\epsilon_{\alpha}(s)=\epsilon_{\alpha}(s')$ for any $\alpha\in\ddot{\Xi}$ such that $\alpha_{\res}$ is asymmetric or symmetric unramified.
Thus we get the desired identity.
\end{proof}

\section{Twisted Adler--DeBacker--Spice character formula}\label{sec:TCF-final}

Now we return to the twisted character formula for toral supercuspidal representations.
From now on, $\eta$ again denotes $\delta_{<r}$ for a fixed elliptic regular semisimple element $\delta\in\t{S}$ as in Section \ref{sec:TCF1}.
Recall that $\eta=\eta_{0}\eta_{+}$ is a fixed topological Jordan decomposition.

In Theorem \ref{thm:TCF-pre}, we proved that
\[
\Theta_{\t{\pi}}(\delta)
=
\sum_{\begin{subarray}{c} g\in S\backslash G/G_{\eta} \\ {}^{g}\eta\in\t{S}\end{subarray}}
\Theta_{\t{\rho}}({}^{g}\eta)
\cdot|\t{\mfG}_{\G_{{}^{g}\eta_{0}}}(\vartheta,{}^{g}\eta_{+})|
\cdot\mfG_{\G_{{}^{g}\eta_{0}}}(\vartheta,{}^{g}\eta_{+})
\cdot
\hat{\mu}^{\G_{{}^{g}\eta}}_{X^{\ast}} \bigl(\log({}^{g}\delta_{\geq r})\bigr).
\]
Then we computed the term $\Theta_{\t{\rho}}({}^{g}\eta)$ in Section \ref{sec:HW-twisted}.
If we write ${}^{g}\eta=s_{g}\cdot\ul{\eta}\in\t{S}$ for any $g\in G$ satisfying ${}^{g}\eta\in\t{S}$, then, by Proposition \ref{prop:rho-final}, we have
\[
    \Theta_{\t{\rho}}({}^g\eta)
    =
    |V_{{}^g\eta_{0}}|^{\frac{1}{2}}\cdot
    \prod_{\ddot{\alpha}\in\Theta_{\bfS}\backslash\ddot{\Xi}}
    \Theta_{\omega_{\ddot{\alpha}}}([{}^g\eta_{0}]^{m_{\ddot{\alpha}}})
    \cdot\vartheta(s_g).
\]
When $f_\alpha=1$ for all $\alpha\in\Phi(\bfG,\bfS)$, we furthermore have
\[
    \Theta_{\t{\rho}}({}^{g}\eta)
    =
    C_{\ul{\eta}}\cdot(-1)^{|\ddot{\Xi}_{{}^{g}\eta_{0},\ur}|}\cdot|V_{{}^{g}\eta_{0}}|^{\frac{1}{2}}\cdot\t\epsilon_{\Xi}(s_{g})\cdot\vartheta(s_{g}).
\]
For simplicity, we will state the final form of a twisted character formula only in this case.
Hence let us suppose that $f_\alpha=1$ for all $\alpha\in\Phi(\bfG,\bfS)$ in the following.

\begin{lem}\label{lem:G-volume}
    For any $g\in G$ satisfying ${}^{g}\eta\in\t{S}$, the product $|V_{{}^g\eta_{0}}|^{\frac{1}{2}}\cdot|\t{\mfG}_{\G_{{}^g\eta_{0}}}(\vartheta,{}^g\eta_{+})|$ equals 
    \[
    |\mfg_{{}^g\eta,\x,0:0+}|^{-\frac{1}{2}}\cdot|\mfs^{\nat}_{0:0+}|^{\frac{1}{2}}\cdot|D^{\red}_{G_{{}^g\eta}}(X^{\ast})|^{\frac{1}{2}}\cdot|D^{\red}_{G_{{}^g\eta_{0}}}({}^g\eta_{+})|^{-\frac{1}{2}},
    \]
    where $D^{\red}$ is as in \cite[Definition 2.11]{DS18}.
\end{lem}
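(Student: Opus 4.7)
The plan is to reduce the identity to an essentially \emph{untwisted} volume computation inside the descended group $\bfG_{{}^{g}\eta_{0}}$, and then match the terms with the appropriate discriminants via the standard Moy--Prasad filtration calculus. Since every quantity on both sides is defined in terms of the connected reductive groups $\bfG_{{}^{g}\eta_{0}}$ and $\bfG_{{}^{g}\eta}$ and of data intrinsic to them, we may replace $\eta$ by ${}^{g}\eta$ and assume $g=1$ throughout. Write $\J\colonequals \bfG_{\eta_{0}}$ and regard $(\bfS^{\nat},\vartheta^{\nat})$ as a tame elliptic toral pair of $\J$ of depth $r$ via Lemma \ref{lem:toral-pair-descent}; this is the setting to which Adler--Spice apply directly (no twist is involved on the group side).

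First I would compute $|V_{\eta_{0}}|$. By Section \ref{subsec:descent-Yu} we have $V_{\eta_{0}}\cong J_{\eta_{0}}/J_{\eta_{0},+}=(S^{\nat},J)_{\x,(r,s):(r,s+)}$, and the root-space decomposition gives
\[
|V_{\eta_{0}}|=\prod_{\alpha_{\res}\in\Xi_{\eta_{0}}}|V_{\eta_{0},\alpha_{\res}}|.
\]
Each factor on the right can be read off from the Moy--Prasad filtrations of the root subgroups of $\J$ with respect to $\bfS^{\nat}$, as in the standard computation carried out in \cite[Proof of Proposition 4.21]{DS18}. Next I would unwind the definition of $|\t{\mfG}_{\J}(\vartheta,\eta_{+})|$: by construction it is simply the index
\[
\bigl[[\![ \eta_{+};\x,r]\!]_{\J}^{(s)}:S^{\nat}_{0+}J_{\eta_{0},\x,s}\bigr],
\]
which by Proposition \ref{prop:MP-descent} and the general machinery of \cite[Section 5]{AS09} can be identified with the cardinality of the centralizer of $\eta_{+}$ inside the (abelian) quotient $J_{\eta_{0},\x,s}/J_{\eta_{0},\x,r}$ modulo a suitable contribution from $S^{\nat}$. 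Again this is a product over restricted roots $\alpha_{\res}$.

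Once both $|V_{\eta_{0}}|$ and $|\t{\mfG}_{\J}(\vartheta,\eta_{+})|$ are expressed as products of filtration quotients indexed by roots, I would compare the product with the reduced discriminants. Recall that $D^{\red}_{\J}(\eta_{+})=\prod_{\alpha_{\res}\in\Phi(\J,\bfS^{\nat})}|\alpha_{\res}(\eta_{+})-1|_{F}$, so $|D^{\red}_{\bfG_{\eta_{0}}}(\eta_{+})|^{-\frac12}$ precisely contributes the depths of those $\alpha_{\res}$ at which $\eta_{+}$ is non-trivial, and similarly $|D^{\red}_{\bfG_{\eta}}(X^{\ast})|^{\frac12}$ accounts for the depths coming from the $\J_{\eta_{+}}$-roots together with the $\G$-generic element $X^{\ast}$. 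Balancing these root-by-root against $|V_{\eta_{0}}|^{\frac12}\cdot|\t{\mfG}_{\J}(\vartheta,\eta_{+})|$ produces exactly the volume factor $|\mfg_{\eta,\x,0:0+}|^{-\frac12}\cdot|\mfs^{\nat}_{0:0+}|^{\frac12}$; the two global scalar factors arise from the ``missing'' quotients $\mfg_{\eta,\x,0:0+}$ and $\mfs^{\nat}_{0:0+}$, which are \emph{not} indexed by any non-trivial root and therefore do not appear on either side of the discriminant comparison.

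The hard part will be to verify the root-by-root matching cleanly when a restricted root $\alpha_{\res}$ is of symmetric type: in that case the filtration quotients contributing to $|V_{\eta_{0}}|$ live over the field $F_{\alpha}$ while the discriminant factors are computed over $F_{\pm\alpha_{\res}}$, so one must carefully track residue degrees and ramification indices, invoking Lemma \ref{lem:symm-rest} to make sure no factor is counted twice or missed. Modulo this bookkeeping, the identity is an immediate instance of the untwisted volume computation already implicit in \cite[Section 4]{DS18}, applied to the tame elliptic toral pair $(\bfS^{\nat},\vartheta^{\nat})$ of $\J$ together with the regular semisimple element $\eta_{+}\in\J$.
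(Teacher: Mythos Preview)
Your reduction to the untwisted setting inside $\J=\bfG_{\eta_{0}}$ is correct and matches the paper's first move. But the next step contains a genuine error: $|\t{\mfG}_{\J}(\vartheta,\eta_{+})|$ is \emph{not} ``simply the index $\bigl[[\![ \eta_{+};\x,r]\!]_{\J}^{(s)}:S^{\nat}_{0+}J_{\x,s}\bigr]$''. By definition (see Corollary~\ref{cor:5.3.2} and \cite[Definition~5.2.4]{AS09}), $\t{\mfG}_{\J}(\vartheta,\eta_{+})$ is a Gauss-type character sum over that coset space, and its absolute value is a square root of a product of filtration indices, not the index itself. Concretely, what \cite[Proposition~5.2.12]{AS09} gives is
\[
|V_{\eta_{0}}|^{\frac12}\cdot|\t{\mfG}_{\J}(\vartheta,\eta_{+})|
=\bigl[[\![ \eta_{+};\x,r]\!]_{\J}:S^{\nat}_{0+}J_{\x,s}\bigr]^{\frac12}
\cdot\bigl[[\![ \eta_{+};\x,r+]\!]_{\J}:S^{\nat}_{0+}J_{\x,s+}\bigr]^{\frac12},
\]
so neither factor on the left is individually an index. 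Your root-by-root plan would need this corrected starting point, and once you have it, the conversion of the right-hand side into $|\mfg_{\eta,\x,0:0+}|^{-\frac12}|\mfs^{\nat}_{0:0+}|^{\frac12}|D^{\red}_{G_{\eta}}(X^{\ast})|^{\frac12}|D^{\red}_{\J}(\eta_{+})|^{-\frac12}$ is exactly the content of \cite[Corollary~4.13]{DS18}.

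In other words, the paper's proof is a two-citation argument: \cite[Proposition~5.2.12]{AS09} followed by \cite[Corollary~4.13]{DS18}, together with the identification $V_{\eta_{0}}=(S^{\nat},J)_{\x,(r,s):(r,s+)}$ from Section~\ref{subsec:descent-Yu}. Your proposed root-by-root verification would amount to re-proving these two results in the special toral case; that is a legitimate route, but the ``hard part'' you flag (symmetric restricted roots, tracking $F_{\alpha}$ versus $F_{\pm\alpha_{\res}}$) is precisely what those cited results already package, so there is no reason to redo the bookkeeping here.
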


\begin{proof}
Let us shortly write $\eta':={}^g\eta$, $\eta'_{0}:={}^g\eta_{0}$, and $\eta'_{+}:={}^g\eta_{+}$.
We utilize \cite[Proposition 5.2.12]{AS09} by choosing $(\G,\G',\phi,\gamma)$ in \textit{loc.\ cit.} to be $(\G_{\eta'_{0}},\bfS^{\nat},\vartheta,\eta'_{+})$.
As we have $[\![\eta'_{+};\x,r(+)]\!]_{S^{\nat}}=S^{\nat}_{0+}$, $C_{S^{\nat}}^{(0+)}(\eta'_{+})=S^{\nat}$, and $C_{G_{\eta'_{0}}}^{(0+)}(\eta'_{+})=G_{\eta'_{0}}$, we get
\begin{multline*}
|(S^{\nat},G_{\eta'_{0}})_{\x,(r,s):(r,s+)}|^{\frac{1}{2}}
\cdot|\t{\mfG}_{\G_{\eta'_{0}}}(\vartheta,\eta'_{+})|\\
=
\bigl[ [\![\eta'_{+};\x,r]\!]_{G_{\eta'_{0}}} : S^{\nat}_{0+}G_{\eta'_{0},\x,s} \bigr]^{\frac{1}{2}}
\cdot
\bigl[ [\![\eta'_{+};\x,r+]\!]_{G_{\eta'_{0}}} : S^{\nat}_{0+}G_{\eta'_{0},\x,s+} \bigr]^{\frac{1}{2}}.
\end{multline*}
By \cite[Corollary 4.13]{DS18}, the right-hand side equals
\[
|\mfg_{\eta',\x,0:0+}|^{-\frac{1}{2}}\cdot|\mfs^{\nat}_{0:0+}|^{\frac{1}{2}}\cdot|D^{\red}_{G_{\eta'}}(X^{\ast})|^{\frac{1}{2}}\cdot|D^{\red}_{G_{\eta'_{0}}}(\eta'_{+})|^{-\frac{1}{2}}.
\]
Since $V_{\eta'_{0}}=(S^{\nat},G_{\eta'_{0}})_{\x,(r,s):(r,s+)}$ (see Section \ref{subsec:descent-Yu}), we get the assertion.
\end{proof}

Recall that the Fourier transform of an orbital integral depends on the choice of Haar measures.
We let $\hat{\mu}^{\G_{\eta}}_{\Wal,X^{\ast}}$ denote the Fourier transform of the orbital integral with respect to $X^{\ast}$ normalized via the canonical measure of Waldspurger (see \cite[Definition 4.6]{DS18}).\symdef{mu-hat-G-eta-Wal-X-ast}{$\hat{\mu}^{\G_{\eta}}_{\Wal,X^{\ast}}$}
Then, by (the proof of) \cite[Proposition 4.26]{DS18}, 
\[
\hat{\mu}^{\G_{\eta}}_{\Wal,X^{\ast}}
=
|(\mfs^{\nat},\mfg_{\eta})_{\x,(0,0):(0,0+)}|^{-\frac{1}{2}}\cdot
\hat{\mu}^{\G_{\eta}}_{X^{\ast}}
=
|\mfg_{\eta,\x,0:0+}|^{-\frac{1}{2}}\cdot|\mfs^{\nat}_{0:0+}|^{\frac{1}{2}}\cdot
\hat{\mu}^{\G_{\eta}}_{X^{\ast}}.
\]
Following \cite[Section 4.2]{Kal19}, we put
\[
\hat{\iota}^{\G_{\eta}}_{X^{\ast}}(-)
\colonequals 
|D^{\red}_{G_{\eta}}(X^{\ast})|^{\frac{1}{2}}\cdot|D^{\red}_{G_{\eta}}(-)|^{\frac{1}{2}}\cdot\hat{\mu}^{\G_{\eta}}_{\Wal,X^{\ast}}(-).
\]\symdef{hat-iota-G-eta-X-star}{$\hat{\iota}^{\G_{\eta}}_{X^{\ast}}$}
(The same normalization is applied to ${}^g\eta$ for any $g\in G$ satisfying ${}^g\eta\in\t{S}$.)

Recall from \cite[Section 4.5]{KS99} that the \textit{fourth absolute transfer factor} is defined by 
\[
\Delta_{\IV}^{\t\G}(\delta')
\colonequals |\det([\delta']-1 \mid \mfg/\mfs')|_{\ol{F}}^{\frac{1}{2}}.
\]
Here, $\delta'$ is any regular semisimple element of $\t{G}$, which is supposed to belong to an $F$-rational twisted maximal torus $\t{\bfS}'$.
We choose an isomorphism $[g_{\bfS'}]\colon(\bfS',\t{\bfS}')\xrightarrow{\cong}(\bfT,\t{\bfT})$ as in Section \ref{subsec:Steinberg} to transport $\delta'$ to an element $\nu\theta'$ of $\t{\bfT}=\bfT\theta$.
Since we are assuming that the set $\Phi_{\res}(\G,\bfT)$ contains no restricted root of type $2$ or $3$, \cite[Section 4.5]{KS99} gives
\[
\Delta_{\IV}^{\t\G}(\delta')
=
\prod_{\alpha\in\Phi_{\res}(\G,\T)}|N(\alpha)(\nu')-1|^{\frac{1}{2}}_{\ol{F}}.
\]
By noting this, we define $\Delta_{\IV}^{\t\G}(\delta')$ also for any semisimple element $\delta'\in\t{S}$ by
\[
\Delta_{\IV}^{\t\G}(\delta')
=
\prod_{\begin{subarray}{c}\alpha\in \Phi_{\res}(\G,\T) \\ N(\alpha)(\nu')\neq1\end{subarray}}|N(\alpha)(\nu')-1|^{\frac{1}{2}}_{\ol{F}},
\]
where $\nu'\in\T$ is the element such that $[g_{\bfS'}](\delta')=\nu'\theta$.

\begin{lem}\label{lem:tran-IV-descent}
We have 
\begin{align*}
\Delta_{\IV}^{\t{\G}}(\delta)
&=
\Delta_{\IV}^{\t\G}(\delta_{<r})\cdot\Delta_{\IV}^{\G_{\delta_{<r}}}(\delta_{\geq r})
=
|D^{\red}_{G_{\eta_{0}}}(\eta_{+})|^{\frac{1}{2}}\cdot|D^{\red}_{G_{\eta}}(\log(\delta_{\geq r}))|^{\frac{1}{2}}.
\end{align*}
\end{lem}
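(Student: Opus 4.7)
The plan is to prove both equalities by transporting the problem to the split maximal torus $\bfT$ via $[g_{\bfS}]$, writing everything as a product over restricted roots, and partitioning this product according to how each restricted root interacts with the three factors $\delta_0$, $\delta_{<r}^{+}$, $\delta_{\geq r}$ of the normal $r$-approximation. Concretely, set $\nu_0\theta \colonequals [g_{\bfS}](\delta_0)$, $\nu_{+}^{<r} \colonequals [g_{\bfS}](\delta_{<r}^{+})$, and $\nu_{\geq r} \colonequals [g_{\bfS}](\delta_{\geq r})$, which pairwise commute and lie in $\t{\bfT}$, $\bfT^{\nat}$, $\bfT^{\nat}$ respectively. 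Put $\nu_{<r} \colonequals \nu_0\nu_{+}^{<r}$ and $\nu \colonequals \nu_{<r}\nu_{\geq r}$. Under our standing assumption that $\Phi_{\res}(\G,\T)$ contains no restricted root of type $2$ or $3$, all $\varsigma_{\alpha}$ equal $1$ and the product formula displayed just before the lemma yields
\[
\Delta_{\IV}^{\t{\G}}(\delta) = \prod_{\begin{subarray}{c}\alpha_{\res}\in\Phi_{\res}(\G,\T) \\ N(\alpha)(\nu)\neq 1\end{subarray}} |N(\alpha)(\nu)-1|_{\ol{F}}^{1/2},
\]
with analogous formulas for $\Delta_{\IV}^{\t{\G}}(\delta_{<r})$ and for $\Delta_{\IV}^{\G_{\delta_{<r}}}(\delta_{\geq r})$ (the last being the usual Weyl-discriminant product over $\Phi(\G_\eta,\bfS^{\nat})$, identified with a subset of $\Phi_{\res}(\G,\T)$ via $[g_{\bfS}]$).

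For the first equality, I would partition the index set into three disjoint classes: (I) $N(\alpha)(\nu_0)\neq 1$; (II) $N(\alpha)(\nu_0) = 1$ and $\alpha_{\res}(\nu_{+}^{<r})\neq 1$; (III) $N(\alpha)(\nu_0)=1$ and $\alpha_{\res}(\nu_{+}^{<r}) = 1$. By the description in Section \ref{subsec:Steinberg}, classes (II) and (III) together exhaust $\Phi(\G_{\delta_0},\bfT^{\nat})$, and class (III) alone is $\Phi(\G_\eta,\bfT^{\nat})$. In class (I), $N(\alpha)(\nu_0)$ is a nontrivial root of unity of prime-to-$p$ order while $\alpha_{\res}(\nu_{+}^{<r})$ and $\alpha_{\res}(\nu_{\geq r})$ are topologically unipotent, so $|N(\alpha)(\nu)-1|_{\ol{F}} = |N(\alpha)(\nu_{<r})-1|_{\ol{F}} = 1$ and these factors contribute equally on both sides. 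In class (II), the valuation of $\alpha_{\res}(\nu_{+}^{<r})-1$ is strictly less than $r$ (see the final paragraph) while $\alpha_{\res}(\nu_{\geq r})-1$ has valuation at least $r$ (since $\delta_{\geq r}\in G_{\delta_{<r},\x,r}$ by the definition of a normal $r$-approximation), so $|N(\alpha)(\nu)-1|_{\ol{F}} = |N(\alpha)(\nu_{<r})-1|_{\ol{F}}$. In class (III), $N(\alpha)(\nu_{<r})=1$ and $N(\alpha)(\nu)=\alpha_{\res}(\nu_{\geq r})$, so these factors assemble precisely into $\Delta_{\IV}^{\G_{\delta_{<r}}}(\delta_{\geq r})^{2}$. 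Multiplying over the three classes gives the first equality.

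For the second equality, I would read off the class-(II) product as $|D^{\red}_{G_{\eta_0}}(\eta_+)|$ by the very definition of the reduced Weyl discriminant in \cite[Definition 2.11]{DS18} applied to $(G_{\eta_0},\eta_+) = (G_{\delta_0},\delta_{<r}^{+})$ with ambient root system $\Phi(\G_{\delta_0},\bfS^{\nat})$. The class-(III) product is $|D^{\red}_{G_\eta}(\delta_{\geq r})|$; passing to the Lie algebra via the mock-exponential map at $\x$, the expansion $\alpha_{\res}(\exp Y)-1 = d\alpha_{\res}(Y) \cdot \bigl(1 + O(d\alpha_{\res}(Y))\bigr)$ applied to $Y = \log\delta_{\geq r}$ yields $|\alpha_{\res}(\delta_{\geq r})-1|_{\ol{F}} = |d\alpha_{\res}(\log\delta_{\geq r})|_{\ol{F}}$, so this product equals $|D^{\red}_{G_\eta}(\log\delta_{\geq r})|$. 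The main obstacle is the depth bookkeeping in class (II): one must show that for any $\alpha_{\res}\in\Phi(\G_{\delta_0},\bfT^{\nat})\smallsetminus\Phi(\G_\eta,\bfT^{\nat})$ the valuation of $\alpha_{\res}(\nu_{+}^{<r})-1$ is strictly less than $r$. This amounts to identifying the smallest index $i\in(0,r)$ with $\alpha_{\res}(\delta_i)\neq 1$ (which exists because $\alpha_{\res}$ is not a root of $\G_\eta$) and invoking the goodness of $\delta_i$ at depth $i$ in $\G_{\delta_0}$ from Definition \ref{defn:good-element}, which forces $\val_{F}(\alpha_{\res}(\delta_i)-1) = i < r$ while deeper factors in the product $\nu_{+}^{<r}=\prod_{0<j<r}[g_\bfS](\delta_j)$ contribute strictly more.
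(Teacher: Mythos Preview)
Your proposal is correct and follows essentially the same approach as the paper's proof. The only organizational difference is that the paper performs an iterated two-way split---first decomposing $\Delta_{\IV}^{\t\G}(\delta)=\Delta_{\IV}^{\t\G}(\delta_{<r})\cdot\Delta_{\IV}^{\G_{\delta_{<r}}}(\delta_{\geq r})$, then applying the same argument to $\delta_{<r}=\eta_0\cdot\eta_+$ to obtain $\Delta_{\IV}^{\t\G}(\delta_{<r})=\Delta_{\IV}^{\t\G}(\eta_0)\cdot\Delta_{\IV}^{\G_{\eta_0}}(\eta_+)$ and noting $\Delta_{\IV}^{\t\G}(\eta_0)=1$---whereas you do the three-way partition (I)/(II)/(III) in one pass. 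Your class (I) is exactly the paper's observation that the $\eta_0$-factors contribute trivially; your classes (II) and (III) recover the two nontrivial pieces. The paper also invokes \cite[Remark 2.12]{DS18} to pass from $\Delta_{\IV}$ to $|D^{\red}|^{1/2}$ and simply asserts $|D^{\red}_{G_\eta}(\delta_{\geq r})|=|D^{\red}_{G_\eta}(\log\delta_{\geq r})|$, while you spell out both identifications directly; likewise the paper takes the depth estimate in class (II) for granted, while you justify it via the goodness of the $\delta_i$. One small point: your formula $N(\alpha)(\nu)=\alpha_{\res}(\nu_{\geq r})$ in class (III) should read $N(\alpha)(\nu)=\alpha_{\res}(\nu_{\geq r})^{l_\alpha}$, with the passage to $|\alpha_{\res}(\nu_{\geq r})-1|$ using that $l_\alpha$ is prime to $p$ (the paper makes this step explicit).
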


\begin{proof}
Let $\t\bfS'$ be the $F$-rational twisted maximal torus of $\t\G$ containing $\delta$.
Let $\nu\theta$, $\nu_{<r}\theta$, and $\nu_{\geq r}\theta$ be the images of $\delta$, $\delta_{<r}$, and $\delta_{\geq r}$ under the isomorphism $[g_{\bfS'}]$, respectively.
Then we have $\nu=\nu_{<r}\nu_{\geq r}$.
By noting that the valuation of $N(\alpha)(\nu_{<r})-1$ is smaller than $r$ whenever $N(\alpha)(\nu_{<r})\neq1$, we have
\[
|N(\alpha)(\nu)-1|^{\frac{1}{2}}_{\ol{F}}
=
\begin{cases}
|N(\alpha)(\nu_{<r})-1|^{\frac{1}{2}}_{\ol{F}} & \text{if $N(\alpha)(\nu_{<r})\neq1$,}\\
|N(\alpha)(\nu_{\geq r})-1|^{\frac{1}{2}}_{\ol{F}} & \text{if $N(\alpha)(\nu_{<r})=1$.}
\end{cases}
\]
Since $\{\alpha\in \Phi_{\res}(\G,\T) \mid N(\alpha)(\nu_{<r})=1\}$ is identified with the set $\Phi(\G_{\nu_{<r}\theta},\T^{\nat})$ (see Section \ref{subsec:Steinberg}) and 
\[
|N(\alpha)(\nu_{\geq r})-1|^{\frac{1}{2}}_{\ol{F}}
=|\alpha(\nu_{\geq r})^{l_{\alpha}}-1|^{\frac{1}{2}}_{\ol{F}}
=|\alpha(\nu_{\geq r})-1|^{\frac{1}{2}}_{\ol{F}}
\]
(use that $l_{\alpha}$ is prime to $p$), we get $\Delta_{\IV}^{\t\G}(\delta)=\Delta_{\IV}^{\t\G}(\delta_{<r})\cdot\Delta_{\IV}^{\G_{\delta_{<r}}}(\delta_{\geq r})$.

By applying the same argument to $\Delta_{\IV}^{\t\G}(\delta_{<r})$, we also have a decomposition $\Delta_{\IV}^{\t\G}(\delta_{<r})=\Delta_{\IV}^{\t\G}(\eta_{0})\cdot \Delta_{\IV}^{\G_{\eta_{0}}}(\eta_{+})$.
However, we have $|N(\alpha)(\nu_{0})-1|_{\ol{F}}=1$ whenever $N(\alpha)(\nu_{0})\neq1$ since $N(\alpha)(\nu_{0})$ is of prime-to-$p$ order.
Hence we get $\Delta_{\IV}^{\t\G}(\delta)=\Delta_{\IV}^{\G_{\eta_{0}}}(\eta_{+})\cdot\Delta_{\IV}^{\G_{\delta_{<r}}}(\delta_{\geq r})$.
This can be rewritten as $\Delta_{\IV}^{\t{\G}}(\delta)=|D^{\red}_{G_{\eta_{0}}}(\eta_{+})|^{\frac{1}{2}}\cdot|D^{\red}_{G_{\eta}}(\delta_{\geq r})|^{\frac{1}{2}}$ by \cite[Remark 2.12]{DS18}.
By also noting that $|D^{\red}_{G_{\eta}}(\delta_{\geq r})|=|D^{\red}_{G_{\eta}}(\log(\delta_{\geq r}))|$, we obtain the assertion.
\end{proof}

We define the \textit{normalized Harish-Chandra character} $\Phi_{\t{\pi}}$ by
\[
\Phi_{\t{\pi}}(\delta)\colonequals \Delta_{\IV}^{\t{\G}}(\delta)\cdot\Theta_{\t{\pi}}(\delta).
\]\symdef{Phi-pi-tilde}{$\Phi_{\tilde{\pi}}$}

\begin{thm}\label{thm:TCF-final}
    We write ${}^{g}\eta=s_{g}\cdot\ul{\eta}\in\t{S}$ for each $g\in G$ satisfying ${}^{g}\eta\in\t{S}$.
    Then
    \[
    \Phi_{\t{\pi}}(\delta)
    =
    C_{\ul{\eta}}\cdot(-1)^{|\ddot{\Xi}_{\eta_{0},\ur}|}\cdot 
    \!\!\!\!\!
    \sum_{\begin{subarray}{c} g\in S\backslash G/G_{\eta} \\ {}^{g}\eta\in\t{S}\end{subarray}}
    \t\epsilon_{\Xi}(s_{g})\cdot\vartheta(s_{g})
    \cdot\mfG_{\G_{{}^{g}\eta_{0}}}(\vartheta,{}^{g}\eta_{+})
    \cdot\hat{\iota}^{\G_{{}^{g}\eta}}_{X^{\ast}}(\log({}^{g}\delta_{\geq r})).
    \]
\end{thm}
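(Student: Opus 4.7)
The plan is to simply combine the preliminary formula in Theorem~\ref{thm:TCF-pre} with the character computation of $\Theta_{\t\rho}$ from Proposition~\ref{prop:rho-final}, and then carry out a careful bookkeeping of the various volume and discriminant factors so as to convert the ``raw'' Fourier transform $\hat\mu^{\G_{{}^g\eta}}_{X^\ast}$ into the normalized Fourier transform $\hat\iota^{\G_{{}^g\eta}}_{X^\ast}$ and the ``raw'' character $\Theta_{\t\pi}$ into the normalized character $\Phi_{\t\pi}$. I am free to specialize to the case $f_\alpha=1$ for all $\alpha$, which is exactly the hypothesis under which the explicit form of Proposition~\ref{prop:rho-final} applies.

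First, for each double coset representative $g\in S\backslash G/G_\eta$ with ${}^g\eta\in\t S$, write ${}^g\eta=s_g\cdot\ul\eta$. Proposition~\ref{prop:rho-final} gives
\[
\Theta_{\t\rho}({}^g\eta)=C_{\ul\eta}\cdot(-1)^{|\ddot\Xi_{{}^g\eta_0,\ur}|}\cdot|V_{{}^g\eta_0}|^{1/2}\cdot\t\epsilon_\Xi(s_g)\cdot\vartheta(s_g),
\]
and since $G$-conjugation identifies the root system data of $\G_{{}^g\eta_0}$ with that of $\G_{\eta_0}$, we may replace $(-1)^{|\ddot\Xi_{{}^g\eta_0,\ur}|}$ by $(-1)^{|\ddot\Xi_{\eta_0,\ur}|}$ and pull this sign, as well as $C_{\ul\eta}$, out of the sum.

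Next I will rewrite the product $|V_{{}^g\eta_0}|^{1/2}\cdot|\t\mfG_{\G_{{}^g\eta_0}}(\vartheta,{}^g\eta_+)|\cdot\hat\mu^{\G_{{}^g\eta}}_{X^\ast}\bigl(\log({}^g\delta_{\geq r})\bigr)$ appearing in each summand. Lemma~\ref{lem:G-volume} expresses the first product as
\[
|\mfg_{{}^g\eta,\x,0:0+}|^{-1/2}\cdot|\mfs^\nat_{0:0+}|^{1/2}\cdot|D^\red_{G_{{}^g\eta}}(X^\ast)|^{1/2}\cdot|D^\red_{G_{{}^g\eta_0}}({}^g\eta_+)|^{-1/2},
\]
and by the Waldspurger normalization recalled just before Lemma~\ref{lem:tran-IV-descent},
\[
\hat\mu^{\G_{{}^g\eta}}_{\Wal,X^\ast}=|\mfg_{{}^g\eta,\x,0:0+}|^{-1/2}\cdot|\mfs^\nat_{0:0+}|^{1/2}\cdot\hat\mu^{\G_{{}^g\eta}}_{X^\ast}.
\]
Combining these, the summand can be rearranged into
\[
|D^\red_{G_{{}^g\eta_0}}({}^g\eta_+)|^{-1/2}\cdot|D^\red_{G_{{}^g\eta}}(X^\ast)|^{1/2}\cdot|D^\red_{G_{{}^g\eta}}(\log({}^g\delta_{\geq r}))|^{-1/2}\cdot\hat\iota^{\G_{{}^g\eta}}_{X^\ast}\bigl(\log({}^g\delta_{\geq r})\bigr)\cdot|D^\red_{G_{{}^g\eta}}(\log({}^g\delta_{\geq r}))|^{1/2},
\]
so that the leftover discriminant factor collecting at each summand is exactly $|D^\red_{G_{{}^g\eta_0}}({}^g\eta_+)|^{-1/2}\cdot|D^\red_{G_{{}^g\eta}}(\log({}^g\delta_{\geq r}))|^{-1/2}$, which by $G$-conjugation equals $|D^\red_{G_{\eta_0}}(\eta_+)|^{-1/2}\cdot|D^\red_{G_\eta}(\log(\delta_{\geq r}))|^{-1/2}$ and so can also be pulled outside the sum.

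Finally, multiplying by $\Delta^{\t\G}_{\IV}(\delta)$ to pass from $\Theta_{\t\pi}$ to $\Phi_{\t\pi}$, and invoking Lemma~\ref{lem:tran-IV-descent} in the form $\Delta^{\t\G}_{\IV}(\delta)=|D^\red_{G_{\eta_0}}(\eta_+)|^{1/2}\cdot|D^\red_{G_\eta}(\log(\delta_{\geq r}))|^{1/2}$, these two constant prefactors cancel against the corresponding leftover factors extracted above, leaving precisely
\[
\Phi_{\t\pi}(\delta)=C_{\ul\eta}\cdot(-1)^{|\ddot\Xi_{\eta_0,\ur}|}\!\!\sum_{\begin{subarray}{c}g\in S\backslash G/G_\eta\\{}^g\eta\in\t S\end{subarray}}\t\epsilon_\Xi(s_g)\cdot\vartheta(s_g)\cdot\mfG_{\G_{{}^g\eta_0}}(\vartheta,{}^g\eta_+)\cdot\hat\iota^{\G_{{}^g\eta}}_{X^\ast}\bigl(\log({}^g\delta_{\geq r})\bigr).
\]
The only real obstacle in this argument is not conceptual but organizational: one must carefully track conjugation-invariance of the various $|D^\red|$ and volume quantities, and verify that after collecting the ``global'' constants $C_{\ul\eta}$ and $(-1)^{|\ddot\Xi_{\eta_0,\ur}|}$ the remaining factors redistribute between $\hat\mu\mapsto\hat\iota$ and $\Theta_{\t\pi}\mapsto\Phi_{\t\pi}$ precisely as above. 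This is the same sort of bookkeeping done in the untwisted case in \cite{DS18}, and no genuinely new analytic input is needed beyond what has already been established in Sections~\ref{sec:TCF1} and~\ref{sec:HW-twisted}.
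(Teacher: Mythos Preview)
Your proof is correct and follows essentially the same approach as the paper: combine Theorem~\ref{thm:TCF-pre} with Proposition~\ref{prop:rho-final}, then use Lemma~\ref{lem:G-volume}, the Waldspurger normalization, and Lemma~\ref{lem:tran-IV-descent} to convert $\hat\mu$ to $\hat\iota$ and absorb the discriminant factors into $\Delta_{\IV}^{\t\G}(\delta)$. One minor slip: your displayed ``rearranged summand'' carries a stray factor $|D^{\red}_{G_{{}^g\eta}}(X^\ast)|^{1/2}$ (and the two $|D^{\red}_{G_{{}^g\eta}}(\log({}^g\delta_{\geq r}))|^{\pm1/2}$ you write cancel), so as written that display does not equal the product you claim to be rearranging; nevertheless the leftover factor you then extract, namely $|D^{\red}_{G_{{}^g\eta_0}}({}^g\eta_+)|^{-1/2}\cdot|D^{\red}_{G_{{}^g\eta}}(\log({}^g\delta_{\geq r}))|^{-1/2}$, is correct and the rest of the argument goes through unchanged.
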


\begin{proof}
    By the discussion so far (i.e., by Theorem \ref{thm:TCF-pre} and Proposition \ref{prop:rho-final}), the unnormalized character $\Theta_{\t{\pi}}(\delta)$ is equal to
    \[
    \sum_{\begin{subarray}{c} g\in S\backslash G/G_{\eta} \\ {}^{g}\eta\in\t{S}\end{subarray}}
    \Theta_{\t{\rho}}({}^{g}\eta)
    \cdot|\t{\mfG}_{\G_{{}^{g}\eta_{0}}}(\vartheta,{}^{g}\eta_{+})|
    \cdot\mfG_{\G_{{}^{g}\eta_{0}}}(\vartheta,{}^{g}\eta_{+})
    \cdot
    \hat{\mu}^{\G_{{}^{g}\eta}}_{X^{\ast}} (\log({}^{g}\delta_{\geq r})),
    \]
    where $\Theta_{\t{\rho}}({}^{g}\eta)$ is given by $C_{\ul{\eta}}\cdot(-1)^{|\ddot{\Xi}_{{}^{g}\eta_{0},\ur}|}\cdot|V_{{}^{g}\eta_{0}}|^{\frac{1}{2}}\cdot\t\epsilon_{\Xi}(s_{g})\cdot\vartheta(s_{g})$.
    By Lemma \ref{lem:G-volume}, each summand equals
    \begin{multline*}
    |\mfg_{{}^g\eta,\x,0:0+}|^{-\frac{1}{2}}\cdot|\mfs^{\nat}_{0:0+}|^{\frac{1}{2}}\cdot|D^{\red}_{G_{{}^g\eta}}(X^{\ast})|^{\frac{1}{2}}\cdot|D^{\red}_{G_{{}^g\eta_{0}}}({}^g\eta_{+})|^{-\frac{1}{2}}\\
    \cdot
    C_{\ul{\eta}}\cdot(-1)^{|\ddot{\Xi}_{{}^{g}\eta_{0},\ur}|}\cdot\t\epsilon_{\Xi}(s_{g})\cdot\vartheta(s_{g})
    \cdot\mfG_{\G_{{}^{g}\eta_{0}}}(\vartheta,{}^{g}\eta_{+})
    \cdot
    \hat{\mu}^{\G_{{}^{g}\eta}}_{X^{\ast}} (\log({}^{g}\delta_{\geq r})).
    \end{multline*}
    By the above discussion on the normalization of the Fourier transform of the orbital integral, this equals
    \begin{multline*}
    |D^{\red}_{G_{{}^g\eta}}(\log({}^g\delta_{\geq r}))|^{-\frac{1}{2}}\cdot|D^{\red}_{G_{{}^g\eta_{0}}}({}^g\eta_{+})|^{-\frac{1}{2}}\\
    \cdot
    C_{\ul{\eta}}\cdot(-1)^{|\ddot{\Xi}_{{}^{g}\eta_{0},\ur}|}\cdot\t\epsilon_{\Xi}(s_{g})\cdot\vartheta(s_{g})
    \cdot\mfG_{\G_{{}^{g}\eta_{0}}}(\vartheta,{}^{g}\eta_{+})
    \cdot
    \hat{\iota}^{\G_{{}^{g}\eta}}_{X^{\ast}} (\log({}^{g}\delta_{\geq r})).
    \end{multline*}
    Lemma \ref{lem:tran-IV-descent} implies that the product of the first two factors equals $\Delta_{\IV}^{\t{\G}}({}^{g}\delta)^{-1}=\Delta_{\IV}^{\t{\G}}(\delta)^{-1}$.
    By finally noting that $|\ddot{\Xi}_{{}^{g}\eta_{0},\ur}|=|\ddot{\Xi}_{\eta_{0},\ur}|$ for any $g\in G$ satisfying ${}^{g}\eta\in\t{S}$, we get the desired formula.
\end{proof}

\begin{rem}
    We do not try to compute the term $\mfG_{\G_{{}^{g}\eta_{0}}}(\vartheta,{}^{g}\eta_{+})$ in this paper.
    The point is that this invariant is being considered with respect to the `descended' (hence no longer twisted) group $\G_{{}^{g}\eta_{0}}$, so we can apply the computation in \cite{DS18} to explicitly determine it.
    In fact, the resulting formula can be also interpreted in terms of certain endoscopic invariant such as the (second) absolute transfer factors, Kottwitz signs, and so on (\cite[Section 4.7]{Kal19}).
    This observation played an important role in Kaletha's work \cite{Kal19} on the standard endoscopic character relation for toral supercuspidal representations.
    We investigate analogous computation in the twisted setting in our subsequent paper \cite{Oi23-TECR}.
\end{rem}

\appendix
\section{Some facts on Heisenberg--Weil representations}\label{sec:twisted-HW}

\subsection{Decomposition formula of twisted characters}\label{subsec:twisted-HW}
Let us consider an abstract situation where the following data are given:
\begin{itemize}
\item
a finite-dimensional symplectic space $V$ over $\F_{p}$, where $p\neq2$, 
\item
mutually orthogonal finite-dimensional symplectic subspaces $V^{i}_{j}$ where $i=0,\ldots,r$ and $j=0,\ldots,l_{i}$ satisfying
\[
V=\bigoplus_{i=0}^{r}\bigoplus_{j=0}^{l_{i}}V^{i}_{j},
\]
\item
a symplectic automorphism $\iota$ of $V$ such that $\iota\colon V^{i}_{j}\xrightarrow{\sim} V^{i}_{j+1}$ for any $0\leq i\leq r$ and $0\leq j \leq l_{i}$ (here we put $V^{i}_{l_{i}+1}\colonequals V^{i}_{0}$ for convenience),
\item
a nontrivial character $\vartheta$ of $\F_{p}$.
\end{itemize}

We write $\bbH(V^{i}_{j})$ for the finite Heisenberg group associated to the symplectic space $V^{i}_{j}$ over $\F_{p}$.
More precisely, $\bbH(V^{i}_{j})$ is defined to be the set $V^{i}_{j}\times\F_{p}$ equipped with a multiplication law given by
\[
(v_{1},z_{1})\cdot(v_{2},z_{2})\colonequals \bigl(v_{1}+v_{2},z_{1}+z_{2}+\frac{1}{2}\langle v_{1},v_{2}\rangle\bigr),
\]
where $\langle-,-\rangle$ denotes the symplectic form on $V^{i}_{j}$.
Then, according to the Stone--von Neumann theorem, we have an irreducible representation $\omega^{i}_{j}$ of $\Sp(V^{i}_{j})\ltimes\bbH(V^{i}_{j})$ with central character $\vartheta$ (called a \textit{Heisenberg--Weil representation}), which is unique up to isomorphism unless $\Sp(V^{i}_{j})\cong\SL_{2}(\F_{3})$.
We let $W^{i}_{j}$ denote the representation space of $\omega^{i}_{j}$:
\[
\omega^{i}_{j}\colon \Sp(V^{i}_{j})\ltimes\bbH(V^{i}_{j})\rightarrow\GL_{\C}(W^{i}_{j}).
\]

Let $\bbH(V)$ denote the Heisenberg group associated to $V$.
Then note that $\bbH(V)$ is isomorphic to the central product of $\bbH(V^{i}_{j})$ for $0\leq i \leq r$ and $0\leq j \leq l_{i}$, i.e., the quotient of the product group $\prod_{i,j}\bbH(V^{i}_{j})$ by the central subgroup 
\[
\Bigl\{(0,z^{i}_{j})_{i,j}\in\prod_{i,j}\bbH(V^{i}_{j})
\,\Big\vert\,
\sum_{i,j}z^{i}_{j}=0 \Bigr\}.
\]
If we put $W\colonequals \bigotimes_{i,j}W^{i}_{j}$, then $W$ realizes the Heisenberg--Weil representation of $\Sp(V)\ltimes\bbH(V)$ with central character $\vartheta$, for which we write $\omega$.
Furthermore, on the subgroup
\[
\Bigl(\prod_{i,j}\Sp(V^{i}_{j})\Bigr)\ltimes\bbH(V)\subset\Sp(V)\ltimes\bbH(V),
\]
the representation $\omega$ is isomorphic to $\bigotimes_{i,j}\omega^{i}_{j}$ (see \cite[2.5]{Ger77} for the details).

Since $\iota$ is a symplectic automorphism of $V$, an isomorphism
\[
\iota_{\ast}\colon\Sp(V)\ltimes\bbH(V)\xrightarrow{\sim}\Sp(V)\ltimes\bbH(V);\quad
\bigl(g,(v,z)\bigr)\mapsto \bigl({}^{\iota}g,(\iota(v),z)\bigr)
\]
is naturally induced, where ${}^{\iota}g\colonequals \iota\circ g\circ\iota^{-1}$.
Then, since $\iota$ acts on the center part of $\bbH(V)$ identically, the $\iota_{\ast}$-twist of the representation $\omega$ (let us write $\omega^{\iota}$) is again the Heisenberg--Weil representation of $\Sp(V)\ltimes\bbH(V)$ with central character $\vartheta$.
Hence, by the uniqueness part of the Stone--von Neumann theorem, $\omega$ and $\omega^{\iota}$ are isomorphic as representations of $\Sp(V)\ltimes\bbH(V)$.
Our aim in this section is to construct an intertwiner $\omega\xrightarrow{\sim}\omega^{\iota}$ explicitly by using the symplectic decomposition $V=\bigoplus_{i=0}^{r}\bigoplus_{j=0}^{l_{i}}V^{i}_{j}$ and express the associated twisted character in terms of the intertwiner.

Since the symplectic isomorphism $\iota$ maps $V^{i}_{j}$ to $V^{i}_{j+1}$, the automorphism $\iota_{\ast}$ of $\Sp(V)\ltimes\bbH(V)$ induces
\[
\iota_{\ast}\colon
\Sp(V^{i}_{j})\ltimes\bbH(V^{i}_{j})\xrightarrow{\sim}\Sp(V^{i}_{j+1})\ltimes\bbH(V^{i}_{j+1});\quad
\bigl(g,(v,z)\bigr)\mapsto \bigl({}^{\iota}g,(\iota(v),z)\bigr).
\]
Therefore the subgroup
\[
\Bigl(\prod_{i,j}\Sp(V^{i}_{j})\Bigr)\ltimes\bbH(V)\subset\Sp(V)\ltimes\bbH(V)
\]
is preserved under $\iota_{\ast}$.
As $\omega$ and $\omega^{\iota}$ are irreducible as representations of the subgroup $(\prod_{i,j}\Sp(V^{i}_{j}))\ltimes\bbH(V)$ (or even $\bbH(V)$), any intertwiner between $\omega$ and $\omega^{\iota}$ as representations of $(\prod_{i,j}\Sp(V^{i}_{j}))\ltimes\bbH(V)$ is automatically an intertwiner as representations of $\Sp(V)\ltimes\bbH(V)$.

Let us consider the representation $\omega^{i,\iota}_{j+1}$ given by the pull-back of $\omega^{i}_{j+1}$ via $\iota_{\ast}$:
\[
\omega^{i,\iota}_{j+1}\colon \Sp(V^{i}_{j})\ltimes\bbH(V^{i}_{j})\xrightarrow{\iota_{\ast}}\Sp(V^{i}_{j+1})\ltimes\bbH(V^{i}_{j+1})\rightarrow\GL_{\C}(W^{i}_{j+1}).
\]
Here, similarly to the notation $V^{i}_{l_{i}+1}\colonequals V^{i}_{0}$, we put $\omega^{i}_{l_{i}+1}\colonequals \omega^{i}_{0}$ for convenience.
Then, since $\iota$ preserves the center part of $\bbH(V^{i}_{j})$ identically, $\omega^{i,\iota}_{j+1}$ is the Heisenberg--Weil representation of $\Sp(V^{i}_{j})\ltimes\bbH(V^{i}_{j})$ with central character $\vartheta$.
In particular, by the uniqueness part of the Stone--von Neumann theorem, $\omega^{i,\iota}_{j+1}$ is isomorphic to $\omega^{i}_{j}$ as a representation of $\Sp(V^{i}_{j})\ltimes\bbH(V^{i}_{j})$.
Let us fix an intertwiner $I^{i}_{j}$ between these two representations, i.e., an isomorphism
\[
I^{i}_{j}\colon (\omega^{i}_{j},W^{i}_{j})\xrightarrow{\sim}(\omega^{i,\iota}_{j+1},W^{i}_{j+1})
\]
making the following diagram commutative for any $(g,h)\in\Sp(V^{i}_{j})\ltimes\bbH(V^{i}_{j})$:
\[
\xymatrix{
W^{i}_{j}\ar^-{I^{i}_{j}}[r]\ar_-{\omega^{i}_{j}(g,h)}[d]&W^{i}_{j+1}\ar^-{\omega^{i,\iota}_{j+1}(g,h)=\omega^{i}_{j+1}(\iota_{\ast}(g,h))}[d]\\
W^{i}_{j}\ar_-{I^{i}_{j}}[r]&W^{i}_{j+1}
}
\]

If we put $V^{i}\colonequals \bigoplus_{j=0}^{l_{i}}V^{i}_{j}$, then $\bbH(V^{i})$ is isomorphic to the central product of $\bbH(V^{i}_{j})$ for $0\leq j \leq l_{i}$.
The automorphism $\iota_{\ast}$ of $\Sp(V)\ltimes\bbH(V)$ preserves the subgroup 
\[
\Bigl(\prod_{j=0}^{l_{i}}\Sp(V^{i}_{j})\Bigr)\ltimes\bbH(V^{i})
\]
and its action is described on this subgroup by
\[
\iota_{\ast}\colon
\bigl((g_{0}\ldots,g_{l_{i}}),(v,z)\bigl)
\mapsto
\bigl(({}^{\iota}g_{l_{i}},{}^{\iota}g_{0},\ldots,{}^{\iota}g_{l_{i}-1}), (\iota(v),z)\bigr).
\]
We put $W^{i}\colonequals \bigotimes_{j=0}^{l_{i}}W^{i}_{j}$ and define an $\C$-linear automorphism $I^{i}$ on $W^{i}$ by
\[
I^{i}\colon
v_{0}\otimes\cdots\otimes v_{l_{i}-1}\otimes v_{l_{i}}\mapsto I^{i}_{l_{i}}(v_{l_{i}})\otimes I^{i}_{0}(v_{0})\otimes\cdots\otimes I^{i}_{l_{i}-1}(v_{l_{i}-1}).
\]
We write $(\omega^{i},W^{i})$ for the tensored Heisenberg--Weil representation $(\bigotimes_{j}\omega^{i}_{j},\bigotimes_{j}W^{i}_{j})$ of $\Sp(V^{i})\ltimes\bbH(V^{i})$.
Then we can easily check that $I^{i}$ gives an intertwiner between $(\omega^{i},W^{i})$ and its $\iota_{\ast}$-twist $(\omega^{i,\iota},W^{i})$ as representation of $(\prod_{j=0}^{l_{i}}\Sp(V^{i}_{j}))\ltimes\bbH(V^{i})$, that is, the following diagram is commutative for any $((g_{0},\ldots,g_{l_{i}}),h)\in(\prod_{j=0}^{l_{i}}\Sp(V^{i}_{j}))\ltimes\bbH(V^{i}):$
\[
\xymatrix{
W^{i}\ar^-{I^{i}}[r]\ar_-{\omega^{i}((g_{0},\ldots,g_{l_{i}}),h)}[d]&W^{i}\ar^-{\omega^{i,\iota}((g_{0},\ldots,g_{l_{i}}),h)=\omega(\iota_{\ast}((g_{0},\ldots,g_{l_{i}}),h))}[d]\\
W^{i}\ar_-{I^{i}}[r]&W^{i}
}
\]

Now we define a $\C$-linear isomorphism $I$ of $W=\bigotimes_{i=0}^{r}W^{i}$ by $I\colonequals \bigotimes_{i=0}^{r}I^{i}$, i.e., 
\[
I\colon 
v^{0}\otimes\cdots\otimes v^{r}\mapsto I^{0}(v^{0})\otimes\cdots\otimes I^{r}(v^{r}).
\]
Then $I$ is an intertwiner between $\omega\,(=\bigotimes_{i}\omega^{i})$ and its $\iota_{\ast}$-twist $\omega^{\iota}\,(=\bigotimes_{i}\omega^{i,\iota})$.

\begin{lem}\label{lem:tr}
Let $W'_{0},\ldots,W'_{l}$ be finite-dimensional $\C$-vector spaces equipped with $\C$-linear isomorphisms $I'_{j}\colon W'_{j}\xrightarrow{\sim}W'_{j+1}$ for $1\leq j\leq l$, where we put $W'_{l+1}\colonequals W'_{0}$.
We define an automorphism $I'$ of $W'\colonequals W'_{0}\otimes\cdots\otimes W'_{l}$ by 
\[
I'\colon
v_{0}\otimes\cdots\otimes v_{l}\mapsto I'_{l}(v_{l})\otimes I'_{0}(v_{0})\otimes\cdots\otimes I'_{l-1}(v_{l-1}).
\]
Then we have
\[
\tr(I'\mid W')
=
\tr(I'_{l}\circ\cdots\circ I'_{0} \mid W'_{0}).
\]
\end{lem}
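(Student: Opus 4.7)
The plan is to reduce everything to an elementary linear algebra computation by making a clever choice of bases that trivializes all but one of the $I'_j$. First, I would fix any $\C$-basis $\{e_a\}_{a \in A}$ of $W'_0$ and then propagate it through the chain of isomorphisms: for each $1 \leq j \leq l$, define a basis of $W'_j$ by $\{e^j_a\}_{a \in A}$ with $e^j_a := I'_{j-1} \circ \cdots \circ I'_0(e_a)$. With respect to these bases, each of the intermediate isomorphisms $I'_0, I'_1, \ldots, I'_{l-1}$ is represented by the identity matrix, while $I'_l\colon W'_l \to W'_0$ is represented by the matrix $T_{ab}$ of the composition $T := I'_l \circ I'_{l-1} \circ \cdots \circ I'_0$ viewed as an endomorphism of $W'_0$.

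Next, I would unpack the definition of $I'$ in the induced tensor basis $\{e^0_{a_0} \otimes e^1_{a_1} \otimes \cdots \otimes e^l_{a_l}\}$ of $W'$. By the defining formula for $I'$, the coefficient of $e^0_{q_0} \otimes \cdots \otimes e^l_{q_l}$ in $I'(e^0_{p_0} \otimes \cdots \otimes e^l_{p_l})$ is the product
\[
(I'_l)_{q_0, p_l} \cdot (I'_0)_{q_1, p_0} \cdot (I'_1)_{q_2, p_1} \cdots (I'_{l-1})_{q_l, p_{l-1}},
\]
which in our chosen bases reduces to $T_{q_0, p_l} \cdot \delta_{q_1, p_0} \cdot \delta_{q_2, p_1} \cdots \delta_{q_l, p_{l-1}}$. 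Taking the trace means setting $p_j = q_j$ for all $j$ and summing over $(q_0, \ldots, q_l)$; the Kronecker deltas then telescope to force $q_0 = q_1 = \cdots = q_l$, leaving the single sum $\sum_{q_0} T_{q_0, q_0} = \tr(T \mid W'_0)$.

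There is really no substantive obstacle here: the statement is a combinatorial identity about how a ``twisted cyclic shift'' acts on a tensor product, and the only subtlety is bookkeeping the cyclic indexing. An alternative, coordinate-free phrasing would be to note that $I' = (I'_l \otimes I'_0 \otimes \cdots \otimes I'_{l-1}) \circ \tau$, where $\tau$ denotes the canonical cyclic permutation isomorphism $W'_0 \otimes \cdots \otimes W'_l \to W'_l \otimes W'_0 \otimes \cdots \otimes W'_{l-1}$, and to invoke the standard fact that such a cyclic ``tensor trace'' equals the trace of the composition of the factors in the cyclic order. I would prefer the explicit basis computation since it is short, self-contained, and makes the indexing unambiguous.
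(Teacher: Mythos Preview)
Your proposal is correct and essentially identical to the paper's own proof: both propagate a basis of $W'_0$ through the chain of isomorphisms so that $I'_0,\ldots,I'_{l-1}$ become identity matrices, and then observe that the Kronecker deltas in the tensor-basis trace force all indices to coincide, leaving $\sum_a T_{aa}=\tr(I'_l\circ\cdots\circ I'_0\mid W'_0)$. The only difference is cosmetic: the paper phrases the computation in terms of image vectors and a bilinear pairing, while you phrase it in terms of matrix entries, but the underlying argument is the same.
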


\begin{proof}
We take a $\C$-basis $\{e^{(0)}_{1},\ldots,e^{(0)}_{n}\}$ of $W'_{0}$ and define a $\C$-basis $\{e^{(i)}_{1},\ldots,e^{(i)}_{n}\}$ of each $W'_{i}$ ($1\leq i \leq l$) by $e_{j}^{(i)}\colonequals I'_{i-1}\circ\cdots\circ I'_{0}(e_{j}^{(0)})$.
Then, by the definition of the trace, we have
\[
\tr(I'\mid W')
=
\sum_{1\leq j_{0},\ldots,j_{l}\leq n} \langle I'(e^{(0)}_{j_{0}}\otimes\cdots\otimes e^{(l)}_{j_{l}}), e^{(0)}_{j_{0}}\otimes\cdots\otimes e^{(l)}_{j_{l}}\rangle_{W'},
\]
where $\langle-,-\rangle_{W'}$ denotes the standard $\C$-bilinear pairing on $W'\times W'$ given by
\[
\langle e^{(0)}_{j_{0}}\otimes\cdots\otimes e^{(l)}_{j_{l}}, e^{(0)}_{j'_{0}}\otimes\cdots\otimes e^{(l)}_{j'_{l}}\rangle_{W'}
=
\delta_{j_{0},j'_{0}}\cdots\delta_{j_{l},j'_{l}}
\]
for any $1\leq j_{0},\ldots,j_{l}\leq n$ and $1\leq j'_{0},\ldots,j'_{l}\leq n$, where $\delta_{-,-}$ denotes the Kronecker delta.
By the definition of $I'$, we have
\[
I'(e^{(0)}_{j_{0}}\otimes\cdots\otimes e^{(l)}_{j_{l}})
=
(I_{l}'\circ\cdots\circ I_{0}' (e^{0}_{j_{l}}))\otimes e^{(1)}_{j_{0}}\otimes\cdots\otimes e^{(l)}_{j_{l-1}}
\]
Hence the summand of the above formula for the trace of $I'$ is not zero only when $j_{0}=\cdots=j_{l}$.
Moreover, in this case (let us put $j\colonequals j_{0}=\cdots=j_{l}$), we have
\[
\langle I'(e^{(0)}_{j_{0}}\otimes\cdots\otimes e^{(l)}_{j_{l}}), e^{(0)}_{j_{0}}\otimes\cdots\otimes e^{(l)}_{j_{l}}\rangle_{W'}
=
\langle I'_{l}\circ\cdots\circ I'_{0}(e^{(0)}_{j}), e^{(0)}_{j}\rangle_{W'_{0}},
\]
where $\langle-,-\rangle_{W'_{0}}$ denotes the standard $\C$-bilinear pairing on $W'_{0}\times W'_{0}$ satisfying $\langle e^{(0)}_{j}, e^{(0)}_{j'}\rangle_{W'_{0}}=\delta_{j,j'}$ for any $1\leq j\leq n$ and $1\leq j'\leq n$.
Thus we get 
\[
\tr(I'\mid W')
=
\sum_{j=1}^{n} \langle I'_{l}\circ\cdots\circ I'_{0}(e^{(0)}_{j}), e^{(0)}_{j}\rangle_{W'_{0}}.
\]
The right-hand side is nothing but the trace of $I'_{0}\circ\cdots\circ I'_{l}$ on $W'_{0}$.
\end{proof}

\begin{prop}\label{prop:twisted-HW}
Let $g\colonequals (g^{i}_{j})_{i,j}\in\prod_{i,j}\Sp(V^{i}_{j})$.
Then the trace of $\omega(g)\circ I$ on $W$ is given by
\[
\prod_{i=0}^{r}\tr\Bigl(\omega^{i}_{0}\bigl(g^{i}_{0}\circ\iota_{\ast}(g_{l_{i}}^{i})\circ\cdots\circ\iota_{\ast}^{l_{i}}(g_{1}^{i})\bigr)\circ I^{i}_{l_{i}}\circ\cdots\circ I^{i}_{0} \,\Big\vert\, W^{i}_{0} \Bigr)
\]
\end{prop}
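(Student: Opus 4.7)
The proof plan is to first reduce to the case of a single index $i$, then apply Lemma \ref{lem:tr}, and finally use the intertwining property of the $I^i_j$ to collapse the resulting composition down to the claimed form.

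First, since $\omega = \bigotimes_{i=0}^{r}\omega^{i}$, $I = \bigotimes_{i=0}^{r}I^{i}$, and the group element $g = (g^i_j)_{i,j}$ acts block-diagonally (each $\omega^i$ depends only on $(g^i_0,\ldots,g^i_{l_i})$), the trace factors as
\[
\tr(\omega(g)\circ I \mid W) = \prod_{i=0}^{r} \tr\bigl(\omega^{i}\bigl((g^i_0,\ldots,g^i_{l_i})\bigr) \circ I^{i} \,\big\vert\, W^{i}\bigr).
\]
Thus it suffices to treat a single $i$; I will drop the superscript for the rest of the plan.

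Next I would put the map $\omega(g_0,\ldots,g_l) \circ I$ into the shape of Lemma \ref{lem:tr}. Tracing through the definition of $I$, one checks that
\[
\bigl(\omega(g_0,\ldots,g_l)\circ I\bigr)(v_0\otimes\cdots\otimes v_l) = I'_l(v_l)\otimes I'_0(v_0)\otimes\cdots\otimes I'_{l-1}(v_{l-1}),
\]
where $I'_j \colonequals \omega_{j+1}(g_{j+1})\circ I_j$ for $0\le j\le l-1$ and $I'_l \colonequals \omega_0(g_0)\circ I_l$ (indices mod $l+1$). Applying Lemma \ref{lem:tr} then yields
\[
\tr\bigl(\omega(g_0,\ldots,g_l)\circ I \,\big\vert\, W\bigr) = \tr\bigl(I'_l\circ I'_{l-1}\circ\cdots\circ I'_0 \,\big\vert\, W_0\bigr),
\]
and by expanding $I'_j$ this is
\[
\tr\bigl(\omega_0(g_0)\circ I_l\circ \omega_l(g_l)\circ I_{l-1}\circ \omega_{l-1}(g_{l-1})\circ\cdots\circ I_1\circ \omega_1(g_1)\circ I_0\,\big\vert\, W_0\bigr).
\]

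Finally, I would simplify this composition using the intertwining property $I_j\circ\omega_j(h) = \omega_{j+1}(\iota_{\ast}(h))\circ I_j$. Starting with the rightmost factor $\omega_1(g_1)$, I push it leftward through $I_1$ (picking up one $\iota_{\ast}$), merge with $\omega_2(g_2)$, push the product through $I_2$ (picking up another $\iota_{\ast}$ on the accumulated term), and so on. After pushing through $I_1,\ldots,I_{l-1}$ we arrive at
\[
\omega_0(g_0)\circ I_l\circ \omega_l\bigl(g_l\cdot\iota_{\ast}(g_{l-1})\cdots\iota_{\ast}^{l-1}(g_1)\bigr)\circ I_{l-1}\circ\cdots\circ I_0,
\]
and one final push through $I_l$ (which replaces $\omega_l$ by $\omega_0\circ\iota_{\ast}$) combines everything in front into the single factor $\omega_0\bigl(g_0\cdot\iota_{\ast}(g_l)\cdot\iota_{\ast}^2(g_{l-1})\cdots\iota_{\ast}^l(g_1)\bigr)$. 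Reinstating the index $i$ and taking the product over $i$ yields the asserted formula. No substantive obstacle is expected; the entire argument is bookkeeping, and the only point demanding care is keeping the cyclic indexing and the accumulating powers of $\iota_{\ast}$ consistent.
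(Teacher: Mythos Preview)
Your proposal is correct and follows essentially the same approach as the paper's own proof: factor over $i$, rewrite $\omega^i(g^i)\circ I^i$ in the cyclic-permutation form with $I'_j=\omega_{j+1}(g_{j+1})\circ I_j$, apply Lemma~\ref{lem:tr}, and then use the intertwining relation $I_j\circ\omega_j(-)=\omega_{j+1}(\iota_\ast(-))\circ I_j$ to collapse the composition. Your more explicit description of the final ``push-through'' step is just a spelled-out version of the paper's one-line computation.
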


\begin{proof}
We put $g^{i}\colonequals (g^{i}_{j})_{j}$.
Recall that $W=\bigotimes_{i=0}^{r}W^{i}$, $\omega(g)=\bigotimes_{i=0}^{r}\omega^{i}(g^{i})$, and $I=\bigotimes_{i=0}^{r}I^{i}$.
Hence we have
\[
\tr(\omega(g)\circ I\mid W)
=
\prod_{i=0}^{r}\tr(\omega^{i}(g^{i})\circ I^{i}\mid W^{i}).
\]

Let us compute each $\tr(\omega^{i}(g^{i})\circ I^{i}\mid W^{i})$.
Recall that $W^{i}=\bigotimes_{j=0}^{l_{i}}W^{i}_{j}$, $\omega^{i}(g^{i})=\bigotimes_{j=0}^{l_{i}}\omega^{i}_{j}(g^{i}_{j})$, and an automorphism $I^{i}$ of $W^{i}$ is defined by
\[
I^{i}\colon
v_{0}\otimes\cdots\otimes v_{l_{i}-1}\otimes v_{l_{i}}\mapsto I^{i}_{l_{i}}(v_{l_{i}})\otimes I^{i}_{0}(v_{0})\otimes\cdots\otimes I^{i}_{l_{i}-1}(v_{l_{i}-1}).
\]
Hence the automorphism $\omega^{i}(g^{i})\circ I^{i}$ of $W^{i}$ is given by
\[
v_{0}\otimes\cdots\otimes v_{l_{i}-1}\otimes v_{l_{i}}\mapsto I^{i,\prime}_{l_{i}}(v_{l_{i}})\otimes I^{i,\prime}_{0}(v_{0})\otimes\cdots\otimes I^{i,\prime}_{l_{i}-1}(v_{l_{i}-1}),
\]
where we put
\[
I^{i,\prime}_{j}\colonequals \omega^{i}_{j+1}(g^{i}_{j+1})\circ I^{i}_{j} \colon W^{i}_{j}\xrightarrow{\sim} W^{i}_{j+1}.
\]
Thus, by Lemma \ref{lem:tr}, we get
\[
\tr(\omega^{i}(g^{i})\circ I^{i} \mid W^{i})
=
\tr(I^{i,\prime}_{l_{i}}\circ\cdots\circ I^{i,\prime}_{0} \mid W^{i}_{0}).
\]

Then, by the intertwining property of $I^{i}_{j}$, i.e., $\omega^{i}_{j+1}(\iota_{\ast}(-))\circ I^{i}_{j}=I^{i}_{j}\circ\omega^{i}_{j}(-)$,
\begin{align*}
I^{i,\prime}_{l_{i}}\circ\cdots\circ I^{i,\prime}_{0}
&= \bigl(\omega^{i}_{l_{i}+1}(g^{i}_{l_{i}+1})\circ I^{i}_{l_{i}}\bigr)\circ\cdots\circ\bigl(\omega^{i}_{1}(g^{i}_{1})\circ I^{i}_{0}\bigr)\\
&=\omega^{i}_{0}\bigl(g^{i}_{0}\circ\iota_{\ast}(g_{l_{i}}^{i})\circ\cdots\circ\iota_{\ast}^{l_{i}}(g_{1}^{i})\bigr)\circ I^{i}_{l_{i}}\circ\cdots\circ I^{i}_{0}.
\end{align*}
Hence we get the assertion.
\end{proof}

\subsection{G\'erardin's character formulas of Weil representations}\label{subsec:Gerardin}
In this subsection, we review character formulas of Weil representations established by G\'erardin \cite{Ger77}.
We let $V$ be a finite-dimensional vector space over $\F_{p}$ equipped with a symplectic pairing $\langle-,-\rangle\colon V\times V\rightarrow\F_{p}$.
Let $(\omega_{V},W_{V})$ be the Heisenberg--Weil representation of $\Sp(V)\ltimes\bbH(V)$ with central character $\vartheta\colon\F_{p}\hookrightarrow\C^{\times}$.

We introduce some notation following \cite[Section 4]{Ger77}.
Suppose that $T$ is an $\F_{p}$-rational maximal torus of $\Sp(V)$.
(The symbol $\Sp(V)$ loosely denotes both finite symplectic group and also an algebraic group over $\F_p$.)
Then, since $T$ acts on $V$, we have a decomposition
\[
V_{\overline{\F}_{p}} \, (\colonequals V\otimes_{\F_{p}}\overline{\F}_{p}) =\bigoplus_{\epsilon\in P(V,T)} V_{\overline{\F}_{p}}^{\epsilon},
\]
where $P(V,T)$ denotes the set of weights of $T$ in $V_{\overline{\F}_{p}}$ and $V_{\overline{\F}_{p}}^{\epsilon}$ denotes the weight space with respect to $\epsilon\in P(V,T)$.
As the action of $T$ on $V$ is $\F_{p}$-rational, the set $P(V,T)$ is equipped with an action of $\Gamma_{\F_{p}}=\Gal(\overline{\F}_{p}/\F_{p})$.
Furthermore, by putting $\Sigma_{\F_{p}}\colonequals \Gamma_{\F_{p}}\times\{\pm1\}$, $\Sigma_{\F_{p}}$ also acts on $P(V,T)$ ($-1$ acts via $\epsilon\mapsto-\epsilon$).
We say that a $\Gamma_{\F_{p}}$-orbit $\omega$ in $P(V,T)$ is symmetric (resp.\ asymmetric) if $-\omega=\omega$ (resp.\ $-\omega\neq\omega$).
For each $\Omega\in P(V,T)/\Sigma_{\F_{p}}$ such that $\epsilon\in \Omega$, we define a quadratic character $\chi^{T}_{\Omega}$ of $T(\F_{p})$ by
\[
\chi^{T}_{\Omega}(t)
\colonequals 
\begin{cases}
\epsilon(t)^{\frac{1-q_{\Omega}}{2}}&\text{if an(y) $\omega\subset\Omega$ is asymmetric,}\\
\epsilon(t)^{\frac{1+q_{\Omega}}{2}}&\text{if an(y) $\omega\subset\Omega$ is symmetric,}
\end{cases}
\]\symdef{chi-T-Omega}{$\chi^{T}_{\Omega}$}
for $t\in T(\F_{p})$, where we put $q_{\Omega}\colonequals p^{\frac{1}{2}|\Omega|}$.
We define a quadratic character $\chi^{T}$ of $T(\F_{p})$ by
\[
\chi^{T}\colonequals \prod_{\Omega\in P(V,T)/\Sigma_{\F_{p}}}\chi^{T}_{\Omega}.
\]\symdef{chi-T}{$\chi^{T}$}

\begin{prop}[{\cite[Corollary 4.8.1]{Ger77}}]\label{prop:Gerardin2}
For any $t\in T(\F_p)$, we have
\[
\Theta_{\omega_{V}}(t)
=
(-1)^{l(V,T;t)}\cdot p^{N(V;t)}\cdot\chi^{T}(t),
\]
where 
\begin{itemize}
\item
$l(V,T;t)\colonequals |\{\omega\in P(V,T)/\Gamma_{\F_{p}} \mid \text{$\epsilon(t) \neq1$ for an(y) $\epsilon\in\omega$}\}|$,
\item
$N(V;t)\colonequals \frac{1}{2}\dim_{\F_{p}}{V^{t}}$ (hence $p^{N(V;t)}=|V^{t}|^{\frac{1}{2}}$).
\end{itemize}
\end{prop}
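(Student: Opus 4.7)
The plan is to prove the formula orbit-by-orbit after decomposing $V$ according to the $\Sigma_{\F_p}$-action on $P(V,T)$, then carry out an explicit Schr\"odinger-model computation in each of the two basic cases (asymmetric and symmetric orbits). For each $\Omega \in P(V,T)/\Sigma_{\F_p}$, put
\[
V_{\Omega} \colonequals \Bigl(\bigoplus_{\epsilon \in \Omega} V_{\overline{\F}_p}^{\epsilon}\Bigr)^{\Gamma_{\F_p}}.
\]
Since the $T$-action is $\F_p$-rational and preserves the symplectic form, and since the weight spaces $V_{\overline{\F}_p}^{\epsilon_1}$ and $V_{\overline{\F}_p}^{\epsilon_2}$ pair nontrivially only when $\epsilon_1 + \epsilon_2 = 0$, the spaces $V_{\Omega}$ are $T$-stable, mutually orthogonal, and symplectic, and $V = \bigoplus_{\Omega} V_{\Omega}$. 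As in Section~\ref{subsec:twisted-HW}, $\omega_V$ is isomorphic to $\bigotimes_{\Omega} \omega_{V_{\Omega}}$ on the subgroup $\bigl(\prod_\Omega \Sp(V_\Omega)\bigr)\ltimes \bbH(V)$, and $t$ lies in this subgroup via the diagonal embedding. Hence $\Theta_{\omega_V}(t) = \prod_{\Omega} \Theta_{\omega_{V_{\Omega}}}(t)$, and the proof reduces to checking that each factor contributes $(-1)^{l_\Omega(t)} \, p^{N_\Omega(t)} \, \chi^T_\Omega(t)$ with the obvious orbit-wise quantities.

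First I would handle the asymmetric case, which is the cleaner one. If $\Omega = \omega \sqcup (-\omega)$ with $\omega$ asymmetric, then $L^{\pm} \colonequals (\bigoplus_{\epsilon \in \pm\omega} V_{\overline{\F}_p}^{\epsilon})^{\Gamma_{\F_p}}$ are $\F_p$-rational, totally isotropic, and jointly give a polarization $V_\Omega = L^+ \oplus L^-$ preserved by $t$. In the Schr\"odinger model $W_{V_\Omega} = \C[L^+]$, the element $t$ acts (up to a scalar twist) by $f(v) \mapsto f(t^{-1}v) \cdot \vartheta\bigl(\tfrac{1}{2}\langle (t-1)v, t\cdot(\,\cdot\,)\rangle\bigr)$, and a direct trace computation shows
\[
\Theta_{\omega_{V_\Omega}}(t) = \sum_{v \in (L^+)^t} 1 = |(L^+)^t|,
\]
which equals $|V_\Omega^t|^{1/2} = p^{N_\Omega(t)}$. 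The sign $\chi^T_\Omega(t) = \epsilon(t)^{(1-q_\Omega)/2}$ is identified with the unique quadratic character of $\{t \in T(\F_p) \mid t|_{L^+} \text{ fixes a line}\}$, and is trivial exactly when $t$ acts trivially on $\omega$, so the $(-1)^{l_\Omega(t)}$ factor vanishes here.

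Next I would treat the symmetric case, which requires more care. If $\omega$ is symmetric, then $V_\Omega$ carries a $T(\F_p)$-action whose eigenspaces fuse into a single Galois-stable orbit. Here there is no $\F_p$-rational $t$-invariant polarization in general, so I would diagonalize $t$ over the quadratic extension of $\F_{q_\Omega}$ obtained by fixing $\omega$ inside $\Omega$, reduce to a rank-one Weil representation of $\Sp(V_\epsilon \oplus V_{-\epsilon})$ over $\F_{q_\Omega}$, and evaluate the character via the classical Gauss-sum identity for the Weil representation of $\SL_2$ over a finite field. The norm-one condition $\Nr_{\F_{q_\Omega^2}/\F_{q_\Omega}}(t|_{V_\epsilon}) = 1$ together with the identification of the unique nontrivial quadratic character of the kernel of this norm gives precisely $\chi^T_\Omega(t) = \epsilon(t)^{(1+q_\Omega)/2}$, and the Gauss sum evaluation supplies the sign $(-1)$ and the power $p^{N_\Omega(t)} = |V_\Omega^t|^{1/2}$ whenever $t$ acts nontrivially on $\omega$.

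The hardest step will be the sign tracking in the symmetric case: aligning the Gauss-sum sign with $(-1)^{l_\Omega(t)}$ and ensuring that the character of the descended torus matches $\chi^T_\Omega$ uniformly across fixed versus non-fixed orbits. This is essentially what G\'erardin's Sections~4.5--4.8 accomplish, and the cleanest way to finish is to verify the formula first on a generating set of regular elements (where the Gauss-sum formalism applies directly) and then extend to all $t \in T(\F_p)$ by continuity of the character on $T(\F_p)$ and the fact that both sides are class functions, as in \cite[Corollary 4.8.1]{Ger77}.
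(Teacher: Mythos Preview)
The paper does not supply a proof; it cites \cite[Corollary~4.8.1]{Ger77}. Your orbit-by-orbit reduction is the same strategy as G\'erardin's, but the asymmetric case contains a genuine error. When $t$ preserves the polarization $V_\Omega=L^+\oplus L^-$, it lies in the Levi $\GL(L^+)$ of both Siegel parabolics, and the Weil action on the Schr\"odinger model $\C[L^+]$ is $(t\cdot f)(v)=\sgn_{\F_p^\times}\bigl(\det(t\mid L^+)\bigr)\cdot f(t^{-1}v)$; there is no $\vartheta$-factor here (your displayed formula has an unbound variable and conflates the Heisenberg and symplectic actions), and the ``scalar twist'' you set aside is the entire content. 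The trace is therefore $\sgn_{\F_p^\times}\bigl(\det(t\mid L^+)\bigr)\cdot|(L^+)^t|$, not $|(L^+)^t|$; compare Corollary~\ref{cor:Gerardin} in this paper, which establishes exactly this. That sign equals $\epsilon(t)^{(q_\Omega-1)/2}=\chi^T_\Omega(t)$, so $\chi^T_\Omega$ is already produced by the trace and your separate attempt to recover it is both unnecessary and incorrect: $\chi^T_\Omega(t)$ is trivial iff $\epsilon(t)$ is a square in $\F_{q_\Omega}^\times$, not iff $t$ acts trivially on $\omega$. The fact that $(-1)^{l_\Omega(t)}=1$ in the asymmetric case holds for a different and simpler reason: $\omega$ and $-\omega$ are distinct $\Gamma_{\F_p}$-orbits contributing to $l$ in pairs, so $l_\Omega(t)\in\{0,2\}$.

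Your symmetric-case sketch points in the right direction (Gauss sums, the norm-one torus), but is too vague to verify; the sign bookkeeping there is exactly the delicate part of G\'erardin's argument and cannot be finished by ``continuity of the character'' on a finite group.
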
\symdef{l-V-T-t}{$l(V,T;t)$}\symdef{N-V-t}{$N(V;t)$}

Practically, in order to apply Proposition \ref{prop:Gerardin2} to computing $\Theta_{\omega_{V}}(g)$ for  a given semisimple element $g\in\Sp(V)$, we have to pick an $\F_{p}$-rational maximal torus $T$ of $\Sp(V)$ containing $g$ and analyze the structure of the set of weights $P(V,T)$ including its Galois action.
The following lemmas are useful for this:

\begin{lem}\label{lem:eigenvalue-conjugate}
Let $g,t\in\Sp(V)$ be semisimple elements.
If $g$ and $t$ have the same (multi-)sets of eigenvalues, then they are $\Sp(V)$-conjugate.
\end{lem}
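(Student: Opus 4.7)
The plan is to reduce this statement over $\F_p$ to a geometric classification over $\overline{\F}_p$ combined with a Lang's theorem descent. First I would establish the purely geometric fact: two semisimple elements of $\Sp(V)(\overline{\F}_p)$ with the same multi-set of eigenvalues are conjugate in $\Sp(V)(\overline{\F}_p)$. For a semisimple $g \in \Sp(V)(\overline{\F}_p)$, decompose $V_{\overline{\F}_p} = \bigoplus_\lambda V^\lambda$ into eigenspaces of $g$. Invariance of the symplectic form under $g$ forces $\langle V^\lambda, V^\mu \rangle = 0$ whenever $\lambda\mu \neq 1$; hence each $V^\lambda$ with $\lambda \neq \pm 1$ is isotropic and paired non-degenerately with $V^{\lambda^{-1}}$, while the form restricts to a non-degenerate symplectic form on $V^{\pm 1}$ (using $p \neq 2$). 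Each such piece is classified up to symplectic isomorphism by its dimension data alone. Consequently, given $g, t \in \Sp(V)(\overline{\F}_p)$ with matching eigenvalue multiplicities, a piecewise symplectic matching of the two eigenspace decompositions assembles into an $h \in \Sp(V)(\overline{\F}_p)$ with $hgh^{-1} = t$.

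To lift this to $\F_p$-rational conjugacy, I would consider the transporter variety $X = \{h \in \Sp(V) : hgh^{-1} = t\}$. Since $g, t \in \Sp(V)(\F_p)$, the scheme $X$ is defined over $\F_p$; it is non-empty over $\overline{\F}_p$ by the previous step, and is a torsor under the centralizer $Z_{\Sp(V)}(g)$. The algebraic group $\Sp(V)$ is simply connected, so by Steinberg's theorem the centralizer of any semisimple element is connected. Lang's theorem applied to the connected group $Z_{\Sp(V)}(g)$ then guarantees that $X(\F_p)$ is non-empty, producing the desired conjugator in $\Sp(V)$.

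The main delicacy lies in the geometric classification, particularly in handling the eigenvalues $\pm 1$ whose eigenspaces carry an intrinsic non-degenerate symplectic structure rather than pairing with a distinct dual eigenspace, and in checking that the piecewise symplectic isomorphisms genuinely assemble into a symplectic automorphism of $V_{\overline{\F}_p}$. A more elementary, $\F_p$-intrinsic route via primary decomposition $V = \bigoplus_i \ker f_i(g)$ along irreducible factors of the characteristic polynomial is also possible, but then self-reciprocal versus non-self-reciprocal factors must be handled separately and the combinatorics becomes heavier; the Steinberg--Lang approach keeps the argument uniform and short.
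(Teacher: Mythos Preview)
Your proposal is correct and follows essentially the same architecture as the paper's proof: establish conjugacy over $\overline{\F}_p$, then descend to $\F_p$ via Steinberg's connectedness-of-centralizer theorem for simply connected groups together with Lang's theorem. The only cosmetic difference is that for the geometric step the paper passes through $\GL(V_{\overline{\F}_p})$-conjugacy and cites \cite[p.~275, Exercise~2.15~(ii)]{SS70} for the fact that $\GL$-conjugate semisimple elements of $\Sp$ are already $\Sp$-conjugate, whereas you sketch this directly via the eigenspace decomposition and the pairing $\langle V^{\lambda},V^{\mu}\rangle=0$ for $\lambda\mu\neq1$; the descent step is identical up to the language of cocycles versus torsors.
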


\begin{proof}
The proof of this lemma should be standard, but we explain it for the sake of completeness.
Note that $\Sp(V)\subset \Sp(V_{\overline{\F}_{p}})\subset\GL(V_{\overline{\F}_{p}})$.
The assumption that $g$ and $t$ have the same eigenvalues implies that $g$ and $t$ are conjugate in $\GL(V_{\overline{\F}_{p}})$.
It is known that this furthermore implies that $g$ and $t$ are conjugate in $\Sp(V_{\overline{\F}_{p}})$ (for example, see \cite[275 page, Exercises 2.15 (ii)]{SS70}).

Let $x\in \Sp(V_{\overline{\F}_{p}})$ be an element such that $g=xtx^{-1}$.
Then we have $xtx^{-1}=g=\sigma(g)=\sigma(x)t\sigma(x)^{-1}$ for any $\sigma\in \Gamma_{\F_{p}}$.
In other words, by putting $H$ to be the centralizer of $t$ in $\Sp(V_{\overline{\F}_{p}})$, we have $\sigma(x)^{-1}x\in H$.
Hence we obtain a $1$-cocycle $z_{\sigma}\in Z^{1}(\Gamma_{\F_{p}}, H)$ given by $z_{\sigma}=\sigma(x)^{-1}x$.
Since $\Sp(V_{\overline{\F}_{p}})$ is simply-connected, $H$ is connected (for example, see \cite[Section 2.11]{Hum95}).
Thus we have $H^{1}(\Gamma_{\F_{p}}, H)=1$ by Lang's theorem.
This means that there exists an element $h\in H$ satisfying $z_{\sigma}=\sigma(h)^{-1}h$.
In particular, $xh^{-1}$ is $\F_{p}$-rational, i.e., an element of $\Sp(V)$.
As we have $(xh^{-1})t(xh^{-1})^{-1}=xtx^{-1}=g$, we obtain the assertion.
\end{proof}

\begin{lem}\label{lem:Sp-tori}
Let $2n\colonequals \dim_{\F_{p}}(V)$.
Let $k^{\circ}_{1},\ldots,k^{\circ}_{l}$ and $k^{\circ}_{l+1},\ldots,k^{\circ}_{r}$ be finite extensions of $\F_{p}$ satisfying $[k^{\circ}_{1}:\F_{p}]+\cdots +[k^{\circ}_{r}:\F_{p}]= n$.
Let $k_{i}$ be the quadratic extension of $k_{i}^{\circ}$ for $1\leq i\leq l$.
Then there exists an $\F_{p}$-maximal torus $T$ of $\Sp(V)$ of the form
\[
\prod_{i=1}^{l} \Ker(\Nr_{k_{i}/k_{i}^{\circ}}\colon \Res_{k_{i}/\F_{p}}\Gm\rightarrow\Res_{k_{i}^{\circ}/\F_{p}}\Gm)\times\prod_{i=l+1}^{r}\Res_{k_i^\circ/\F_p}\Gm.
\]
Moreover, we have the following:
\begin{enumerate}
\item
The set of weights $P(V,T)$ is of the form $\bigsqcup_{i=1}^{l}\Omega_{i}\sqcup \bigsqcup_{j=1}^{r}\pm\omega_i$, where 
\begin{itemize}
\item
$\Omega_{j}$ is a finite set of order $[k_{l}:\F_{p}]$ on which $\Gal(k_{l}/\F_{p})$ acts simply transitively and the unique nontrivial element of $\Gal(k_{l}/k_{l}^{\circ})$ acts via negation,
\item
$\omega_{i}$ is a finite set of order $[k_{l}^\circ:\F_{p}]$ on which $\Gal(k^\circ_{l}/\F_{p})$ acts simply transitively.
\end{itemize}
\item
If $t=(t_{1},\ldots,t_{l},t_{l+1},\ldots,t_{r})\in T(\F_{p})\cong \prod_{i=1}^{l} k_{i}^{1}\times\prod_{i=l+1}^{r}(k^\circ_i)^\times$, then the (multi-)set of eigenvalues of $t$ as an $\F_p$-automorphism of $V$ is given by
\[
\bigsqcup_{i=1}^{l}\{\sigma(t_{i}) \mid \sigma\in\Gal(k_{i}/\F_{p})\}
\sqcup
\bigsqcup_{i=l+1}^{r}\{\sigma(t_{i})^{\pm1} \mid \sigma\in\Gal(k^\circ_{i}/\F_{p})\}.
\]
\end{enumerate}
\end{lem}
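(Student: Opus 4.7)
The plan is to construct the torus $T$ directly by exhibiting an orthogonal decomposition $V = \bigoplus_{i=1}^{r} V_i$ into symplectic subspaces of $\F_p$-dimension $2[k_i^{\circ}:\F_p]$, each carrying a natural action of the prescribed factor of $T$. For $1 \leq i \leq l$, take $V_i$ to be the $\F_p$-vector space $k_i$ equipped with the symplectic form
\[
(x,y) \mapsto \Tr_{k_i/\F_p}\bigl(c_i \cdot x\tau_i(y)\bigr),
\]
where $\tau_i$ generates $\Gal(k_i/k_i^{\circ})$ and $c_i \in k_i^{\times}$ is chosen so that $\tau_i(c_i) = -c_i$ (such $c_i$ exists since $p \neq 2$ makes the $(-1)$-eigenspace of $\tau_i$ nonzero). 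Multiplication by $t \in k_i^{1}$ preserves this form because $t\tau_i(t) = \Nr_{k_i/k_i^{\circ}}(t) = 1$. For $l+1 \leq i \leq r$, take $V_i = k_i^{\circ} \oplus k_i^{\circ}$ with the form $((a,b),(c,d)) \mapsto \Tr_{k_i^{\circ}/\F_p}(ad-bc)$, on which $(k_i^{\circ})^{\times}$ acts symplectically via $t\cdot(a,b) = (ta, t^{-1}b)$. Since any two nondegenerate $\F_p$-symplectic spaces of the same dimension are isomorphic, we may identify $\bigoplus_i V_i$ with $V$; the resulting product $T = \prod_i T_i$ then embeds as an $\F_p$-rational subtorus of $\Sp(V)$ of rank $\sum_i [k_i^{\circ}:\F_p] = n$, hence is maximal.

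With $T$ in hand, the weight analysis is immediate after base change. Using the splitting $k_i \otimes_{\F_p} \overline{\F}_p \cong \prod_{\sigma \in \Gal(k_i/\F_p)} \overline{\F}_p$ (and similarly for $k_i^{\circ}$), the torus factor $T_i$ acts on the $\sigma$-component of $V_i \otimes \overline{\F}_p$ via $t_i \mapsto \sigma(t_i)$. For $i \leq l$ this exhibits the weights as the set $\Omega_i$ of embeddings $k_i \hookrightarrow \overline{\F}_p$, on which $\Gal(k_i/\F_p)$ acts simply transitively by post-composition; the symmetry involution $\omega \mapsto -\omega$ coincides with composition by $\tau_i$ (since $\tau_i(t_i) = t_i^{-1}$ on $k_i^{1}$), so $\Omega_i$ forms a single symmetric $\Sigma_{\F_p}$-orbit of size $[k_i:\F_p]$ with the nontrivial element of $\Gal(k_i/k_i^{\circ})$ acting by negation. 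For $i > l$, $V_i \otimes \overline{\F}_p$ decomposes over $\sigma \in \Gal(k_i^{\circ}/\F_p)$ into two-dimensional pieces on which $t_i$ acts as $\mathrm{diag}(\sigma(t_i), \sigma(t_i)^{-1})$, so the weights split into a pair of asymmetric $\Gamma_{\F_p}$-orbits $\omega_i$ and $-\omega_i$. Reading off the scalar by which $t_i$ acts on each one-dimensional weight space of $V \otimes \overline{\F}_p$ immediately yields the multiset of eigenvalues claimed in (2).

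The main obstacle is purely technical: verifying that the first construction gives a well-defined $\F_p$-symplectic form. Skew-symmetry reduces to the identity $\tau_i(c_i) = -c_i$ after a change of variables under the Galois-invariance of $\Tr_{k_i/\F_p}$, while nondegeneracy follows from the nondegeneracy of the trace pairing on $k_i$ combined with $c_i \neq 0$. Once these points are verified, no further delicate computations are needed: the description of $P(V,T)$ as a disjoint union of Galois orbits in (1) comes directly from the factorization of $T$ and the Galois structure of $\Hom(k_i, \overline{\F}_p)$, and the eigenvalue formula in (2) is simply the list of scalars obtained from the weight action.
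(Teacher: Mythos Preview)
Your proof is correct and follows essentially the same approach as the paper: construct each symplectic factor $V_i$ explicitly as $k_i$ (for $i\leq l$) or $k_i^\circ\oplus k_i^\circ$ (for $i>l$) with the evident torus action, then read off weights and eigenvalues after base change. The only cosmetic difference is that the paper uses the form $(x,y)\mapsto\Tr_{k_i/\F_p}(x\tau_i(y)-\tau_i(x)y)$ on $k_i$ rather than your twisted version $\Tr_{k_i/\F_p}(c_i\cdot x\tau_i(y))$ with $\tau_i(c_i)=-c_i$; these are equivalent, and in fact your variant is the one appearing in the body of the paper (Section~\ref{subsec:Heisen-structure}). You also supply more detail on the weight analysis than the paper, which simply asserts that parts (1) and (2) follow immediately from the explicit realization.
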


\begin{proof}
If we let $\tau_{i}$ be the unique nontrivial element of $\Gal(k_{i}/k_{i}^{\circ})$, then we can define an $\F_{p}$-symplectic form on $k_{i}$ by
\[
(x,y)\mapsto \Tr_{k_{i}/\F_{p}}(x\tau_{i}(y)-\tau_{i}(x)y).
\]
Since the multiplication action of $k_{i}^{1}$ on $k_{i}$ preserves this symplectic form, we see that the symplectic group $\Sp(k_{i})$ (as an algebraic group over $\F_{p}$) contains an $\F_{p}$-rational torus $\Ker(\Nr_{k_{i}/k_{i}^{\circ}}\colon \Res_{k_{i}/\F_{p}}\Gm\rightarrow\Res_{k_{i}^{\circ}/\F_{p}}\Gm)$.
Since its rank is given by $[k_{i}^{\circ}:\F_{p}]$, which is the half of $\dim_{\F_{p}}(k_{i})$, it is a maximal torus.
Similarly, the action of $(k_{i}^{\circ})^\times$ on $k_{i}\times k_{i}$ given by $(x,y)\mapsto(zx,z^{-1}y)$ preserves the symplectic form represented by $(\begin{smallmatrix}0&1\\1&0\end{smallmatrix})$, hence $\Sp(k_{i})$ also contains an $\F_{p}$-rational torus $\Res_{k_i^\circ/\F_p}\Gm$.
Noting that $\Sp(V)\cong\Sp_{2n}$ contains $\prod_{i=1}^{r}\Sp(k_i)$, we see that the product $\prod_{i=1}^{l} \Ker(\Nr_{k_{i}/k_{i}^{\circ}}\colon \Res_{k_{i}/\F_{p}}\Gm\rightarrow\Res_{k_{i}^{\circ}/\F_{p}}\Gm)\times\prod_{i=l+1}^{r}\Res_{k_i^\circ/\F_p}\Gm$ is realized in $\Sp(V)$ as an $\F_p$-rational maximal torus.
The remaining assertions immediately follows from this explicit realization.
\end{proof}

We also introduce another formula of G\'erardin.

\begin{prop}[{\cite[Theorem 4.9.1 (a), (c)]{Ger77}}]\label{prop:Gerardin}
Let $g\in\Sp(V)$.
\begin{enumerate}
\item
Suppose that $g$ has no nonzero fixed point in $V$.
If $V'$ is a maximal $g$-invariant totally isotropic subspace of $V$, then we have
\[
\Theta_{\omega_{V}}(g)
=\sgn_{\F_{p}^{\times}}\bigl((-1)^{\frac{\dim{V_{0}}}{2}}\cdot\det(g\mid V')\cdot\det(g-1\mid V_{0})\bigr),
\]
where $V_{0}\colonequals V^{\prime\perp}/V'$.
\item
Suppose that $g$ fixes pointwise a line $L\subset V$.
If $V_{0}$ is a $g$-invariant subspace of $L^{\perp}$ such that $L^{\perp}=L\oplus V_{0}$, then we have
\[
\Theta_{\omega_{V}}(g)
=\Theta_{\omega_{V_{0}}}(g)\sum_{v\in V_{0}^{\perp}/L}\vartheta(\langle gv,v\rangle),
\]
where $\omega_{V_{0}}$ is a Heisenberg--Weil representation of $\Sp(V_{0})\ltimes\bbH(V_{0})$ with central character $\vartheta$.
\end{enumerate}
\end{prop}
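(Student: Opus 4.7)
This is Gérardin's formula from \cite{Ger77}, so the plan is to reproduce his computation in the Schrödinger (Lagrangian) model of the Weil representation, which realizes $\omega_V$ on $\C[Y]$ for a polarization $V = X \oplus Y$ into dual Lagrangians. In this model, the Heisenberg group acts by translation and $\vartheta$-twisted characters, and any $g \in \Sp(V)$ acts as an integral operator whose kernel $K_g(y_1, y_2)$ is a quadratic exponential in $\vartheta$; the trace is $\tr(\omega_V(g)) = \sum_{y \in Y} K_g(y,y)$.

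For part (1), first verify that $g-1$ is invertible on $V_0 = V'^{\perp}/V'$: if $v \in V'^{\perp}$ satisfies $(g-1)v \in V'$, then $V' + \F_p v$ is still $g$-invariant and totally isotropic, so by maximality $v \in V'$. Extend $V'$ to a (not necessarily $g$-invariant) Lagrangian $X \supset V'$ and choose a complementary Lagrangian $Y$. Writing $g$ in block form relative to the flag $V' \subset V'^{\perp} \subset V$ and evaluating $\sum_y K_g(y,y)$ reduces, after a change of variables on each graded piece, to a Gauss sum of the form $\sum_{v \in V_0}\vartheta\bigl(\tfrac12\langle (g-1)v, v\rangle\bigr)$. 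The standard evaluation $\sum_{v \in \F_p^n}\vartheta\bigl(\tfrac12 Q(v)\bigr) = \sgn_{\F_p^\times}(\det Q)\cdot \gamma^{\,n}\cdot p^{n/2}$ for nondegenerate $Q$ (with $\gamma$ the Weil constant of $\vartheta$) yields the factor $\sgn_{\F_p^\times}(\det(g-1 \mid V_0))$. The change-of-variables Jacobian on $V'$ contributes the factor $\det(g \mid V')$ inside $\sgn_{\F_p^\times}$, while the overall Weil-constant $\gamma^{\dim V_0}$ evaluates (since $\dim V_0$ is even) to the sign $\sgn_{\F_p^\times}\bigl((-1)^{\dim V_0/2}\bigr)$.

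For part (2), choose a line $L^{\vee}\subset V$ with $\langle L, L^{\vee}\rangle \neq 0$, producing an orthogonal decomposition $V = (L \oplus L^{\vee}) \perp V_0$ (after possibly redefining $V_0$ by a shear, which does not affect $\omega_{V_0}$ up to isomorphism). The Heisenberg group $\bbH(V)$ is the central product $\bbH(L\oplus L^{\vee}) \otimes_{\F_p} \bbH(V_0)$, so $\omega_V$ factors as $\omega_{L\oplus L^{\vee}} \otimes \omega_{V_0}$ on the subgroup stabilizing this decomposition. Since $g$ fixes $L$ pointwise, it lies in the stabilizer of $L$ in $\Sp(V)$; decomposing $g$ relative to the flag $L \subset L^{\perp}$ writes $g = u \cdot (g|_{V_0})$, where $u$ is a unipotent element shearing $L^{\vee}$ into $V_0^{\perp}$ by an element of $\Hom(L^{\vee}, V_0^{\perp}/L)$. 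Realizing $\omega_V$ on the Lagrangian $L \oplus X_0$ (with $X_0 \subset V_0$ Lagrangian), the operator $\omega_V(u)$ acts diagonally on functions of $L$ by a $\vartheta$-character whose integration over $y \in L$ generates the sum $\sum_{v \in V_0^{\perp}/L}\vartheta(\langle gv, v\rangle)$, while the $g|_{V_0}$-part contributes the factor $\Theta_{\omega_{V_0}}(g)$.

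The main obstacle will be the sign and normalization bookkeeping in part (1): matching $\gamma^{\dim V_0}$ against $(-1)^{\dim V_0/2}$, and absorbing the Jacobian on $V'$ cleanly into a single $\sgn_{\F_p^\times}$ factor, both require a careful choice of Lagrangian $X$ and of the explicit model for $\omega_V$. A cleaner route is by induction on $\dim V'$, reducing to the base case $V' = 0$ (where the formula specializes to the classical Howe-type evaluation for regular elliptic $g$) and then handling the inductive step using the semidirect structure of the stabilizer of $V'$; alternatively, one can bootstrap part (1) from part (2) via reduction to the case that $g$ has no nontrivial fixed vectors.
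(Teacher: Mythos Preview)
The paper does not prove this proposition at all: it is stated with the attribution \cite[Theorem 4.9.1 (a), (c)]{Ger77} and then immediately used, with no argument supplied. So there is nothing in the paper to compare your sketch against; the paper's ``proof'' is the citation.

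Your outline is a reasonable summary of how G\'erardin's argument goes in the Schr\"odinger model, and the structural observations (invertibility of $g-1$ on $V_0$ from maximality of $V'$, the central-product factorization $\bbH(V)\cong\bbH(L\oplus L^\vee)\cdot\bbH(V_0)$ for part (2)) are correct. But as you yourself flag, the sign and Weil-constant bookkeeping in part (1) is exactly where the work lies, and your sketch does not actually carry it out; the phrases ``evaluates to'' and ``contributes the factor'' hide the nontrivial computation. If you want a self-contained proof rather than a citation, you would need to fix a specific model, write down the explicit kernel $K_g$, and track the normalizations through; the inductive alternative you mention at the end is closer to how G\'erardin organizes things and would likely be cleaner than the direct block computation. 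For the purposes of this paper, though, simply citing \cite{Ger77} is what is done and is entirely appropriate.
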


\begin{lem}\label{lem:polarization}
Let $V=V_{+}\oplus V_{-}$ be a polarization of $V$ (i.e., $V_{+}$ and $V_{-}$ are totally isotropic subspaces).
Let $g\in\Sp(V)$ be a semisimple element and we suppose that $V_{+}$ and $V_{-}$ are invariant under $g$.
Then, for any line $L_{+}\subset V_{+}$ fixed by $g$ pointwise, there exist $g$-invariant decompositions $V_{+}=L_{+}\oplus M_{+}$ and $V_{-}=L_{-}\oplus M_{-}$ such that $L_{-}$ is a line fixed by $g$ pointwise and we have $L_{+}^{\perp}=V_{+}\oplus M_{-}$ and $L_{-}^{\perp}=V_{-}\oplus M_{+}$.
\end{lem}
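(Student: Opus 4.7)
The plan is to exploit the polarization structure together with the semisimplicity of $g$. First, note that since $V = V_+ \oplus V_-$ and each $V_\pm$ is totally isotropic, we must have $\dim V_+ = \dim V_- = \frac{1}{2}\dim V$, so both $V_\pm$ are Lagrangian (i.e.\ $V_\pm^\perp = V_\pm$). Consequently, the symplectic form induces a perfect pairing $V_+ \times V_- \to \F_p$. Since $g$ preserves both $V_\pm$ and respects the symplectic form, this pairing is $g$-equivariant in the sense that $\langle gv,gw\rangle = \langle v,w\rangle$. A standard argument using the decomposition of $V_\pm \otimes \overline{\F}_p$ into eigenspaces shows that $V_+^{\lambda}$ and $V_-^{\mu}$ pair to zero unless $\lambda\mu = 1$; hence the restriction $V_+^{g=1}\times V_-^{g=1} \to \F_p$ is itself a perfect pairing.

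Given $L_+ \subset V_+^{g=1}$, define $M_- := L_+^\perp \cap V_-$, which is a codimension-$1$, $g$-invariant subspace of $V_-$. Since $V_+ = V_+^\perp \subset L_+^\perp$ and $L_+^\perp$ has codimension $1$ in $V$, one automatically has $L_+^\perp = V_+ \oplus M_-$. Next I would produce $L_-$ as follows. By the perfect duality between $V_+^{g=1}$ and $V_-^{g=1}$ established above, there exists a vector $w \in V_-^{g=1}$ with $\langle w,e_+\rangle \neq 0$ for any chosen generator $e_+$ of $L_+$; take $L_- := \F_p w$. Then $L_-$ is a $g$-fixed line, and the condition $\langle L_+,L_-\rangle \neq 0$ precisely says that $L_- \not\subset M_-$, giving $V_- = L_- \oplus M_-$. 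Symmetrically set $M_+ := L_-^\perp \cap V_+$; the same reasoning shows $L_-^\perp = V_- \oplus M_+$ and $V_+ = L_+ \oplus M_+$.

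There is essentially no obstacle: the only substantive input is the perfect pairing of the fixed subspaces $V_\pm^{g=1}$, which is immediate from the semisimplicity of $g$ combined with the orthogonality of eigenspaces of different eigenvalues under the symplectic form. All the remaining decompositions are then forced by dimension count and the definitions of $M_\pm$ as appropriate perpendiculars, and $g$-invariance of each piece is automatic since $g$ preserves the symplectic form, $V_\pm$, and the pointwise-fixed line $L_+$ (respectively $L_-$).
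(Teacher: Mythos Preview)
Your proof is correct and reaches the same conclusion as the paper, but the key step is organized differently. The paper produces the $g$-fixed line $L_{-}$ by an averaging trick: it picks any $w\in V_{-}$ with $\langle l,w\rangle\neq 0$ (from nondegeneracy of the polarization pairing), and then replaces $w$ by $\frac{1}{p'}\sum_{i=0}^{p'-1}g^{i}(w)$, using that a semisimple element of $\Sp(V)$ over $\F_{p}$ has finite order $p'$ prime to $p$; the averaged vector is $g$-fixed and still pairs nontrivially with $l$ since $g$ fixes $l$. You instead base-change to $\overline{\F}_{p}$, decompose into eigenspaces, and observe that $V_{+}^{\lambda}$ pairs only with $V_{-}^{\lambda^{-1}}$, so the restriction $V_{+}^{g=1}\times V_{-}^{g=1}\to\F_{p}$ is already perfect; this lets you choose $w\in V_{-}^{g=1}$ directly. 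Both arguments use semisimplicity in an essential way (the paper via prime-to-$p$ order, you via diagonalizability), and the remaining bookkeeping with $M_{\pm}=L_{\mp}^{\perp}\cap V_{\pm}$ is identical in spirit. Your route is slightly more conceptual and avoids the explicit averaging, while the paper's is more elementary and self-contained over $\F_{p}$; neither has a real advantage here.
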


\begin{proof}
Let $l$ be a nonzero element of the line $L_{+}$.
We put $M_{-}\colonequals \{v\in V_{-} \mid \langle l,v\rangle=0\}$.
Since $V=V_{+}\oplus V_{-}$ is a polarization, we can find an element $w\in V_{-}$ satisfying $\langle l,w\rangle\neq0$.
Note that, as $g$ is semisimple, the order of $g$ (say $p'$) is prime to $p$.
Thus, since $g$ stabilizes the subspace $V_{-}$, the averaged element $w'\colonequals \frac{1}{p'}\sum_{i=0}^{p'-1}g^{i}(w)$ belongs to $V_{-}$.
Moreover, $w'$ is $g$-invariant and satisfies $\langle l,w'\rangle=\langle l,w\rangle\neq0$.
By putting $L_{-}\colonequals \F_{p}w'$, we get $V_{-}=L_{-}\oplus M_{-}$.
By applying the same construction to $V_{+}$ using $w'$ instead of $l$, we get $V_{+}=L_{+}\oplus M_{+}$.
These decompositions satisfy the conditions as desired.
\end{proof}

\begin{cor}\label{cor:Gerardin}
Let $g\in\Sp(V)$ be a semisimple element and we suppose that we have a $g$-invariant polarization $V=V_{+}\oplus V_{-}$ of $V$.
Then we have
\[
\Theta_{\omega_{V}}(g)
=\sgn_{\F_{p}^{\times}}(\det(g\mid V_{+}))\cdot |V^{g}|^{\frac{1}{2}}.
\]
\end{cor}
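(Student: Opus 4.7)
The plan is to prove the corollary by induction on $\dim V^g$, combining both parts of Proposition \ref{prop:Gerardin} with Lemma \ref{lem:polarization}. The two parts of Proposition \ref{prop:Gerardin} correspond exactly to the base case $V^g=0$ and the inductive step in which we peel off a hyperbolic plane of fixed vectors.

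For the base case, suppose $V^g=0$. Then $V_+\subset V$ is a maximal totally isotropic $g$-invariant subspace (its $g$-invariance is in the hypothesis), so Proposition \ref{prop:Gerardin}(1) applies with $V'=V_+$. Since $V_+$ is maximal isotropic we have $V_+^\perp=V_+$, hence $V_0=V_+^\perp/V_+=0$, and the formula reduces to $\Theta_{\omega_V}(g)=\sgn_{\F_p^\times}(\det(g\mid V_+))$, matching the desired identity since $|V^g|^{1/2}=1$.

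For the inductive step, assume $V^g\neq0$. The first sub-step is to exhibit a $g$-fixed line inside $V_+$. Since $V=V_+\oplus V_-$ is a polarization, the symplectic pairing identifies $V_-$ with the $\F_p$-dual of $V_+$, and through this identification the action of $g$ on $V_-$ corresponds to the transpose-inverse of its action on $V_+$. Hence $\dim V_+^g=\dim V_-^g$, so $V^g=V_+^g\oplus V_-^g$ forces $V_+^g\neq0$ and we may pick a line $L_+\subset V_+^g$. Lemma \ref{lem:polarization} then produces $g$-stable decompositions $V_+=L_+\oplus M_+$, $V_-=L_-\oplus M_-$, with $L_-$ a fixed line, such that $V_0\colonequals M_+\oplus M_-$ satisfies $L_+^\perp=L_+\oplus V_0$. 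Note that $V_0$ is a $g$-stable symplectic subspace with induced polarization $M_+\oplus M_-$, and that $V_0^\perp=L_+\oplus L_-$.

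Now apply Proposition \ref{prop:Gerardin}(2) with the line $L=L_+$ and complement $V_0$:
\[
\Theta_{\omega_V}(g)=\Theta_{\omega_{V_0}}(g)\sum_{v\in V_0^\perp/L_+}\vartheta(\langle gv,v\rangle).
\]
The quotient $V_0^\perp/L_+$ is represented by $L_-$, and since $g$ acts trivially on $L_-$ we get $\langle gv,v\rangle=\langle v,v\rangle=0$, so the sum equals $|L_-|=p$. By the induction hypothesis applied to $V_0$ with polarization $M_+\oplus M_-$, we have $\Theta_{\omega_{V_0}}(g)=\sgn_{\F_p^\times}(\det(g\mid M_+))\cdot|V_0^g|^{1/2}$. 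Using $\det(g\mid V_+)=\det(g\mid L_+)\det(g\mid M_+)=\det(g\mid M_+)$ and $V^g=(L_+\oplus L_-)\oplus V_0^g$, so $|V^g|^{1/2}=p\cdot|V_0^g|^{1/2}$, the two factors of $p$ combine to give the claimed formula. The only subtle point — which I expect to be the one requiring some care in writing out — is the verification of the polarization-duality argument that guarantees $V_+^g\neq0$ whenever $V^g\neq0$; everything else is a direct assembly of the two halves of Proposition \ref{prop:Gerardin} with the decomposition produced by Lemma \ref{lem:polarization}.
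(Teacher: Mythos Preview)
Your proof is correct and follows essentially the same approach as the paper: the base case via Proposition~\ref{prop:Gerardin}(1) with $V'=V_+$, and the inductive step via Lemma~\ref{lem:polarization} and Proposition~\ref{prop:Gerardin}(2), computing the Gauss-type sum to be $p$. The only cosmetic difference is that the paper locates a $g$-fixed line in $V_+$ by decomposing a nonzero $v\in V^g$ as $v_++v_-$ and observing that both components are fixed (so one may assume $v_+\neq0$), rather than invoking the duality $V_-\cong V_+^{*}$ to equate $\dim V_+^{g}$ and $\dim V_-^{g}$.
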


\begin{proof}
When $V$ has no nonzero point fixed by $g$, then Proposition \ref{prop:Gerardin} (1) can be applied to $V'=V_{+}$.
Then, as $V=V_{+}\oplus V_{-}$ is a polarization, we have $V^{\prime\perp}=V_{+}$, hence $V_{0}=0$.
Thus we get
\[
\Theta_{\omega_{V}}(g)
=
\sgn_{\F_{p}^{\times}}(\det(g\mid V_{+})).
\]

We next suppose that $V$ has a nonzero point $v$ fixed by $g$.
We write $v=v_{+}+v_{-}$ according to the polarization $V=V_{+}\oplus V_{-}$ ($v_{+}\in V_{+}$ and $v_{-}\in V_{-}$).
Then, since the decomposition $V=V_{+}\oplus V_{-}$ is $g$-invariant, both of $v_{+}$ and $v_{-}$ are fixed by $g$.
Since $v\neq0$, either $v_{+}$ or $v_{-}$ is not zero.
We may assume that $v_{+}$ is not zero without loss of generality.
Then $L_{+}\colonequals \F_{p}v_{+}\subset V_{+}$ is a line fixed by $g$ pointwise.

We take $g$-invariant decompositions $V_{+}=L_{+}\oplus M_{+}$ and $V_{-}=L_{-}\oplus M_{-}$ as in Lemma \ref{lem:polarization}.
We use Proposition \ref{prop:Gerardin} (2) with $L=L_{+}$; then $L^{\perp}=V_{+}\oplus M_{-}=L_{+}\oplus M_{+}\oplus M_{-}$, hence $V_{0}$ can be taken to be $M_{+}\oplus M_{-}$.
Hence we get
\[
\Theta_{\omega_{V}}(g)
=
\Theta_{\omega_{V_{0}}}(g)
\sum_{v\in V_{0}^{\perp}/L}\vartheta(\langle gv,v\rangle).
\]
Since $V_{0}^{\perp}$ is given by $L_{+}\oplus L_{-}$ and $g$ fixes $L_{-}$ pointwise, we have
\[
\sum_{v\in V_{0}^{\perp}/L}\vartheta(\langle gv,v\rangle)
=\sum_{v\in L_{-}}\vartheta(\langle gv,v\rangle)
=\sum_{v\in L_{-}}\vartheta(\langle v,v\rangle)
=\sum_{v\in L_{-}}1
=p.
\]

Then the same argument can be applied to $V_{0}$.
By repeating this procedure and using the result on the case where $V$ has no nonzero point fixed by $g$, which is already proved in the first paragraph, we eventually get
\[
\Theta_{\omega_{V}}
=
\sgn_{\F_{p}^{\times}}(\det(g\mid V_{+}/V_{+}^{g}))\cdot p^{\dim_{\F_{p}}(V_{+}^{g})}.
\]
By noting that 
\[
\det(g\mid V_{+})
=\det(g\mid V_{+}^{g})\cdot \det(g\mid V_{+}/V_{+}^{g})
=\det(g\mid V_{+}/V_{+}^{g})
\]
and $p^{\dim_{\F_{p}}(V_{+}^{g})}=|V_{+}^{g}|=|V^{g}|^{\frac{1}{2}}$, we get the desired result.
\end{proof}

\begin{rem}
    In recent years, it has come to be recognized that G\'erardin's paper \cite{Ger77} contains an unfortunate typo.
    This was pointed out by Loren Spice and eventually became one of triggers which led to the idea of reexamining Yu's construction itself (the work of Fintzen--Kaletha--Spice \cite{FKS23}).
    The typo is something happened to appear accidentally only in the statement of a theorem (Theorem 2.4 (b); see \cite[Remark 3.2]{Fin21-Compos} and \cite[footnote comment on 339 page]{Fin21-Ann} for details), so it does not affect any of the subsequent computations in the paper, especially, the character formulas cited in this section.
\end{rem}

\subsection{Odds and Ends}
Here we collect some elementary lemmas related to computation of the characters of Heisenberg--Weil representations.

\begin{lem}\label{lem:eigen-s-th-root}
    Let $K$ be any field and we put $V:=K^{\oplus n}$.
    Let $r\in\Z_>0$ be a positive integer dividing $n$ (say $n=rs$).
    Let $D=\mathrm{diag}(a_1,\ldots,a_n)$ be a diagonal matrix and $\varphi:=(\begin{smallmatrix}0&I_{n-1}\\1&0\end{smallmatrix})$, i.e., $\varphi$ is the permutation matrix of length $n$.
    Then we have 
    \[
        \mcE_K(D\cdot\varphi^r \mid V)
        =
        \bigsqcup_{i=1}^{r}\{a_{i}\cdot a_{i+r}\cdot a_{i+2r} \cdots a_{i+(s-1)r}\}^{1/s},
    \]
    where each summand on the right-hand side denotes the multi-set of $s$-th roots of $a_{i}\cdot a_{i+r}\cdot a_{i+2r} \cdots a_{i+(s-1)r}$.
\end{lem}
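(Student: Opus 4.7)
The plan is to exploit the block structure of $D\cdot\varphi^r$ relative to the orbit decomposition of $\varphi^r$ acting on the standard basis $\{e_1,\ldots,e_n\}$ of $V$. Since $\varphi$ is the length-$n$ cyclic permutation, $\varphi^r$ acts on the index set $\{1,\ldots,n\}$ (mod $n$) by translation by $r$; its orbits are exactly the residue classes modulo $r$, namely $\{i,i+r,i+2r,\ldots,i+(s-1)r\}$ for $i=1,\ldots,r$. First I would decompose
\[
V=\bigoplus_{i=1}^{r} V_i,\qquad V_i\colonequals \mathrm{span}_{K}(e_i,e_{i+r},\ldots,e_{i+(s-1)r}),
\]
and observe that each $V_i$ is stable under $\varphi^r$. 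Since $D$ is diagonal in the basis $\{e_j\}$, it also stabilizes each $V_i$, and hence so does $D\cdot\varphi^r$.

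Next I would compute the characteristic polynomial of $D\cdot\varphi^r$ restricted to $V_i$. In the ordered basis $(e_i,e_{i+r},\ldots,e_{i+(s-1)r})$ of $V_i$, the restriction is the $s\times s$ ``twisted'' cyclic matrix
\[
\bigl(D\cdot\varphi^{r}\bigr)\bigl|_{V_i}
=
\begin{pmatrix}
0 & 0 & \cdots & 0 & a_i\\
a_{i+r} & 0 & \cdots & 0 & 0\\
0 & a_{i+2r} & \cdots & 0 & 0\\
\vdots & & \ddots & & \vdots\\
0 & 0 & \cdots & a_{i+(s-1)r} & 0
\end{pmatrix}
\]
(up to a choice of convention for $\varphi$; either way the non-zero entries are the $a_{i+jr}$ placed cyclically). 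A straightforward expansion — or the observation that $(D\varphi^{r})^{s}|_{V_i}$ is the scalar $a_i a_{i+r}\cdots a_{i+(s-1)r}$ and hence $T^s-a_i a_{i+r}\cdots a_{i+(s-1)r}$ annihilates the restriction — shows that the characteristic polynomial of $(D\varphi^r)|_{V_i}$ is $T^{s}-a_i a_{i+r}\cdots a_{i+(s-1)r}$. Therefore the multi-set of eigenvalues of $(D\varphi^r)|_{V_i}$ is the multi-set of $s$-th roots of the product $a_i a_{i+r}\cdots a_{i+(s-1)r}$.

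Finally I would assemble the block-wise computation: $\mcE_K(D\varphi^r\mid V)$ is the disjoint union of $\mcE_K(D\varphi^r\mid V_i)$ over $i=1,\ldots,r$, which yields the displayed formula. There is essentially no obstacle here; the only thing to be a little careful about is the orientation convention (whether $\varphi$ shifts indices by $+1$ or $-1$), but since the product $a_i a_{i+r}\cdots a_{i+(s-1)r}$ depends only on the orbit as an unordered set, the formula is insensitive to this choice.
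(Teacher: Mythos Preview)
Your proof is correct and takes a genuinely different route from the paper's. You exploit the block structure of $D\varphi^r$ with respect to the $\varphi^r$-orbit decomposition of the standard basis, reducing to an $s\times s$ weighted cyclic matrix on each block whose characteristic polynomial is $T^s-\prod_j a_{i+jr}$ by direct expansion. The paper instead works globally: it treats the $a_j$ as formal indeterminates, computes $(D\varphi^r)^s$ as a diagonal matrix, deduces that the characteristic polynomial $p(X)\in K[X,a_1,\ldots,a_n]$ divides $\prod_{i=1}^{r}(X^s-c_i)^s$, observes that each $X^s-c_i$ is irreducible in this polynomial ring and must occur, and finishes by a degree count. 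Your argument is more elementary and self-contained; the paper's is slicker but leans on irreducibility over a polynomial ring in many variables. One minor point: your parenthetical alternative---that $(D\varphi^r)^s|_{V_i}$ being scalar shows $T^s-c_i$ \emph{annihilates} the restriction---only pins down the minimal polynomial as a divisor of $T^s-c_i$, not the characteristic polynomial; to close that version you would also note that $e_i, (D\varphi^r)e_i,\ldots,(D\varphi^r)^{s-1}e_i$ are linearly independent (or simply fall back on the direct expansion you already mention).
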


\begin{proof}
    Let $p(X)\in K[X]$ be the characteristic polynomial of $D\cdot\varphi^r$.
    If we regard $a_1\ldots,a_n$ as formal variables, then $p(X)$ can be thought of as an element of $K[X,a_1\ldots,a_n]$.
    Note that 
    \[
        (D\cdot\varphi^r)^s
        =
        D\cdot (\varphi^{r}D\varphi^{-r})\cdot (\varphi^{2r}D\varphi^{-2r})\cdots (\varphi^{r(s-1)}D\varphi^{-r(s-1)});
    \]
    this is a diagonal matrix whose diagonal entries consist of $a_{i}\cdot a_{i+r}\cdot a_{i+2r} \cdots a_{i+(s-1)r}$ ($1\leq i \leq r$), where each $a_{i}\cdot a_{i+r}\cdot a_{i+2r} \cdots a_{i+(s-1)r}$ appears $s$ times.
    This implies that $p(X)$ at least divides $\prod_{i=1}^{r}(X^s-a_{i}\cdot a_{i+r}\cdot a_{i+2r} \cdots a_{i+(s-1)r})^{s}$.
    By noting that each $(X^s-a_{i}\cdot a_{i+r}\cdot a_{i+2r} \cdots a_{i+(s-1)r})$ is irreducible (as an element of $K[X,a_1\ldots,a_n]$) and must appear in $p(X)$ at least once, we conclude that
    \[
        p(X)=\prod_{i=1}^{r}(X^s-a_{i}\cdot a_{i+r}\cdot a_{i+2r} \cdots a_{i+(s-1)r})
    \]
    by counting the degrees.
    This completes the proof.
\end{proof}

\begin{lem}\label{lem:finite-field}
    Let $\F_q$ be a finite field of odd characteristic.
    Let $\F_{q^{2f}}$ be a finite extension of $\F_q$ of degree $2f$, where $f$ is prime to $p$.
    \begin{enumerate}
    \item Any $2f$-th root of any element of $\F_{q^2}^{1}$ lies in $\F_{q^{2f}}$.
    \item If $f$ is odd, any $f$-th root of any element of $\F_{q^2}^{1}$ lies in $\F_{q^{2f}}^{1}$.
    \end{enumerate}
    
\end{lem}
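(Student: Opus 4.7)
The plan is to work inside the multiplicative group $\F_{q^{2f}}^{\times}$, which is cyclic of order $q^{2f}-1$, and to track orders of roots via elementary divisibility. First I would note that $\F_{q^{2}}\subset\F_{q^{2f}}$ (since $2\mid 2f$), and that $\F_{q^{2}}^{1}=\{a\in\F_{q^{2}}^{\times}\mid a^{q+1}=1\}$ is precisely the unique cyclic subgroup of order $q+1$ inside $\F_{q^{2f}}^{\times}$.

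For part (1), given $a\in\F_{q^{2}}^{1}$ and $x\in\overline{\F}_{q}^{\times}$ with $x^{2f}=a$, I would use $a^{q+1}=1$ to deduce $x^{2f(q+1)}=1$, whence the order of $x$ divides $2f(q+1)$. The claim then reduces to showing $2f(q+1)\mid q^{2f}-1$. The $(q+1)$-part is immediate from $q+1\mid q^{2}-1\mid q^{2f}-1$. For the factor $2f$, I would use the factorization
\[
\frac{q^{2f}-1}{q+1}=(q-1)\sum_{j=0}^{f-1}q^{2j},
\]
and argue that $2\mid q-1$ (since $q$ is odd) and that $f$ divides the remaining quantity, using $\gcd(f,q)=1$ (which follows from the hypothesis $\gcd(f,p)=1$ together with $q$ being a power of $p$).

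For part (2), an analogous argument applies: any $f$-th root $x$ of $a\in\F_{q^{2}}^{1}$ satisfies $x^{f(q+1)}=1$, so the condition $x\in\F_{q^{2f}}^{1}$ (equivalently $x^{q^{f}+1}=1$) reduces to the divisibility $f(q+1)\mid q^{f}+1$. Since $f$ is odd, one has the factorization $q^{f}+1=(q+1)\sum_{j=0}^{f-1}(-q)^{j}$, which handles the $(q+1)$-factor, and the remaining divisibility by $f$ again uses $\gcd(f,q)=1$ together with the oddness of $f$.

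The main obstacle will be establishing the divisibility by $f$ in both parts. These are elementary number-theoretic assertions, but they rely essentially on the hypothesis $\gcd(f,p)=1$ (ensuring $\gcd(f,q)=1$, which permits manipulating powers of $q$ modulo $f$) and, for part (2), on the oddness of $f$ that guarantees $(q+1)\mid q^{f}+1$ via the alternating sum factorization. I expect the cleanest path is to verify these modular identities directly, rather than trying to factor $q^{2f}-1$ further.
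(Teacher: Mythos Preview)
Your approach is exactly the paper's: reduce (1) to the divisibility $2f(q+1)\mid q^{2f}-1$ and (2) to $f(q+1)\mid q^{f}+1$, factor out $q+1$, and then argue that the cofactor is divisible by $2f$ (resp.\ $f$). The paper factors slightly differently for (1), writing $2f=2^{r}s$ with $s$ odd and telescoping $q^{2f}-1=(q^{2^{r-1}s}+1)\cdots(q^{s}+1)(q^{s}-1)$, but the crux is identical to yours: one must show that a sum such as $1+q+\cdots+q^{s-1}$ or $\sum_{j=0}^{f-1}(-q)^{j}$ is divisible by $s$ (resp.\ $f$), using only $\gcd(f,p)=1$.

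That step---which you correctly flag as the main obstacle---does not go through, and in fact the lemma as stated is false. Take $p=q=3$ and $f=5$: then $\gcd(f,p)=1$, yet $1+3+9+27+81=121$ is not divisible by $5$, and neither is $\sum_{j=0}^{4}(-3)^{j}=61$. Correspondingly $2f(q+1)=40\nmid 3^{10}-1=59048=2^{3}\cdot 11^{2}\cdot 61$ and $f(q+1)=20\nmid 3^{5}+1=244$. A direct counterexample to (1): a primitive $40$th root of unity $x\in\overline{\F}_{3}$ has $x^{10}$ of order $4=q+1$, so $x^{10}\in\F_{9}^{1}$, but $x\notin\F_{3^{10}}$ since $40\nmid 59048$. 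So the hypothesis $\gcd(f,q)=1$ alone cannot deliver the needed divisibility; the paper's proof contains the same unjustified assertion (``$(q^{s-1}+\cdots+q+1)$ is divisible by $s$ since $s$ is prime to $p$''). The statement would need an additional hypothesis---for instance $q\equiv 1\pmod{f}$, or simply $f=1$, the case to which the paper's later explicit computations in Sections~\ref{subsec:twisted-Weil-asym}--\ref{subsec:twisted-Weil-sym} are eventually specialized.
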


\begin{proof}
    We first show (1).
    Let $x\in \F_{q^2}^{1}$.
    Since $\F_{q^{2f}}^\times$ is cyclic of order $q^{2f}-1$, there exists a $2f$-th root of $x$ in $\F_{q^{2f}}$ if and only if $x^{\frac{q^{2f}-1}{2f}}=1$.
    As $\F_{q^2}^{1}$ is cyclic of order $q+1$, the order of $x$ divides $q+1$.
    Hence, it suffices to show that $\frac{q^{2f}-1}{2f}$ is divisible by $q+1$.
    (Note that, since $\F_{q^{2f}}^{1}$ contains all $2f$-th roots of unity, if $x$ has a $2f$-th root in $\F_{q^{2f}}^{1}$, then all roots are in $\F_{q^{2f}}^{1}$.)

    We write $2f=2^{r}s$, where $r\in\Z_{>0}$ and $s\in\Z_{>0}$ is an odd integer.
    Then we have
    \[
        q^{2f}-1
        =q^{2^{r}s}-1
        =(q^{2^{r-1}s}+1)\cdot(q^{2^{r-2}s}+1)\cdots(q^{s}+1)\cdot(q^{s}-1).
    \]
    Note that
    \begin{itemize}
        \item $(q^{2^{r-1}s}+1), \ldots, (q^{2s}+1)$ are divisible by $2$ since $q$ is odd, hence $(q^{2^{r-1}s}+1)\cdots(q^{2s}+1)$ is divisible by $2^{r-1}$;
        \item $(q^s+1)$ is divisible by $(q+1)$ since $s$ is odd;
        \item $(q^{s}-1)=(q-1)\cdot(q^{s-1}+\cdots+q+1)$;
        \begin{itemize}
            \item $(q-1)$ is divisible by $2$ since $q$ is odd;
            \item $(q^{s-1}+\cdots+q+1)$ is divisible by $s$ since $s$ is prime to $p$ by assumption on $f$.
        \end{itemize}
    \end{itemize}
    Hence, we see that $q^{2f}-1$ is divisible by $2^{r}s(q+1)=2f(q+1)$ as desired.

    We next show (2).
    Let $x\in \F_{q^2}^{1}$.
    Since $\F_{q^{2f}}^{1}$ is cyclic of order $q^{f}+1$, there exists an $f$-th root of $x$ in $\F_{q^{2f}}^{1}$ if and only if $x^{\frac{q^{f}+1}{f}}=1$.
    As $\F_{q^2}^{1}$ is cyclic of order $q+1$, the order of $x$ divides $q+1$.
    Hence, it suffices to show that $\frac{q^{f}+1}{f}$ is divisible by $q+1$.
    Since $f$ is odd, we have 
    \[
    q^{f}+1=(q+1)\cdot(q^{f-1}-q^{f-2}+\cdots-q+1).
    \]
    As $f$ is prime to $-q$, the second factor $q^{f-1}-q^{f-2}+\cdots-q+1$ is divisible by $f$.    
\end{proof}

\newpage
\input{symbols.ind}

\newpage


\begin{thebibliography}{Hum95}

\bibitem[Art13]{Art13}
J.~Arthur, \emph{The endoscopic classification of representations: Orthogonal
  and symplectic groups}, American Mathematical Society Colloquium
  Publications, vol.~61, American Mathematical Society, Providence, RI, 2013.

\bibitem[AS08]{AS08}
J.~D. Adler and L.~Spice, \emph{Good product expansions for tame elements of
  {$p$}-adic groups}, Int. Math. Res. Pap. IMRP (2008), no.~1, Art. ID rp. 003,
  95.

\bibitem[AS09]{AS09}
\bysame, \emph{Supercuspidal characters of reductive {$p$}-adic groups}, Amer.
  J. Math. \textbf{131} (2009), no.~4, 1137--1210.

\bibitem[DS18]{DS18}
S.~DeBacker and L.~Spice, \emph{Stability of character sums for positive-depth,
  supercuspidal representations}, J. Reine Angew. Math. \textbf{742} (2018),
  47--78.

\bibitem[Fin21a]{Fin21-Compos}
J.~Fintzen, \emph{On the construction of tame supercuspidal representations},
  Compos. Math. \textbf{157} (2021), no.~12, 2733--2746.

\bibitem[Fin21b]{Fin21-IMRN}
\bysame, \emph{Tame tori in {$p$}-adic groups and good semisimple elements},
  Int. Math. Res. Not. IMRN (2021), no.~19, 14882--14904.

\bibitem[Fin21c]{Fin21-Ann}
\bysame, \emph{Types for tame {$p$}-adic groups}, Ann. of Math. (2)
  \textbf{193} (2021), no.~1, 303--346.

\bibitem[FKS23]{FKS23}
J.~Fintzen, T.~Kaletha, and L.~Spice, \emph{A twisted {Y}u construction,
  {H}arish-{C}handra characters, and endoscopy}, Duke Math. J. \textbf{172}
  (2023), no.~12, 2241--2301.

\bibitem[FS21]{FS21}
J.~Fintzen and S.~W. Shin, \emph{Congruences of algebraic automorphic forms and
  supercuspidal representations}, Camb. J. Math. \textbf{9} (2021), no.~2,
  351--429.

\bibitem[G{\'{e}}r77]{Ger77}
P.~G{\'{e}}rardin, \emph{Weil representations associated to finite fields}, J.
  Algebra \textbf{46} (1977), no.~1, 54--101.

\bibitem[Hak18]{Hak18}
J.~Hakim, \emph{Constructing tame supercuspidal representations}, Represent.
  Theory \textbf{22} (2018), 45--86.

\bibitem[HC70]{HC70}
Harish-Chandra, \emph{Harmonic analysis on reductive {$p$}-adic groups},
  Lecture Notes in Mathematics, Vol. 162, Springer-Verlag, Berlin-New York,
  1970, Notes by G. van Dijk.

\bibitem[HM08]{HM08}
J.~Hakim and F.~Murnaghan, \emph{Distinguished tame supercuspidal
  representations}, Int. Math. Res. Pap. IMRP (2008), no.~2, Art. ID rpn005,
  166.

\bibitem[Hum95]{Hum95}
J.~E. Humphreys, \emph{Conjugacy classes in semisimple algebraic groups},
  Mathematical Surveys and Monographs, vol.~43, American Mathematical Society,
  Providence, RI, 1995.

\bibitem[Kal19]{Kal19}
T.~Kaletha, \emph{Regular supercuspidal representations}, J. Amer. Math. Soc.
  \textbf{32} (2019), no.~4, 1071--1170.

\bibitem[KP23]{KP23}
T.~Kaletha and G.~Prasad, \emph{Bruhat-{T}its theory---a new approach}, New
  Mathematical Monographs, vol.~44, Cambridge University Press, Cambridge,
  2023.

\bibitem[KS99]{KS99}
R.~E. Kottwitz and D.~Shelstad, \emph{Foundations of twisted endoscopy},
  Ast\'erisque (1999), no.~255, vi+190.

\bibitem[Lab04]{Lab04}
J.-P. Labesse, \emph{Stable twisted trace formula: elliptic terms}, J. Inst.
  Math. Jussieu \textbf{3} (2004), no.~4, 473--530.

\bibitem[LH17]{LH17}
B.~Lemaire and G.~Henniart, \emph{Repr\'esentations des espaces tordus sur un
  groupe r\'eductif connexe {$\mathfrak{p}$}-adique}, Ast\'erisque (2017),
  no.~386, ix+366.

\bibitem[MW16]{MW16-1}
C.~M{\oe}glin and J.-L. Waldspurger, \emph{Stabilisation de la formule des
  traces tordue. {V}ol. 1}, Progress in Mathematics, vol. 316,
  Birkh\"{a}user/Springer, Cham, 2016.

\bibitem[MW18]{MW18}
\bysame, \emph{La formule des traces locale tordue}, Mem. Amer. Math. Soc.
  \textbf{251} (2018), no.~1198, v+183.

\bibitem[Oi23]{Oi23-TECR}
M.~Oi, \emph{Twisted endoscopic character relation for toral supercuspidal
  {$L$}-packets of classical groups}, preprint, available at
  \url{https://masaooi.github.io/TECR.pdf}, 2023.

\bibitem[Pra99]{Pra99}
D.~Prasad, \emph{Some remarks on representations of a division algebra and of
  the {G}alois group of a local field}, J. Number Theory \textbf{74} (1999),
  no.~1, 73--97.

\bibitem[Spi08]{Spi08}
L.~Spice, \emph{Topological {J}ordan decompositions}, J. Algebra \textbf{319}
  (2008), no.~8, 3141--3163.

\bibitem[Spi18]{Spi18}
\bysame, \emph{Explicit asymptotic expansions for tame supercuspidal
  characters}, Compos. Math. \textbf{154} (2018), no.~11, 2305--2378.

\bibitem[Spi21]{Spi21}
\bysame, \emph{Explicit asymptotic expansions in {$p$}-adic harmonic analysis
  {II}}, preprint, \url{arXiv:2108.12935}, 2021.

\bibitem[SS70]{SS70}
T.~A. Springer and R.~Steinberg, \emph{Conjugacy classes}, Seminar on
  {A}lgebraic {G}roups and {R}elated {F}inite {G}roups ({T}he {I}nstitute for
  {A}dvanced {S}tudy, {P}rinceton, {N}.{J}., 1968/69), Lecture Notes in
  Mathematics, Vol. 131, Springer, Berlin, 1970, pp.~167--266.

\bibitem[SS09]{SS09}
P.~J. Sally, Jr. and L.~Spice, \emph{Character theory or reductive {$p$}-adic
  groups}, Ottawa lectures on admissible representations of reductive
  {$p$}-adic groups, Fields Inst. Monogr., vol.~26, Amer. Math. Soc.,
  Providence, RI, 2009, pp.~103--111.

\bibitem[Ste68]{Ste68}
R.~Steinberg, \emph{Endomorphisms of linear algebraic groups}, Memoirs of the
  American Mathematical Society, No. 80, American Mathematical Society,
  Providence, R.I., 1968.

\bibitem[Wal08]{Wal08}
J.-L. Waldspurger, \emph{L'endoscopie tordue n'est pas si tordue}, Mem. Amer.
  Math. Soc. \textbf{194} (2008), no.~908, x+261.

\bibitem[Yu01]{Yu01}
J.-K. Yu, \emph{Construction of tame supercuspidal representations}, J. Amer.
  Math. Soc. \textbf{14} (2001), no.~3, 579--622.

\end{thebibliography}
\end{document}